  \newcommand{\LE}{\mathcal{LE}}
   \renewcommand{\H}{\mathcal{H}}
  \newcommand{\B}{\mathcal{B}}
\newcommand{\G}{\mathcal{G}}
\newcommand{\cZ}{\mathcal{Z}}
\newcommand{\X}{\mathcal{X}}
\newcommand{\Y}{\mathcal{Y}}
\newcommand{\gG}{\Gamma}
\newcommand{\C}{\mathbb{C}}
\newcommand{\T}{\mathbb{T}}
\newcommand{\R}{\mathbb{R}}
\newcommand{\E}{\mathbb{E}}
\newcommand{\N}{\mathbb{N}}
\newcommand{\Z}{\mathbb{Z}}
\newcommand{\norm}[1]{\left\Vert #1\right\Vert}
\newcommand{\nnorm}[1]{\lvert\!|\!| #1|\!|\!\rvert}
\theoremstyle{plain}
\newtheorem{theorem}{Theorem}[section]
\newtheorem{lemma}[theorem]{Lemma}
\newtheorem{proposition}[theorem]{Proposition}
\newtheorem*{conjecture2'}{Problem 4$'$}
\newtheorem*{theoremA'}{Theorem A'}
\newtheorem*{theoremB'}{Theorem B'}
\newtheorem*{theoremC'}{Theorem C'}
\newtheorem*{theorem*}{Theorem}
\newtheorem*{Correspondence1}{Furstenberg Correspondence Principle}
\newtheorem{conjecture}{Problem}
\theoremstyle{definition}
\newtheorem{example}{Example}
\theoremstyle{remark}
\newtheorem*{remark}{Remark}
\newtheorem*{remarks}{Remarks}
\begin{document}
\title{Multiple recurrence and convergence for Hardy  sequences of polynomial growth}

\author{Nikos Frantzikinakis}
\address[Nikos  Frantzikinakis]{Department of Mathematics\\
  University of Crete\\Knossos Avenue\\
    Heraklion\\ 71409 \\ GREECE } \email{frantzikinakis@gmail.com}

\begin{abstract}
We study the limiting behavior of multiple ergodic averages involving  sequences of integers
   that satisfy some regularity conditions and have polynomial growth.
   We show that for  ``typical''
   choices of
   Hardy field functions $a(t)$     with  polynomial growth,
 the averages  $\frac{1}{N}\sum_{n=1}^N f_1(T^{[a(n)]}x)\cdot\ldots\cdot f_\ell(T^{\ell [a(n)]}x)$ converge in the mean and  we determine  their limit.  For example, this is the case if $a(t)=t^{3/2}, t\log{t},$ or $t^2+(\log{t})^2$.
 Furthermore, if $\{a_1(t),\ldots,a_\ell(t)\}$ is a ``typical'' family of logarithmico-exponential functions of polynomial growth, then  for every ergodic system, the
averages $\frac{1}{N}\sum_{n=1}^N f_1(T^{[a_1(n)]}x)\cdot\ldots\cdot f_\ell(T^{[a_\ell(n)]}x)$
converge in the mean to the product of the integrals of the corresponding functions.
For example, this is the case if the functions $a_i(t)$ are given by different positive fractional powers of $t$.
We deduce several results in combinatorics. We show
that if $a(t)$ is a non-polynomial Hardy field function
with polynomial growth,   then every
set of integers with positive upper density contains arithmetic progressions of the form $\{m,m+[a(n)],\ldots,m+\ell[a(n)]\}$.
Under suitable assumptions we get a related result
concerning patterns of the form
$
\{m, m+[a_1(n)],\ldots, m+[a_\ell(n)]\}.
$
\end{abstract}

\thanks{The  author was partially supported by NSF grant
 DMS-0701027 and Marie Curie International Reintegration Grant 248008.}

\subjclass[2000]{Primary: 37A45; Secondary: 28D05, 05D10, 11B25}

\keywords{ Hardy field,  multiple recurrence, ergodic averages,
arithmetic
   progressions.}

\maketitle


\setcounter{tocdepth}{1}
\tableofcontents
\section{Introduction}
In recent years there has been a lot of activity in studying the limiting behavior in $L^2(\mu)$ (as $N\to\infty$) of
multiple ergodic averages of the form
\begin{equation}\label{E:basic1}
\frac{1}{N}\sum_{n=1}^N f_1(T^{a_1(n)}x)\cdot\ldots\cdot f_\ell(T^{a_\ell(n)}x)
\end{equation}
for various choices of sequences of integers $a_1(n), \ldots, a_\ell(n)$, where $T$ is an invertible measure preserving transformation acting on a probability space $(X,\X,\mu)$, and $f_1,\ldots,f_\ell$ are bounded measurable functions. This study was initiated
in \cite{Fu77}, where Furstenberg  studied the averages \eqref{E:basic1} when $a_1(n)=n, a_2(n)=2n,\ldots, a_\ell(n)=\ell n$,
in a depth   that was sufficient to give a new proof of Szemer\'edi's theorem on arithmetic progressions (\cite{Sz75}). Later on, Bergelson and Leibman in \cite{BL96} extended Furstenberg's method to cover the case
where the sequences  $a_1(n), \ldots, a_\ell(n)$ are integer polynomials with zero constant term,
 and established a polynomial extension of Szemer\'edi's theorem.

 For $\ell=1$ the limiting behavior in $L^2(\mu)$ of the averages \eqref{E:basic1} can be reduced (using the spectral theorem for unitary operators) to the study of certain exponential sums, and therefore is in a sense well understood.
 For $\ell\geq 2$, even in the simplest cases, convergence
of the averages \eqref{E:basic1} and identification of the limit turned out to be a very resistent problem. Nevertheless, we now have several different proofs of convergence in the case where   the sequences are linear (\cite{HK05a}, \cite{Zi07}, \cite{Ta08}, \cite{To09},\cite{Au09}, \cite{Ho09}), and the case where all the sequences are integer polynomials  was treated in \cite{HK05b} and \cite{Lei05c}. Furthermore, a rather explicit formula for the limit of the averages \eqref{E:basic1} can  be given in the linear case (combining results from \cite{HK05a} and  \cite{Zi05}), and for some special collections of integer polynomial sequences   (\cite{FrK06}, \cite{Fr08}, \cite{Lei09}).

The purpose of this article is to carry out a detailed  study of the limiting behavior of the averages \eqref{E:basic1}   for a large class of sequences of integers $a_1(n),\ldots, a_\ell(n)$ that
 have  polynomial growth (meaning $a_i(t)/t^k\to 0$ for some $k\in\N$) but are not necessarily defined by integer polynomials.\footnote{The case where the transformation $T$ is a nilrotation was treated in the companion paper \cite{Fr09} and is an essential component of the present paper.} For example, we shall show
that for every positive $c\in \R\setminus \Z$, and measure preserving transformation $T$, the averages
\begin{equation}\label{E:basic2}
\frac{1}{N}\sum_{n=1}^N f_1(T^{[n^c]}x)\cdot f_2(T^{2[n^c]}x)\cdot \ldots\cdot f_\ell(T^{\ell[n^c]}x)
\end{equation}
converge in $L^2(\mu)$,  and  their limit is equal to the limit of the ``Furstenberg averages''
\begin{equation}\label{E:Furst}
\frac{1}{N}\sum_{n=1}^N f_1(T^{n}x)\cdot f_2(T^{2n}x)\cdot\ldots\cdot f_\ell(T^{\ell n}x).
\end{equation}
More generally, the role of the sequence $[n^c]$ in \eqref{E:basic2} can play any sequence $[a(n)]$
where $a(t)$ is a function that belongs to some Hardy field, has polynomial growth,
 and stays logarithmically away from constant multiples of integer polynomials (see Theorem~\ref{T:ConvSingleFormula}).
 For instance, any of the following sequences  will work (below, $k$ is an arbitrary positive integer)
 \begin{equation}\label{E:ExamplesConv}
  [n\log n],\ [n^3/\log{n}],
   \  [n^2+n\log n],  \ [n^2+\sqrt{3}\ \! n],\ [n^2+(\log{n})^2],\
[(\log(n!))^k], \ [(\text{Li}(n))^k].
\end{equation}
We also give explicit
necessary and sufficient conditions for mean convergence of the averages \eqref{E:basic2} when the
sequence $[n^c]$ is replaced with the sequence $[a(n)]$
where $a(t)$ is any function that belongs to some Hardy field and  has polynomial growth.

With the help of   the previous convergence results we derive a
 refinement of Szemer\'edi's theorem on arithmetic progressions.
 We show that
if $a(t)$ is a function that belongs to some Hardy field, has polynomial growth, and is
not equal to a constant multiple of an integer polynomial (modulo a function that converges to a constant), then  for every $\ell\in \N$,
every set of integers with positive upper density\footnote{A set of integers $\Lambda$ has \emph{positive upper density} if $\bar{d}(\Lambda)=\limsup_{N\to\infty}|\Lambda\cap
\{-N,\ldots ,N\}|/(2N+1)>0$.} contains arithmetic progressions of the form
$$
\{m, m+[a(n)],\ldots, m+\ell[a(n)]\}
$$
(see Theorem~\ref{T:SzSingle}). 
 Therefore, one can use any of the sequences  in \eqref{E:ExamplesConv} in place of $[a(n)]$ and also sequences like $[n^2+\log{n}]$ or $[\sqrt{2} \ \!n^2+\log\log n]$  (these sequences are bad for mean convergence).

   Furthermore, we study the averages \eqref{E:basic1} for sequences that are not necessarily in  arithmetic progression. We show that if $c_1,\ldots,c_\ell\in \R \setminus \Z$ are positive and distinct, then for every ergodic transformation $T$
we have
 \begin{equation}\label{E:basic3}
\lim_{N\to\infty}\frac{1}{N}\sum_{n=1}^N f_1(T^{[n^{c_1}]}x)\cdot\ldots\cdot f_\ell(T^{[n^{c_\ell}]}x)=
\int f_1 \ d\mu \cdot \ldots \cdot \int f_\ell \ d\mu
\end{equation}
where the convergence takes place in $L^2(\mu)$.
 More generally, one can replace the sequences $[n^{c_1}], \ldots, [n^{c_\ell}]$ in \eqref{E:basic3}
 with
 sequences
 $[a_1(n)],\ldots,[a_\ell(n)] $ where the functions  $a_1(t),\ldots,a_\ell(t)$  are  logarithmico-exponential
  and  satisfy some appropriate growth conditions
 (see Theorem~\ref{T:ConvSeveral}). This enables us to establish a conjecture of Bergelson and H{\aa}land-Knutson (\cite{BH09}, Conjecture~8.2).
We deduce  that if $c_1,\ldots,c_\ell\in \R \setminus \Z$ are positive, then
 every set of integers with positive upper density contains
  patterns of the form
$$
\{m, m+[n^{c_1}],\ldots, m+[n^{c_\ell}]\}.
$$

In the next section we give a more precise formulation of our main results
and also define  some of the concepts used throughout
the paper.
\section{Main results}
We first introduce some basic terminology needed to state our main results.
The reader will find more information related to the notions involved in the background section.

All along the article we will use the term {\it measure preserving
  system}, or the word {\it system}, to designate a quadruple
$(X,\X,\mu, T)$, where $(X,\X,\mu)$ is a Lebesgue
probability space, and $T\colon X\to X$ is an \emph{invertible}
measurable map such that $\mu(T^{-1}A)=\mu(A)$ for every
$A\in\X$.

Let $B$ be the collection of equivalence classes of real valued
  functions  defined on some half-line $(c,\infty)$, where we
  identify two functions if they agree eventually.\footnote{The
    equivalence classes just defined are often called \emph{germs of
      functions}. We are going to use the word function when we refer to
    elements of $B$ instead, with the understanding that all the
    operations defined and statements made for elements of $B$ are
    considered only for sufficiently large values of $t\in \R$.}  A
  \emph{Hardy field} is a subfield of the ring $(B,+,\cdot)$ that is
  closed under differentiation. With $\H$ we denote the \emph{union of all
  Hardy fields}. If $a\in \H$ is defined in $[1,\infty)$ (one can always
  choose such a representative of $a(t)$)
  we call the sequence $([a(n)])_{n\in\N}$ a  \emph{Hardy sequence}.
  Working within the class  $\H$  eliminates several
 technicalities  that would otherwise obscure the transparency of  our results and the main ideas of their proofs.
 Furthermore,  $\H$
 is a rich enough class to enable one to  deal, for example, with  all the sequences considered in \eqref{E:ExamplesConv}.

An explicit  example of a Hardy field to keep in mind  is the set
$\LE$ that consists of all  \emph{logarithmico-exponential
functions} (\cite{Ha10}, \cite{Ha12}), meaning  all
 functions defined on some  half-line $(c,\infty)$
using  a finite combination of the symbols $+,-,\times, :, \log, \exp$, operating on the real variable $t$
and on real constants.
For example, all rational functions and the functions
 $t^{\sqrt{2}}$,  $t\log{t}$,
$e^{\sqrt{\log \log t}}/\log(t^2+1)$ belong in $\LE$.

 The set $\H$ is much more extensive than the set  $\LE$, for example, one can show that it contains all antiderivatives of elements of $\LE$,  the Riemann zeta function $\zeta$, and  the Euler Gamma function $\Gamma$. Let us stress though that $\H$ does not contain functions that oscillate like $\sin{t}$ or $t\sin{t}$, or functions that have a derivative that oscillates, like $t^{100}+\sin{t}$.

 The most important  property of elements of $\H$ that will be used throughout this article is
 that we can relate  their growth rates with the growth rates  of their derivatives.

To simplify our exposition we introduce some notation. If  $a(t),b(t)$ are real valued functions defined on some half-line $(u,\infty)$ we write $a(t)\prec b(t)$  if $a(t)/b(t) \to 0$ as $t\to \infty$. (For example,  $1\prec \log{t}\prec t^\varepsilon$ for every $\varepsilon>0$.) We write $a(t)\ll b(t)$ if there exists $C\in \R$ such that
 $|a(t)|\leq C|b(t)|$ for all large enough $t\in \R$.
We say that a function $a(t)$ has \emph{polynomial growth}   if $a(t) \ll t^k$ for some $k\in \N$.

\subsection{Arithmetic progressions}\label{SS:ArithmeticProgressions} We are going to give
a collection of results that deal with
 multiple convergence and recurrence
properties of  Hardy sequences  of  polynomial growth.

\subsubsection{Convergence}\label{SSS:ApConvergence}
 Let $a(t)$ be a real valued function. We say that the sequence of integers $([a(n)])_{n\in\N}$ is \emph{good for
    multiple convergence} if for every $\ell \in \N$, system $(X,\X,\mu,T)$, and functions
  $f_1,f_2,\dots,f_\ell\in L^\infty(\mu)$, the averages
  \begin{equation}\label{E:basicAP}
    \frac1N \sum_{n=1}^N f_1(T^{[a(n)]}x)\cdot f_2(T^{2[a(n)]}x)\cdot \ldots
    \cdot f_\ell(T^{\ell [a(n)]}x)
  \end{equation}
  converge in $L^2(\mu)$  as $N\to\infty$.
As mentioned in the introduction, any polynomial with integer coefficients is an example of such a sequence.

The next result gives an extensive list of new examples of sequences that are good for multiple convergence.
In fact it shows that it is a rather rare occurrence for a
 Hardy sequence with polynomial growth to be bad for multiple convergence.
 \begin{theorem}\label{T:ConvSingle}
Let $a\in \H$ have polynomial growth.

 Then    the sequence  $([a(n)])_{n\in\N}$
is good for multiple convergence if and only if one of the following conditions holds:
\begin{itemize}
 \item   $|a(t)-cp(t)|\succ \log t$ for every $c\in\R$ and every  $p\in \Z[t]$; \text{ or }

 \item $a(t)-cp(t)\to d$ for some $c,d\in\R$ and some $p\in \Z[t]$; \text{ or }

 \item $|a(t)-t/m|\ll \log{t}$ for some $m\in\Z$.
\end{itemize}
\end{theorem}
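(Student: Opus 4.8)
The plan is to treat the ``if'' and ``only if'' directions separately, using throughout the basic comparability property of Hardy field functions: for $f,g\in\H$ the ratio $f/g$ tends to a limit in $[-\infty,\infty]$, so no element of $\H$ oscillates and any two are asymptotically comparable. First I would put $a$ into a normal form. Since $a$ has polynomial growth, and since two constant multiples of integer polynomials that both approximate $a$ to within $\ll\log t$ must differ by a constant (a nonconstant polynomial cannot be $\ll\log t$), there is a well-defined ``polynomial part'': either $|a(t)-c\,p(t)|\succ\log t$ for every $c\in\R,\ p\in\Z[t]$ (this is condition~1), or there is a pair $c,p$, unique up to additive constants, with $a(t)=c\,p(t)+b(t)$ where $b\in\H$ and $|b(t)|\ll\log t$. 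In the latter case $b$ is eventually monotone, so either $b\to d$ for some $d\in\R$ (condition~2), or $b\to\pm\infty$ with $1\prec|b(t)|\ll\log t$. Noting that $c\,p(t)=t/m+\text{const}$ with $m\in\Z$ is exactly the extra situation isolated by condition~3, this trichotomy shows that all three listed conditions fail \emph{simultaneously} precisely when $a=c\,p+b$ with $b\to\pm\infty$, $|b|\ll\log t$, and $c\,p$ not of the form $t/m+\text{const}$. The two directions then reduce to proving convergence in the first three cases and divergence in this last ``bad'' case.

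For the sufficiency I would dispatch the three cases as follows. Condition~1 is exactly the hypothesis of Theorem~\ref{T:ConvSingleFormula}, so convergence (to the Furstenberg limit) is already available. For condition~2 write $[a(n)]=[c\,p(n)+d]+\varepsilon_n$; passing to the Host--Kra characteristic factors reduces matters to a nilsystem, where convergence of $\tfrac1N\sum_n\prod_{j=1}^\ell f_j\big(g^{\,j[a(n)]}x\big)$ follows from the equidistribution of polynomial-floor orbits $g^{[c\,p(n)+d]}$ on nilmanifolds established in \cite{Fr09} (the case $c\in\Q,\ p\in\Z[t]$ being the classical Bergelson--Leibman one), the error $\varepsilon_n$ affecting only a set of $n$ of density zero. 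For condition~3 I would use that $\{n/m\}$ is \emph{$m$-periodic}: splitting $n$ into residues modulo $m$ and grouping $n$ according to the block-constant value of $[b(n)]$ expresses the averages as a finite combination of shifted linear (Furstenberg) averages; since a Ces\`aro limit is invariant under a shift of the iterate, all these limits coincide and the whole average converges.

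The heart of the matter, and the step I expect to be the main obstacle, is the ``only if'' direction: exhibiting, in the bad case $a=c\,p+b$, a system and functions $f_1,\dots,f_\ell$ for which the averages \eqref{E:basicAP} fail to converge in $L^2(\mu)$. Here $\ell\ge2$ is essential, since the single averages pose no obstruction (they reduce, via the spectral theorem, to one-dimensional exponential sums, which converge for Hardy sequences). The mechanism I would exploit is the slow logarithmic drift of $b$: grouping $n$ by the value $k=[b(n)]$ partitions $\{1,\dots,N\}$ into blocks of \emph{geometrically} increasing length, so the Ces\`aro average is dominated by the last block and its limit, if it exists, must equal $\lim_{k\to\infty}L_k$, where $L_k$ is the limit of the averages over the $k$-th block. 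In the cleanest case $c\,p\in\Z[t]$ one has $[a(n)]=c\,p(n)+k$ on that block, so on a suitable nilsystem $L_k$ is an integral over the orbit-closure of $\big(g^{\,jc\,p(n)}\big)_{j\le\ell}$ twisted by the fixed nil-translation $g^{jk}$. Because $c\,p$ is not of the form $t/m+\text{const}$, this closure is a \emph{proper} subnilmanifold and the twist acts nontrivially, making $k\mapsto L_k$ a genuinely nonconstant function of $g^k\Gamma$; choosing $g$ so that $(g^k\Gamma)$ equidistributes on a nontrivial subnilmanifold forces $L_k$ to oscillate, and $\lim_N$ cannot exist. The two delicate points are: (i) when $c\,p\notin\Z[t]$ the identity $[a(n)]=c\,p(n)+[b(n)]$ fails and one must control an extra floor correction coming from the \emph{equidistributed} fractional parts $\{c\,p(n)\}$ --- this is exactly where the rational-slope case $t/m$, whose fractional parts are periodic, behaves differently and remains good; and (ii) establishing the equidistribution of $\big(g^{\,jc\,p(n)}\big)$ along the blocks $[e^k,e^{k+1})$ with enough uniformity in $k$, which again rests on the nilmanifold equidistribution estimates of \cite{Fr09}.
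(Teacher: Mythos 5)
Your ``only if'' direction rests on a false premise, and this is a genuine gap. You assert that ``$\ell\ge 2$ is essential, since the single averages pose no obstruction (they reduce, via the spectral theorem, to one-dimensional exponential sums, which converge for Hardy sequences).'' The reduction to exponential sums is correct, but the convergence claim is not: take $a(t)=2t+\log t$ (which fails all three conditions) and the rotation by $1/2$ on the circle with $f(x)=e(x)$. Since $[a(n)]=2n+[\log n]$, the averages are $e(x)\cdot\frac{1}{N}\sum_{n\le N}(-1)^{[\log n]}$, and because $[\log n]$ is constant on geometrically growing blocks, the quantity $\frac{1}{N}\sum_{n\le N}(-1)^{[\log n]}$ oscillates between values near $\pm\frac{e-1}{e+1}$ along $N=[e^K]$; the averages diverge in $L^2$ already for $\ell=1$. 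This is exactly the block-drift mechanism you invoke later, but it operates at $\ell=1$ on a rational circle rotation. Since ``good for multiple convergence'' quantifies over \emph{every} $\ell$ (including $\ell=1$), necessity is entirely an $\ell=1$ statement, and this is how the paper proves it: it cites \cite{BKQW05}, where these same three conditions were shown necessary and sufficient for convergence of single averages, precisely via rational rotations. Your alternative nilsystem construction is both unnecessary and incomplete --- you yourself flag the floor corrections when $cp\notin\Z[t]$ and the uniformity of equidistribution along blocks as unresolved ``delicate points,'' so as written the necessity direction is not a proof.

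There are also flaws in your sufficiency argument, though these are repairable. For condition~2 you claim $[a(n)]=[cp(n)+d]+\varepsilon_n$ with $\varepsilon_n$ supported on a set of density zero; this fails whenever $c$ is rational and $cp(n)+d$ takes integer values on arithmetic progressions while $a(t)-cp(t)-d\to 0$ from below (e.g.\ $a(t)=t^2-1/t$ gives $[a(n)]=n^2-1\ne[n^2]$ for \emph{every} $n$). Since $a\in\H$, the error is in fact eventually constant along each such progression, so the decomposition can be salvaged, but not with the density-zero argument you give. For condition~3, passing from block limits to the full Ces\`aro limit requires the \emph{uniform} ($N-M\to\infty$) convergence of Furstenberg averages, because your blocks grow geometrically; this is true but must be invoked. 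For comparison, the paper proves sufficiency for all three conditions at once with no case analysis: Theorem~\ref{T:CharSingle} makes the nilfactor characteristic, Theorem~\ref{T:HoKra} together with ergodic decomposition and approximation reduces matters to nilsystems, and then Theorem~\ref{T:A} from \cite{Fr09} --- applied on the product nilmanifold $X^\ell$ to the nilrotation $(b,b^2,\ldots,b^\ell)$ and the diagonal point --- gives convergence directly for the sequence $[a(n)]$, with all floor-function bookkeeping absorbed into that equidistribution result.
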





\begin{remarks}
$\bullet$ It follows  that the sequences in  \eqref{E:ExamplesConv}
and the sequences $[\sqrt{5}n^2]$, $[n/2+\log{n}]$ are good for
multiple convergence. The sequences $[\sqrt{5}n^2+\log{n}]$,
$[2n+\log{n}]$ are not good.

$\bullet$ The same necessary and sufficient conditions for convergence of  ``single" ergodic averages
$\frac{1}{N} \sum_{n=1}^Nf(T^{[a(n)]}x)$ where previously established in \cite{BKQW05}.


$\bullet$ If $a(t)$ is a real valued polynomial, then  our argument  shows that the averages \eqref{E:basicAP} converge in $L^2(\mu)$ even
if one replaces
 the limit  $\lim_{N\to\infty}\frac1N \sum_{n=1}^N$ with
 the limit $\lim_{N-M\to\infty}\frac{1}{N-M} \sum_{n=M+1}^N$. On the other hand, if $a\in \H$
  satisfies $t^{k-1}\prec a(t)
 \prec t^k$ for some $k\in \N$, then one can show that  the sequence $([a(n)])_{n\in\N}$ takes odd (respectively even) values in arbitrarily long intervals; as a result the limit
$\lim_{N-M\to\infty}\frac{1}{N-M} \sum_{n=M+1}^N T^{[a(n)]}f$ does not exist in general.
\end{remarks}
The first condition of Theorem~\ref{T:ConvSingle} is satisfied by
``typical'' functions in $\H$ with polynomial growth. For such
functions, the next result allows us to identify the limit of the
averages \eqref{E:basicAP}:
\begin{theorem}\label{T:ConvSingleFormula}
Let $a\in \H$ have polynomial growth and satisfy $|a(t)-cp(t)|\succ \log t$ for every
$c\in \R$ and every $p\in \Z[t]$.

Then for every $\ell\in \N$, system $(X,\X,\mu,T)$, and functions $f_1,\ldots,f_\ell\in L^\infty(\mu)$,
 we have
\begin{equation}\label{E:Furste}
\lim_{N\to\infty}\frac{1}{N}\sum_{n=1}^N T^{[a(n)]}f_1\cdot T^{2[a(n)]}f_2\cdot \ldots\cdot T^{\ell[a(n)]}f_\ell=
\lim_{N\to\infty}\frac{1}{N}\sum_{n=1}^N T^{n}f_1\cdot T^{2n}f_2\cdot\ldots\cdot T^{\ell n}f_\ell
\end{equation}
where the limit is taken in $L^2(\mu)$.
\end{theorem}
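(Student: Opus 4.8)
The plan is to reduce both sides of \eqref{E:Furste} to an identity on nilsystems, where it follows from the equidistribution results of the companion paper \cite{Fr09}, and to carry out this reduction by showing that the two families of averages are governed by a common nilfactor. Write
\[
A_N(f_1,\ldots,f_\ell)=\frac1N\sum_{n=1}^N T^{[a(n)]}f_1\cdots T^{\ell[a(n)]}f_\ell, \qquad B_N(f_1,\ldots,f_\ell)=\frac1N\sum_{n=1}^N T^{n}f_1\cdots T^{\ell n}f_\ell .
\]
By Theorem~\ref{T:ConvSingle} the averages $A_N$ converge in $L^2(\mu)$ (the hypothesis on $a$ places it in the first case of that theorem), and the $B_N$ converge by the Host--Kra theory \cite{HK05a}. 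Hence only the \emph{identification} of the two limits remains. By the ergodic decomposition we may assume $T$ is ergodic.

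The crucial input is a seminorm estimate for the $A_N$: I would show there is $s=s(\ell)$ such that if $\nnorm{f_i}_{s}=0$ for some $i\in\{1,\ldots,\ell\}$, then $A_N\to 0$ in $L^2(\mu)$. Combined with the classical fact that the factor $Z_{\ell-1}$ is characteristic for the $B_N$, this shows that $Z_{s'}$ with $s'=\max(s,\ell-1)$ is characteristic for \emph{both} families, so replacing each $f_i$ by its conditional expectation onto $Z_{s'}$ alters neither limit. Since $Z_{s'}$ is an inverse limit of ergodic nilsystems, a standard approximation argument then reduces the proof of \eqref{E:Furste} to the case where $(X,\X,\mu,T)$ is itself an ergodic nilsystem $X=G/\Gamma$ with $Tx=bx$ and $f_1,\ldots,f_\ell$ are continuous.

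The seminorm estimate is the main obstacle, and I expect it to occupy most of the work. I would derive it by repeated application of the van der Corput inequality, organized by a PET-type induction, each application replacing $a(n)$ by the difference $a(n+h)-a(n)$. The properties of $\H$ guarantee that for fixed $h$ this difference is again a Hardy function whose growth is controlled by $a'$, so iteration lowers the effective ``degree'' of the family until the resulting phases become negligible. The hypothesis $|a(t)-cp(t)|\succ\log t$ for all $c\in\R$ and $p\in\Z[t]$ is precisely what ensures that the induction never stabilizes at a constant multiple of an integer polynomial, so no obstruction to seminorm control survives; the errors introduced by the floor function are handled via $a(n+h)-a(n)\approx a'(n)h$ together with the polynomial growth of $a$. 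This step must be done carefully, since $[a(n)]$ is not a polynomial and the usual PET scheme does not apply verbatim.

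Finally, on the nilsystem $G/\Gamma$ I would pass to the product system. Setting $\Delta x=(x,\ldots,x)\in X^\ell$, $F=f_1\otimes\cdots\otimes f_\ell$, and $g=(b,b^2,\ldots,b^\ell)\in G^\ell$, the two averages become $\frac1N\sum_n F(g^{[a(n)]}\Delta x)$ and $\frac1N\sum_n F(g^{n}\Delta x)$ on the nilmanifold $G^\ell/\Gamma^\ell$. The equidistribution theorem of \cite{Fr09} for Hardy sequences in nilmanifolds asserts that, under the present hypothesis on $a$, the orbit $(g^{[a(n)]}\Delta x)_n$ equidistributes in the closure $\overline{\{g^n\Delta x\}}$ with respect to the \emph{same} measure as $(g^{n}\Delta x)_n$; hence the two averages share the same limit for every continuous $F$ and every $x$. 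Undoing the reductions of the previous paragraphs then yields \eqref{E:Furste}.
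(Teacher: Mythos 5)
Your overall skeleton is exactly the paper's: show the nilfactor is characteristic for both families of averages, use the ergodic decomposition and the Host--Kra structure theorem (Theorem~\ref{T:HoKra}) plus an approximation argument to reduce to an ergodic nilsystem with continuous functions, pass to the product nilmanifold $X^\ell$ with $g=(b,b^2,\ldots,b^\ell)$ and $F=f_1\otimes\cdots\otimes f_\ell$, and conclude from the equidistribution theorem of \cite{Fr09} (Theorem~\ref{T:B}) that $(g^{[a(n)]}\Delta x)_n$ equidistributes in $\overline{\{g^n\Delta x\}}$, so the two averages agree pointwise. That part of your argument is correct and is precisely how the paper concludes.

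The genuine gap is in your proposed proof of the seminorm estimate, which you correctly identify as the main obstacle. You propose iterated van der Corput organized by PET, ``each application replacing $a(n)$ by the difference $a(n+h)-a(n)$,'' using $a(n+h)-a(n)\approx a'(n)h$ to lower the degree ``until the resulting phases become negligible.'' This scheme breaks down for exactly the functions the theorem is meant to cover. Take $a(t)=t\log t$, which satisfies $|a(t)-cp(t)|\succ\log t$ for all $c\in\R$, $p\in\Z[t]$: one differencing produces sequences of the shape $h\log n$, and the nilfactor is \emph{not} characteristic for iterates $[h\log n]$ --- indeed $[\log n]$ is constant on subintervals of $[1,N]$ of length proportional to $N$, so (as the remark following Theorem~\ref{T:CharSingle} notes) the characteristic-factor statement fails even for $\ell=1$ once $a(t)\ll\log t$. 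Thus the induction never reaches a base case where the phases are negligible; the slowly growing derivative is the whole difficulty, not an error term. The paper circumvents this by a different mechanism (Section~\ref{S:CharFactorsSingle}): partition $[1,N]$ into intervals of carefully chosen length $l(N)$, Taylor-expand $a$ on each interval so that $[a(n)]$ is replaced by the integer part of a genuine real polynomial of fixed degree $k$ with interval-dependent coefficients (Lemma~\ref{L:AwayPoly}), and then prove a polynomial multiple ergodic estimate that is \emph{uniform} over those coefficients (Proposition~\ref{P:uniform}, proved by PET with supremum over $p\in\R_{k-1}[t]$). Note also that your hypothesis only keeps $a$ logarithmically away from real multiples of \emph{integer} polynomials; $a$ may lie within $O(\log t)$ of a general real polynomial (e.g.\ $t^2+\sqrt2\,t$), and the paper's proof of Theorem~\ref{T:CharSingle} treats that case by a separate argument rather than by any differencing scheme. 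The cleanest repair of your write-up is simply to invoke Theorem~\ref{T:CharSingle} (valid since the hypothesis with $c=0$, $p=0$ gives $a(t)\succ\log t$) instead of attempting to reprove it by direct PET.
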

\begin{remarks}
$\bullet$ Examples of Hardy sequences for which this result applies are those given in \eqref{E:ExamplesConv}.

$\bullet$ A rather explicit formula for the limit in \eqref{E:Furste} can be given by combining  results in \cite{HK05a} (see also \cite{Zi07}) and  \cite{Zi05}.

$\bullet$ If $a(t)=cp(t)+d$  for some $c\in \R $ and $p\in \Z[t]$, then  \eqref{E:Furste} typically fails.
One can see this by considering appropriate rotations on the circle and taking $a(t)=2t, t^2,$ or $\sqrt{2} t$.
\end{remarks}

\subsubsection{Recurrence}\label{SSS:ApRecurrence}
 Let $a(t)$ be a real valued function.
 We say that the sequence of integers $([a(n)])_{n\in\N}$ is \emph{good for multiple recurrence}
  if for every $\ell\in \N$, system   $(X,\X,\mu,T)$, and
  set $A\in \X$ with $\mu(A)>0$ we have
  \begin{equation}\label{E:cnm}
  \mu(A\cap T^{-[a(n)]}A\cap T^{-2[a(n)]}A\cap\cdots \cap T^{-\ell [a(n)]}A)>0
  \end{equation}
for some $n\in \N$ such that  $[a(n)]\neq 0$. One can check that if the sequence $([a(n)])_{n\in\N}$ is good for multiple recurrence, then  $\eqref{E:cnm}$ is satisfied for infinitely many $n\in \N$.

Let us discuss briefly the recurrence properties of   sequences  defined using polynomials with   real coefficients.
If $q\in \R[t]$ is  non-constant and has zero constant term,  then the sequence $q(n)$ is good for multiple recurrence (this follows from  \cite{BL96} and a trick used in \cite{BH96}).
 If  $q\in \Z[t]$ does not have zero constant term, then the sequence $q(n)$  is good for multiple recurrence if and only if   the range of the polynomial contains multiples of every positive integer (\cite{Fr08}).
 More generally, if $q\in \R[t]$,  then
 $[q(n)]$ is good for multiple recurrence unless $q(t)$ has the form $q(t)=cp(t)+d$ for some $p\in \Z[t]$
 and $c,d\in \R$ (one way to see this is to use  Theorem~\ref{T:RecSingle} below). In this last case deciding whether the sequence $[q(n)]$ is good for multiple recurrence is more delicate and depends on intrinsic properties of the polynomial $q$.
 For example, one can show that the sequences $[\sqrt{5}n+1]$ and  $[\sqrt{5}n+3]$
 are good for multiple  recurrence, but the sequence $[\sqrt{5}n+2]$
is bad for multiple recurrence.\footnote{ The sequence $[\sqrt{5}n+2]$ is bad for recurrence  because
 $\norm{[\sqrt{5}n+2]/\sqrt{5}}\geq 1/10$ for every $n\in\N$, where $\norm{\cdot}$ denotes the distance to the closest integer.
It can be shown (\cite{BH96}) that the sequence $[an+b]$, $a,b\in\R$, is good for
  single recurrence (meaning \eqref{E:cnm} holds for $\ell=1$) if and only if there exists an integer $k$ such that
  $ak+b\in [0,1]$ (this is equivalent to $\{b/a\}\leq 1/a$).  For other sequences of the form $[ap(n)+n]$, like  $[an^2+b]$, necessary and sufficient conditions seem to be more complicated.}

  Our next result shows that  if one avoids polynomial sequences, then
   every Hardy  sequence of polynomial growth is good for multiple recurrence:
\begin{theorem}\label{T:RecSingle}
Let $a\in \H$ have polynomial growth and suppose that  $a(t)-cp(t)\to \infty$ for every $c\in \R$ and $p\in \Z[t]$.

Then the sequence $([a(n)])_{n\in\N}$ is good for multiple recurrence.
\end{theorem}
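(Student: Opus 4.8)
The plan is to translate recurrence into a positivity statement and then split according to how well $a$ is approximated by an integer polynomial. By the Furstenberg correspondence principle it suffices to prove that for every system $(X,\X,\mu,T)$ and every $A\in\X$ with $\mu(A)>0$,
\[
\liminf_{N\to\infty}\frac1N\sum_{n=1}^N \mu\big(A\cap T^{-[a(n)]}A\cap\cdots\cap T^{-\ell[a(n)]}A\big)>0 .
\]
The hypothesis states that $a(t)-cp(t)$ never tends to a finite limit, so by the trichotomy of Theorem~\ref{T:ConvSingle} the second alternative there is excluded and exactly two regimes remain: either (I) $|a(t)-cp(t)|\succ\log t$ for all $c\in\R$ and $p\in\Z[t]$, or (II) $|a(t)-c_0p_0(t)|\ll\log t$ for some $c_0\in\R$ and $p_0\in\Z[t]$.

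In regime (I) I would invoke Theorem~\ref{T:ConvSingleFormula} directly, since its hypothesis is precisely condition (I). Taking $f_1=\cdots=f_\ell=\mathbf 1_A$ in \eqref{E:basicAP}, the averages converge in $L^2(\mu)$ to the same limit as the Furstenberg averages; pairing with $\mathbf 1_A$ and using continuity of the inner product, the $\liminf$ above equals $\lim_N\frac1N\sum_n\mu(A\cap T^{-n}A\cap\cdots\cap T^{-\ell n}A)$, which is strictly positive by Furstenberg's multiple recurrence theorem \cite{Fu77}. This settles the ``generic'' case and in particular all the examples in \eqref{E:ExamplesConv}.

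Regime (II) is the real content: it captures exactly the sequences that are bad for mean convergence, such as $[n^2+\log n]$ or $[\sqrt2\,n^2+\log\log n]$, so Theorem~\ref{T:ConvSingleFormula} is unavailable. Here I set $g:=a-c_0p_0$; the hypothesis forces $g\to\pm\infty$ with $|g|\ll\log t$, and after replacing $T$ by $T^{-1}$ if necessary I may assume $g\to+\infty$. A basic property of Hardy fields is that such a slowly growing $g$ satisfies $g'\to0$, so for each large integer $k$ the level set $I_k:=\{n:[g(n)]=k\}$ is an interval whose length tends to infinity, and the $I_k$ tile a half-line. When $c_0p_0\in\Z[t]$ one has $[a(n)]=c_0p_0(n)+k$ on $I_k$, so the inner average over $I_k$ is a multiple-recurrence average for the integer-polynomial family $\{\,j(c_0p_0(n)+k)\,\}_{j=1}^\ell$; I would bound it below by the Bergelson--Leibman polynomial Szemer\'edi theorem \cite{BL96} and then average the positive contributions of the $I_k$.

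The hard part will be making this lower bound uniform over $I_k$ and, above all, over the shift $k$. For a fixed $k$ the polynomial $c_0p_0(n)+k$ may fail to be good for recurrence altogether---by \cite{Fr08} this requires its range to contain a multiple of every positive integer, which can fail for a single $k$---so one must exploit the freedom to vary $k$: given a finite obstruction modulo $d$ one selects a level $k$ for which the congruence $c_0p_0(n)+k\equiv0\pmod d$ is solvable and then uses $|I_k|\to\infty$ to realise the needed residue inside $I_k$. Turning this into a bound valid for an \emph{arbitrary} system forces a reduction to nilsystems, since the Host--Kra characteristic factor governs the $\liminf$ of \eqref{E:basicAP} even when the full average does not converge. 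On the resulting inverse limit of nilsystems the uniform positivity---and likewise the parallel sub-case $c_0p_0\notin\Z[t]$, where $[a(n)]$ is controlled through equidistribution rather than congruences---follows from the equidistribution results for Hardy sequences on nilmanifolds established in the companion paper \cite{Fr09}. This reduction, together with the uniform-over-$k$ estimate, is where the bulk of the work lies.
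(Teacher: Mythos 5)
Your regime (I) argument is exactly the paper's (Theorem~\ref{T:ConvSingleFormula} combined with Furstenberg's theorem \cite{Fu77}), so everything hinges on regime (II), and there the trouble starts with your opening reduction. (Incidentally, no correspondence principle enters that reduction; correspondence is only used for the combinatorial corollary, Theorem~\ref{T:SzSingle}.) The statement you propose to prove, namely that
\[
\liminf_{N\to\infty}\frac{1}{N}\sum_{n=1}^{N}\mu\big(A\cap T^{-[a(n)]}A\cap\cdots\cap T^{-\ell[a(n)]}A\big)>0,
\]
is \emph{false} in regime (II). Take $a(t)=t^{2}+\log\log t$ (one of the paper's explicit examples for Theorem~\ref{T:RecSingle}), $\ell=1$, $X=\Z/4\Z$, $Tx=x+1$, $A=\{0\}$. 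On the level set $I_{k}=\{n\colon [\log\log n]=k\}$ one has $[a(n)]=n^{2}+k$, and $n^{2}+k\equiv 0 \pmod 4$ has no solution when $k\equiv 1,2 \pmod 4$, so the summand vanishes identically on such levels. Since $|I_{k}|$ grows doubly exponentially in $k$, at times $N$ lying at the end of a bad level the Ces\`aro average is dominated by the bad levels and tends to $0$; hence the liminf is $0$ even though recurrence does hold. This is precisely why the paper's Proposition~\ref{P:closetopoly} asserts positivity of $\limsup_{N-M\to\infty}\E_{M\leq n\leq N}(\cdots)$ over long intervals rather than of a Ces\`aro liminf; any correct proof must aim at such an interval-limsup statement.

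The deeper gap is that what you call ``the hard part'' is left unproved, and the route you indicate is not the one that works. In the congruence case you would need lower bounds for the averages of $\mu(A\cap T^{-(q(n)+k)}A\cap\cdots)$ over the far-away intervals $I_{k}$ that are \emph{uniform in $k$}; neither \cite{BL96} nor \cite{BLL08} provides uniformity of this kind, and selecting ``a level $k$ adapted to a modulus $d$'' does not handle a system (an odometer, say) carrying all congruence obstructions simultaneously. Moreover, the case $c_{0}p_{0}\notin\Z[t]$ is not ``parallel,'' and its equidistribution input is \emph{not} in \cite{Fr09}: the paper states explicitly that this special case of Theorem~\ref{T:RecSingle} is the one place requiring a new quantitative equidistribution result, Lemma~\ref{L:unif}, deduced from Green--Tao (Theorem~\ref{T:GT2}) and crucially uniform in the base point. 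The paper's actual mechanism in Proposition~\ref{P:closetopoly} is different from yours: after reducing to nilsystems via Theorem~\ref{T:CharSingle}, Host--Kra, and an inverse-limit argument, it chooses intervals $J_{r,m}$ on which the error $e(n)$ lies just above $mr\alpha$, so that $[a(n)]=[p(n)\alpha+mr\alpha]$ there, the shift now being commensurable with the leading coefficient $\alpha$; Lemma~\ref{L:key} (via Lemma~\ref{L:unif}) shows that the corresponding nilsystem averages converge, as $m\to\infty$, to the averages along $[nr\alpha]$; and positivity for those comes from the uniform multiple recurrence theorem of \cite{BHRF00} (Lemma~\ref{L:[na]}). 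The interval selection relative to $r\alpha$ and the uniform equidistribution lemma are the two ideas missing from your sketch.
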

\begin{remarks}
$\bullet$  Examples of Hardy sequences for which this result applies are given in \eqref{E:ExamplesConv}.
It also applies to the sequences $[\sqrt{5}n+\log{n}]$ and  $[n^2 +\log\log{n}]$.

$\bullet$ Theorem~\ref{T:RecSingle}  was previously established in  \cite{FrW09}
under a somewhat more restrictive assumption (namely $t^{k-1}\prec a(t)\prec t^k$ for some $k\in \N$). Furthermore,
 the single recurrence case   was previously established by Boshernitzan (unpublished), and subsequently in \cite{FrW09}.

$\bullet$ Let $R$  be the set  of those $n\in\N$
for which \eqref{E:cnm} holds.
Combining the multiple recurrence result of Furstenberg (\cite{Fu77}) and Theorem~\ref{T:ConvSingleFormula}, one sees that if $a(t)-cp(t)\succ\log{t}$  for every $c\in \R$ and $p\in \Z[t]$, then the set $R$ has positive lower density.
Unlike the case where $a(t)$ is polynomial,  if
$a\in \H$ satisfies $t^{k-1}\prec a(t)\prec t^{k}$ for some $k\in \N$, then
one can show that the sequence $[a(n)]$   takes odd values in arbitrarily long intervals, and as a result
for some systems  the set $R$ has unbounded gaps.
\end{remarks}

\subsubsection{Characteristic factors}\label{SSS:ApCharacteristic}
Let  $(X,\X,\mu,T)$ be a system. A
 factor $\mathcal{C}$ is called a \emph{characteristic factor}, or \emph{characteristic}, for the family  of integer sequences $\{a_1(n),\ldots, a_\ell(n)\}$, if whenever
  one of the functions $f_1,\ldots,f_\ell\in L^\infty(\mu)$ is orthogonal to $\mathcal{C}$, the averages
\begin{equation}\label{E:fvv}
\frac{1}{N}\sum_{n=1}^N f_1(T^{a_1(n)}x)\cdot \ldots\cdot f_\ell(T^{a_\ell(n)}x)
\end{equation}
converge to $0$ in $L^2(\mu)$ as $N\to\infty$.

It follows  that if $\mathcal{C}$ is as above, then the limiting behavior of the  averages \eqref{E:fvv} remains unchanged if one projects each function to the factor $\mathcal{C}$, meaning that
the difference of the two averages converges to $0$ in $L^2(\mu)$ as $N\to\infty$.

It is known that the nilfactor $\mathcal{Z}$ of a system (defined in Section~\ref{SS:BackgroundErgodic}) is characteristic for every family $\{p(n), 2p(n),\ldots, \ell p(n)\}$ whenever  $p$ is an integer polynomial (\cite{HK05b}). We extend this result by showing:

\begin{theorem}\label{T:CharSingle}
Suppose that  $a\in \H$ has polynomial growth and satisfies $a(t)\succ \log{t}$.

Then for every  system and $\ell \in \N$, the  nilfactor $\mathcal{Z}$  of the system is characteristic for the family
 $\{[a(n)], 2[a(n)],\ldots, \ell [a(n)]\}$, for every $\ell\in \N$.
\end{theorem}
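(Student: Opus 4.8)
The plan is to deduce the statement from a single quantitative seminorm estimate. I will show that there exist an integer $s=s(\ell,a)$ and a constant $C=C(\ell,a)$ such that for all $f_1,\dots,f_\ell\in L^\infty(\mu)$ with $\norm{f_j}_{L^\infty(\mu)}\le 1$ one has
\begin{equation*}
\limsup_{N\to\infty}\norm{\frac1N\sum_{n=1}^N T^{[a(n)]}f_1\cdot T^{2[a(n)]}f_2\cdots T^{\ell[a(n)]}f_\ell}_{L^2(\mu)}\le C\min_{1\le j\le\ell}\nnorm{f_j}_{s+1},
\end{equation*}
where $\nnorm{\cdot}_{s+1}$ denotes the Host--Kra seminorm whose vanishing is equivalent to orthogonality to the factor $\mathcal{Z}_s$. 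Since the full nilfactor is $\mathcal{Z}=\bigvee_s\mathcal{Z}_s$, any function orthogonal to $\mathcal{Z}$ is orthogonal to every $\mathcal{Z}_s$ and hence satisfies $\nnorm{f}_{s+1}=0$ for all $s$. Thus the displayed estimate shows at once that whenever one of the $f_j$ is orthogonal to $\mathcal{Z}$ the averages tend to $0$ in $L^2(\mu)$, which is precisely the assertion that $\mathcal{Z}$ is characteristic for $\{[a(n)],2[a(n)],\dots,\ell[a(n)]\}$.

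I would prove the estimate by a PET induction driven by the van der Corput inequality, paralleling the treatment of integer polynomial families in \cite{HK05b} while carrying the additional feature that the exponents are the non-polynomial sequences $j[a(n)]$. Writing $u_n=\prod_{j}T^{j[a(n)]}f_j$ and applying van der Corput, one bounds the square of the limiting norm by an average over a shift parameter $h$ of correlations $\lim_N\frac1N\sum_n\langle u_{n+h},u_n\rangle$; using the invariance of $\mu$, each correlation is again an average of products of the same type, but attached to a new family of exponent functions built from $a(n+h)$ and $a(n)$, with the $f_j$ replaced by the differenced functions $T^{j([a(n+h)]-[a(n)])}f_j\cdot\overline{f_j}$. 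To organize the induction I would assign to each family of exponent functions a complexity (a weight and degree in the PET ordering); the key point is that each van der Corput step strictly lowers this complexity, so the procedure terminates after finitely many steps.

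Two Hardy-field facts make the induction go through. First, for fixed $h$ the function $a(n+h)-a(n)$ again lies in $\H$ and has strictly smaller growth: differentiation lowers growth rates for elements of $\H$, and by the mean value theorem $a(n+h)-a(n)$ is comparable to $h\,a'(n)$. Hence each differencing pushes us toward lower-degree exponents and ultimately to a base case involving a single averaged orbit $\frac1N\sum_n T^{[c(n)]}g$ with $c\in\H$. Second — and this is where the hypothesis $a(t)\succ\log t$ enters decisively — at the base one controls such a single average, via the spectral theorem, by the exponential sums $\frac1N\sum_n e^{2\pi i\,\theta[c(n)]}$; the growth condition $c\succ\log t$, inherited from $a\succ\log t$, is exactly what forces cancellation in these sums for every irrational $\theta$ (equivalently, equidistribution of $([c(n)]\theta)$ modulo one, as in the single-average analysis of \cite{BKQW05}). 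Consequently the base average is controlled by the projection onto the Kronecker factor, i.e.\ by $\nnorm{g}_2$, and unwinding the finitely many differencing steps converts these base estimates into the desired bound by a single seminorm $\nnorm{\cdot}_{s+1}$.

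The main obstacle I anticipate is the bookkeeping of the integer parts throughout the induction: the exponents that actually arise after differencing are $[a(n+h)]-[a(n)]$ rather than $[a(n+h)-a(n)]$, and one must show that the bounded fractional discrepancies neither destroy membership in a controllable class of Hardy-type sequences nor spoil the equidistribution used at the base. Handling this error uniformly in the shift parameters, while simultaneously verifying that the PET complexity genuinely decreases at every van der Corput step and that the hypothesis $a\succ\log t$ survives the reduction to the base case, is the delicate part of the argument.
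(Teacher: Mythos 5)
Your strategy (van der Corput plus PET induction, closed by a seminorm estimate at the base) is the natural first attempt, and it does work when $t^{k-1}\log t\prec a(t)\prec t^k$; but there is a genuine gap at the step you flag as "where the hypothesis $a\succ\log t$ enters decisively." The claim that the condition $a(t)\succ\log t$ survives the differencing is false, and this is precisely the obstruction the paper's proof is built to avoid. Take $a(t)=t\log t$, which satisfies all hypotheses of the theorem. A single van der Corput step applied to the family $\{a,2a,\dots,\ell a\}$ necessarily produces, among the new exponents, a function of the form $j_0\bigl(a(t+h)-a(t)\bigr)\sim j_0h\,a'(t)\sim j_0h\log t$: whichever multiple $j_0a$ you factor out, the term $S^{[j_0a(n+h)]-[j_0a(n)]}F_{j_0}\cdot\overline{F}_{j_0}$ appears, and since all members of the family have the same growth none of these differences is removable in the PET bookkeeping (they are unbounded, hence not absorbable as constants). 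This new exponent grows only logarithmically, so it does \emph{not} satisfy $\succ\log t$, and for such exponents the base-case control fails outright: as noted in the remark following Theorem~\ref{T:CharSingle}, when $c(t)\ll\log t$ the sequence $[c(n)]$ is constant on subintervals of $[1,N]$ of length proportional to $N$, and a weakly mixing but not strongly mixing system gives $f\perp\cZ$ with $\frac1N\sum_{n\le N}T^{[c(n)]}f\not\to0$. So the seminorm bound you want at the base is simply false for the functions your induction generates, and the induction cannot close. (A further, separate issue: your base-case equidistribution argument also says nothing about functions allowed by the hypothesis that are logarithmically close to a real polynomial, e.g.\ $a(t)=\sqrt5\,t^2+\log\log t$.)

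The paper circumvents this by never differencing the Hardy function at all. When $|a(t)-p(t)|\succ\log t$ for every $p\in\R[t]$, it Taylor-expands $a$ on blocks $[N,N+l(N)]$ of carefully calibrated length (Lemma~\ref{L:AwayPoly}, using the derivative estimates of Lemma~\ref{L:cvb}), so that on each block $[a(n)]$ agrees, up to bounded errors, with a genuine real polynomial sequence $[\alpha n^k+p(n)]$ whose leading coefficient is normalized by a change of variables. The problem then becomes a polynomial ergodic theorem, uniform over the lower-order coefficients $p\in\R_{k-1}[t]$ and over bounded weights (Proposition~\ref{P:uniform}), and PET is run only on polynomial families, where differencing can only produce polynomial exponents and degenerate terms are honestly constant and removable. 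The remaining case $a(t)=p(t)+e(t)$ with $p\in\R[t]$ and $e(t)\ll\log t$ is handled by a blocking argument (partitioning $\N$ into long intervals on which $[e(n)]$ is constant) rather than by equidistribution. If you want to salvage your approach you would need, at minimum, a mechanism for handling the unbounded log-growth exponents created by differencing---blocking rather than cancellation---which amounts to rebuilding the paper's reduction.
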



\begin{remarks}
$\bullet$ If $a(t)\ll \log{t}$, then the result fails even for $\ell=1$, the reason being that
the sequence $[a(n)]$ remains constant on  some sub-interval of $[1,N]$ that has  length  proportional
to $N$ as $N\to \infty$. Therefore,  if the transformation $T$ is  weakly mixing  but not strongly mixing,
the function $f$ has  zero integral  and satisfies $\int f \cdot T^{n}f\ d\mu\not\to 0$, then  $f\bot \cZ$ but
$\frac{1}{N}\sum_{n=1}^N  T^{[a(n)]}f\not\to 0$.

$\bullet$ A related result was proved in \cite{BH09} for weakly mixing systems assuming that the function $a(t)$ is tempered (for $a\in \H$ this is equivalent to  $t^{k-1}\log t\prec a(t)\prec t^{k}$ for some $k\in\N$). Since the method used in \cite{BH09} does not  work for functions like $t^k\log{t}$, we will
use a  different approach to prove Theorem~\ref{T:CharSingle}.
\end{remarks}

\subsubsection{Combinatorics}\label{SSS:ApCombinatorics}
Using  the previous multiple recurrence result we derive   a refinement of Szemer\'edi's Theorem on arithmetic progressions.
We will use the following correspondence principle of Furstenberg  (the formulation given is from \cite{Be87b}):
\begin{Correspondence1}Let $\Lambda$ be a
  set of integers.

  Then  there exist a system $(X,\B,\mu,T)$ and a set
  $A\in\X$, with $\mu(A)=\bar{d}(\Lambda)$, and such that
  \begin{equation}\label{E:correspondence}
    \bar{d}(\Lambda \cap (\Lambda-n_1)\cap\ldots\cap
    (\Lambda-n_\ell))\geq \mu(A\cap T^{-n_1}A\cap\cdots \cap T^{-n_\ell}A)
  \end{equation}
  for every $n_1,\ldots,n_\ell\in\Z$ and $\ell\in\N$.
\end{Correspondence1}

Using the previous principle and Theorem~\ref{T:RecSingle}
we immediately deduce the following:
\begin{theorem}\label{T:SzSingle}
Let $a\in \H$ have polynomial growth and suppose that  $a(t)-cp(t)\to \infty$ for every $c\in \R$ and every $p\in \Z[t]$.

Then for every $\ell \in\N$,  every $\Lambda\subset\mathbb{Z}$ with
  $\bar{d}(\Lambda)>0$ contains arithmetic progressions of the form
  \begin{equation}\label{E:SzSingle}
    \{m, m+[a(n)],m+2[a(n)],\ldots, m+\ell[a(n)]\}
  \end{equation}
  for some $m\in \Z $ and $n\in\N$ with $[a(n)]\neq 0$.
\end{theorem}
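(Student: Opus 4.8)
The plan is to deduce this statement directly from Theorem~\ref{T:RecSingle} by means of the Furstenberg Correspondence Principle, following the standard route that transfers a multiple recurrence property into a Szemer\'edi-type combinatorial statement. All of the genuine analytic content has already been placed in the recurrence theorem, so what remains is a short bookkeeping argument that matches the shifts correctly.

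First I would fix $\ell\in\N$ and a set $\Lambda\subset\Z$ with $\bar{d}(\Lambda)>0$, and invoke the correspondence principle to produce a system $(X,\B,\mu,T)$ together with a set $A\in\X$ satisfying $\mu(A)=\bar{d}(\Lambda)>0$ and the inequality \eqref{E:correspondence} for every choice of shifts. Next, since $a\in\H$ has polynomial growth and satisfies $a(t)-cp(t)\to\infty$ for every $c\in\R$ and every $p\in\Z[t]$, the hypotheses of Theorem~\ref{T:RecSingle} are met, so the sequence $([a(n)])_{n\in\N}$ is good for multiple recurrence. Applying the definition of multiple recurrence with this $\ell$ and the set $A$, there exists $n\in\N$ with $[a(n)]\neq 0$ for which
$$\mu\bigl(A\cap T^{-[a(n)]}A\cap T^{-2[a(n)]}A\cap\cdots\cap T^{-\ell[a(n)]}A\bigr)>0.$$

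Finally, I would specialize the correspondence inequality \eqref{E:correspondence} to the shifts $n_i=i[a(n)]$ for $i=1,\ldots,\ell$, which yields
$$\bar{d}\bigl(\Lambda\cap(\Lambda-[a(n)])\cap\cdots\cap(\Lambda-\ell[a(n)])\bigr)\geq \mu\bigl(A\cap T^{-[a(n)]}A\cap\cdots\cap T^{-\ell[a(n)]}A\bigr)>0.$$
In particular the intersection on the left is nonempty, so choosing any $m$ belonging to it gives $m+i[a(n)]\in\Lambda$ for every $0\leq i\leq\ell$, which is exactly the progression \eqref{E:SzSingle} with $[a(n)]\neq 0$. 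I expect no real obstacle here beyond correctly aligning the shifts $n_i=i[a(n)]$ with the iterates appearing in the recurrence expression; the entire difficulty of the result is concentrated in the recurrence statement of Theorem~\ref{T:RecSingle}, and the present argument merely transports it into the combinatorial setting.
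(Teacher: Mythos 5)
Your proposal is correct and follows exactly the route the paper takes: the paper derives Theorem~\ref{T:SzSingle} immediately from Theorem~\ref{T:RecSingle} via the Furstenberg Correspondence Principle, with the shifts $n_i=i[a(n)]$ matched just as you describe.
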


\subsubsection{More general classes of functions}\label{SSS:ApGeneral}
We make some remarks about the extend of the functions our methods cover that do not necessarily belong to some Hardy field.

 The conclusions of Theorems~\ref{T:ConvSingleFormula}, \ref{T:RecSingle}, \ref{T:CharSingle}, and \ref{T:SzSingle} hold if for some
$k\in \N$ the function $a\in C^{k+1}(\R_+)$ satisfies
$$
|a^{(k+1)}(t)| \text{ decreases to zero}, \quad 1/t^k\prec a^{(k)}(t)\prec 1,
\quad  \text{and } \quad  (a^{(k+1)}(t))^k\prec (a^{(k)}(t))^{k+1}.
$$
(If $a\in \H$, these three conditions are equivalent to
``$a(t)$ has polynomial growth and $|a(t)-p(t)|\succ \log{t}$ for every $p\in \R[t]$''.)
One can see this by  repeating verbatim the proofs  given in this article and in \cite{Fr09}. The reader is advised to think
of the second condition
as the most important one and the other two as  technical necessities (for functions in $\H$ the second condition implies the other two).

As for Theorem~\ref{T:ConvSingle},  unless one works within
 a ``regular" class of functions  like $\H$, it seems impossible to get explicit necessary and sufficient
 conditions.

\subsection{Several sequences}\label{SS:SeveralSequences} We are going to give  results related to multiple convergence and recurrence
properties involving several sequences  of polynomial growth.
For practical reasons (mainly expository) we are going to restrict ourselves  to the case where all the functions involved
are logarithmico-exponential. More technically involved arguments
should  enable one to extend the results
mentioned below to the case where all the functions belong to the same Hardy field.

Let us also remark that the results we give below
 are certainly less exhaustive than the results of Section~\ref{SS:ArithmeticProgressions}.
We are able to handle  a case that includes all functions given by fractional powers of $t$
and is general enough to cover a conjecture of Bergelson and H{\aa}land.
The expected ``optimal'' results involving several sequences are stated
in Problems 2, 3, and 4 of Section~\ref{SS:Conjectures}.
\subsubsection{Convergence}\label{SSS:SevConvergence}
To simplify our statements we introduce the following class of ``good'' (for our purposes) functions:
\begin{equation}\label{E:G}
\G=\{a\in C(\R_+)\colon t^{k+\varepsilon}\prec a(t)\prec t^{k+1} \text{ for some integer } k\geq 0 \text{ and  some } \varepsilon>0\}.
\end{equation}
  Equivalently, a function $a\in \H$  with polynomial growth  belongs in $\G$ \emph{unless} for some  integer $k\geq 0$  we have   $t^k\prec a(t)\prec t^{k+\varepsilon}$ for
every $\varepsilon>0$.
For example,  if $c\geq 0$, then  $t^c\in \G$ if and only if $c$ is not an integer. The reader is advised to think of functions in $\G$ as having ``fractional-power growth rate''.

The next result (in fact its corollary Theorem~\ref{T:RecMult}) verifies a conjecture of Bergelson and H{\aa}land-Knutson (\cite{BH09}, Conjecture~8.2).\footnote{A comment about notation. In \cite{BH09} the class $\LE$ is denoted by $\H$. Also, for functions in $\LE$ our class $\G$ coincides with the class $\mathcal{T}$ defined in \cite{BH09}.} It
 shows that
for ``typical" logarithmico-exponential functions  of polynomial growth  the limit of the averages \eqref{E:basic1} exists and for ergodic systems it is constant.  We say that the functions $a_1(t),\ldots,a_\ell(t)$  have \emph{different growth rates} if the quotient of any two of these functions  converges  to $\pm \infty$ or to $0$.
\begin{theorem}\label{T:ConvSeveral}
Suppose that the functions $a_1,\ldots,a_\ell \in\mathcal{LE}\cap \G$ have different growth rates.

Then  for every ergodic system $(X,\mathcal{B},\mu,T)$
 and
   $f_1,\dots,f_\ell\in L^\infty(\mu)$ we have
  \begin{equation}\label{E:product}
\lim_{N\to\infty}    \frac1N \sum_{n=1}^N f_1(T^{[a_1(n)]}x)\cdot  \ldots
    \cdot f_\ell(T^{ [a_\ell(n)]}x)=\int f_1\ d\mu \cdot\ldots \cdot \int f_\ell \ d\mu
  \end{equation}
where the convergence takes place in $L^2(\mu)$.
\end{theorem}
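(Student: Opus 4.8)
The plan is to follow the standard two-step scheme for such joint equidistribution results: first reduce the problem to the case where $(X,\B,\mu,T)$ is a nilsystem by showing that the Host--Kra nilfactor $\cZ$ is characteristic for the family $\{[a_1(n)],\dots,[a_\ell(n)]\}$, and then establish the required joint equidistribution directly on nilsystems, leaning on the single-sequence equidistribution theory developed in the companion paper \cite{Fr09}. Since projection onto $\cZ$ preserves integrals, the product-of-integrals conclusion transfers back to the original system once it is known on the nilfactor.

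For the reduction step I would run a van der Corput / PET induction on the averages in \eqref{E:product}. Because each $a_i$ lies in $\H\cap\G$, its derivatives have controlled growth and the discrete difference $a_i(n+h)-a_i(n)$ behaves like $h\,a_i'(n)$, whose growth exponent is strictly smaller; crucially, since the $a_i$ have pairwise different growth rates, no van der Corput step can make the dominant function cancel, and every nonzero integer combination $\sum_i k_i a_i$ remains a Hardy function of fractional-power type lying in $\G$. Iterating the van der Corput inequality therefore lowers the growth profile of the family in a controlled way and eventually bounds the $L^2$-norm of the average by a Gowers--Host--Kra seminorm $\nnorm{f_i}_s$ of one of the functions (one mirrors here the argument behind Theorem~\ref{T:CharSingle} and the polynomial case of \cite{HK05b}, but carried out for genuinely distinct functions rather than an arithmetic progression). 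Consequently, if any $f_i$ is orthogonal to $\cZ$ the averages tend to $0$, so we may assume each $f_i$ is $\cZ$-measurable; by an inverse-limit approximation this reduces the whole problem to the case where $X=G/\Gamma$ is a nilmanifold and $Tx=bx$ is an ergodic nilrotation.

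On such a nilsystem the assertion \eqref{E:product} is equivalent to the joint equidistribution of the orbit
$$
\big(b^{[a_1(n)]}x,\ \dots,\ b^{[a_\ell(n)]}x\big)
$$
in $X^\ell=(G/\Gamma)^\ell$ with respect to the product measure, for $\mu$-a.e.\ $x$. Here I would invoke Leibman's equidistribution criterion on the nilmanifold $G^\ell/\Gamma^\ell$, which reduces equidistribution to the non-vanishing of finitely many Weyl-type averages attached to nontrivial characters of the horizontal torus, and then feed in the single-sequence results of \cite{Fr09} to handle the higher-step structure. The arithmetic input is that for any integers $k_1,\dots,k_\ell$ not all zero, the Hardy function $\sum_i k_i a_i$ is dominated by its fastest-growing nonzero term, hence still lies in $\G$ and in particular satisfies $|\sum_i k_i a_i(t)-cp(t)|\succ\log t$ for all $c\in\R$ and $p\in\Z[t]$; by the Hardy-field equidistribution theory this forces $\sum_i k_i [a_i(n)]\,\theta$ to be equidistributed modulo $1$ for every irrational $\theta$, so the corresponding exponential sums vanish in the limit and only the constant (product-of-integrals) mode survives. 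The passage from the exact values $a_i(n)$ to the integer parts $[a_i(n)]$ is handled as in the single-sequence arguments of \cite{Fr09} and contributes only lower-order fractional corrections.

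The main obstacle, I expect, is the joint equidistribution on nilmanifolds rather than the characteristic-factor reduction. Passing from the abelianization (where the different-growth-rate hypothesis transparently decouples the sequences) to the full non-abelian nilmanifold $G^\ell/\Gamma^\ell$ requires controlling how the distinct exponents $[a_i(n)]$ interact through the group commutators, and it is precisely here that one must combine the quantitative equidistribution estimates of \cite{Fr09} with the growth separation of the $a_i$ to exclude unexpected algebraic relations among the coordinate orbits. A secondary technical point is that an ergodic nilrotation need not act on a connected nilmanifold, so one must first split off the finite rotation factor and pass to the identity component before the Hardy-equidistribution machinery applies.
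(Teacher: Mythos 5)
Your first step --- showing that the Host--Kra nilfactor $\cZ$ is characteristic for $\{[a_1(n)],\dots,[a_\ell(n)]\}$ via a van der Corput/PET-style induction, then passing to nilsystems through the structure theorem and inverse limits --- is exactly the paper's route (Theorem~\ref{T:CharSeveral}, which adapts the Bergelson--H{\aa}land argument of \cite{BH09}: a base case for functions of at most linear growth handled by a change-of-variables/rescaling trick, then induction on the ``type'' of the family). The abelian part of your second step is also sound: since the $a_i$ have different growth rates, every nontrivial integer combination $\sum_i k_ia_i$ is asymptotic to its fastest-growing term, hence lies in $\G$ and stays logarithmically away from all $c\,p(t)$ with $p\in\Z[t]$, so the Weyl sums on the horizontal torus do vanish.

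The genuine gap is in lifting this to the full nilmanifold. Leibman's equidistribution criterion --- that equidistribution on $G^\ell/\Gamma^\ell$ follows from equidistribution of the projection to the maximal torus factor --- is a theorem about \emph{polynomial} sequences in the group; the sequence $n\mapsto(b^{[a_1(n)]},\dots,b^{[a_\ell(n)]})$ is not polynomial, and for general sequences horizontal equidistribution does \emph{not} imply equidistribution upstairs. Establishing precisely such a statement for Hardy sequences is the hard content of the companion paper \cite{Fr09} (via the Green--Tao quantitative theorem), and the joint equidistribution results there (Theorem~1.3 of \cite{Fr09}) cover only functions of super-linear growth; single-sequence results cannot simply be ``fed in'' to produce the joint statement. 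This is why the paper's Proposition~\ref{P:L^2product} treats the mixed case by a different mechanism: it splits the family into super-linear and sub-linear functions, averages over blocks $\{Rn+r:1\le r\le R\}$ on which the sub-linear sequences $[a_i(Rn+r)]$ are essentially constant (Lemma~2.12 of \cite{BH09}) while the super-linear ones equidistribute uniformly over the block (Proposition~\ref{P:Rn+r}), and then invokes Proposition~6.4 of \cite{BH09} for the remaining sub-linear part. Your proposal names the non-abelian lifting as ``the main obstacle'' but offers only the inapplicable polynomial criterion to overcome it, so the real substance of the nilsystem case is left unproved; it also nowhere addresses the sub-linear and mixed-growth configurations, which do not follow from the super-linear equidistribution theory at all.
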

\begin{remarks}
$\bullet$ Some  examples for which our result applies are given by the collections of sequences
$\{[n^{1/2}],[n^{3/2}], [n^{5/2}]\}$, and  $\{[n^{\sqrt{2}}], [n^{\sqrt{2}}\log\log{n}],[n^{\sqrt{2}}\log{n}] \}$.

$\bullet$ Equation \eqref{E:product}  fails
for some ergodic systems if a non-trivial linear combination  of the functions $a_1(t),\ldots,a_\ell(t)$ is an  integer polynomial other than $\pm t+k$.


$\bullet$ A substantial part of the proof (carried out in the companion paper \cite{Fr09}) is consumed in working on a potentially  non-trivial (characteristic) factor  of our system.  Initially we show that this factor  has (roughly speaking)  the structure of a nilsystem, only to realize later (using some non-trivial equidistribution results   on nilmanifolds) that this factor is trivial.
 It would be nice
 to have a proof  that avoids such diversions to  non-Abelian analysis.
\end{remarks}




If all the functions $a_1(t),\ldots,a_\ell(t)$ have sub-linear growth then
Theorem~\ref{T:ConvSeveral} can be (rather easily)  proved in a more general setup, where one uses iterates of $\ell$ not necessarily commuting ergodic transformations in place of a single ergodic  transformation.

\begin{theorem}\label{T:ConvNonCommuting}
Let $a_1,\ldots,a_\ell \in\mathcal{LE}\cap \mathcal{G}$ have different  growth rates and satisfy
$a_i(t)\prec t$ for $i=1,\ldots,\ell$.
Let  $T_1,\ldots,T_\ell$ be invertible measure preserving transformations acting on a probability space $(X,\X,\mu)$.

Then  for every
   $f_1,\dots,f_\ell\in L^\infty(\mu)$ we have
  $$
\lim_{N\to\infty}    \frac1N \sum_{n=1}^N f_1(T^{[a_1(n)]}_1x)\cdot  \ldots
    \cdot f_\ell(T^{ [a_\ell(n)]}_\ell x)= \tilde{f}_1 \cdot\ldots \cdot \tilde{f}_\ell,
  $$
where $\tilde{f}_i=\E(f_i|\mathcal{I}(T_i))$, and the convergence takes place in $L^2(\mu)$.
\end{theorem}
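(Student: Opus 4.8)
The plan is to prove the theorem by induction on $\ell$, after a standard multilinearity reduction, exploiting that all the sequences are sublinear ($a_i(t)\prec t$) to run a ``freezing'' (block) argument that never uses commutativity of the $T_i$. First I would write $f_i=\tilde f_i+f_i'$ with $\tilde f_i=\E(f_i\mid\mathcal I(T_i))$ and $\E(f_i'\mid\mathcal I(T_i))=0$, and expand $\prod_i T_i^{[a_i(n)]}(\tilde f_i+f_i')$ multilinearly. Since each $\tilde f_i$ is $T_i$-invariant, $T_i^{[a_i(n)]}\tilde f_i=\tilde f_i$ for every $n$, so the all-``tilde'' term contributes the constant $\prod_i\tilde f_i$, which is exactly the asserted limit. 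Hence it suffices to show that whenever every function satisfies $\E(f_i\mid\mathcal I(T_i))=0$, the full average tends to $0$ in $L^2(\mu)$; the same hypotheses (functions in $\LE\cap\G$, different growth rates, sublinear) are inherited by every subfamily, so this reduced statement is what I carry through the induction.

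\textbf{Base case $\ell=1$.} After splitting off the $T$-invariant part, the spectral theorem reduces $\|\frac1N\sum_n T^{[a(n)]}f\|_2^2$ to $\int\big|\frac1N\sum_n e^{2\pi i[a(n)]\theta}\big|^2\,d\sigma_f(\theta)$, where $\sigma_f$ has no atom at $0$. For $a\in\LE\cap\G$ with $a(t)\prec t$ the integer sequence $[a(n)]$ is nondecreasing with increments in $\{0,1\}$, and it takes each value $m$ on a block of length $c_m\asymp 1/a'(a^{-1}(m))$ that varies slowly in $m$. Writing $\frac1N\sum_{n=1}^N e^{2\pi i[a(n)]\theta}=\frac1N\sum_m c_m e^{2\pi i m\theta}$ and using that $c_m$ is eventually monotone, summation by parts against the geometric partial sums (bounded by $1/|\sin\pi\theta|$) gives a bound $\asymp 1/(N\,a'(N)\,|\sin\pi\theta|)$ for the top block length $\asymp 1/a'(N)$. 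Since $t^\varepsilon\prec a(t)$ forces $N\,a'(N)\asymp a(N)\to\infty$, the sum tends to $0$ for every $\theta\neq 0$, so the average converges to $\E(f\mid\mathcal I(T))$. The identical computation applied to a window $\sum_{n=n_0+1}^{n_0+L}$ yields decay whenever $L\,a'(n_0)\to\infty$, and I record this \emph{uniform windowed} form for the induction.

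\textbf{Inductive step.} Order the functions so that $a_1\prec a_2\prec\cdots\prec a_\ell$ and partition $[1,N]$ into the maximal blocks $I$ on which $[a_1(n)]$ is constant, say $=m$; because $a_1(t)\prec t$ these blocks have length $\asymp 1/a_1'\to\infty$. On such a block $T_1^{[a_1(n)]}f_1=T_1^m f_1$ is independent of $n$, so it factors out of the block average, and the estimate $\|T_1^m f_1\cdot g\|_2\le\|f_1\|_\infty\|g\|_2$ reduces matters to controlling $\frac1{|I|}\sum_{n\in I}\prod_{i\ge2}T_i^{[a_i(n)]}f_i$. This is an $(\ell-1)$-fold average over a moving window, and the windowed inductive hypothesis shows it tends to $\prod_{i\ge2}\E(f_i\mid\mathcal I(T_i))=0$, provided $|I|\,a_2'\to\infty$. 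Here the different-growth-rates hypothesis enters: since $a_1\prec a_2$ in $\LE$, the Hardy-field regularity (growth of a function controls growth of its derivative) gives $a_1'\prec a_2'$, so $|I|\,a_2'\asymp a_2'/a_1'\to\infty$. Weighting the uniformly small block contributions by $|I|/N$ and absorbing the negligible initial blocks (whose total length is $o(N)$), I conclude that the whole average tends to $0$.

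\textbf{Main obstacle.} The hard part is the windowed uniformity threaded through the induction: I must establish that the $(\ell-1)$-fold averages converge to $0$ not merely along $[1,N]$ but uniformly over the windows produced by freezing $a_1$, with window length tied to $1/a_1'$. This forces the nested-block bookkeeping (controlling partial and boundary blocks at every level of the recursion) and the quantitative derivative-separation estimates $a_1'\prec\cdots\prec a_\ell'$ to be carried through carefully. These estimates are precisely where the hypotheses $a_i(t)\prec t$, the different growth rates, and membership in $\LE$ are all needed; once they are in place the potential non-commutativity of the $T_i$ plays no role whatsoever, which is why this sublinear statement is so much softer than Theorem~\ref{T:ConvSeveral}.
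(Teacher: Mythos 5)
Your proposal is correct in substance, but it organizes the argument quite differently from the paper, so a comparison is worth recording. The paper first isolates a purely deterministic statement (Proposition~\ref{P:ConvSequence}): if $(F_i(n))_{n\in\N}$ are uniformly bounded sequences of functions whose Ces\`aro averages converge \emph{uniformly}, i.e. $\lim_{N-M\to\infty}\E_{M\leq n\leq N}F_i(n)=\tilde F_i$, then $\E_{1\leq n\leq N}\prod_i F_i([a_i(n)])\to \prod_i \tilde F_i$ in $L^2(\mu)$; the theorem is then a two-line consequence via the uniform mean ergodic theorem applied to $F_i(n)=T_i^n f_i$. In the inductive step of that proposition the paper does \emph{not} freeze the slowest function: it composes with the inverse of the fastest one, $\tilde a_i=a_i\circ a_\ell^{-1}$ (this is why it passes to the Pfaffian Hardy field, which is closed under composition and compositional inversion, and invokes Lemmas 2.7 and 2.12 of \cite{BH09} together with Lemma~\ref{L:ChangeVan}), so that the fastest sequence becomes $n$ itself, and then freezes \emph{all} $\ell-1$ slower sequences simultaneously on common constancy intervals $I_m$ with $|I_m|\to\infty$. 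The payoff is that the frozen factors only need to be bounded --- no inductive control over windows is required for them --- and the only ``windowed'' statement ever used is the uniform convergence of the fastest (now linear) sequence, which is exactly the hypothesis. Your route instead freezes the \emph{slowest} function and recurses, which forces the windowed form of the inductive statement to be threaded through every level, with the nested-block bookkeeping you flag as the main obstacle; your base case (spectral theorem plus Abel summation over the constancy blocks of $[a_\ell(n)]$) is a hand-made substitute for the uniform von Neumann theorem. Both routes are sound: your multilinear reduction is valid because the invariant factors are independent of $n$ and pull out of the average; the derivative separation $a_1'\prec\cdots\prec a_\ell'$ does follow from Lemma~\ref{L:properties} ($a'(t)\sim a(t)/t$ for sublinear functions in $\G$); and the windowed exponential-sum estimate works, except that the integer rounding of the block lengths contributes an extra error term of order $a_\ell(N)/N$ times $1/|\sin\pi\theta|$, which also vanishes but should be added to your stated bound. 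What your approach buys is the complete avoidance of the composition/inversion trick and of any auxiliary Hardy field; what the paper's buys is a much lighter induction --- and indeed you could import its key idea into your scheme by freezing $a_1,\ldots,a_{\ell-1}$ all at once on their common constancy blocks (of length comparable to $1/a_{\ell-1}'$, which suffices since $a_\ell'/a_{\ell-1}'\to\infty$), thereby collapsing your nested windowed induction to the single windowed base case.
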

\begin{remarks}
$\bullet$ In \cite{BH09} a similar result was proved for iterates of a single transformation.

$\bullet$ It is not known whether similar convergence results hold without any commutativity assumption on the transformations $T_i$ for some choice of functions $a_i(t)$ with different,  at least linear growth rates. On the other hand,  it is known (\cite{Bere85}) that
for some choice of non-commuting transformations $T_1,T_2$ and functions $f_1,f_2$, the averages  $\frac1N \sum_{n=1}^N f_1(T^{n}_1x)\cdot f_2(T^{n}_2 x)$
diverge in $L^2(\mu)$.
\end{remarks}

\subsubsection{Recurrence}\label{SSS:SevRecurrence}
The next multiple recurrence result is a consequence of Theorem~\ref{T:ConvSeveral}:
\begin{theorem}\label{T:RecMult}
Suppose that the functions $a_1,\ldots,a_\ell \in\mathcal{LE}\cap \G$ have different growth rates.

Then  for every system $(X,\X,\mu,T)$ and set $A\in \X$ we have
\begin{equation}\label{E:lowerbounds}
\lim_{N\to\infty} \frac{1}{N}\sum_{n=1}^N\mu(A\cap T^{-[a_1(n)]}A\cap T^{-[a_2(n)]}A\cap\cdots \cap
T^{-[a_\ell(n)]}A)\geq (\mu(A))^{\ell +1}.
\end{equation}
\end{theorem}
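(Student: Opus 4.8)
The plan is to deduce this lower bound from Theorem~\ref{T:ConvSeveral} by applying it to indicator functions and then handling non-ergodic systems through the ergodic decomposition together with a convexity argument. The starting point is the identity
$$\mu\bigl(A\cap T^{-[a_1(n)]}A\cap\cdots\cap T^{-[a_\ell(n)]}A\bigr)=\int \mathbf{1}_A\cdot T^{[a_1(n)]}\mathbf{1}_A\cdot\ldots\cdot T^{[a_\ell(n)]}\mathbf{1}_A\ d\mu,$$
where I write $T^k f$ for $f\circ T^k$. Averaging over $n$ and pulling the fixed factor $\mathbf{1}_A$ out of the sum, the quantity whose limit we must estimate equals $\int \mathbf{1}_A\cdot A_N\ d\mu$, where $A_N=\frac1N\sum_{n=1}^N \prod_{i=1}^\ell T^{[a_i(n)]}\mathbf{1}_A$ is exactly the kind of average controlled by Theorem~\ref{T:ConvSeveral}.

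First I would treat the ergodic case. If $(X,\X,\mu,T)$ is ergodic, then since $a_1,\dots,a_\ell\in\LE\cap\G$ have different growth rates, Theorem~\ref{T:ConvSeveral} (applied with $f_1=\dots=f_\ell=\mathbf{1}_A$) shows that $A_N$ converges in $L^2(\mu)$ to the constant $\prod_{i=1}^\ell\int\mathbf{1}_A\ d\mu=(\mu(A))^\ell$. Since $L^2$-convergence implies convergence of inner products with the fixed function $\mathbf{1}_A$, we get
$$\lim_{N\to\infty}\frac1N\sum_{n=1}^N\mu\bigl(A\cap T^{-[a_1(n)]}A\cap\cdots\cap T^{-[a_\ell(n)]}A\bigr)=\int \mathbf{1}_A\cdot(\mu(A))^\ell\ d\mu=(\mu(A))^{\ell+1}.$$
In the ergodic case the limit thus exists and equals $(\mu(A))^{\ell+1}$ on the nose, which is stronger than the asserted inequality.

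For a general system I would pass to the ergodic decomposition $\mu=\int_Y\mu_y\ d\nu(y)$ with each $\mu_y$ being $T$-ergodic. Writing $c_N(y)=\frac1N\sum_{n=1}^N\mu_y(A\cap T^{-[a_1(n)]}A\cap\cdots\cap T^{-[a_\ell(n)]}A)$, the ergodic case applied to each component gives $c_N(y)\to(\mu_y(A))^{\ell+1}$ for $\nu$-a.e.\ $y$. Since $0\le c_N(y)\le1$, the bounded convergence theorem lets me integrate this limit over $y$ and obtain
$$\lim_{N\to\infty}\frac1N\sum_{n=1}^N\mu\bigl(A\cap T^{-[a_1(n)]}A\cap\cdots\cap T^{-[a_\ell(n)]}A\bigr)=\int_Y(\mu_y(A))^{\ell+1}\ d\nu(y).$$
Finally, since $x\mapsto x^{\ell+1}$ is convex on $[0,\infty)$, Jensen's inequality gives $\int_Y(\mu_y(A))^{\ell+1}\ d\nu(y)\ge\bigl(\int_Y\mu_y(A)\ d\nu(y)\bigr)^{\ell+1}=(\mu(A))^{\ell+1}$, which is exactly the claimed bound.

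The genuine mathematical content is entirely contained in Theorem~\ref{T:ConvSeveral}, so the main point requiring care here is the interchange of the limit in $N$ with the integral over the ergodic components: one must check that $y\mapsto c_N(y)$ is measurable and that the per-component convergence holds $\nu$-almost everywhere, after which bounded convergence applies. Equivalently, one can avoid the explicit decomposition by arguing that $A_N$ converges in $L^2(\mu)$ to $(\E(\mathbf{1}_A\mid\mathcal{I}))^\ell$, where $\mathcal{I}$ is the $\sigma$-algebra of $T$-invariant sets, and then computing $\int\mathbf{1}_A\cdot(\E(\mathbf{1}_A\mid\mathcal{I}))^\ell\ d\mu=\int(\E(\mathbf{1}_A\mid\mathcal{I}))^{\ell+1}\ d\mu$ before applying Jensen with respect to $\mathcal{I}$.
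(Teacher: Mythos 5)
Your proof is correct and follows essentially the same route as the paper: apply Theorem~\ref{T:ConvSeveral} to $f_1=\dots=f_\ell=\mathbf{1}_A$ on each ergodic component, integrate over the ergodic decomposition to show the limit equals $\int(\mu_t(A))^{\ell+1}\,d\lambda(t)$, and then bound this below by $(\mu(A))^{\ell+1}$ via convexity (the paper invokes H\"older's inequality where you invoke Jensen, which is the same estimate). Your extra attention to measurability and the bounded convergence step only makes explicit what the paper leaves implicit.
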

\begin{remarks}
$\bullet$ The estimate \eqref{E:lowerbounds} becomes an equality when the system is ergodic.

$\bullet$ The lower bounds \eqref{E:lowerbounds} contrast the corresponding lower bounds when the functions $a_1(t),...,a_\ell(t)$ are non-constant integer polynomials.  In this case, \eqref{E:lowerbounds}
fails even when $\ell=1$  and $a_1(t)=t^2$. In fact no power type lower bound is known for any
collection of polynomials (except of course  when all the functions are equal and  linear).
\end{remarks}
\subsubsection{Characteristic factors} \label{SSS:SevCharacteristic}
The next result gives convenient  characteristic factors  for a family of
 ``typical" logarithmico-exponential sequences  of polynomial growth.
\begin{theorem}\label{T:CharSeveral}
 Let $a_1,\ldots,a_\ell\in \mathcal{LE}$,  and suppose that
 all the functions $a_i(t)$ and their pairwise differences $a_i(t)-a_j(t)$
belong in $\G$ (defined \eqref{E:G}).

 Then for every system  its nilfactor $\cZ$ is characteristic for the family
 $\{[a_1(n)], \ldots, [a_\ell(n)]\}$.
 \end{theorem}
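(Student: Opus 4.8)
The plan is to prove the stronger quantitative fact that a \emph{single Host--Kra seminorm is controlling}: there is an $s=s(\ell)\in\N$ such that whenever one of the functions $f_j$ satisfies $\norm{f_j}_{U^s(\mu)}=0$, the averages \eqref{E:fvv} for the integer sequences $[a_i(n)]$ converge to $0$ in $L^2(\mu)$. Since a bounded function is orthogonal to the nilfactor $\cZ$ exactly when all of its seminorms $\norm{f}_{U^s(\mu)}$ vanish, establishing control by some \emph{finite} $s$ immediately yields the characteristic-factor statement. The whole argument is ergodicity-free, which matches the hypothesis ``for every system''. The engine is an iterated van der Corput differencing scheme organized by a PET-type induction adapted to Hardy functions, in the spirit of the integer-polynomial case \cite{HK05b} but with the notion of degree replaced by an appropriate measure of growth.

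The first ingredient is one van der Corput step. Writing $F(n)=T^{[a_1(n)]}f_1\cdot\ldots\cdot T^{[a_\ell(n)]}f_\ell$, the van der Corput inequality bounds $\limsup_N\norm{\frac1N\sum_{n\le N}F(n)}^2$ by an average over shifts $h$ of the inner products $\frac1N\sum_n\langle F(n+h),F(n)\rangle$. Using measure preservation to compose every term with $T^{-[a_j(n)]}$ for a chosen index $j$, one rewrites these as averages of products of $2\ell-1$ bounded terms whose iterates are the integers $[a_i(n+h)]-[a_j(n)]$ and $[a_i(n)]-[a_j(n)]$; this is where the hypothesis that the pairwise differences $a_i-a_j$ lie in $\G$ first enters. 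The key analytic input is that $[a_i(n+h)]-[a_i(n)]=[a_i(n+h)-a_i(n)]+O(1)$, and that by the central derivative-growth property of $\H$ the Hardy function $a_i(n+h)-a_i(n)$ grows like $h\,a_i'(n)$, hence with growth strictly smaller than that of $a_i$. Thus each differencing step lowers the growth of the exponents while keeping all surviving functions, up to the allowed $O(1)$ and bounded-$h$ ambiguities, in a class to which the next step applies.

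The heart of the matter is the bookkeeping of this induction, which I expect to be the main obstacle. I would attach to each family of Hardy exponents a \emph{weight vector} recording the growth rates of the participating functions together with their pairwise differences, well-order these vectors, and verify that every van der Corput step strictly decreases the weight. The hypotheses that all $a_i$ and all $a_i-a_j$ belong to $\G$ are precisely the non-degeneracy conditions needed: they keep every function produced by the scheme with growth strictly between consecutive integer powers and prevent any linear combination from collapsing onto an integer-power polynomial, which would stall the reduction and wreck the eventual equidistribution input. The delicate point is that the scheme generates many iterated differences of the form $a_{i_1}(n+h_1)-a_{i_2}(n+h_2)-\cdots$, and one must check that the $\G$-type properties (and the absence of resonances) are inherited by all of them; a single resonance among these would destroy both the monotonicity of the induction and the base-case estimate. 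Controlling the integer-part discrepancies $[a_i(n+h)]-[a_i(n)]-[a_i(n+h)-a_i(n)]$ uniformly is a recurring technical nuisance that I would handle by partitioning according to the fractional parts $\{a_i(n)\}$ and exploiting their equidistribution.

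The induction terminates when the surviving exponent is a single Hardy function of fractional-power growth acting on a single factor, together with bounded companions. At this base case the average either decays, by a Weyl-type estimate which for functions in $\LE$ follows from the equidistribution results of \cite{BKQW05} (ultimately a consequence of the derivative-growth property of $\H$), or the Gowers--Host--Kra seminorm $\norm{f_j}_{U^s(\mu)}$ emerges as the sole obstruction. Counting the number of differencing steps needed to reach this base case yields the value of $s$, which completes the seminorm control and hence the conclusion that $\cZ$ is characteristic for $\{[a_1(n)],\ldots,[a_\ell(n)]\}$.
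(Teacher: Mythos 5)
There is a genuine gap, and it sits exactly where your scheme claims to terminate: the sub-linear part of $\G$. Your induction rests on the claim that each van der Corput step replaces $a_i$ by functions with the growth of $a_i(n+h)-a_i(n)\sim h\,a_i'(n)$, ``hence with growth strictly smaller,'' and that iterating this reaches a tractable base case. But when $a_i(t)\prec t$ (the $k=0$ case of $\G$, e.g.\ $a_i(t)=t^{1/2}$, which is allowed and is in fact the typical case of interest), the difference $a_i(n+h)-a_i(n)$ tends to $0$ for every fixed $h$; hence $[a_i(n+h)]=[a_i(n)]$ for a set of $n$ of density one, the two occurrences of $f_i$ merge into $|f_i|^2$ rather than acquiring an $h$-shift, and van der Corput degenerates. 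Already for $\ell=1$ and $a(t)=t^{1/2}$ one computes that the quantities $b_h$ of Lemma~\ref{L:N-VDC} satisfy $b_h\to\norm{f}_{L^2(\mu)}^2$ for every $h$, so the lemma gives only a trivial bound and no seminorm can ``emerge.'' Differencing therefore cannot even get started on a family such as $\{[n^{1/2}],[n^{1/3}]\}$: your weight vector never decreases, and the Weyl-type input from \cite{BKQW05} that you invoke at the base case controls single averages, not correlations of several sub-linear sequences.

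The paper closes exactly this hole by a mechanism your proposal lacks: a change of the time variable. In the all-sub-linear case (Proposition~\ref{P:CharSublin}) one writes $a_i(t)=\tilde a_i(a_1(t))$ with $\tilde a_i=a_i\circ a_1^{-1}$, where $a_1$ has maximal growth, working inside a Hardy field closed under composition and compositional inversion (Pfaffian functions; $\LE$ itself is not closed under inversion). Lemma~\ref{L:ChangeVan} then converts averaging along $[a_1(n)]$ into averaging along $n$, so the fastest-growing function becomes exactly $t$; only after this substitution does van der Corput produce genuine shifts $[\alpha_1 h]$, from which the seminorm $\nnorm{f_1}_{2\ell}$ appears through the inductive definition \eqref{eq:recur}. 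The paper's outer induction (on a growth-based ``type,'' much like your weight vectors) uses differencing only to reduce the super-linear part of the family, the sub-linear companions harmlessly merging at each step, and it terminates at this substitution-based base case rather than at a differencing one. If you replace your base case by the argument of Proposition~\ref{P:CharSublin}, the remainder of your plan (control by a single Host--Kra seminorm, here $\nnorm{\cdot}_{2\ell}$; PET bookkeeping; $O(1)$ integer-part errors absorbed by splitting $\N$ into finitely many pieces) does match the paper's proof.
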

\begin{remark}
A related result was proved in \cite{BH09} for weakly mixing systems. In fact we are going to adapt
the argument  used in \cite{BH09} to establish our result.
\end{remark}
\subsubsection{Combinatorics}\label{SSS:SevCombinatorics}
Using Furstenberg's Correspondence Principle and Theorem~\ref{T:RecMult}
 we immediately deduce the following:

\begin{theorem}\label{T:SzMultiple}
Suppose that the functions $a_1,\ldots,a_\ell \in\mathcal{LE}\cap \G$ have different growth rates.

Then for  every set of integers $\Lambda$  we have
$$
\liminf_{N\to\infty}\frac{1}{N}\sum_{n=1}^N \overline{d}(\Lambda\cap (\Lambda-[a_1(n)])\cap\cdots\cap (\Lambda-[a_\ell(n)]))
\geq \big(\overline{d}(\Lambda)\big)^{\ell+1}.
$$
\end{theorem}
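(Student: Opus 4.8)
The plan is to deduce the statement directly from the ergodic lower bound of Theorem~\ref{T:RecMult} by means of Furstenberg's Correspondence Principle; all of the analytic content already resides in Theorem~\ref{T:RecMult}, so what remains is a routine transfer argument.

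First I would apply the Furstenberg Correspondence Principle to the given set $\Lambda\subset\Z$. This yields a system $(X,\B,\mu,T)$ and a set $A\in\B$ with $\mu(A)=\bar{d}(\Lambda)$ such that
\[
\bar{d}\big(\Lambda\cap(\Lambda-n_1)\cap\cdots\cap(\Lambda-n_\ell)\big)\geq \mu\big(A\cap T^{-n_1}A\cap\cdots\cap T^{-n_\ell}A\big)
\]
for all $n_1,\ldots,n_\ell\in\Z$. For each fixed $n\in\N$ I would specialize the shifts to $n_i=[a_i(n)]$, obtaining
\[
\bar{d}\big(\Lambda\cap(\Lambda-[a_1(n)])\cap\cdots\cap(\Lambda-[a_\ell(n)])\big)\geq \mu\big(A\cap T^{-[a_1(n)]}A\cap\cdots\cap T^{-[a_\ell(n)]}A\big).
\]

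Next I would average these inequalities over $n=1,\ldots,N$, divide by $N$, and pass to the $\liminf$ as $N\to\infty$. Because the inequality holds term by term for every $n$, the $\liminf$ of the left-hand averages is at least the $\liminf$ of the right-hand averages. By Theorem~\ref{T:RecMult}, whose hypotheses on $a_1,\ldots,a_\ell\in\LE\cap\G$ are precisely those assumed here, the right-hand averages in fact converge, so their $\liminf$ is a genuine limit and is bounded below by $(\mu(A))^{\ell+1}$. Substituting $\mu(A)=\bar{d}(\Lambda)$ then gives the desired bound $(\bar{d}(\Lambda))^{\ell+1}$.

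There is essentially no obstacle to overcome: the hypotheses match those of Theorem~\ref{T:RecMult} verbatim, and the correspondence principle is invoked as stated. The only point meriting care is the bookkeeping about the direction of the inequality—one checks that the correspondence bound points the correct way for a \emph{lower} bound on the combinatorial densities, and that passing to the $\liminf$ preserves a term-by-term inequality—after which the conclusion is immediate.
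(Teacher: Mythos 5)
Your proposal is correct and is precisely the paper's own argument: the paper deduces Theorem~\ref{T:SzMultiple} immediately from Furstenberg's Correspondence Principle (specializing the shifts to $n_i=[a_i(n)]$) combined with the lower bound of Theorem~\ref{T:RecMult}, exactly as you do. The only detail worth noting is the one you already flagged—that the term-by-term correspondence inequality passes to the averaged $\liminf$ and that Theorem~\ref{T:RecMult} guarantees the ergodic averages actually converge to a limit at least $(\mu(A))^{\ell+1}$—so nothing is missing.
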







\subsection{Further directions}\label{SS:Conjectures}
We state some  open problems that are closely  related to the results stated before.
To avoid repetition we remark that \emph{in Problems 1-4  we always   work with a family  $\mathcal{F}=\{a_1(t),\ldots,a_\ell(t)\}$
  of functions of polynomial growth that belong to the same Hardy field}. With
  $\text{span}(\mathcal{F})$ we denote the set of all \emph{non-trivial} linear combinations of elements of $\mathcal{F}$.

\subsubsection{Convergence}\label{SSS:ConConvergence}
The family of functions $\mathcal{F}=\{a_1(t),\ldots,a_\ell(t)\}$ is \emph{good for multiple convergence}
if for every  system $(X,\X, \mu,T)$ and functions $f_1,\ldots,f_\ell\in L^\infty(\mu)$
the limit
\begin{equation}\label{E:fes}
\lim_{N\to\infty}    \frac1N \sum_{n=1}^N f_1(T^{[a_1(n)]}x)\cdot  \ldots
    \cdot f_\ell(T^{ [a_\ell(n)]}x)
    \end{equation}
exists in $L^2(\mu)$.

The next  problem
is much in the spirit of Theorem~\ref{T:ConvSingle}:
\begin{conjecture}\label{C:ConjConv}
 The family  $\mathcal{F}$ is good for multiple convergence
 if and only if  every  function  $a\in\text{span}(\mathcal{F})$ satisfies one of  the following conditions:
\begin{itemize}
 \item   $|a(t)-cp(t)|\succ \log t$ for every $c\in\R$ and every $p\in \Z[t]$; \text{ or }

 \item $a(t)-cp(t)\to d$ for some $c,d\in\R$; \text{ or }

 \item $|a(t)-t/m|\ll \log{t}$ for some $m\in\Z$.
\end{itemize}
\end{conjecture}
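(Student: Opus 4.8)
The plan is to treat this as the several-variable analogue of Theorem~\ref{T:ConvSingle}, proving the two implications separately and, in both, reducing from an arbitrary system to a nilsystem and then to an equidistribution question on a nilmanifold. The three dichotomy conditions are precisely those appearing in Theorem~\ref{T:ConvSingle}, so the guiding principle is that convergence of the multiple average should be governed, direction by direction in the relevant nilpotent group, by the behavior of the individual linear combinations $a\in\text{span}(\mathcal{F})$: each such $a$ contributes a ``single sequence'' phase of the type already analyzed, and the family is good exactly when none of these phases is of the bad single-sequence type.

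For the necessity (``only if'') direction I would argue by contraposition. Assuming some $a=\sum_{i=1}^\ell c_i a_i\in\text{span}(\mathcal{F})$ violates all three conditions, I would manufacture a system on which the averages \eqref{E:fes} diverge. The natural candidates are rotations on tori and, more generally, nilrotations, on which $\prod_i f_i(T^{[a_i(n)]}x)$ becomes an exponential (or nilsequence) whose phase is an integer combination of the quantities $[a_i(n)]$; after accounting for the floor discrepancies, this phase is controlled by $[a(n)]$ up to a correction that the growth hypotheses keep negligible. Since $a$ fails all three conditions, the single-sequence theory (Theorem~\ref{T:ConvSingle}, and \cite{BKQW05} for $\ell=1$) supplies a rotation on which $\frac1N\sum_{n=1}^N e([a(n)]\alpha)$ has no Ces\`aro limit, and transporting this obstruction to the product average yields divergence. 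The only real care here is decoupling the floor functions $[a_i(n)]$ from $[\sum_i c_i a_i(n)]$, which should be routine given the polynomial-growth and regularity hypotheses.

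The sufficiency (``if'') direction is the substantial one. The first step is a characteristic-factor reduction generalizing Theorem~\ref{T:CharSeveral}: one must show that, under the span hypotheses, the nilfactor $\cZ$ is characteristic for $\{[a_1(n)],\ldots,[a_\ell(n)]\}$, so that it suffices to prove convergence on nilsystems. On a nilsystem the average is a Ces\`aro average of a nilsequence evaluated along $([a_1(n)],\ldots,[a_\ell(n)])$, and its convergence reduces, via the equidistribution machinery of the companion paper \cite{Fr09}, to understanding how this vector sequence equidistributes in the nilmanifold. Here the three conditions enter decisively: directions in which the governing combination $a\in\text{span}(\mathcal{F})$ satisfies the gap condition $|a(t)-cp(t)|\succ\log t$ should genuinely equidistribute, so the corresponding contribution converges, while the degenerate directions, where $a$ is within $O(\log t)$ of $t/m$ or equals a constant multiple of an integer polynomial modulo a function tending to a constant, reduce to the known polynomial convergence theory of \cite{HK05b} and \cite{Lei05c}. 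Assembling these per-direction statements into convergence of the full average requires a decomposition of the nilpotent group adapted to a filtration by these regimes.

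The main obstacle I anticipate is exactly this assembly step on a general nilmanifold. For arithmetic progressions $\{[a(n)],2[a(n)],\ldots,\ell[a(n)]\}$ a single function $a$ controls every direction at once, which is why Theorem~\ref{T:ConvSingleFormula} goes through; for an arbitrary family the pairwise differences $a_i-a_j$ need not lie in $\G$, so Theorem~\ref{T:CharSeveral} does not apply, and the various $a\in\text{span}(\mathcal{F})$ may simultaneously fall into different regimes---some equidistributing, some polynomial-like, some linear. Separating these contributions, and in particular handling combinations that sit near integer multiples of $t$ (the third condition) alongside genuinely equidistributing ones, is where the delicate non-Abelian equidistribution analysis is forced, and is presumably the reason the statement is posed here as an open problem rather than proved.
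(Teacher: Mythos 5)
You should first be aware that the paper does not prove this statement at all: it is Problem~\ref{C:ConjConv}, posed explicitly as an open problem in Section~\ref{SS:Conjectures}, so there is no proof of record to compare yours against, and a complete argument here would be new research. Read as a proof, your proposal has genuine gaps at exactly the points you flag. In the sufficiency direction, everything hinges on a characteristic-factor statement for families satisfying only the span hypotheses; but the paper's Theorem~\ref{T:CharSeveral} requires all the $a_i$ \emph{and} all pairwise differences $a_i-a_j$ to lie in $\G$, and the needed generalization (under hypotheses like $a_i\succ\log t$, $a_i-a_j\succ\log t$) is itself posed as the open Problem~\ref{C:ConjChar}. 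Similarly, the ``assembly'' of mixed regimes on a nilmanifold is not something the results of \cite{Fr09} deliver: the equidistribution theorems imported into this paper (Theorems~\ref{T:A} and~\ref{T:B}) concern a single Hardy sequence $[a(n)]$ along powers of a single nilrotation, not the joint distribution of $([a_1(n)],\ldots,[a_\ell(n)])$ when different combinations in $\text{span}(\mathcal{F})$ fall into different regimes. Naming that missing lemma is not the same as proving it, and you concede as much in your last paragraph.

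The necessity direction also has a concrete hole: you call the decoupling of the floors $[a_i(n)]$ from $[\sum_i c_i a_i(n)]$ ``routine,'' but it is not. On a product of circle rotations the accessible phases have the form $\sum_i \beta_i [a_i(n)] = a(n) - \sum_i \beta_i\{a_i(n)\}$ with $a=\sum_i\beta_i a_i$, and the correction $\sum_i\beta_i\{a_i(n)\}$ is a bounded but oscillating sequence that can create or destroy equidistribution; controlling it requires knowing the joint distribution of the fractional parts $(\{a_1(n)\},\ldots,\{a_\ell(n)\})$, which is a several-sequence equidistribution problem of essentially the same difficulty as the one being attacked. So neither implication is actually reduced to known results: what you have written is a sensible research program --- essentially the one the paper's own structure suggests --- but both directions terminate in open problems rather than in proofs.
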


The next problem  provides a possible generalization of Theorem~\ref{T:ConvSeveral}:
\begin{conjecture}\label{C:ConjProduct}
Suppose
that every function  $a\in \text{span}(\mathcal{F})$ satisfies $|a(t)-cp(t)|\succ \log{t}$ for every $c\in \R$ and
every $p\in \Z[t]$.

Then  for every ergodic system $(X,\mathcal{B},\mu,T)$
 and
   $f_1,\dots,f_\ell\in L^\infty(\mu)$ we have
  \begin{equation}\label{E:product'}
\lim_{N\to\infty}    \frac1N \sum_{n=1}^N f_1(T^{[a_1(n)]}x)\cdot  \ldots
  \cdot f_\ell(T^{ [a_\ell(n)]}x)=\int f_1\ d\mu \cdot\ldots \cdot \int f_\ell \ d\mu
 \end{equation}
where the convergence takes place in $L^2(\mu)$.
\end{conjecture}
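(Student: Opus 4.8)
The plan is to follow the two–step paradigm behind Theorem~\ref{T:ConvSeveral}: first reduce the averages to an algebraic (nilsystem) model by producing a small characteristic factor, and then establish \emph{joint} equidistribution of the several Hardy sequences on that model, with the span hypothesis forcing the limit to split as a product of integrals. The genuinely new feature relative to Theorem~\ref{T:ConvSeveral} is that the $a_i$ are no longer assumed to have different growth rates; one controls only their nontrivial linear combinations. Thus the admissible families now include, for instance, $\{t^{3/2},\,t^{3/2}+t^{1/2}\}$, whose members share the growth rate $t^{3/2}$ while their difference does not, a case in which Theorem~\ref{T:ConvSeveral} does not apply. The hypothesis on $\text{span}(\mathcal{F})$ is exactly the non-degeneracy one expects to render the combined action ``totally equidistributed,'' so it should be the correct sufficient condition.

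\emph{Step 1 (characteristic factors).} I would first show that the nilfactor $\cZ$ is characteristic for $\{[a_1(n)],\ldots,[a_\ell(n)]\}$, extending Theorem~\ref{T:CharSeveral} to the present hypothesis. The tool is a van der Corput / PET-style induction: apply the van der Corput inequality to bound the average by averages over the differenced family $\{a_i(n+h)-a_i(n)\}_i$ for fixed increments $h$, and iterate until all but one function is eliminated, at which point single-sequence control by Host–Kra seminorms (in the spirit of Theorem~\ref{T:CharSingle}) applies. The delicate bookkeeping is that each differencing step should preserve a usable form of the span condition; here the Hardy-field calculus relating the growth of a function to that of its derivative, used throughout the paper, is essential.

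\emph{Step 2 (reduction to nilsystems and equidistribution).} Having reduced to $\cZ$, I would pass, through its inverse-limit structure, to a single nilsystem $(G/\Gamma,\mu,T_b)$, so that the average becomes $\frac1N\sum_{n} F_1(b^{[a_1(n)]}x)\cdots F_\ell(b^{[a_\ell(n)]}x)$. The goal is to prove that the orbit $\big(b^{[a_1(n)]}x,\ldots,b^{[a_\ell(n)]}x\big)$ equidistributes in the product $(G/\Gamma)^\ell$, which yields \eqref{E:product'}. For this I would invoke the equidistribution theory for Hardy sequences on nilmanifolds developed in the companion paper \cite{Fr09}, testing against characters of the horizontal torus and reducing, by a Weyl-type criterion, to the equidistribution modulo $1$ of the real sequences $\sum_i c_i a_i(n)$ for nonzero integer vectors $(c_i)$. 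The span hypothesis guarantees that $\sum_i c_i a_i$ is neither close to a constant multiple of an integer polynomial nor of sub-logarithmic size, so these sequences equidistribute modulo $1$; lifting this through the nilpotent filtration should give the product splitting.

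The main obstacle is that the van der Corput induction of Step~1 is not obviously compatible with the span hypothesis once several $a_i$ share a growth rate. Differencing replaces $a_i$ by $a_i(t+h)-a_i(t)\approx h\,a_i'(t)$, and for equal-growth functions the leading terms of a linear combination can cancel: for $\{t^{3/2},\,t^{3/2}+t^{1/2}\}$ the difference of the two differenced functions behaves like $t^{-1/2}$, which tends to $0$ and falls below the logarithmic threshold on which the seminorm control depends (recall that if $a\ll\log t$ even Theorem~\ref{T:CharSingle} fails). Thus the induction hypothesis is not automatically preserved, even though the original family satisfies the span condition. Resolving this — either through a more robust differencing scheme that tracks the lower-order terms, or by proving joint equidistribution on the product nilmanifold directly from the hypothesis on $\text{span}(\mathcal{F})$ without ever reducing to single sequences — is exactly the point at which the statement remains open.
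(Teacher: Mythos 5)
The paper does not prove this statement: it is Problem~\ref{C:ConjProduct} of Section~\ref{SS:Conjectures}, stated explicitly as an open problem generalizing Theorem~\ref{T:ConvSeveral}. So there is no proof of the paper to compare yours against; the only question is whether your outline closes the problem, and it does not --- as you yourself concede in your final paragraph. The gaps are genuine, but let me sharpen where they actually lie. Your Step-1 worry is partly misplaced: the family $\{t^{3/2},\,t^{3/2}+t^{1/2}\}$ is ``nice'' in the paper's sense (both functions and their difference lie in $\G$), so Theorem~\ref{T:CharSeveral} already yields the characteristic factor for it. Moreover, the paper's PET scheme never differences each $a_i$ against itself: by Lemma~\ref{L:indstep} one factors out $S^{[a(n)]}$ for a single well-chosen $a\in\mathcal{F}$, producing the functions $a_i(t+h)-a(t)$ and $a_i(t)-a(t)$, whose pairwise differences in your example all grow like $t^{1/2}$, so the sub-logarithmic cancellation $\sim t^{-1/2}$ you fear does not arise there. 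The genuine Step-1 gap occurs for families allowed by the span hypothesis but not ``nice'', e.g.\ $\{t^{3/2},\,t^{3/2}+(\log t)^2\}$ (the difference $(\log t)^2$ is $\succ\log t$ but lies outside $\G$) or $\{t\log t,\,t^2\log t\}$ (members outside $\G$); for these, neither Theorem~\ref{T:CharSeveral} nor any bookkeeping of the paper's induction applies, and extending the characteristic-factor result to that generality is itself the open Problem~\ref{C:ConjChar}.

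Step 2 has an independent and equally serious gap, which your sketch papers over. Joint equidistribution of $(b^{[a_1(n)]}x,\ldots,b^{[a_\ell(n)]}x)$ in $X^\ell$ does not reduce, via a Weyl-type criterion, to equidistribution modulo $1$ of the scalar sequences $\sum_i c_ia_i(n)$: characters only detect the projection to the horizontal torus, and for nilmanifolds of step at least $2$ the orbit closure in the product can be a proper subnilmanifold even when every abelian obstruction vanishes. This is precisely why Proposition~\ref{P:L^2product}, and the results of \cite{Fr09} behind it, carry the different-growth-rate hypothesis that your setting drops; ``lifting through the nilpotent filtration'' is not a routine step but the heart of the matter. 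Concretely, for $\{t^{3/2},\,t^{3/2}+t^{1/2}\}$ Step 1 is already supplied by the paper, yet the statement remains open because no one knows how to prove the product splitting on a $2$-step ergodic nilsystem for these two sequences. So your diagnosis that the statement is open is correct, but the decisive missing ingredient is the non-abelian joint equidistribution on product nilmanifolds, not only the compatibility of differencing with the span condition.
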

We remark that  if some function $a\in \text{span}(\mathcal{F})$ satisfies $|a(t)-cp(t)|\ll \log{t}$
for some $c\in \R$ and $p\in \Z[t]$ with $\deg(p)\geq 2$, then  \eqref{E:product'}
fails for some system.

\subsubsection{Characteristic factors}\label{SSS:ConCharacteristic}
We state a possible generalization  of Theorem~\ref{T:CharSeveral}:
\begin{conjecture}\label{C:ConjChar}
Suppose that
$a_i(t)\succ \log{t}$ and $a_i(t)-a_j(t)\succ \log{t}$ whenever $i\neq j$.

 Then for every system its nilfactor $\cZ$ is characteristic for the family
 $\{[a_1(n)], \ldots, [a_\ell(n)]\}$.
\end{conjecture}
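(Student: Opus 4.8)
The plan is to reduce the statement to a single \emph{seminorm control} estimate: that for each index $i$ the averages \eqref{E:fvv} associated to $\{[a_1(n)],\ldots,[a_\ell(n)]\}$ are bounded in $L^2(\mu)$ by a quantity of the form $C\,\nnorm{f_i}_{s}$, where $\nnorm{\cdot}_{s}$ is the $s$-th Host--Kra seminorm, $s=s(\ell)$ is finite, and $C=C(\ell)$. Since $f_i\bot\cZ$ is equivalent to $\nnorm{f_i}_{s}=0$ for all $s$, such an estimate immediately gives that $\cZ$ is characteristic. This is the mechanism already underlying Theorem~\ref{T:CharSeveral}, which adapts the van der Corput based argument of \cite{BH09}; the entire difficulty is to extend the range of validity of the control estimate from the class $\G$ (functions \emph{and} pairwise differences in $\G$) to the much larger class permitted here, where one only knows $a_i(t)\succ\log t$ and $a_i(t)-a_j(t)\succ\log t$. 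Note that the hypotheses cannot be weakened further: if some $a_i(t)\ll\log t$ the conclusion fails already for $\ell=1$, as recorded in the remark after Theorem~\ref{T:CharSingle}, so $\succ\log t$ is the natural threshold.

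First I would run a van der Corput / PET induction scheme adapted to Hardy fields. Applying the van der Corput inequality to $u_n=\prod_{i=1}^\ell f_i(T^{[a_i(n)]}x)$ produces, for each shift $h$, inner products governed by the differenced exponents $[a_i(n+h)]-[a_i(n)]$. For $a_i\in\H$ one has $a_i(n+h)-a_i(n)\approx h\,a_i'(n)$, so differencing lowers the growth rate of each function by roughly one power of $t$, and the hypotheses $a_i\succ\log t$, $a_i-a_j\succ\log t$ are exactly what guarantee that the surviving differenced family stays nondegenerate throughout the induction: no two exponents collapse to within $O(\log t)$ of one another, and no exponent becomes bounded. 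Iterating, the growth rates descend until one reaches a family for which the relevant single equidistribution statement forces the corresponding average to vanish unless the offending function is measurable with respect to some $\cZ_{s}$; tracking the number of differencing steps gives the finite level $s$ and the bound by $\nnorm{f_i}_{s}$.

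The main obstacle is precisely the one flagged in the remark following Theorem~\ref{T:CharSingle}: the van der Corput / equidistribution machinery of \cite{BH09} breaks down for functions whose growth rate lies between two consecutive integer powers in the slowly varying sense, i.e. $t^{k}\prec a(t)\prec t^{k+\varepsilon}$ for every $\varepsilon>0$ (such as $t^k\log t$ or $t^k/\log t$), and these are exactly the functions admitted by the conjecture but excluded from $\G$. When the iterated differencing drives a family into this borderline regime, the underlying sequences fail to equidistribute modulo $1$, after the relevant rescaling, with enough uniformity for the naive estimate to close. I expect that handling these cases requires importing the alternative strategy of the companion paper \cite{Fr09} and of Theorem~\ref{T:CharSingle}: one first shows the relevant characteristic factor is (approximately) a nilsystem, transports the averages to a nilmanifold, and then applies quantitative equidistribution results for polynomial-like orbits whose coefficients vary slowly with $n$. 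Making this uniform across several functions of possibly \emph{equal} growth rate but $\log$-separated differences, while simultaneously controlling the interaction terms produced by the PET induction, is where the real work lies; a clean treatment would also benefit from the ``Abelian'' argument hoped for in the remark after Theorem~\ref{T:ConvSeveral}.
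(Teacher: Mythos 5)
You should first be aware that the statement you were given is not a theorem of the paper at all: it is Problem~3 of Section~\ref{SS:Conjectures}, stated explicitly as a \emph{possible generalization} of Theorem~\ref{T:CharSeveral}, and the paper contains no proof of it. What the paper actually proves (Theorem~\ref{T:CharSeveral}) is the special case where the $a_i$ are logarithmico-exponential and both the functions and their pairwise differences lie in the class $\G$ of fractional-power growth; the full statement, with only the hypotheses $a_i(t)\succ\log t$ and $a_i(t)-a_j(t)\succ\log t$, is left open. So there is no argument of the paper to compare yours against; the only question is whether your proposal settles the problem, and it does not.

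The decisive gap is in your second paragraph. You claim that the hypotheses ``are exactly what guarantee that the surviving differenced family stays nondegenerate throughout the induction.'' This is false: the $\log$-separation hypotheses are \emph{not} preserved by van der Corput differencing. Take $a_1(t)=t\log t$ and $a_2(t)=t\log t+t^{1/2}$, an admissible family ($a_2(t)-a_1(t)=t^{1/2}\succ\log t$). After one van der Corput step with shift $h$ and factoring out $T^{[a_1(n)]}$, the new family contains $a_1(t+h)-a_1(t)\sim h\log t$, which is comparable to $\log t$ rather than $\succ\log t$; likewise two of the new exponents differ by $a_2(t+h)-a_2(t)\sim h\log t$, violating pairwise separation. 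Since the paper notes that the hypotheses of Problem~3 are \emph{necessary}, the differenced family is one for which $\cZ$ genuinely fails to be characteristic, so the induction cannot simply recurse on it: the terms can neither be merged (their difference is unbounded) nor handled by the inductive hypothesis (their difference grows too slowly). This instability is precisely why Theorem~\ref{T:CharSeveral} is restricted to $\G$ --- Lemma~\ref{L:indstep} shows that the class of ``nice'' families (members and differences in $\G$) is closed under the operation $\mathcal{F}\mapsto\mathcal{F}(a,h)$, and that closure is what makes the PET induction run --- and why the paper's other device, the Taylor-expansion/short-interval reduction of Section~\ref{S:CharFactorsSingle}, has only been made to work for the single-generator family $\{[a(n)],2[a(n)],\ldots,\ell[a(n)]\}$, where one Hardy function governs all iterates. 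Your final paragraph correctly diagnoses the borderline regime ($t^k\prec a(t)\prec t^{k+\varepsilon}$ for every $\varepsilon>0$) as the obstruction and proposes to combine the methods of \cite{BH09}, \cite{Fr09}, and Theorem~\ref{T:CharSingle}, but you defer exactly that combination to ``where the real work lies.'' That deferred step is the entire content of the problem: what you have written is a reasonable research plan with a correctly identified obstacle, not a proof.
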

One can easily see that the stated assumptions are also necessary.
\subsubsection{Recurrence}\label{SSS:Conrecurrence}
The next problem provides a possible extension of  Theorem~\ref{T:RecSingle}:
\begin{conjecture}\label{C:ConjRec}
Suppose that every function  $a\in \text{span}(\mathcal{F})$ satisfies $|a(t)-cp(t)|\to \infty$
for every  $c\in \R$ and every $p\in \Z[t]$.

Then for every  system $(X,\X,\mu,T)$ and $A\in \X$ with $\mu(A)>0$ we have
$$
\mu(A\cap T^{-[a_1(n)]}A\cap\cdots \cap T^{-[a_\ell(n)]}A)>0
$$
for some $n\in \N$ such that $[a_i(n)]\neq 0$.
\end{conjecture}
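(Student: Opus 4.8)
The plan is to derive the (single-$n$) recurrence statement from a positive lower bound on the Ces\`aro averages. First I would reduce to showing
\[
\liminf_{N\to\infty}\frac1N\sum_{n=1}^N \mu\big(A\cap T^{-[a_1(n)]}A\cap\cdots\cap T^{-[a_\ell(n)]}A\big)>0,
\]
since a positive liminf forces the integrand to be positive on a set of $n$ of positive lower density, and applying the hypothesis to each $a_i\in\text{span}(\mathcal{F})$ with $c=0$ already gives $|a_i(t)|\to\infty$, hence $[a_i(n)]\neq 0$ for all large $n$. It is essential to pass to a liminf rather than a limit: exactly as in the step from Theorem~\ref{T:ConvSingleFormula} to Theorem~\ref{T:RecSingle}, the averages themselves need not converge, because the hypothesis only prevents a combination in $\text{span}(\mathcal{F})$ from staying within $O(1)$ of a constant multiple of an integer polynomial, so a ``borderline'' combination such as $\log t$ (which satisfies $\to\infty$ but is $\ll\log t$) is permitted, and for such directions the floored sequences equidistribute badly modulo small integers and destroy convergence.

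The analytic core would be a reduction to nilsystems. I would group $a_1,\dots,a_\ell$ by growth rate, calling $a_i$ and $a_j$ equivalent when $a_i/a_j$ tends to a finite nonzero limit. The guiding picture is that distinct groups decouple---their quotients tend to $0$ or $\pm\infty$, producing the joint equidistribution that underlies Theorem~\ref{T:ConvSeveral}---while a single group behaves like a fixed direction perturbed by lower-order Hardy terms, so that within it the relevant structure is that of a Furstenberg (arithmetic-progression-like) family, governed by the identification in Theorem~\ref{T:ConvSingleFormula} and the single-direction recurrence of Theorem~\ref{T:RecSingle}. Making this precise, I would invoke a characteristic-factor reduction to the nilfactor $\cZ$---available through Theorem~\ref{T:CharSeveral} when the $a_i$ and the differences $a_i-a_j$ all lie in $\G$, and through the expected extension formulated in Problem~\ref{C:ConjChar} in general---and then use the equidistribution theory on nilmanifolds from the companion paper \cite{Fr09} to rewrite the liminf as an explicit integral of $\prod_i \mathbf{1}_A$ against the limiting orbit measure on a sub-nilmanifold.

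Positivity of that integral I would extract by ergodic decomposition. Over each ergodic component the cross-group decoupling turns the average into a product over the growth groups; a group of genuinely distinct rates contributes a product of integrals, as in Theorem~\ref{T:RecMult}, while a group sharing a common rate contributes a Furstenberg average whose liminf is positive by the multiple recurrence theorem of \cite{Fu77}---this is exactly where the mechanism behind Theorem~\ref{T:RecSingle} serves as the base case. A H\"older (convexity) estimate across the decomposition should then produce a strictly positive bound, uniform in the system. To convert the nilsystem analysis, which naturally controls integer-valued iterates, into a statement about the floors $[a_i(n)]$, I would use the device of \cite{BH96}: one works on a circle (or torus) extension on which the fractional parts $\{a_i(n)\}$ equidistribute, so that the floored orbits can be matched with the continuous flow that the nilmanifold analysis describes.

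The main obstacle is precisely the borderline directions of $\text{span}(\mathcal{F})$ that grow to infinity but no faster than $\log t$ up to a constant: these are allowed by the $\to\infty$ hypothesis yet excluded by the $\succ\log t$ hypotheses of every convergence and characteristic-factor result presently at hand. Along such directions neither mean convergence nor clean equidistribution modulo small integers holds, so the reduction to $\cZ$ is not covered by Theorem~\ref{T:CharSeveral} and would first require settling the characteristic-factor problem (Problem~\ref{C:ConjChar}) under the weaker assumption. A secondary difficulty is the mixed regime, where some functions share a growth rate (yielding only a Furstenberg-type lower bound) while others have distinct rates (yielding a product): one must show that these two mechanisms combine multiplicatively at the level of the liminf, which needs a joint equidistribution statement on the product nilmanifold strong enough to separate the resonant and non-resonant blocks simultaneously. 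Securing these two points is, I expect, the essential difficulty that keeps the problem open.
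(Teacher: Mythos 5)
You were asked to prove Problem~\ref{C:ConjRec}, which the paper states in Section~\ref{SS:Conjectures} as an \emph{open problem}: there is no proof of it anywhere in the paper, only of two special cases, namely Theorem~\ref{T:RecSingle} (the family $\{a,2a,\ldots,\ell a\}$ for a single Hardy function) and Theorem~\ref{T:RecMult} (functions in $\LE\cap\G$ with pairwise different growth rates). Your outline faithfully mirrors how those settled cases are actually proved --- reduction to the nilfactor $\cZ$ (Theorem~\ref{T:CharSingle}, Theorem~\ref{T:CharSeveral}), Host--Kra structure theory, the equidistribution results of \cite{Fr09}, Furstenberg's theorem \cite{Fu77} plus H\"older across the ergodic decomposition, and, for the resonant single-function case, the interval/comparison mechanism of Proposition~\ref{P:closetopoly} --- and you correctly identify that the general case is blocked by the characteristic-factor question (Problem~\ref{C:ConjChar}) and by the directions in $\text{span}(\mathcal{F})$ that tend to infinity but stay $\ll\log t$ from $cp(t)$. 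As a status assessment your proposal is accurate and honest; as a proof it is not one, and you say so yourself.

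Two points in the sketch are nonetheless misfounded and worth correcting. First, for the borderline directions there is no ``limiting orbit measure'' at all: the averages genuinely diverge (this is why Theorem~\ref{T:RecSingle} cannot be deduced from Theorem~\ref{T:ConvSingleFormula} in that regime), so the plan to ``rewrite the liminf as an explicit integral against the limiting orbit measure on a sub-nilmanifold'' fails in principle, not merely for lack of current technology. What the paper does in the single-function resonant case $a(t)=p(t)\alpha+e(t)$, $1\prec e(t)\ll\log t$, is qualitatively different: Proposition~\ref{P:closetopoly} selects sparse intervals $I_m$ on which $[a(n)]=[p(n)\alpha+mr\alpha]$ and compares the resulting averages with averages along $[n\alpha]$ via Lemmas~\ref{L:key}, \ref{L:unif} and \ref{L:[na]}; a several-function analogue of this comparison device is an additional missing idea beyond the two obstructions you list. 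Second, your dichotomy ``distinct growth groups decouple, a common-growth group is Furstenberg-like'' is too coarse. Functions of the same growth whose leading behaviors are not rationally related --- say $t^{3/2}$ and $\sqrt{2}\,t^{3/2}$ --- are expected to decouple completely (this is exactly the content of Problem~\ref{C:ConjProduct}, which is also open), not to form a Furstenberg block; conversely, resonances can be created by lower-order terms across your groups. The correct invariant is which combinations in $\text{span}(\mathcal{F})$ come logarithmically close to $cp(t)$ with $p\in\Z[t]$, and grouping by growth rate does not track it, so the ``multiplicative combination'' step of your positivity argument does not survive as stated.
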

An interesting  special case  of this result  is when the
functions $a_1(t),\ldots,a_\ell(t)$ have different growth and none of them is equal to a
 polynomial (modulo a function that vanishes at infinity).

If all the functions $a_1(t),\ldots,a_\ell(t)$ are integer polynomials, then necessary and sufficient conditions
for multiple recurrence where given in \cite{BLL08}.
\subsubsection{Combinatorics}\label{SSS:ConCombinatorics}
We rephrase  Problem~\ref{C:ConjRec} in combinatorial terminology:
\begin{conjecture2'}\label{C:ConjRec'}
Suppose that every function  $a\in \text{span}(\mathcal{F})$ satisfies $|a(t)-cp(t)|\to \infty$
for every  $c\in \R$ and every $p\in \Z[t]$.

Then  every $\Lambda\subset\mathbb{Z}$ with
  $\bar{d}(\Lambda)>0$ contains patterns of the form
 \begin{equation}\label{E:SzMultiple}
    \{m, m+[a_1(n)],\ldots, m+[a_\ell(n)]\}
  \end{equation}
  for some $m\in \Z $ and $n\in\N$ with $[a_i(n)]\neq 0$.
\end{conjecture2'}

\subsubsection{Commuting transformations}\label{SSS:ConCommuting}
It seems likely that our main results
remain true when one works  with iterates of $\ell$ commuting measure preserving transformations instead of iterates a single transformation. We  state two  related problems here:

\begin{conjecture}\label{C:ConjCommuting1}
Let  $T_1,\ldots,T_\ell$ be commuting invertible measure preserving transformations acting on a probability space $(X,\X,\mu)$ and $f_1,\ldots,f_\ell\in L^\infty(\mu)$.

 Then  for every positive  real number $c$ the following limit exists in $L^2(\mu)$
 \begin{equation}\label{E:abra}
\lim_{N\to\infty}  \frac1N \sum_{n=1}^N f_1(T^{[n^c]}_1x)\cdot \ldots\cdot f_\ell(T^{[n^c]}_\ell x).
 \end{equation}
 Furthermore,   if $c$ is not an integer, then  \eqref{E:abra} is equal to
 $\lim_{N\to\infty}  \frac1N \sum_{n=1}^N T^{n}_1 f_1\cdot \ldots\cdot T^{n}_\ell f_\ell $.
\end{conjecture}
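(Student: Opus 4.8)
The plan is to split the statement into its two assertions and attack them with the machinery already developed for a single transformation, importing the one genuinely new ingredient needed for commuting maps. For the \emph{identification} in the non-integer case, recall that the commuting linear averages $B_N:=\frac1N\sum_{n=1}^N T_1^n f_1\cdots T_\ell^n f_\ell$ are already known to converge in $L^2(\mu)$ by Tao's theorem \cite{Ta08}; writing $A_N$ for the averages in \eqref{E:abra}, it then suffices to prove that the difference
\begin{equation*}
A_N-B_N=\frac1N\sum_{n=1}^N\bigl(T_1^{[n^c]}f_1\cdots T_\ell^{[n^c]}f_\ell-T_1^{n}f_1\cdots T_\ell^{n}f_\ell\bigr)
\end{equation*}
tends to $0$ in $L^2(\mu)$ (this simultaneously yields existence of $\lim A_N$). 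For integer $c$ the two families coincide up to the reparametrisation $n\mapsto n^c$, and existence follows again from \cite{Ta08} after a standard blocking argument that passes from $n^c$ to its integer part. So the whole difficulty is concentrated in showing $A_N-B_N\to0$ when $c\notin\Z$.

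First I would try to reduce both averages to a common characteristic factor and run the comparison there. For a single transformation this is exactly the route of Theorem~\ref{T:ConvSingleFormula}: one shows the nilfactor $\cZ$ is characteristic (Theorem~\ref{T:CharSingle}), transfers the problem to a nilsystem, and then invokes the equidistribution of $(n^c)$ on nilmanifolds established in the companion paper \cite{Fr09}, the non-integrality of $c$ guaranteeing that $[n^c]$ exhibits no resonance with integer polynomials and hence is indistinguishable from $n$ at the level of nilsequences. The steps, in order, would be: (i) a van der Corput (vdC) / PET argument, using the growth control $\frac{d}{dt}t^c=c\,t^{c-1}$ to partition $\{1,\dots,N\}$ into blocks on which $[n^c]$ is uniformly close to an arithmetic progression; (ii) passage from each block to a commuting \emph{linear} average, using the uniformity of Tao's convergence over shifts; and (iii) an equidistribution step on the relevant characteristic factor to match the $[n^c]$ weights with the $n$ weights, thereby identifying $\lim A_N$ with $\lim B_N$.

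The hard part, and the reason this is stated as a problem rather than a theorem, is step (iii): the structure theory that makes the single-transformation argument work has no clean analogue for $\ell$ commuting maps. There is no Host--Kra--Ziegler description of a characteristic factor for the $\Z^\ell$-action as an inverse limit of nilsystems, and Tao's convergence proof is non-structural, so it supplies the limit of $B_N$ without handing us a concrete factor on which to carry out the equidistribution comparison. I therefore expect the viable route to bypass structure theory altogether and prove $A_N-B_N\to0$ by a purely ``soft'' multiparameter vdC reduction: iterate the Cauchy--Schwarz/vdC inequality $\ell$ times to peel off the functions one at a time, at each stage replacing a factor $T_i^{[n^c]}$ by $T_i^{n}$ with an error controlled by the slow variation of $n^c$, until one is left with an expression that the growth hypotheses on $t^c$ force to zero. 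The crux will be maintaining uniform control of the error terms through all $\ell$ applications of vdC in the commuting (non-self-dual) setting, where successive differencing produces iterates along several independent directions at once; making these multiparameter estimates uniform, rather than merely showing each collapses individually, is where I expect the real work to lie.
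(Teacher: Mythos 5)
You should first note the status of this statement in the paper: it is stated as a \emph{Problem} (an open question), not a theorem. The paper proves nothing beyond what it says right after the statement, namely that the case $c=1$ is Tao's theorem \cite{Ta08} and that the case $0<c<1$ reduces to $c=1$ via Lemma~\ref{L:ChangeVan} (this is also how Theorem~\ref{T:ConvNonCommuting} operates, and commutativity is not even needed there). For $c>1$ the problem is open precisely because the single-transformation machinery --- Theorem~\ref{T:CharSingle}, the polynomial uniformity estimate of Proposition~\ref{P:uniform}, and the reduction to nilsystems via the Host--Kra structure theorem --- has no analogue for a $\Z^\ell$-action generated by commuting transformations. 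So your proposal cannot be compared with a proof in the paper; it has to stand on its own, and it does not.

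There are two concrete gaps. First, your dismissal of the integer case is wrong: for $c\in\Z$, $c\geq 2$, the averages $\frac1N\sum_{n=1}^N T_1^{n^c}f_1\cdots T_\ell^{n^c}f_\ell$ are polynomial multiple ergodic averages for commuting transformations, and their convergence does \emph{not} follow from \cite{Ta08} by any ``reparametrisation'' or ``blocking argument'': Ces\`aro convergence of $\frac1N\sum_{n\leq N}x_n$ says nothing about Ces\`aro convergence of the sparse subsequence averages $\frac1N\sum_{n\leq N}x_{n^c}$. At the time of this paper that integer case was itself a well-known open problem (it was only later resolved, by Walsh's norm convergence theorem). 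Second, for non-integer $c>1$ your entire argument funnels into the claim $A_N-B_N\to 0$, and the ``soft multiparameter vdC peeling'' you propose has no termination mechanism: after one application of van der Corput the resulting expressions involve iterates like $T_i^{[(n+h)^c]-[n^c]}$ along several independent directions, and since $(n+h)^c-n^c\sim chn^{c-1}\to\infty$ for $c>1$, these are not slowly varying and cannot be absorbed as errors. In the single-transformation case this is exactly the point where one invokes Proposition~\ref{P:uniform}, whose proof rests on the seminorms and structure theorem of \cite{HK05a}; absent an analogous seminorm/characteristic-factor theory for commuting transformations (which, as you yourself concede, does not exist), the induction never closes. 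In short: you correctly diagnose where the difficulty lies, but the proposal is a plan contingent on precisely the missing ingredient, not a proof, and it additionally mishandles the integer case.
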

For $c=1$ the existence of the limit \eqref{E:abra}  was established
by Tao in \cite{Ta08} (see also \cite{To09}, \cite{Au09},
\cite{Ho09} for other subsequent proofs). The case $0<c<1$ can be
easily reduced to the case $c=1$ (see Lemma~\ref{L:ChangeVar}
below).

\begin{conjecture}\label{C:ConjCommuting2}
Let $T_1,\ldots,T_\ell$ be commuting measure preserving transformations acting on a probability space $(X,\X,\mu)$.
Let $c_1,\ldots,c_\ell\in \R\setminus \Z$ be  positive and distinct.

Then  for every
   $f_1,\dots,f_\ell\in L^\infty(\mu)$ we have
  $$
\lim_{N\to\infty}    \frac1N \sum_{n=1}^N f_1(T^{[n^{c_1}]}_1x)\cdot  \ldots
    \cdot f_\ell(T^{ [n^{c_\ell}]}_\ell x)= \tilde{f}_1 \cdot\ldots \cdot \tilde{f}_\ell,
  $$
where $\tilde{f}_i=\E(f_i|\mathcal{I}(T_i))$, and the convergence takes place in $L^2(\mu)$.
\end{conjecture}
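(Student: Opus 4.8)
The plan is to reduce the product formula to a characteristic-factor statement for the $\Z^\ell$-action generated by the $T_i$, and then to attack that statement by a van der Corput / PET scheme in which the distinct growth rates of the $c_i$ supply the termination mechanism. Concretely, I would first show it suffices to prove the following: for any subfamily of distinct positive non-integers $\{c_i\}_{i\in S}$ and commuting $\{T_i\}_{i\in S}$, if $\E(g_j\mid\mathcal{I}(T_j))=0$ for some $j\in S$, then
\[
\lim_{N\to\infty}\frac1N\sum_{n=1}^N\prod_{i\in S} g_i(T_i^{[n^{c_i}]}x)=0 \quad\text{in } L^2(\mu).
\]
Granting this, decompose $f_i=\tilde f_i+g_i$ with $g_i=f_i-\E(f_i\mid\mathcal{I}(T_i))$ and expand the product over the $2^\ell$ subsets $S$ of coordinates carrying a $g_i$. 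Because $\tilde f_i$ is $T_i$-invariant we have $\tilde f_i(T_i^{[n^{c_i}]}x)=\tilde f_i(x)$ for every $n$, so each term pulls its invariant factors out of the average and leaves precisely an average of the type above over the coordinates in $S$. Every term with $S\neq\emptyset$ then vanishes in the limit (each $g_i$, $i\in S$, is orthogonal to $\mathcal{I}(T_i)$), while the $S=\emptyset$ term equals $\tilde f_1\cdots\tilde f_\ell$; this is exactly the asserted identity.

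\textbf{Two boundary inputs.} When all the exponents lie below $1$ the sequences have sublinear growth and pairwise distinct growth rates, so Theorem~\ref{T:ConvNonCommuting} applies verbatim (commutativity is not even required) and already yields the limit $\tilde f_1\cdots\tilde f_\ell$; in particular the characteristic-factor statement holds in this range. At the opposite extreme, the single-function case is spectral: if $g\perp\mathcal{I}(T)$ its spectral measure has no atom at $1$, and since $c\notin\Z$ the exponential sums $\frac1N\sum_n\lambda^{[n^c]}$ tend to $0$ for every $\lambda\neq 1$ on the circle, so $\frac1N\sum_n T^{[n^c]}g\to 0$ by dominated convergence. These will serve as the base cases of the induction.

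\textbf{The inductive (van der Corput) step.} For a subfamily with some $c_i>1$, I would pick a convenient index $j\in S$, apply the van der Corput inequality to $u_n=\prod_{i\in S}g_i(T_i^{[n^{c_i}]}x)$, and then shift by $T_j^{-[n^{c_j}]}$ inside the resulting correlations. This replaces the $j$-th factor by a fixed function $\overline{g_j}$ together with $g_j$ iterated along the difference sequence $[(n+h)^{c_j}]-[n^{c_j}]$, whose growth is of order $h\,n^{c_j-1}$, and turns the remaining factors into iterates of the commuting action along vector-valued sequences. The point is that, for each fixed $h$, the new family of iterating sequences again has pairwise distinct growth rates and the $T_j$-difference sequence is separated in growth from the others, so a PET-type complexity (measured by these growth rates rather than by the number of functions) strictly decreases and the recursion terminates at the two boundary inputs above.

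\textbf{Main obstacle.} The crux is the same phenomenon flagged after Theorem~\ref{T:ConvSeveral}: once the van der Corput/PET reduction has been carried out, one must control a potentially nontrivial characteristic factor of the \emph{commuting} $\Z^\ell$-action and show that it contributes nothing beyond the product of the invariant factors $\mathcal{I}(T_i)$. In the single-transformation setting this is achieved in \cite{Fr09} by recognizing the factor as a nilsystem and invoking equidistribution on nilmanifolds. For commuting transformations the analogous structure theory of characteristic (Host--Kra--type) factors for $\Z^\ell$-actions is much less developed, and importing the required nilmanifold equidistribution into the multi-transformation setting is the difficulty that keeps the statement open.
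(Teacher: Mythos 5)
You should first be aware that this statement is not proved in the paper at all: it is posed there as an open problem (Problem~\ref{C:ConjCommuting2} of Section~\ref{SS:Conjectures}), and the only thing the paper records about it is exactly your first boundary input, namely that when all the exponents $c_i$ are smaller than $1$ the claim follows from Theorem~\ref{T:ConvNonCommuting}, with no commutativity needed. So there is no proof to compare yours against; the question is whether your sketch settles the problem, and it does not --- as you essentially concede in your final paragraph. Your reduction (splitting $f_i=\tilde f_i+g_i$ and expanding) and your two base cases are correct, but they were never the issue.

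The gap is the inductive van der Corput step, and it is fatal rather than technical. After you compose with $T_j^{-[n^{c_j}]}$, the remaining factors are iterates of the joint $\Z^\ell$-action, e.g. $g_i\bigl(T_i^{[n^{c_i}]}T_j^{-[n^{c_j}]}x\bigr)$, and two things go wrong. First, the bookkeeping claim is false: the difference sequence $[(n+h)^{c_j}]-[n^{c_j}]$ grows like $n^{c_j-1}$, and $c_j-1$ may coincide with another exponent (take $c_j=5/2$, $c_i=3/2$), so pairwise distinctness of growth rates is not preserved; moreover the top growth $n^{c_j}$ cannot be cancelled by shifting along any single $T_i$, since it now appears attached to $T_j$ inside mixed iterates, so it is not clear your complexity decreases at all. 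Second, and more fundamentally, the hypothesis $\E(g_j\mid\mathcal I(T_j))=0$ does not propagate through the recursion: the terminal averages it produces are governed by \emph{joint} invariant factors such as $\mathcal I(T_iT_j^{-1})$ --- already for linear iterates $\frac1N\sum_{n=1}^N f_1(T_1^nx)\cdot f_2(T_2^nx)$ the limit is not $\tilde f_1\cdot\tilde f_2$ (take $T_1=T_2=T$, where it equals $\E(f_1f_2\mid\mathcal I(T))$) --- and orthogonality to $\mathcal I(T_j)$ gives no control whatsoever over such factors. In the single-transformation setting the paper closes precisely this loop with the Host--Kra seminorms, the structure theorem (Theorem~\ref{T:HoKra}), and the nilmanifold equidistribution results of \cite{Fr09}; the missing idea in your proposal is an analogue of that entire chain (seminorms whose vanishing follows from the stated hypothesis, a structure theorem identifying the corresponding factors, and an equidistribution theorem on those factors) for commuting $\Z^\ell$-actions with fractional-power iterates. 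You name this obstacle yourself, but naming it is not overcoming it; that is exactly why the statement is a problem in the paper and not a theorem.
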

This is an immediate consequence of Theorem~\ref{T:ConvNonCommuting}
when all the exponents $c_i$ are smaller than $1$  and the commutativity of the transformations $T_i$ is not needed in this case.

\subsubsection{Prime numbers}\label{SSS:ConPrimes}
The results of this article  should remain true when one makes the substitution $n\to p_n$ where
$p_n$ denotes the $n$-th prime number.
For instance:
\begin{conjecture}
 If $c$ is a positive non-integral real number,
    then the sequence $[p_n^c]$ is good for multiple recurrence and convergence.
Furthermore, the limit of the  corresponding multiple ergodic averages is equal to the limit of
the ``Furstenberg averages" (defined by \eqref{E:Furst}).
\end{conjecture}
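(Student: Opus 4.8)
The plan is to reduce the prime-indexed assertion to the Hardy-field results already proved together with the distribution theory of the primes in nilmanifolds. Since $p_n^c\to\infty$, recurrence will follow once convergence and the identification of the limit are in hand: if, for $f_i=1_A$, the averages
\[
\frac1N\sum_{n=1}^N 1_A(T^{[p_n^c]}x)\cdot 1_A(T^{2[p_n^c]}x)\cdots 1_A(T^{\ell[p_n^c]}x)
\]
converge in $L^2(\mu)$ to the Furstenberg limit, then integrating $1_A$ against that limit reproduces $\lim_N\frac1N\sum_{n=1}^N\mu(A\cap T^{-n}A\cap\cdots\cap T^{-\ell n}A)$, which is strictly positive by Furstenberg's multiple recurrence theorem. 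So it suffices to prove convergence of \eqref{E:basicAP} with $[a(n)]$ replaced by $[p_n^c]$ and to show the limit equals the Furstenberg limit \eqref{E:Furst}. First I would pass from the ordering over the first $N$ primes to a weighted sum over integers: reindexing gives the average $\frac1{\pi(M)}\sum_{p\le M}(\cdots)$ with $M=p_N$, and by partial summation (the contribution of prime powers being negligible) this equals, up to $o(1)$, the von Mangoldt weighted average $\frac1M\sum_{m\le M}\Lambda(m)\,f_1(T^{[m^c]}x)\cdots f_\ell(T^{\ell[m^c]}x)$.

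Next I would apply the $W$-trick, splitting according to residues $m\equiv b\pmod W$ with $(b,W)=1$ and $W=\prod_{p\le w}p$, and use the Green--Tao--Ziegler decomposition $\frac{\phi(W)}{W}\Lambda(Wn+b)=1+\nu_b(n)$, where $\nu_b$ is bounded and small in the Gowers norm $U^{s}$ for $s=s(\ell)$ fixed by the complexity of the problem. For each fixed $b$ the function $t\mapsto(Wt+b)^c$ lies in $\H$, has polynomial growth, and (as $c$ is a non-integer) satisfies $|(Wt+b)^c-\alpha p(t)|\succ\log t$ for every $\alpha\in\R$ and $p\in\Z[t]$, so Theorem~\ref{T:ConvSingleFormula} applies and the ``main term'' $1\cdot(\cdots)$ converges to the Furstenberg limit \eqref{E:Furst}. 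The decisive point is that this limit is independent of $W$ and $b$, so averaging over the $\phi(W)$ admissible residues again yields exactly the Furstenberg limit; the structured part of $\Lambda$ thus produces the desired answer.

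The whole difficulty is then concentrated in the uniform (minor-arc) part: one must show that for a bounded weight $\nu$ with $\|\nu\|_{U^s}$ small,
\[
\frac1N\sum_{n\le N}\nu(n)\,f_1(T^{[a(n)]}x)\cdots f_\ell(T^{\ell[a(n)]}x)\longrightarrow 0,\qquad a(t)=(Wt+b)^c,
\]
in $L^2(\mu)$. By Theorem~\ref{T:CharSingle} the nilfactor $\cZ$ is characteristic for $\{[a(n)],2[a(n)],\dots,\ell[a(n)]\}$, so I may assume the system is an inverse limit of nilsystems; then $\prod_j f_j(T^{j[a(n)]}x)=\Psi([a(n)])$ for a nilsequence $\Psi$ of bounded complexity evaluated along the Hardy sequence. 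If $\Psi([a(n)])$ were itself a nilsequence in $n$, its correlation with $\nu$ would vanish by the orthogonality of Gowers-uniform functions to nilsequences (Green and Tao). The obstruction is that $a$ is not polynomial, so $\Psi([a(n)])$ is not literally a nilsequence. I would circumvent this as in the unweighted convergence proof: partition $[1,N]$ into blocks on which $a'(t)=cW(Wt+b)^{c-1}$ is essentially constant, so that $[a(n)]$ behaves there like an arithmetic progression with slowly varying step and $\Psi([a(n)])$ is well approximated by a polynomial nilsequence, and then apply the nilsequence--uniformity orthogonality block by block, uniformly in the block parameters, summing the contributions.

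The main obstacle is precisely this uniform, block-wise comparison. Two things must be controlled simultaneously and uniformly over the growing number of blocks and over the bounded family of nilsequences $\Psi$: the linearization error from the fluctuation of $a(n+h)-a(n)-a'(n)h$ and from the fractional parts $\{a(n)\}$, and the minor-arc estimate, which demands that the Gowers norm of $\nu$ dominate the correlation with \emph{every} low-complexity polynomial nilsequence arising across the blocks. In effect one needs a quantitative, parameter-uniform equidistribution theorem for the orbit $\bigl(g_0^{[p^c]}y\Gamma\bigr)_{p\ \text{prime}}$ in nilmanifolds that merges the Green--Tao equidistribution of polynomial nilsequences along primes with the Hardy-field equidistribution established in the companion paper \cite{Fr09}. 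Producing this merged statement, rather than either ingredient in isolation, is the crux, and for $\ell\ge2$ it is what keeps the assertion at the level of a conjecture; the case $\ell=1$ is already accessible through the single-recurrence theory for $[p_n^c]$ developed by Boshernitzan, Wierdl, and Nair.
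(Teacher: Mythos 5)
There is no paper proof to compare against here: the statement you are addressing is posed in the paper as an \emph{open problem} (in the subsection on prime numbers, Section~\ref{SSS:ConPrimes}), accompanied only by the one-line suggestion that one might compare the averages along $[p_n^c]$ with those along $[n^c]$, in the spirit of \cite{FrHK07}. So the only question is whether your argument settles the problem, and it does not --- as you yourself concede in your final paragraph. The chain ``reindex to a $\Lambda$-weighted average, apply the $W$-trick and the Green--Tao--Ziegler decomposition, dispose of the structured part by Theorem~\ref{T:ConvSingleFormula}'' is a sensible programme, and the recurrence-from-convergence reduction at the start is fine. But the entire content of the problem sits in the step you leave open: proving that the Gowers-uniform part of the von Mangoldt function is asymptotically orthogonal to $n\mapsto f_1(T^{[a(n)]}x)\cdots f_\ell(T^{\ell[a(n)]}x)$ with $a(t)=(Wt+b)^c$. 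Your block decomposition converts this into correlation estimates against a family of polynomial nilsequences whose parameters vary with the block and with $N$, and neither the paper, nor \cite{Fr09}, nor the Green--Tao theory supplies the required uniformity in those parameters; identifying this missing ``merged'' equidistribution statement is not the same as proving it. The proposal is therefore a strategy with an acknowledged gap, not a proof, and the statement rightly remains a conjecture.

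Two further points in the sketch would need repair even granting that missing input. First, the uniform part $\nu_b(n)=\tfrac{\phi(W)}{W}\Lambda(Wn+b)-1$ is \emph{not} a bounded weight ($\Lambda$ takes values $\log p$), and smallness of a Gowers norm of an unbounded function yields no correlation estimate without the pseudorandom-majorant/transference apparatus (generalized von Neumann theorem relative to a majorant), which you never invoke; this is precisely what makes \cite{FrHK07} and the work of Green--Tao--Ziegler technically heavy, and here the generalized von Neumann inequality would itself have to be adapted to Hardy-field iterates. Second, your appeal to Theorem~\ref{T:CharSingle} to pass to the nilfactor inside a weighted average is stated in the paper only for unweighted averages; it does extend to \emph{bounded} weights, since Proposition~\ref{P:uniform} and Lemma~\ref{L:AwayPoly} carry bounded sequences $c_{N,n}$ through the argument, but for the unbounded weight $\nu_b$ this reduction must again pass through transference first. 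Both repairs lead back to the same unproven core, so the gap is genuine and not merely expository.
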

 One could try to verify
such a statement by comparing  the averages along $[p_n^c]$
to the averages along the sequence $[n^{c}]$  for which all required properties are known.
A similar strategy was used  in \cite{FrHK07} to deal with double recurrence (and convergence) problems of the shifted primes.

Another challenge is to  use the Szemer\'edi type results of
Sections~\ref{SS:ArithmeticProgressions} and \ref{SS:SeveralSequences}
and prove that the primes contain the corresponding Hardy-field patterns. For instance:
 \begin{conjecture}\label{Pr:8}
If $c,c_1,c_2\in \R$ are positive, then   the  prime numbers  contain patterns of the form
 $$
 \{m,m+[n^{c}],m+2[n^{c}]\} \ \text{ and } \
\{m,m+[n^{c_1}],m+[n^{c_2}]\}.
$$
\end{conjecture}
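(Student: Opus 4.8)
When $c$ (resp. $c_1,c_2$) is a positive integer these are polynomial progressions, already covered by the theorem of Tao and Ziegler that the primes contain arbitrarily long polynomial progressions, so the genuinely new content is the non-integral (Hardy) case. For that case the plan is to transfer the density statements already established in Theorems~\ref{T:SzSingle} and \ref{T:SzMultiple} to the primes by means of a pseudorandom majorant and a relative Szemer\'edi-type theorem, following the strategy of Green and Tao for arithmetic progressions as extended to polynomial patterns by Tao and Ziegler. Since the primes have zero density, one first performs the usual $W$-trick: fixing $W=\prod_{p\le w}p$ and a residue $b$ coprime to $W$, one studies the renormalized weight $\Lambda_{b,W}(m)=\frac{\phi(W)}{W}\Lambda(Wm+b)$, whose average is $\sim 1$ and which is dominated by a Goldston--Y{\i}ld{\i}r{\i}m type measure $\nu$ satisfying appropriate linear forms and correlation conditions. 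With $a(t)=t^{c}$ (resp. $a_i(t)=t^{c_i}$), which lie in $\LE\cap\G$, the goal becomes to bound below by a positive constant the weighted count
$$
\frac{1}{N}\sum_{n=1}^N\ \E_{m}\,\Lambda_{b,W}(m)\,\Lambda_{b,W}(m+[a(n)])\,\Lambda_{b,W}(m+2[a(n)]),
$$
and its two-sequence analogue, after which positivity produces the desired prime configurations for infinitely many $n$.

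\textbf{The three-term pattern.} For $\{m,m+[a(n)],m+2[a(n)]\}$ I would exploit that a progression of length three is governed by the Gowers $U^2$ norm, so the analysis reduces to Fourier analysis. Splitting each weight into a $\nu$-bounded main part plus balanced fluctuation and applying a generalized von Neumann estimate, the count separates into a main term, in which each weight is replaced by its average, and error terms controlled by $U^2$-type quantities for $\Lambda_{b,W}-1$ together with the Weyl sums $\frac1N\sum_n e([a(n)]\alpha)$. The latter are exactly the objects handled by the equidistribution theory of Hardy sequences used throughout this paper (Boshernitzan's estimates and the van der Corput/Weyl bounds underlying Theorems~\ref{T:CharSingle} and \ref{T:ConvSingleFormula}): for $a(t)=t^{c}$ with $c$ non-integral these sums are $o(1)$ away from $\alpha\in\frac1q\Z$ with $q\ll W$, which is precisely what forces only the trivial frequency to survive. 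Combined with the standard restriction/Fourier estimates for $\Lambda_{b,W}-1$ on the minor arcs, this leaves the expected main term, which after the $W$-trick is a positive multiple of $(\E\,\Lambda_{b,W})^{3}\sim 1$.

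\textbf{The two-sequence pattern.} For $\{m,m+[a_1(n)],m+[a_2(n)]\}$ with $c_1\neq c_2$ I would instead use that the two sequences have \emph{different growth rates}, the hypothesis that drives the decoupling in Theorem~\ref{T:ConvSeveral}. Joint equidistribution of $\big([a_1(n)]\alpha_1,[a_2(n)]\alpha_2\big)$ then forces the relevant exponential sums to factor, so the three forms $m,\ m+[a_1(n)],\ m+[a_2(n)]$ are in general position for almost every $n$. The count consequently factors, in the limit, into a product of three separate averages of $\Lambda_{b,W}$, reproducing the product structure $(\mu(A))^{3}$ of Theorem~\ref{T:SzMultiple}; positivity of the resulting main term again yields infinitely many such prime configurations. (The degenerate case $c_1=c_2$ collapses to a two-point pattern and is trivial.)

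\textbf{The main obstacle.} The hard part will be the relative Szemer\'edi step itself, namely upgrading the \emph{soft}, ergodic-theoretic proofs of Theorems~\ref{T:SzSingle} and \ref{T:SzMultiple} to \emph{finitary} statements valid for functions merely bounded by $\nu$ rather than by $1$. The present proofs rest on the Furstenberg correspondence, characteristic factors, and equidistribution on nilmanifolds, none of which obviously relativizes. What is needed is a generalized von Neumann theorem asserting that the Hardy-sequence averages above are controlled by Gowers norms of the individual factors, uniformly in the $\nu$-bounded setting, established through a finitary PET-induction argument fed by the Hardy-sequence Weyl estimates, together with a verification that $\nu$ satisfies the linear forms and correlation conditions for forms constrained by $d=[a(n)]$. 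For the three-term case the $U^2$ reduction makes this plausible with current technology, whereas a general $\ell$-term Hardy-field pattern would further demand higher-order ($U^{\ell}$) uniformity for Hardy sequences, which is presently out of reach.
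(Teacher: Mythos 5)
There is nothing in the paper to compare your proposal against: the statement you were asked about is not a theorem of the paper but Problem~8, posed in Section~\ref{SSS:ConPrimes} (``Further directions'') as an open challenge. The author proves the \emph{density} versions (Theorems~\ref{T:SzSingle} and \ref{T:SzMultiple}) and explicitly leaves the prime version open, remarking only that the Tao--Ziegler polynomial progressions theorem \cite{TZ08} settles the case where the exponents are positive \emph{rationals} (not merely integers, as you state --- for $c=p/q$ one restricts $n$ to $q$-th powers). So your task was to prove something that the paper itself regards as unsolved, and your proposal, read as a proof, has a genuine gap --- one you candidly identify yourself: the entire transference/relative-Szemer\'edi step (a generalized von Neumann theorem for Hardy-sequence patterns with $\nu$-bounded functions, plus verification of linear forms and correlation conditions for a Goldston--Y{\i}ld{\i}r{\i}m majorant along shifts by $[n^c]$) is nowhere established, neither in this paper nor in the literature it cites. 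A strategy whose ``main obstacle'' is the core of the argument is a research program, not a proof.

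One technical point in your sketch deserves a specific warning. Your claim that the three-term pattern $\{m,m+[n^c],m+2[n^c]\}$ ``is governed by the Gowers $U^2$ norm'' conflates two different problems. Ordinary three-term progressions, with the common difference averaged over \emph{all} integers, have complexity one; here the difference is constrained to the sparse set $\{[n^c]\colon n\le N\}$, and counting operators with differences in a sparse set are not automatically controlled by $U^2$ norms of the individual functions. This is exactly why Tao--Ziegler need the full higher-order machinery even for patterns like $\{m,m+n^2,m+2n^2\}$: PET-type eliminations applied to the sparse parameter raise the order of the Gowers norm required. The same phenomenon appears in this paper's own ergodic analysis --- the characteristic factor for $\{[a(n)],2[a(n)]\}$ is the full nilfactor $\cZ$, not the Kronecker factor, precisely because of the $n$-averaging along a Hardy sequence. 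So even the case you call ``plausible with current technology'' needs a new idea, not just bookkeeping, and the two-sequence case inherits the same difficulty through your appeal to joint equidistribution, which again must be made to hold relative to the majorant $\nu$.
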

We remark that using the corresponding density results (in addition to many other things), Green and Tao (\cite{GT08}) proved the existence of arbitrarily  long arithmetic progressions in the primes,
and Tao and Ziegler  (\cite{TZ08}) the existence of arbitrarily  long polynomial progressions in the primes (this last result allows one to handle  Problem~\ref{Pr:8} when  $c,c_1,c_2$ are positive rational numbers).
\subsection{Structure of the article and main ideas}
In Section~\ref{S:background} we gather some essential background material from ergodic theory,
equidistribution results on nilmanifolds, and basic facts about Hardy fields. Key for our study is
the structure theorem of Host and Kra (Theorem~\ref{T:HoKra}) and the quantitative equidistribution result of Green and Tao (Theorem~\ref{T:GT2}). The use of the latter result is rather implicit in this article since we frequently use results from the  companion article \cite{Fr09} that were proved using quantitative equidistribution.

In Section~\ref{S:CharFactorsSingle} we prove Theorem~\ref{T:CharSingle} which shows that under appropriate conditions
the nilfactor
is characteristic for families of the form $\{[a(n)],\ldots,\ell[a(n)]\}$. We remark
 that for functions of ``fractional-power'' growth (like $a(t)=t^{3/2}$), this problem can be handled using more or
 less conventional techniques. But for functions that have slowly growing derivatives  (like $a(t)=t\log{t}$)  the
 ``standard'' techniques become problematic.
 To overcome this problem,  we partition the positive integers into intervals of appropriate
size, and in each such interval we use the Taylor expansion of the function to get an approximation  by real valued  polynomials of fixed degree. This approximation works well when the function stays logarithmically away from
polynomials, and as a result functions like $t^{3/2}$, $t\log{t}$, and $t+(\log{t})^2$ become practically indistinguishable for our purposes.
After performing these initial maneuvers we are led to estimating some multiple ergodic averages involving  polynomial
iterates (Proposition~\ref{P:uniform}), a problem that can be handled using more or less standard techniques.

In Section~\ref{S:CharFactorSeveral} we prove Theorem~\ref{T:CharSeveral} which shows that under appropriate conditions the nilfactor
is  characteristic for families of the form  $\{[a_1(n)],\ldots,[a_\ell(n)]\}$. Since we only work
 with functions of ``fractional-power'' growth,
 we are able to adapt an  argument of Bergelson and H{\aa}land  (\cite{BH09}) that was used to
 establish a convergence result for  weakly mixing systems.
 The proof consists of two steps. One first deals with the case where all the functions have at most linear growth (Proposition~\ref{P:CharSublin}). This is done  by successively applying  Van der Corput's lemma and a change of variable trick.
Then one uses   a modification of the polynomial exhaustion technique of Bergelson
to reduce the general case to the case of at most linear growth.

In the last section we complete the proof of  the convergence and recurrence results of Sections~\ref{SS:ArithmeticProgressions} and \ref{SS:SeveralSequences}.
With the exception of Theorem~\ref{T:ConvNonCommuting} that can be handled directly, to prove the other convergence results
 we first make use of the
 results from Sections~\ref{S:CharFactorsSingle} and \ref{S:CharFactorSeveral}
to show that the nilfactor of the system is characteristic for the appropriate multiple ergodic averages.
Then Theorem~\ref{T:HoKra} enables us to reduce matters to nilsystems. Finally, we
use equidistribution results from the companion paper  \cite{Fr09} to verify the appropriate
convergence property for nilsystems. The recurrence results
are direct consequences of the corresponding convergence results, with the exception of
a special case of  Theorem~\ref{T:RecSingle} where the function  is logarithmically close to
a constant multiple of an integer polynomial.
In this case, a somewhat
complicated analysis is used to prove the corresponding recurrence property for nilsystems.

\subsection{Notational conventions.} The following notation will be
used throughout the article: $\N=\{1,2,\ldots\}$, $\T^k=\R^k/\Z^k$, $Tf=f\circ T$,
$e(t)=e^{2\pi i t}$,
 $[t]$ denotes the integer part of $t$,  $\{t\}=t-[t]$,
 $\norm{x}=d(t,\Z)$,
$\E_{n\in A} a(n)=\frac{1}{|A|}\sum_{n\in A} a(n)$.  We sometimes write $t$ to represent an element $t\Z$ of $\T=\R/\Z$.
By $a(t)\prec b(t)$ we mean
$\lim_{t\to\infty}a(t)/b(t)=0$, by $a(t)\sim b(t)$ we mean $\lim_{t\to\infty}a(t)/b(t)$ is a non-zero real number,
and by $a(t)\ll b(t)$ we mean $|a(t)|\leq C |b(t)|$
 for some absolute constant $C$ and all large enough values of $C$ . By $\R_+$ we denote some half-line $(c,+\infty)$.
When often write $\infty$ instead of $+\infty$. By $\mathcal{H}$ we
denote the class of all functions that belong to some Hardy field,
by $\mathcal{LE}$ the class of all logarithmic-exponential
functions, and by $\mathcal{G}$ the class of functions in $C(\R_+)$
that satisfy $t^{k+\varepsilon}\prec a(t)\prec t^{k+1}$ for some
non-negative integer $k$ and $\varepsilon>0$.

\subsection{Acknowledgement.} The author would like to thank
J.~Ku{\l}aga-Przymus for pointing out a mistake in the statement of
Lemma~\ref{L:ChangeVar}.

\section{Background material}\label{S:background}
\subsection{Hardy fields}\label{SS:Hardy}
We collect here  some basic properties of elements of $\H$ relevant to our study.
The reader can find more background material in \cite{Bos94} and the references
therein.

Every element of $\H$ has eventually constant sign.
 Therefore, if  $a\in \H$, then $a(t)$ is eventually monotonic (since $a'(t)$ has eventually constant sign), and  the limit
$\lim_{t\to \infty} a(t)$ exists (possibly infinite).  For every two
functions $a\in \H, b\in \LE$ $(b\neq 0)$, we have $a/b\in \H$. It follows that   the asymptotic growth ratio
$\lim_{t\to \infty}a(t)/b(t)$ exists (possibly infinite).

We  caution the reader  that $\H$ is not a field, and some pairs of functions in $\H$  are not
asymptotically comparable. This defect of
 $\H$  plays a role in some of our results, and can be sidestepped  by  restricting
 our attention to
 the Hardy field of logarithmic-exponential functions $\LE$.

A key property of elements of $\H$ with
 polynomial growth is that one can relate their growth rates with
the growth rates of their derivatives:
\begin{lemma}[{\bf Lemma 2.1 in  \cite{Fr09}}]\label{L:properties}
Suppose that   $a\in \H$ has polynomial growth.  We have the following

$(i)$ If $t^\varepsilon \prec a(t)$ for some $\varepsilon>0$, then  $a'(t)\sim a(t)/t$.

$(ii)$ If $t^{-k}\prec a(t)$ for some $k\in\N$, and $a(t)$ does not converge to a non-zero constant,
then  $a(t)/(t(\log{t})^2)\prec a'(t)\ll a(t)/t$.
\end{lemma}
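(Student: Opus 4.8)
The plan is to reduce the whole lemma to the behavior of the logarithmic derivative $a'(t)/a(t)$, using two ingredients: the closure property of $\H$ stated above (an element of $\H$ divided by a nonzero element of $\LE$ stays in $\H$), and the fundamental theorem of calculus. The closure property is what guarantees that the auxiliary quantities $t\,a'(t)/a(t)$ and $t(\log t)^2\,a'(t)/a(t)$ again belong to $\H$ and hence have limits in $[-\infty,+\infty]$: indeed $a$ lies in some Hardy field, so $a'$ and $1/a$ lie there too, whence $a'/a\in\H$, and dividing by $1/t$ or $1/(t(\log t)^2)\in\LE$ keeps us in $\H$. The fundamental theorem of calculus is what lets me convert size information about $a'/a$ into size information about $a$, since $a$ (hence $a'/a$) has eventually constant sign, so that $\int^t a'/a = \log|a(t)| + \text{const}$.

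First I would analyze $L:=\lim_{t\to\infty} t\,a'(t)/a(t)$, which exists by the above. I claim $L$ is finite under the hypotheses of both (i) and (ii). If $L=+\infty$, then for every $C$ we have $a'/a> C/t$ eventually, and integrating gives $a(t)\gg t^C$, contradicting polynomial growth once $C$ exceeds the exponent with $a\ll t^k$. Symmetrically $L=-\infty$ forces $a(t)\prec t^{-C}$ for all $C$, contradicting $t^{-k}\prec a(t)$ (a fortiori $t^\varepsilon\prec a(t)$). Finiteness of $L$ is precisely the boundedness of $t\,a'(t)/a(t)$, which already yields the upper bound $a'(t)\ll a(t)/t$ in both parts.

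For part (i) it remains to rule out $L=0$. If $L=0$, then for every $\delta>0$ one has $a'/a<\delta/t$ eventually, so integration gives $a(t)\ll t^\delta$; choosing $\delta<\varepsilon$ contradicts $t^\varepsilon\prec a(t)$. Since $t^\varepsilon\prec a(t)$ forces $a(t)\to\infty$ with $a$ eventually increasing, $L$ is positive, so $L\in(0,k]$ is finite and nonzero, i.e.\ $a'(t)\sim a(t)/t$, which completes (i).

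The genuinely delicate point is the lower bound $a(t)/(t(\log t)^2)\prec a'(t)$ of part (ii). Setting $R(t):=t(\log t)^2\,a'(t)/a(t)\in\H$, its limit $M\in[-\infty,+\infty]$ exists, and the desired relation is equivalent to $|M|=\infty$, because then the ratio $\frac{a/(t(\log t)^2)}{a'}=1/R\to 0$. Suppose instead $M$ is finite. Then $|a'/a|\ll \frac{1}{t(\log t)^2}$, and since $\int^\infty \frac{dt}{t(\log t)^2}<\infty$ the logarithmic derivative is absolutely integrable at infinity; by the fundamental theorem of calculus $\log|a(t)|$ converges to a finite limit, forcing $a(t)$ to converge to a \emph{nonzero} constant, which the hypothesis of (ii) forbids. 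Hence $|M|=\infty$, as needed. I expect this integrability-versus-divergence dichotomy to be the crux, and it explains the exponent: $\int^\infty dt/(t(\log t)^2)$ converges whereas $\int^\infty dt/(t\log t)$ diverges, so the argument—and the statement—would fail with $(\log t)^1$ in place of $(\log t)^2$, matching the sharpness one already sees for $a(t)=\log t$. Beyond elementary calculus, the only input is the closure property of $\H$, which ensures that every auxiliary ratio is again a Hardy-field function and therefore has a limit.
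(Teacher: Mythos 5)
Your proof is correct. Note that the present paper does not actually prove this lemma (it is quoted from the companion paper \cite{Fr09}), but your argument—Hardy-field closure guaranteeing that $t\,a'(t)/a(t)$ and $t(\log t)^2\,a'(t)/a(t)$ have limits in $[-\infty,+\infty]$, integration of $a'/a$ against the growth hypotheses to pin down the first limit, and the convergence of $\int^\infty dt/(t(\log t)^2)$ to force the second limit to be infinite—is essentially the proof given there, with direct integration of the logarithmic derivative playing the role of the L'H\^opital step used in \cite{Fr09}.
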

We are going to freely use all these properties in the sequel.

\subsection{Nilmanifolds}
\label{SS:BackgroundNil}
 All the equidistribution results  needed in
this article
were established
in \cite{Fr09} with the exception of one result that will be needed
to cover a  special case of  Theorem~\ref{T:RecSingle}.
 Below we gather some basic facts and a quantitative equidistribution result that
  will be used in its proof.

  The proofs of all the results mentioned below
can be found or deduced from \cite{Lei05a} and \cite{GT09c}.

\subsubsection{Definitions and basic properties}
A \emph{nilmanifold}  is a homogeneous space $X=G/\Gamma$ where  $G$ is a nilpotent Lie group,
and $\Gamma$ is a discrete cocompact subgroup of $G$. If $G_{k+1}=\{e\}$ , where $G_k$ denotes the $k$-th commutator subgroup of $G$, we say that $X$ is a
 $k$-\emph{step} nilmanifold.  With $G_0$ we denote the connected component of the identity element in $G$.
 The representation of a nilmanifold
$X$ as a homogeneous
space of a nilpotent Lie group $G$ is not unique. It can be shown (\cite{Lei05a}) that if $X$
is connected, then it  admits a
representation of the form $X=G/\Gamma$ such that
$G_0$ is simply connected  and  $G=G_0\Gamma$.
{\em For connected nilmanifolds $X=G/\Gamma$,  we will always assume that $G$ satisfies these two
extra assumptions.}

The group $G$ acts on $G/\gG$ by left
translation where the translation by a fixed element $b\in G$ is given
by $T_{b}(g\gG) = (bg) \gG$.  By $m_X$ we denote the unique probability
measure on $X$ that is invariant under the action of $G$ by left
translations (called the {\it normalized Haar measure}) and $\G/\gG$ denote the
Borel $\sigma$-algebra of $G/\gG$. Fixing an element $b\in G$, we call
the system $(G/\gG, \G/\gG, m, T_{b})$ a {\it  nilsystem}. We
call the elements of $G$ \emph{nilrotations}.

 For every $b\in G$ the  set $X_b=\overline{\{b^n\Gamma, n\in\N\}}$   is a nilmanifold $H/\Delta$, where $H$ is a closed subgroup of $G$ that contains $b$, and $\Delta=H\cap \Gamma$
is a discrete cocompact subgroup of $H$. Furthermore, for every $b\in G$ there exists an $r\in \N$ such that the nilmanifold $X_{b^r}$ is connected.

A nilrotation $b\in G$ is \emph{ergodic},
or \emph{acts ergodically on $X$},  if the sequence $(b^n\Gamma)_{n\in\N}$ is dense
in $X$. If $b\in G$ is ergodic, then  for
every $x\in X$ the sequence $(b^nx)_{n\in\N}$ is equidistributed in $X$.
  If the nilmanifold $X$ is connected and $b$ acts ergodically on $X$, then
 for every $r\in \N$ the element  $b^r$ also acts ergodically on $X$.


\subsubsection{A quantitative equidistribution result}
If $G$ is a nilpotent group, then a sequence $g\colon \Z\to G$ of the
form $g(n)=b_1^{p_1(n)}\cdot\ldots\cdot b_k^{p_k(n)}$, where $b_i\in G$,
and $p_i$ are polynomials taking integer values at the integers, is
called a \emph{polynomial sequence in} $G$. If the maximum
degree of the polynomials $p_i$ is at most $d$ we say that the
\emph{degree} of $g(n)$ is at most $d$.
A \emph{polynomial sequence on the
  nilmanifold} $X=G/\Gamma$ is a sequence of the form
$(g(n)\Gamma)_{n\in\Z}$ where $g\colon \Z\to G$ is a polynomial
sequence in $G$.

In \cite{GT09c}, Green and Tao proved a quantitative equidistribution result
for  polynomial sequences on nilmanifolds $X=G/\Gamma$ when the group
 $G$ is connected and simply connected.
We will need an  extension of this result to the non-connected case.
In order to state it we first introduce  some notions from \cite{GT09c} and \cite{FrW09}.

If $X=G/\Gamma$ is a connected nilmanifold, the {\em affine torus} of
$X$ is defined to be the homogeneous space $A=G/([G_0,G_0]\Gamma)$.
It is known (\cite{FrK06}) that every  nilrotation acting on
the affine torus  is isomorphic to a  unipotent affine transformation
on
  some finite dimensional torus\footnote{This means  $T\colon \T^l\to \T^l$ has the form
$T(t) = b\cdot S(t)$ for some unipotent homomorphism $S$ of $\T^l$
and $b\in \T^l$.} with the normalized Haar measure, and  furthermore
  the conjugation map can be taken to be continuous.
We can therefore identify the affine torus $A$ of a
nilmanifold $X$ with a finite dimensional torus $\T^l$ and think of a
nilrotation acting on $A$ as a unipotent affine transformation on
$\T^l$.

A \emph{quasi-character} of a nilmanifold $X=G/\Gamma$ is a
function $\psi\colon G\to \C$
that is a continuous homomorphism of $G_0$ (to the multiplicative group $\{z\in \C\colon |z|=1\}$) and satisfies $\psi(g\gamma)=\psi(g)$
for every $\gamma\in\Gamma$. Every quasi-character annihilates
$[G_0,G_0]$, and as a result factors through the affine torus $A$ of $X$. Under
an appropriate  isomorphism we have that $A\simeq
\T^l$ and every quasi-character of $X$ is mapped to a character of
$\T^l$. Therefore, thinking of $\psi$ as a character of $\T^l$ we have
$\psi(t)=e(\kappa\cdot t)$ for some  $\kappa\in\Z^l$, where $\cdot$
denotes the inner product operation.
  We refer to $\kappa$ as the
\emph{frequency} of $\psi$ and $\norm{\psi}=|\kappa|$ as the
\emph{frequency magnitude} of $\psi$.

If $p\colon\Z\to \R$ is a polynomial sequence of degree $k$,
then $p$ can be uniquely expressed in the form $
p(n)=\sum_{i=0}^k\binom{n}{i}\alpha_i $ where $\alpha_i\in\R$. For $N\in\N$ we
define
\begin{equation}\label{E:norms}
  \norm{e(p(n))}_{C^\infty[N]}=\max_{1\leq i\leq k}( N^i \norm{\alpha_i})
\end{equation}
where $\norm{t}=d(t,\Z)$.

Given $N\in\N$, a finite sequence $(g(n)\Gamma)_{1\leq n\leq N}$ is
said to be $\delta$-\emph{equidistributed} if
$$
\Big|\frac{1}{N}\sum_{n=1}^N F(g(n)\Gamma)-\int_{X}F \ dm_X\Big|\leq
\delta \norm{F}_{\text{Lip}(X)}
$$
for every Lipschitz function $F\colon X\to \C$ where
$
\norm{F}_{\text{Lip}(X)}=\norm{F}_\infty+ \sup_{x,y\in X, x\neq
  y}\frac{|F(x)-F(y)|}{d_X(x,y)}
$
for some appropriate metric $d_X$ on $X$.

The next result can be found  in \cite{FrW09} (see Theorem~3.4).
\begin{theorem}[{\bf Corollary
 of Green \&    Tao~\cite{GT09c}}]\label{T:GT2}
  Let $X=G/\Gamma$ be a connected nilmanifold (with $G_0$ simply connected), and $d\in\N$.

  Then for every small  enough $\delta>0$ there exists  $M=M_{X,d,\delta}\in \R$ with the following property: For every $N\in\N$,
  if $g\colon \Z\to G$ is a polynomial sequence of degree at most $d$
  such that the finite sequence $(g(n)\Gamma)_{1\leq n\leq N}$ is not
  $\delta$-equidistributed, then for some non-trivial quasi-character $\psi$ with $\norm{\psi}\leq M$ we have
  $$
    \norm{\psi(g(n))}_{C^\infty[N]}\leq  M
  $$
  where we think of $\psi$ as a
  character of some finite dimensional torus $\T^l$ (the affine
  torus) and $g(n)$ as a polynomial sequence of unipotent affine
  transformations on $\T^l$.
\end{theorem}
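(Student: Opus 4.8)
The plan is to deduce the statement from the connected and simply connected case established by Green and Tao in \cite{GT09c} by a reduction to the identity component, using throughout the standing assumptions $G=G_0\Gamma$ and $G_0$ simply connected. Set $\Gamma_0=G_0\cap\Gamma$. The inclusion $G_0\hookrightarrow G$ induces a homeomorphism $\iota\colon X_0:=G_0/\Gamma_0\to G/\Gamma=X$, $g_0\Gamma_0\mapsto g_0\Gamma$, because $G=G_0\Gamma$ forces every coset $g\Gamma$ to meet $G_0$. Here $G_0$ is connected and simply connected and $\Gamma_0$ is a cocompact lattice, so $X_0$ falls within the scope of \cite{GT09c}. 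Moreover $\iota$ is $G_0$-equivariant, hence carries the Haar measure $m_{X_0}$ to $m_X$ and, being a diffeomorphism of compact manifolds, carries Lipschitz functions to Lipschitz functions with comparable norms; consequently a finite orbit fails to be $\delta$-equidistributed in $X$ exactly when its preimage fails to be $\delta$-equidistributed in $X_0$, after harmlessly adjusting $\delta$ by a bounded factor absorbed into the final constant $M$.

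Next I would replace $g(n)\in G$ by a polynomial sequence with values in $G_0$ having the same orbit. Write $g(n)=b_1^{p_1(n)}\cdots b_k^{p_k(n)}$. Since $G=G_0\Gamma$, the group $\Gamma$ surjects onto $G/G_0$, so I may choose $\beta_i\in\Gamma$ with $\beta_iG_0=b_iG_0$ and set $\gamma(n)=\beta_1^{p_1(n)}\cdots\beta_k^{p_k(n)}\in\Gamma$. Then $h(n):=g(n)\gamma(n)^{-1}$ lies in $G_0$ for every $n$ (its image in $G/G_0$ is trivial) and satisfies $h(n)\Gamma=g(n)\Gamma$. Because polynomial sequences of degree at most $d$ in a nilpotent group form a group under coordinatewise multiplication (Leibman, \cite{Lei05a}), $h$ is a polynomial sequence of degree at most $d$, and since it takes values in $G_0$ it is such a sequence in $G_0$. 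Applying the theorem of \cite{GT09c} to $h$ on $X_0$ yields a nontrivial horizontal character $\eta$ of $G_0$, i.e.\ a continuous homomorphism $G_0\to\R/\Z$ annihilating $\Gamma_0$, with $\norm{\eta}\le M$ and $\norm{\eta\circ h}_{C^\infty[N]}\le M$.

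Finally I would transport $\eta$ to a quasi-character of $X$. Define $\psi\colon G\to\C$ by $\psi(g_0\gamma)=e(\eta(g_0))$ for $g_0\in G_0$ and $\gamma\in\Gamma$; this is well defined because any ambiguity in the factorization lies in $G_0\cap\Gamma=\Gamma_0$, which $\eta$ annihilates, and it is a continuous homomorphism of $G_0$ that is right-$\Gamma$-invariant, hence a quasi-character, nontrivial since $\psi|_{G_0}=\eta$. One checks that the affine torus $A=G/([G_0,G_0]\Gamma)$ is canonically isomorphic to the horizontal torus $G_0/([G_0,G_0]\Gamma_0)$ of $X_0$, because $[G_0,G_0]\Gamma\cap G_0=[G_0,G_0]\Gamma_0$, so $\eta$ and $\psi$ are the same character of $\T^l$ and $\norm{\psi}=\norm{\eta}\le M$. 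Right-$\Gamma$-invariance gives $\psi(g(n))=\psi(h(n)\gamma(n))=\psi(h(n))=e(\eta(h(n)))$, whence $\norm{\psi(g(n))}_{C^\infty[N]}=\norm{\eta\circ h}_{C^\infty[N]}\le M$, which is the required conclusion.

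I expect the main obstacle to be the middle step: producing $h$ as a genuine polynomial sequence in $G_0$ of the same degree while keeping the orbit unchanged. The delicate points are the term-by-term lift of $g$ through the possibly infinite discrete quotient $G/G_0$, the verification that the resulting product has degree at most $d$ and is polynomial in the subgroup $G_0$, and the bookkeeping that identifies the affine torus of $X$ with the horizontal torus of $X_0$ so that frequency magnitudes match; the measure- and metric-theoretic transfer through $\iota$ is routine but must be carried out so that the dependence of $M$ remains only on $X$, $d$, and $\delta$.
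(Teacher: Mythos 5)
A preliminary remark on the comparison itself: the paper does not actually prove this statement — it is quoted from Theorem~3.4 of \cite{FrW09} (see the sentence preceding the statement), so your proposal can only be measured against the reduction carried out there, which is of the same nature as yours: pass to $X_0=G_0/\Gamma_0$ via $G=G_0\Gamma$, replace the sequence by a $G_0$-valued one with the same orbit, and pull horizontal characters of $G_0$ back to quasi-characters of $X$. Your treatment of the homeomorphism $\iota$, the transfer of the Haar measure and of Lipschitz norms (with the loss absorbed into $M$), the well-definedness, continuity and non-triviality of $\psi$, and the identification $[G_0,G_0]\Gamma\cap G_0=[G_0,G_0]\Gamma_0$ of the affine torus with the horizontal torus of $X_0$ are all correct.

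There is, however, a genuine gap at exactly the step you flagged, and it is not mere bookkeeping. With the paper's definition of polynomial sequence, $h(n)=g(n)\gamma(n)^{-1}$ is trivially a polynomial sequence of degree at most $d$ \emph{in $G$} (just concatenate the two products; no theorem of Leibman is needed for this). What is not true is your next assertion: that a degree-$\le d$ polynomial sequence in $G$ which happens to take values in $G_0$ is a polynomial sequence of degree $\le d$ \emph{in $G_0$}, i.e.\ admits a representation $\prod_i c_i^{q_i(n)}$ with $c_i\in G_0$ and $\deg q_i\le d$. Already for a single generator this fails: if $b=a\gamma$ with $a\in G_0$, $\gamma\in\Gamma$, then $b^n\gamma^{-n}=\prod_{j=0}^{n-1}\gamma^{j}a\gamma^{-j}$, and since conjugation by $\gamma$ is a (generally non-trivial) unipotent automorphism of $G_0$, rewriting this product with base points in $G_0$ forces polynomials of degree up to the nilpotency step $s$ of $G$, not degree $1$. (Relatedly, the group property you attribute to Leibman fails if "degree" is taken in the discrete-derivative sense: in the Heisenberg group the product $a^nb^n$ of two degree-one sequences has derivative-degree two.) The correct statement, which is what your argument needs, is that $h$ is a polynomial sequence in $G_0$ of degree at most $C(d,X)$ (roughly $ds$), and proving it requires the Lazard--Leibman filtration machinery: every naive degree-$\le d$ sequence in $G$ lies in $\mathrm{poly}(\Z,G_\bullet^{(d)})$, where $G_i^{(d)}=\Gamma_{\lceil i/d\rceil}(G)$ and $\Gamma_j(G)$ denotes the lower central series; this set is a group; a $G_0$-valued member of it has all its discrete derivatives in $G_0\cap G_i^{(d)}$; and the Taylor expansion associated with the filtration $\big(G_0\cap G_i^{(d)}\big)_{i}$ of $G_0$, which has degree $\le ds$, then exhibits $h$ with base points in $G_0$ (modulo routine rationality checks for this filtration). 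After this repair the rest of your argument goes through unchanged, because Green--Tao's theorem can be applied at degree $ds$ and your constant $M$ is allowed to depend on $X$, $d$ and $\delta$.
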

\begin{remark}
  We have $\psi(g(n))=e(p(n))$ for some $p\in \R[x]$ and therefore
  $\norm{\psi( g(n))}_{C^\infty[N]}$ is well defined.
\end{remark}

\subsection{Ergodic theory}\label{SS:BackgroundErgodic}
Below we gather some basic notions and facts from ergodic theory that
we use throughout the paper. The reader can find further background
material in ergodic theory in  \cite{Fu81}, \cite{Pe89},
\cite{Wa82}.
\subsubsection{Factors} A {\it homomorphism} from a system $(X,\X,\mu, T)$ onto a
system $(Y, \Y, \nu, S)$ is a measurable map $\pi\colon X'\to Y'$,
where $X'$ is a $T$-invariant subset of $X$ and $Y'$ is an
$S$-invariant subset of $Y$, both of full measure, such that
$\mu\circ\pi^{-1} = \nu$ and $S\circ\pi(x) = \pi\circ T(x)$ for $x\in
X'$. When we have such a homomorphism we say that the system $(Y, \Y,
\nu, S)$ is a {\it factor} of the system $(X,\X,\mu, T)$.  If
the factor map $\pi\colon X'\to Y'$ can be chosen to be injective,
then we say that the systems $(X,\X, \mu, T)$ and $(Y, \Y, \nu, S)$
are {\it isomorphic} (bijective maps on Lebesgue spaces have
measurable inverses).

A factor can be characterized (modulo isomorphism) by
$\pi^{-1}(\Y)$ which is a $T$-invariant sub-$\sigma$-algebra
of $\mathcal B$, and conversely any $T$-invariant sub-$\sigma$-algebra of
$\mathcal B$ defines a factor. By a classical abuse of terminology we
denote by the same letter the $\sigma$-algebra $\Y$ and its
inverse image by $\pi$. In other words, if $(Y, \Y, \nu, S)$ is a
factor of $(X,\X,\mu,T)$, we think of $\Y$ as a
sub-$\sigma$-algebra of $\X$. A factor can also be
characterized (modulo isomorphism) by a $T$-invariant subalgebra
$\mathcal{F}$ of $L^\infty(X,\X,\mu)$, in which case $\Y$ is
the sub-$\sigma$-algebra generated by $\mathcal{F}$, or equivalently,
$L^2(X,\Y,\mu)$ is the closure of $\mathcal{F}$ in
$L^2(X,\X,\mu)$. We will sometimes abuse notation and use the sub-$\sigma$-algebra $\mathcal{Y}$ in place of
 the subspace $L^2(X,\Y,\mu)$. For example, if we write that a function is orthogonal to the factor $\mathcal{Y}$,
  we mean  it  is orthogonal to the subspace $L^2(X,\Y,\mu)$.

If $\Y$ is a $T$-invariant sub-$\sigma$-algebra of $\X$ and $f\in
L^2(\mu)$, we define the {\it conditional expectation
  $\mathbb{E}(f|\Y)$ of $f$ with respect to $\Y$} to be the
orthogonal projection of $f$ onto $L^2(\Y)$. We will frequently make use
of the identities
$$
\int \mathbb{E}(f|\Y) \ d\mu= \int f\ d\mu, \quad
T\,\mathbb{E}(f|\Y)=\mathbb{E}(Tf|\Y).
$$
 If we want to indicate the dependence on the reference measure, we
 write $\mathbb{E}=\mathbb{E}_\mu$.




The transformation $T$ is {\it ergodic} if $Tf=f$ implies that $f=c$
(a.e.) for some $c\in \mathbb{C}$.
Every system $(X,\X,\mu,T)$ has an {\it ergodic decomposition},
meaning that we can write $\mu=\int \mu_t\ d\lambda(t)$, where
$\lambda$ is a probability measure on $[0,1]$ and $\mu_t$ are
$T$-invariant probability measures on $(X,\X)$ such that the systems
$(X,\X,\mu_t,T)$ are ergodic for $t\in [0,1]$.

We say that $(X,\X,\mu,T)$ is an {\it inverse limit of a sequence of
  factors} $(X,\X_j,\mu,T)$ if $(\X_j)_{j\in\mathbb{N}}$ is an
increasing sequence of $T$-invariant sub-$\sigma$-algebras such that
$\bigvee_{j\in\N}\X_j=\X$ up to sets of measure
zero.

\subsubsection{Seminorms and nilfactors}
Following \cite{HK05a},\footnote{In \cite{HK05a} the authors work
  with ergodic systems and real valued functions, but the whole discussion
  can be carried out for non-ergodic systems as well and complex
  valued functions without extra difficulties.} for every system $(X,\X,\mu,T)$ and
function $f\in L^\infty(\mu)$, we define inductively the
 seminorms $\nnorm{f}_\ell$ as follows: For $\ell=1$ we set
 $$
\nnorm{f}_1=\norm{\E(f|\mathcal{I})}_{L^2(\mu)}
$$
 where $\mathcal{I}$ is the
$\sigma$-algebra of $T$-invariant sets. For $\ell\geq 1$ we set
\begin{equation}\label{eq:recur}
  \nnorm f_{\ell+1}^{2^{\ell+1}} =\lim_{N\to\infty}
   \E_{1\leq n\leq N} \nnorm{\bar{f}\cdot T^nf}_{\ell}^{2^{\ell}}.\footnote{We remark that the limit is the same if the average  $\E_{1\leq n\leq N}$ is replaced with the average $\E_{n\in \Phi_N}$ where $(\Phi_N)_{N\in\N}$ is any F{\o}lner sequence in $\Z$.}
\end{equation}
It was shown in~\cite{HK05a} that for every integer
$\ell\geq 1$, $\nnorm\cdot_\ell$ is a seminorm on $L^\infty(\mu)$ and
it defines factors $\cZ_{\ell-1}=\cZ_{\ell-1}(T)$ in the following
manner: the $T$-invariant sub-$\sigma$-algebra $\cZ_{\ell-1}$ is
characterized by
$$
\text{ for } f\in L^\infty(\mu),\ \E(f|\cZ_{\ell-1})=0\text{ if and
  only if } \nnorm f_{\ell} = 0.
$$

It is shown in  \cite{HK05a} that for every $\ell\in \N$ the factor
 $\mathcal{Z}_\ell$
has a purely algebraic structure, in fact for all practical purposes
we can assume that it is an $\ell$-step nilsystem.
\begin{theorem}[{\bf Host \& Kra~\cite{HK05a}}]\label{T:HoKra}
  Let $(X,\X,\mu,T)$ be a system and $\ell\in \N$.

  Then  a.e. ergodic
   component of the  factor $\mathcal{Z}_\ell(T)$ is an inverse limit of $\ell$-step
  nilsystems.
\end{theorem}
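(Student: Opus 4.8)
The plan is to prove the structure theorem by following the strategy of Host and Kra. First I would reduce to the ergodic case: the seminorm construction \eqref{eq:recur} is compatible with the ergodic decomposition $\mu=\int \mu_t\, d\lambda(t)$, in the sense that the factors $\cZ_\ell$ of the components fit together measurably, so it suffices to show that for a \emph{single} ergodic system the factor $\cZ_\ell$ is an inverse limit of $\ell$-step nilsystems. The central device is the family of \emph{cube measures} $\mu^{[\ell]}$ on $X^{2^\ell}$, defined inductively by $\mu^{[0]}=\mu$ and by taking $\mu^{[\ell+1]}$ to be the relatively independent self-joining of $\mu^{[\ell]}$ with itself over the $\sigma$-algebra of sets invariant under the diagonal transformation $T^{[\ell]}=T\times\cdots\times T$ acting on $(X^{2^\ell},\mu^{[\ell]})$. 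The seminorms are then recovered as an integral over the cube, $\nnorm{f}_\ell^{2^\ell}=\int \prod_{\omega\in\{0,1\}^\ell} f(x_\omega)\, d\mu^{[\ell]}$, which matches the recursion \eqref{eq:recur}.

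Next I would record the symmetries of $\mu^{[\ell]}$ — invariance under the diagonal transformation and under the group of coordinate permutations coming from isometries of the combinatorial cube $\{0,1\}^\ell$ — since these are exactly what makes $\nnorm{\cdot}_\ell$ a seminorm: the Cauchy--Schwarz--Gowers inequalities follow from the integral representation together with the fact that $\mu^{[\ell]}$ is a relatively independent joining. The factors $\cZ_\ell$ are then defined through the seminorm characterization already stated in the excerpt, and the key structural claim to extract from the cube measures is that $\cZ_\ell$ is a \emph{compact abelian group (isometric) extension} of $\cZ_{\ell-1}$.

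The heart of the preparatory work is to show that the cocycle defining this extension is of a special \emph{type $\ell$}: a suitable $\ell$-fold multiplicative difference of it is a coboundary. This functional equation is read off directly from the invariance properties of $\mu^{[\ell]}$, and it says precisely that $\cZ_\ell$ is built as a finite-step tower of abelian extensions each governed by a cocycle of type $\ell$ — what Host and Kra call a \emph{system of order $\ell$}. At this stage the entire analytic input (the seminorms, the cubes, the relative independence) has been converted into a rigid algebraic statement about a tower of cocycle extensions.

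The main obstacle is the final algebraic identification: every ergodic system of order $\ell$ is an inverse limit of $\ell$-step nilsystems. Here I would assemble, from the measure-preserving transformations that commute appropriately with the tower of extensions, a nilpotent Lie group $G$ acting on (an inverse-limit factor of) the system, and then invoke Mal'cev's theory to realize the system as a homogeneous space $G/\Gamma$ with $G$ being $\ell$-step nilpotent and $\Gamma$ discrete cocompact. Controlling this requires the Mackey--Zimmer theory of cocycles to pin down the Mackey group of each extension, the rigidity of the type-$\ell$ cocycles to force the group to be nilpotent of the correct step, and a careful passage to the inverse limit to tame the a priori infinite-dimensional tower. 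This step — turning the cocycle data into a genuine nilpotent group action — is by far the most delicate part of the argument, and it is where the bulk of the work in \cite{HK05a} resides.
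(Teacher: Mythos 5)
The paper does not prove this statement; it is imported verbatim from Host and Kra \cite{HK05a}, with the non-ergodic formulation justified by the ergodic decomposition identity \eqref{E:seminonergodic}. Your sketch follows exactly the strategy of that original proof — reduction to ergodic components, cube measures $\mu^{[\ell]}$ recovering the seminorms \eqref{eq:recur}, the tower of compact abelian extensions by cocycles of type $\ell$, and the final identification of ergodic systems of order $\ell$ with inverse limits of $\ell$-step nilsystems — so it is the same approach as the cited source, correctly outlined.
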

Because of this result we call $\cZ_\ell$ the
\emph{$\ell$-step nilfactor} of the system. The smallest factor that is an extension of
all finite step nilfactors is denoted by $\cZ$ and is called the \emph{nilfactor} of
the system (in other words
$\cZ=\bigvee_{j\in\N}\cZ_j$.)
The nilfactor $\cZ$ is of particular interest because, as it turns out,
 it controls the  limiting behavior in $L^2(\mu)$ of
the  multiple
ergodic averages that are studied in Theorems~\ref{T:ConvSingle} and \ref{T:ConvSeveral}.

We also record two useful facts that can be easily established using
the  definition of the seminorms
\begin{equation}\label{E:seminonergodic}
\nnorm{f}_\ell^{2^\ell}=\int \nnorm{f}_{\mu_s,\ell}^{2^\ell} \ d\lambda(s), \quad
\quad   \nnorm{f\otimes\overline{f}}_{\mu\times \mu,\ell} \leq
\nnorm{f}_{\ell+1}^2
\end{equation}
where $\mu=\int \mu_s \ \! d\lambda(s)$ is the ergodic decomposition associated to the system $(X,\X,\mu,T)$.
 Hence,
if $T_t$ where $t\in [0,1]$ are the ergodic components of the transformation $T$,
then $\E(f|\mathcal{Z}_{\ell}(T))=0$ if and only if
$\E(f|\mathcal{Z}_\ell(T_t))=0$ for a.e. $t\in[0,1]$.
Also  if $f$  satisfies
$\E_\mu(f|\cZ_\ell(T))=0$, then $\E_{\mu\times\mu}(f\otimes
\overline{f}|\cZ_{\ell-1}(T\times T))=0$.

\section{Characteristic factors for multiples of a single sequence}\label{S:CharFactorsSingle}
A crucial step in the proof of Theorem~\ref{T:ConvSingle} is to show that for every $a\in\H$, not growing very fast or very slow, for every $\ell \in \N$, the nilfactor $\mathcal{Z}$ of a system is characteristic for the family $\{[a(n)],2[a(n)],\ldots, \ell [a(n)]\}$.
This is the context of Theorem~\ref{T:CharSingle}  which we are going to prove in this section.

 As is typically the case with such results, one assigns a notion of ``complexity'' to the relevant
 multiple ergodic   averages, and then uses induction on the ``complexity'' to prove the result.
This plan can be carried out  without serious  difficulties when the function $a\in \H$ satisfies
$t^{k-1}\log{t}\prec a(t)\prec t^{k}$ for some $k\in \N$. But when $a(t)=t\log{t}$, for example, there are serious difficulties caused by the fact that  the factor $\cZ$ is not characteristic for the Hardy sequence $[a'(n)]$, the reason being that the sequence $[\log{n}]$ grows too slowly. To deal with such functions we perform some initial maneuvers that enable us to transform the  problem to one where induction
on the ``complexity'' is applicable. Before giving the formal argument we informally explain how the initial step of the proof works in a model case.

\subsection{A model problem}
Suppose we want to  show that  the nilfactor $\cZ$ is characteristic for the family of sequences $\{[n\log{n}],2[n\log{n}],\ldots,\ell[n\log{n}]\}$.  Let
 $(X,\X,\mu,T)$ be a system, and suppose that
 one of the functions $f_1,\ldots,f_\ell\in L^\infty(\mu)$ is orthogonal to $\mathcal{Z}$. We have to show that
\begin{equation}\label{E:lwq}
\lim_{N\to \infty}\E_{1\leq  n\leq  N}V([n\log{n}])=0,
\end{equation}
where
$$
V(n)=T^nf_1\cdot T^{2n}f_2\cdot\ldots\cdot T^{\ell n}f_\ell,
$$
and the convergence takes place in $L^2(\mu)$. Our goal here is to show how to transform \eqref{E:lwq}
into a more manageable identity.

It will be more convenient for us to show that
\begin{equation}\label{E:initial}
\lim_{N\to \infty}\E_{N< n\leq  N+l(N)}V([n\log{n}])=0
\end{equation}
for some function $l(t)$ that satisfies $l(t)\prec t$
(Lemma~\ref{L:averaging} shows that \eqref{E:initial} implies \eqref{E:lwq}).
Using  the Taylor expansion of $a(t)=t\log{t}$ around the point $t=N$ we get for every $n\in\N$ that
$$
(N+n)\log(N+n)=N\log{N}+n(1+\log{N})+\frac{n^2}{2N}-\frac{n^3}{6\xi_n^2}
$$
for some $\xi_n\in [N,N+n]$.
Hence, if $c<2/3$,  then for every large  $N$   and $n=1,\ldots,[N^c]$,  we have
$$
[(N+n)\log(N+n)]=\Big[\alpha_N+n\beta_N +\frac{n^2}{2N}\Big]+e(n)
$$
for some $\alpha_N,\beta_N\in \R$ and error terms  $e(n)\in \{0,-1\}$.
Ignoring the error terms, and writing $[\sqrt{2N}]^2$ in place of $2N$ (all these
technical issues can be justified), we get that in order to establish  \eqref{E:initial} it suffices to show that
\begin{equation}\label{E:transformed}
\lim_{N\to \infty}\E_{1\leq  n\leq  N^c}V\Big(\Big[\alpha_N+n\beta_N+\frac{n^2}{[\sqrt{2N}]^2}\Big]\Big)=0.
\end{equation}
Since  every integer between $1$ and $N^c$ can be represented as
 $n[\sqrt{2N}]+r$ with $0\leq n\leq \tilde{l}(N)=N^{c}/[\sqrt{2N}]$  and
  $1\leq r\leq  r(N)=[\sqrt{2N}]$,
 and since
 $$\frac{(n [\sqrt{2N}] +r)^2}{[\sqrt{2N}]^2}+(n[\sqrt{2N}]+r)\alpha+\beta= n^2+n\alpha_{r,N}+\beta_{r,N}
 $$
 for some $\alpha_{r,N}, \beta_{r,N}\in \R$, we get that  \eqref{E:transformed}
follows if we show that
$$
\lim_{N\to \infty} \E_{1\leq r\leq r(N)} \Big(\E_{1\leq n\leq  \tilde{l}(N)}
V([\alpha_{r,N}+n\beta_{r,N}+n^2])\Big) =0.
$$
If we choose $c>1/2$, then $\tilde{l}(N)\to \infty$, and as a result the last identity
  follows if we show that
\begin{equation}\label{E:jkl''}
\lim_{N\to \infty} \sup_{\alpha,\beta\in \R}\norm{\E_{1\leq n\leq  N} V([\alpha+n\beta +n^2])}_{L^2(\mu)} =0.
\end{equation}
We have therefore  reduced matters to establishing uniform estimates for some   polynomial multiple ergodic averages,
and this turns out to be a more manageable problem.

We also remark that the argument used in the previous model example turns out to work for every function $a\in \H$
of polynomial growth that satisfies $|a(t)-p(t)|\succ \log{t} $ for every $p\in \R[t]$.
We give the details
in the next subsection.
\subsection{Proof of Theorem~\ref{T:CharSingle} modulo a polynomial ergodic theorem}
We are going to prove Theorem~\ref{T:CharSingle}  modulo the following
polynomial ergodic theorem that we shall prove in the next subsection:
\begin{proposition}\label{P:uniform}
Let $(X,\X,\mu, T)$ be a system, and suppose that at least one of the functions $f_1,f_2,\ldots, f_\ell\in L^\infty(\mu)$
is orthogonal to the nilfactor $\mathcal{Z}$.

Then for every $k\in \N$, nonzero $\alpha\in \R$, bounded two parameter sequence $(c_{N,n})_{N,n\in\N}$ of real numbers, and F{\o}lner sequence $(\Phi_N)_{N\in\N}$ in $\Z$ we have
$$
\lim_{N\to \infty} \sup_{p\in \R_{k-1}[t]}\norm{\E_{n\in \Phi_N} c_{N,n}\ \!T^{[n^k\alpha+p(n)]}f_1\cdot T^{2[n^k\alpha+p(n)]}f_2\cdot
 \ldots\cdot T^{\ell[n^k\alpha+p(n)]}f_{\ell}}_{L^2(\mu)}=0.
 $$
\end{proposition}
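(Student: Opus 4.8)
The plan is to prove this by induction on the number of factors $\ell$, using a van der Corput argument to reduce the "complexity" of the averages at each step, exactly as one does for ordinary polynomial multiple ergodic theorems — but carrying the supremum over $p\in\R_{k-1}[t]$ and the weights $c_{N,n}$ through the estimates. The key structural observation is that the expression inside the supremum is a genuinely \emph{polynomial} multiple ergodic average: the iterate $n^k\alpha + p(n)$ is a real polynomial of degree exactly $k$ whose leading coefficient $\alpha$ is fixed and nonzero, while the lower-order coefficients (encoded by $p$) and the constant term are allowed to vary freely. Thus the problem is to establish a \emph{uniform} (over all such lower-order data) seminorm-control estimate. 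By the Host--Kra theory (the seminorms $\nnorm{\cdot}_s$ and the characteristic property of $\cZ$), it suffices to show that if one of the $f_i$ has $\nnorm{f_i}_s=0$ for $s$ large enough (depending on $k$ and $\ell$), then the averages tend to $0$ uniformly.

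The induction scheme is as follows. For the base case I would reduce to showing that for a single function $f$ orthogonal to the Kronecker (or appropriate low-step) factor, the weighted average $\E_{n\in\Phi_N} c_{N,n}\, T^{[n^k\alpha+p(n)]}f$ tends to $0$ uniformly in $p$; since the iterate is an equidistributed integer-part polynomial sequence with fixed irrational-type leading term, this follows from the spectral theorem together with Weyl-type equidistribution, where the crucial point is that the bound is \emph{uniform} because the leading coefficient $\alpha$ is fixed. For the inductive step I would apply van der Corput's inequality in the variable $n$: writing $V_p(n)$ for the product $\prod_{i=1}^\ell T^{i[n^k\alpha+p(n)]}f_i$, the inner product $\langle V_p(n+h), V_p(n)\rangle$ produces, after the standard telescoping, a product of $\ell$ functions with \emph{new} iterates of the form $i([ (n+h)^k\alpha + p(n+h)] - [n^k\alpha + p(n)])$. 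The heart of the matter is that the integer-part differences $[(n+h)^k\alpha+p(n+h)] - [n^k\alpha+p(n)]$ differ from the genuine polynomial difference $(n+h)^k\alpha - n^k\alpha + p(n+h)-p(n)$ only by a bounded (in fact $\{0,\pm 1\}$-valued) error, and after absorbing one factor into a new weight $c'_{N,n}$ and treating the error via a further splitting, I obtain an average of the same shape but in $\ell-1$ (or fewer effective) factors, with iterates that are again real polynomials of degree $k$ in $n$ with a fixed nonzero leading coefficient (now depending on $h$ but still uniformly bounded away from zero for the relevant $h$). Crucially, the new lower-order coefficients still range over a set to which the inductive supremum applies.

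The main obstacle I anticipate is \emph{maintaining uniformity in $p$ throughout the van der Corput step}, since the shift by $h$ mixes the frozen leading coefficient $\alpha$ with the variable data in $p$: one must check that the induced lower-order perturbations of the new polynomial iterate still fall within the class $\R_{k-1}[t]$ over which the inductive hypothesis grants a uniform bound, and that the supremum over $p$ can be interchanged with the $\limsup_N$ and the averaging over $h$ without loss. Handling the integer-part errors $\lfloor\cdot\rfloor$ is a second, more technical, difficulty: these produce $n$-dependent but bounded correction terms, and to keep them from destroying the polynomial structure one partitions $\Phi_N$ according to the (finitely many) values of the error and absorbs the resulting phase into the bounded weight sequence $c_{N,n}$, which is exactly why the statement is phrased with arbitrary bounded weights $(c_{N,n})$ and an arbitrary F\o{}lner sequence in the first place. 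Once these bookkeeping points are settled, the descent in $\ell$ terminates at the base case, and the conclusion $\nnorm{f_i}_s=0\Rightarrow$ vanishing yields, via the characteristic property of $\cZ$, the stated uniform limit whenever some $f_i\bot\cZ$.
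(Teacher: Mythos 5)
Your inductive scheme does not close for $k\geq 2$, and this is a genuine gap rather than a bookkeeping issue. You propose induction on the number of factors $\ell$, asserting that the van der Corput step produces a product of $\ell$ functions with iterates $i\bigl([(n+h)^k\alpha+p(n+h)]-[n^k\alpha+p(n)]\bigr)$, hence ``an average of the same shape but in $\ell-1$ factors''. Writing $a_N(n)=n^k\alpha+p_N(n)$, the inner product $\langle V_p(n+h),V_p(n)\rangle$ is the integral of a product of $2\ell$ functions with iterates $i[a_N(n+h)]$ and $i[a_N(n)]$, $i=1,\ldots,\ell$; since one can only factor out a single common power of $T$ (say $T^{[a_N(n)]}$), and not the $\ell$ distinct powers $T^{i[a_N(n)]}$ simultaneously, the differences you write down never arise. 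When $k=1$ the pairs do combine, since $T^{i(n+h)}f_i\cdot T^{in}\bar f_i=T^{in}(T^{ih}f_i\cdot\bar f_i)$, which is presumably the source of your scheme; but for $k\geq 2$ the differences $a_N(n+h)-a_N(n)$ are unbounded polynomials of degree $k-1$, and after factoring out $T^{[a_N(n)]}$ one is left with roughly $2\ell-1$ essentially distinct polynomial iterates: degree-$k$ polynomials with leading coefficients $(i-1)\alpha$, $i=2,\ldots,\ell$, each occurring twice, plus one polynomial of degree $k-1$. The number of factors goes up, not down, and the new family is no longer a set of integer multiples of a single polynomial, so an inductive hypothesis indexed by $\ell$ never becomes applicable. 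What decreases is the PET type (degree together with the number of distinct leading coefficients in each degree), and the statement to be propagated is accordingly more general: this is why the paper proves Lemma~\ref{L:uniform}, a vanishing theorem for arbitrary ``nice'' collections $\{p_{1,N},\ldots,p_{\ell,N}\}$ of essentially distinct polynomials which may vary with $N$ but have $N$-independent leading coefficients. That $N$-dependence is also exactly how the supremum over $p\in\R_{k-1}[t]$ is disposed of (choose $p_N$ within $1/N$ of the supremum) --- the uniformity issue you flag as your main obstacle but do not resolve.

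Two further points would need repair even after switching to a type-based induction. First, the induction only works if the function known to be orthogonal to $\cZ$ stays attached to a polynomial of maximal degree in the current family; the paper enforces this by proving the stronger dual statement \eqref{E:mnm'} (supremum over the remaining functions $f_0,f_2,\ldots,f_\ell$, with absolute values inside the average over $n$ --- which is also what absorbs the arbitrary bounded weights $c_{N,n}$), and by choosing the polynomial subtracted in the van der Corput step as in Lemma~\ref{L:indstep2}, so that $p_{1,N}(t+h)-p_N(t)$ has maximal degree in the new family. Your sketch maintains no such invariant, and without it the descent can strand the distinguished function on a low-complexity iterate. Second, your base case is shaky: with arbitrary bounded weights, the spectral-theorem/Weyl argument is delicate (the weights can be chosen to cancel the phase at any prescribed frequency), and orthogonality to the Kronecker factor is insufficient for iterates of degree $k\geq 2$. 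In the paper the PET descent terminates when all polynomials are linear, and that case is closed by the Cauchy--Schwarz/product-system estimate of Lemma~\ref{L:ccx}, which bounds the dual averages by the seminorm $\nnorm{f_1}_2$ computed in the product system. Your treatment of the integer parts (partition into finitely many sets where the error terms are constant and absorb them into the weights) does match the paper, and it is indeed the reason the statement carries arbitrary weights and F{\o}lner sequences; but the heart of the proof is the induction on the type of general polynomial families, which your proposal replaces with an induction that cannot be carried out.
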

The main step in the deduction of Theorem~\ref{T:CharSingle} from Proposition~\ref{P:uniform} is carried out in Lemma~\ref{L:AwayPoly} below. Before delving into the proof of this lemma  we mention two  useful ingredients that
will be used in its proof. The first one was proved in \cite{Fr09}:
\begin{lemma}\label{L:cvb}
 Let $a\in\H$ have  polynomial growth and satisfy $|a(t)-p(t)|\succ \log{t}$  for every $p\in\R[t]$.

  Then for some $k\in \N$ we have
\begin{equation}\label{E:cvb}
 |a^{(k+1)}(t)| \text{ decreases to } 0, \quad 1/t^k\prec a^{(k)}(t)\prec 1,\
\text{ and } \   (a^{(k+1)}(t))^k\prec (a^{(k)}(t))^{k+1}.
\end{equation}
\end{lemma}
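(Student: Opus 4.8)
The plan is to locate the correct integer $k$ by differentiating $a$ until its derivatives first tend to $0$, and then to extract all three estimates from Lemma~\ref{L:properties}, using the hypothesis $|a(t)-p(t)|\succ\log t$ only to rule out degenerate behaviour. First I would observe that applying the hypothesis with $p=0$ gives $|a(t)|\succ\log t$, so $a(t)\to\pm\infty$. Since $a$ has polynomial growth and every $a^{(j)}\in\H$, the exponent $\lim_{t\to\infty}\log|a^{(j)}(t)|/\log t$ is well defined, and by Lemma~\ref{L:properties}$(i)$ it drops by exactly $1$ under each differentiation as long as it stays positive; a short separate check (again via Lemma~\ref{L:properties}, treating the sub-cases $a^{(j)}\to c\neq 0$ and super-polynomial decay) handles the boundary case where an exponent equals $0$ and shows that one further derivative then tends to $0$. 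Hence $k:=\min\{j\geq1:a^{(j)}(t)\to0\}$ exists. By construction $a^{(k)}(t)\prec1$ while $a^{(k-1)}(t)\not\to0$, which is the upper half of the middle estimate.

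The heart of the argument is the lower bound $1/t^k\prec a^{(k)}(t)$, equivalently $t^k a^{(k)}(t)\to\infty$ (the limit exists since $t^k a^{(k)}\in\H$). I would argue by contradiction: suppose $|a^{(k)}(t)|\ll t^{-k}$. Then $a^{(k)}$ is integrable at infinity, so $a^{(k-1)}$ converges to a finite limit; integrating the bound $k$ successive times and absorbing each integration constant into a real polynomial $P$ of degree $\leq k-1$, one obtains $|a^{(j)}(t)-P^{(j)}(t)|\ll t^{-j}$ for $1\leq j\leq k$, and a final integration of the $\ll t^{-1}$ bound yields $|a(t)-P(t)|\ll\log t$. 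This contradicts the hypothesis $|a(t)-p(t)|\succ\log t$ taken with $p=P$. Therefore $|a^{(k)}(t)|\not\ll t^{-k}$, and since $t^k a^{(k)}(t)$ has a limit in $\H$ this forces $t^k a^{(k)}(t)\to\infty$. This is the step where the separation-from-polynomials hypothesis is genuinely used, and I expect the bookkeeping of the iterated integration (tracking the polynomial $P$ and the exact power of $t$ lost at each stage, so that the loss lands at $\log t$ rather than a positive power) to be the main obstacle.

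With both halves of $1/t^k\prec a^{(k)}(t)\prec1$ in hand, the remaining two conditions follow mechanically. I would apply Lemma~\ref{L:properties}$(ii)$ to $b:=a^{(k)}$, which has polynomial growth, satisfies $t^{-k}\prec b$, and tends to $0$ (hence does not converge to a nonzero constant); this yields $b(t)/(t(\log t)^2)\prec a^{(k+1)}(t)\ll b(t)/t$. The upper bound $a^{(k+1)}\ll a^{(k)}/t$ together with $a^{(k)}\to0$ gives $a^{(k+1)}(t)\to0$, and since $a^{(k+1)}\in\H$ is eventually monotonic, $|a^{(k+1)}(t)|$ decreases to $0$. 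Finally, from $a^{(k+1)}\ll a^{(k)}/t$ we get
\[
\big(a^{(k+1)}(t)\big)^{k}\ll \frac{\big(a^{(k)}(t)\big)^{k}}{t^{k}}=\big(a^{(k)}(t)\big)^{k+1}\cdot\frac{1}{t^{k}a^{(k)}(t)},
\]
and since $t^{k}a^{(k)}(t)\to\infty$ the last factor tends to $0$, giving $\big(a^{(k+1)}(t)\big)^{k}\prec\big(a^{(k)}(t)\big)^{k+1}$. This establishes all three assertions of \eqref{E:cvb}.
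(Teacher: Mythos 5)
This lemma is one of the ingredients the paper imports from the companion paper: it is stated with the attribution ``proved in \cite{Fr09}'' and no proof appears in this document, so there is no internal argument to compare yours against; I can only judge your proof on its own merits, and it is correct. The definition $k=\min\{j\geq 1:\, a^{(j)}(t)\to 0\}$ is legitimate: the hypothesis with $p=0$ gives $|a(t)|\succ\log t$, ruling out $j=0$; the exponent of $a^{(j)}$ drops by exactly $1$ under differentiation while it is positive, by Lemma~\ref{L:properties}$(i)$; and the boundary checks you defer are genuinely routine for elements of $\H$ --- e.g.\ if $a^{(j)}\to c\neq 0$, then $a^{(j+1)}\in\H$ has a limit, which must be $0$, since otherwise $a^{(j)}(t)/t$ would have a nonzero (possibly infinite) limit, contradicting $a^{(j)}\to c$. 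The crucial step, where the hypothesis $|a(t)-p(t)|\succ\log t$ enters, is your contradiction argument for $1/t^k\prec a^{(k)}$, and the bookkeeping you flagged as the main obstacle does close: under $|a^{(k)}(t)|\ll t^{-k}$, each of the inner integrations $\int_t^\infty$ turns an $O(t^{-j})$ bound into an $O(t^{-(j-1)})$ bound (these improper integrals converge because $j\geq 2$, and the constants of integration are absorbed into the polynomial $P$ of degree $\leq k-1$), and only the final integration, of the $O(t^{-1})$ bound, produces the $O(\log t)$, contradicting the hypothesis with $p=P$; since $t^k a^{(k)}\in\H$ has a limit (finiteness of which would mean $|a^{(k)}|\ll t^{-k}$), this failure forces $|t^k a^{(k)}(t)|\to\infty$. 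The passage from $1/t^k\prec a^{(k)}\prec 1$ to the other two conditions via Lemma~\ref{L:properties}$(ii)$ is also correct: eventual monotonicity of $a^{(k+1)}\in\H$ together with $a^{(k+1)}\ll a^{(k)}/t\to 0$ gives the decrease of $|a^{(k+1)}|$ to zero, and the identity $(a^{(k)})^k/t^k=(a^{(k)})^{k+1}/(t^k a^{(k)})$ gives the third condition. One cosmetic remark: since $a^{(k)}$ may be eventually negative, statements such as $t^k a^{(k)}(t)\to\infty$ should be read with absolute values (equivalently, replace $a$ by $-a$), consistent with how the paper uses $\prec$ and $\succ$ for functions of constant sign.
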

The second is the following simple result:
\begin{lemma}\label{L:averaging}
Let $(V(n))_{n\in\N}$ be a bounded sequence of vectors on a normed space.  Suppose that
$$
\lim_{N\to\infty}\big(\E_{N\leq n\leq N+l(N)}V(n)\big)=0
$$
for some positive function $l(t)$ with $l(t)\prec t$.
Then
$$
\lim_{N\to\infty} \E_{1\leq n\leq N} V(n)=0.
$$
\end{lemma}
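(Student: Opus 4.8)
The statement is a standard averaging fact: if the sliding-window averages over windows $[N, N+l(N)]$ tend to zero, and the window length $l(t)$ grows slower than $t$, then the ordinary Ces\`aro averages over $[1,N]$ also tend to zero. My plan is to cover the long interval $[1,N]$ by a sequence of consecutive windows of the prescribed type, show that their union essentially exhausts $[1,N]$, and then realize the full average as a convex combination of window-averages, each of which is small for large indices.

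First I would fix $\varepsilon>0$ and choose $M_0$ so that $\|\E_{M\leq n\leq M+l(M)}V(n)\|<\varepsilon$ for every $M\geq M_0$; call the sum over such a window a \emph{good block}. Starting from $M_0$, define a greedy chain of consecutive indices $M_0<M_1<M_2<\cdots$ by $M_{j+1}=M_j+l(M_j)+1$, so that the half-open blocks $[M_j,M_{j+1})$ tile the tail $[M_0,\infty)$, and each block is a good block. For a given large $N$, let $J=J(N)$ be the number of complete blocks contained in $[M_0,N]$; the initial segment $[1,M_0)$ has bounded cardinality (independent of $N$) and the leftover incomplete final block $[M_{J},N]$ has length at most $l(M_{J})\leq l(N)$. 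The key point is the growth hypothesis $l(t)\prec t$: it forces the incomplete terminal block, and the fixed head, to be asymptotically negligible fractions of $[1,N]$, since $l(N)/N\to 0$ and $M_0/N\to 0$.

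Now I would write
$$
\E_{1\leq n\leq N}V(n)=\frac{1}{N}\Bigg(\sum_{n=1}^{M_0-1}V(n)+\sum_{j=0}^{J-1}\,\sum_{n=M_j}^{M_{j+1}-1}V(n)+\sum_{n=M_J}^{N}V(n)\Bigg).
$$
Each inner sum over a complete block has norm at most $(l(M_j)+1)\varepsilon$ (plus a harmless correction if the window in the hypothesis is $[M,M+l(M)]$ rather than half-open, which I absorb into the boundedness of $V$), so the total contribution of the middle term is bounded in norm by $\varepsilon$ times the combined block lengths, which is at most $N$; this yields a bound $\leq\varepsilon$ for the middle piece. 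Using the uniform bound $\|V(n)\|\leq C$, the head contributes $\leq CM_0/N$ and the incomplete tail contributes $\leq C(l(N)+1)/N$, both of which tend to $0$ as $N\to\infty$. Hence $\limsup_{N\to\infty}\|\E_{1\leq n\leq N}V(n)\|\leq\varepsilon$, and since $\varepsilon>0$ was arbitrary the limit is $0$.

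The only delicate point, and the one I would state carefully, is bookkeeping the block lengths so that their sum is controlled by $N$ without accumulating error: because the windows overlap by one index (or not, depending on the half-open convention), one must verify that the sum $\sum_j (l(M_j)+1)$ over complete blocks in $[M_0,N]$ is at most $N-M_0+O(l(N))$, which is immediate from the telescoping $M_{j+1}-M_j=l(M_j)+1$. Everything else is routine; the hypothesis $l(t)\prec t$ is used exactly once, to kill the boundary terms, and no regularity of $l$ beyond positivity and sublinearity is needed.
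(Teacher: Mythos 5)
Your proof is correct and follows essentially the same route as the paper's: greedily tile $[1,N]$ (beyond a fixed head) by non-overlapping windows of the form $[M_j,M_j+l(M_j)]$, use the hypothesis on each complete block, and use $l(t)\prec t$ to make the head and the incomplete terminal block negligible. One small repair: your inequality $l(M_J)\leq l(N)$ implicitly assumes $l$ is monotone, which is not part of the hypotheses; instead bound the terminal contribution by $C(l(M_J)+1)/N\leq C(l(M_J)+1)/M_J$, which tends to $0$ because $M_J\to\infty$ as $N\to\infty$ and $l(t)\prec t$.
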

\begin{proof}
We can cover the interval $[1,N]$ by a union  of non-overlapping intervals of the
 form $[k,k+l(k)]$, we denote this union  by $I_N$.
 Since $l(t)\prec t$ and the sequence
  $(V(n))_{n\in\N}$ is bounded we have that
$$
\lim_{N\to \infty} \E_{1\leq n\leq N}V(n)=\lim_{N\to \infty} \E_{n\in I_N}V(n).
$$
 Using our assumption, one easily gets that
 the limit $\lim_{N\to \infty} \E_{n\in I_N}V(n)$ is $0$, finishing the proof.
\end{proof}

\begin{lemma}\label{L:AwayPoly}
Let $(V(n))_{n\in\N}$ be a bounded sequence of vectors on a normed space.  Suppose that
for every $k\in\N$ and bounded two parameter sequence $(c_{N,n})_{N,n\in\N}$ of real numbers we have
$$
\lim_{N\to \infty}
\sup_{p\in \R_{k-1}[t],}
\norm{ \E_{1\leq  n\leq  N} c_{N,n}\ \!V(n^k+[p(n)])}=0.
$$

Then if   $a\in \H$ has polynomial growth and satisfies $|a(t)-p(t)|\succ \log{t}$ for every $p\in \R[t]$,
and $(c_n)_{n\in\N}$ is any bounded sequence of real numbers, we have
$$
\lim_{N\to\infty} \E_{1\leq n\leq N} c_n \ \!V([a(n)])=0.
$$
\end{lemma}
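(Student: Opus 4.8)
The plan is to reproduce, for a general $a$, the chain of reductions carried out for the model problem $a(t)=t\log t$, landing at the polynomial averages that the hypothesis controls. Set $W(n)=c_nV([a(n)])$, a bounded sequence of vectors. By Lemma~\ref{L:averaging} it suffices to exhibit a positive $l(t)\prec t$ with
\[
\lim_{N\to\infty}\E_{N\leq n\leq N+l(N)}c_nV([a(n)])=0,
\]
which, after the substitution $n=N+m$, concerns $\E_{0\leq m\leq l(N)}c_{N+m}V([a(N+m)])$. Let $k$ be the integer furnished by Lemma~\ref{L:cvb} and Taylor-expand $a$ about $t=N$:
\[
a(N+m)=\lambda_N m^k+q_N(m)+\frac{a^{(k+1)}(\xi_m)}{(k+1)!}\,m^{k+1},\qquad \xi_m\in[N,N+m],
\]
where $\lambda_N=a^{(k)}(N)/k!$ and $q_N\in\R_{k-1}[t]$. (We may assume $\lambda_N>0$; a negative leading coefficient is handled in the same way.) The point is that $\lambda_N m^k$ is the only genuinely curving term, and that a suitable change of variables turns it into the monomial $n^k$ with coefficient exactly $1$.

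Next I would choose $l(N)$ and perform the change of variables. Put $r(N)=[\lambda_N^{-1/k}]$ and write $m=n\,r(N)+s$ with $0\leq s<r(N)$ and $0\leq n\leq \tilde l(N):=l(N)/r(N)$. Expanding $\lambda_N(n\,r(N)+s)^k$ and using $\lambda_N r(N)^k=1+o(1)$ shows that for each fixed $s$,
\[
\lambda_N m^k+q_N(m)=n^k+\tilde q_{N,s}(n)+O\big(|\lambda_N r(N)^k-1|\,\tilde l(N)^k\big),
\]
with $\tilde q_{N,s}\in\R_{k-1}[t]$. Requiring simultaneously that $\tilde l(N)\to\infty$, that the Taylor remainder is $o(1)$ on the interval, and that the error above is $o(1)$ translates into the single requirement
\[
a^{(k)}(N)^{-1/k}\prec l(N)\prec\min\{N,\ a^{(k+1)}(N)^{-1/(k+1)}\};
\]
the conditions of Lemma~\ref{L:cvb} (in particular $1/t^k\prec a^{(k)}(t)\prec1$ and $(a^{(k+1)}(t))^k\prec(a^{(k)}(t))^{k+1}$) are exactly what make this range for $l(N)$ nonempty.

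With such an $l$, all perturbations of the argument of $V$ collapse into a bounded integer error: since $n^k\in\Z$,
\[
[a(N+n\,r(N)+s)]=n^k+[\tilde q_{N,s}(n)+e_{N,s}(n)],
\]
where $e_{N,s}(n)\in\{-1,0,1\}$ for large $N$. Splitting the inner average over $n$ according to the finitely many values $e_0$ of $e_{N,s}(n)$ and absorbing each indicator $\mathbf{1}_{\{e_{N,s}(n)=e_0\}}$ into the coefficients, every piece becomes $\E_{1\leq n\leq\tilde l(N)}\tilde c_n\,V(n^k+[p(n)])$ with $p=\tilde q_{N,s}+e_0\in\R_{k-1}[t]$ and $\sup_n|\tilde c_n|\leq\sup_n|c_n|=:C$. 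The hypothesis now applies: encoding offending finite data into a single two-parameter sequence shows that $g_C(M):=\sup\{\norm{\E_{1\leq n\leq M}c_nV(n^k+[p(n)])}:p\in\R_{k-1}[t],\ \sup_n|c_n|\leq C\}$ tends to $0$ as $M\to\infty$. Each piece is thus bounded by $g_C(\tilde l(N))\to0$, uniformly in $s$; averaging over $s$ and invoking the triangle inequality (the uneven ranges of $m$, $n$, $s$ contributing only $o(1)$ boundary terms) yields the claim.

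The main obstacle is the uniform bookkeeping of the rounding, rather than the analytic estimates: one must verify that the Taylor remainder, the normalization $\lambda_N r(N)^k\to1$, and the discretization each perturb the real argument of $V$ by $o(1)$, so that $[a(N+m)]$ differs from the clean integer $n^k+[\tilde q_{N,s}(n)]$ only by a bounded integer. The reason no equidistribution statement for $\tilde q_{N,s}(n)\bmod 1$ is needed is precisely that the hypothesis is uniform both over all real polynomials $p\in\R_{k-1}[t]$ and over all bounded coefficient families, so the rounding discrepancy can be dumped into the constant term of $p$ and into the coefficient sequence.
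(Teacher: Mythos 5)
Your proposal is, in structure, exactly the paper's proof: the same reduction via Lemma~\ref{L:averaging} to short intervals $[N,N+l(N)]$, the same Taylor expansion to order $k$ with $k$ supplied by Lemma~\ref{L:cvb}, the same rescaling $m=n\,r(N)+s$ with $r(N)\approx (a^{(k)}(N)/k!)^{-1/k}$ so that the leading coefficient becomes exactly $1$ and $n^k\in\Z$ can be pulled out of the integer part, and the same absorption of the rounding discrepancies into the two-parameter coefficient sequence permitted by the hypothesis. (The only cosmetic difference: the paper replaces the leading coefficient by $1/[\sqrt[k]{d(N)}]^k$ \emph{before} rescaling, you renormalize \emph{after}; the error incurred is the same, of size $l(N)^k (a^{(k)}(N))^{1+\frac1k}$.)

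There is, however, one concrete slip in your consolidation of the constraints on $l(N)$. You impose three requirements in prose ($\tilde l(N)\to\infty$, Taylor remainder $o(1)$, normalization error $o(1)$), but your displayed ``single requirement'' $(a^{(k)}(N))^{-1/k}\prec l(N)\prec\min\{N,\,(a^{(k+1)}(N))^{-1/(k+1)}\}$ encodes only the first two. The normalization error $|\lambda_N r(N)^k-1|\,\tilde l(N)^k\approx l(N)^k(a^{(k)}(N))^{1+\frac1k}$ being $o(1)$ imposes the additional bound $l(N)\prec (a^{(k)}(N))^{-\frac1k-\frac1{k^2}}$ --- the paper's fourth condition --- and this is \emph{not} implied by your display. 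For instance, for $a(t)=t^{5/2}$ one has $k=3$, $a^{(3)}(N)\sim N^{-1/2}$, $a^{(4)}(N)\sim N^{-3/2}$, and $l(N)=N^{3/10}$ satisfies your display (since $N^{1/6}\prec N^{3/10}\prec N^{3/8}$), yet the normalization error is of order $N^{9/10}\cdot N^{-2/3}=N^{7/30}\to\infty$, so with that choice of $l$ the rounding analysis collapses. The fix is one line: include the fourth bound in the minimum. Nonemptiness of the corrected range still follows from Lemma~\ref{L:cvb} exactly as in the paper: $(a^{(k+1)})^k\prec (a^{(k)})^{k+1}$ makes the lower bound compatible with the Taylor constraint, $1/t^k\prec a^{(k)}$ makes it compatible with $l\prec t$, and $a^{(k)}(t)\prec 1$ makes $(a^{(k)})^{-1/k}\prec (a^{(k)})^{-\frac1k-\frac1{k^2}}$.
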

\begin{proof}
For convenience we assume that $c_n=1$ for every $n\in\N$, the proof is similar in the general case.
 By Lemma~\ref{L:averaging} it suffices to show that
\begin{equation}\label{E:uih}
\lim_{N\to \infty}\E_{N< n\leq  N+l(N)}V([a(n)])=0
\end{equation}
for some function $l(t)$ with $l(t)\prec t$ (we shall impose more conditions on $l(t)$ as the argument proceeds).

Let $k\in \N$ be such that the conclusion of Lemma~\ref{L:cvb} is satisfied, namely,
\begin{equation}\label{E:cvb1}
 |a^{(k+1)}(t)| \text{ decreases to } 0, \quad 1/t^k\prec a^{(k)}(t)\prec 1,\quad
\text{ and } \quad   (a^{(k+1)}(t))^k\prec (a^{(k)}(t))^{k+1}.
\end{equation}
For convenience we are going to assume that $a^{(k)}(t)$ is eventually positive, the proof is similar in the other case.

Using the Taylor expansion of $a(t)$ around the point $t=N$ we get for every $n\in\N$ that
\begin{equation}\label{E:Taylor1}
a(N+n)=a(N)+na'(N)+\cdots +\frac{n^k}{k!}a^{(k)}(N)+\frac{n^{k+1}}{(k+1)!}a^{(k+1)}(\xi_n)
\end{equation}
for some $\xi_n\in [N,N+n]$. Since $|a^{(k+1)}(t)|$ is  eventually decreasing, for every
large $N$ we have $|a^{(k+1)}(\xi_n)|\leq |a^{(k+1)}(N)|$.
It follows that if $l(t)$  satisfies
\begin{equation}\label{E:C1}
(l(t))^{k+1} a^{(k+1)}(t)\prec 1,
\end{equation}
 then for every large $N$ and  $n=1,\ldots, [l(N)]$ we have
\begin{equation}\label{E:C1'}
[a(N+n)]=
\Big[a(N)+na'(N)+\cdots +\frac{n^k}{k!}a^{(k)}(N)\Big]+e_N(n)
\end{equation}
where the error terms  $e_N(n)$ take values in the set $ \{0,-1\}$ (we used that
$a^{(k+1)}(t)$ is eventually negative).
For $t\in[0,1]$ and $x$ positive we have
$$
\left|\frac{1}{(x+t)^k}-\frac{1}{x^k}\right|\leq \frac{k}{x^{k+1}},
$$
therefore if
$$
d(t)=\frac{k!}{a^{(k)}(t)},
$$
then  setting  $x=[\sqrt[k]{d(N)}]$ and  $t=\{\sqrt[k]{d(N)}\}$ in the previous estimate
  we get
$$
\left|\frac{a^{(k)}(N)}{k!}-\frac{1}{[\sqrt[k]{d(N)}]^k}\right|\leq \frac{k}{[\sqrt[k]{d(N)}]^{k+1}}
\sim (a^{(k)}(N))^{1+\frac{1}{k}}.
$$
From this estimate and \eqref{E:C1'}, it follows that if
\begin{equation}\label{E:C2}
(l(t))^k  (a^{(k)}(t))^{1+\frac{1}{k}}\prec 1,
\end{equation}
then for every large $N$ and  $n=1,\ldots,[l(N)]$  we have
$$
[a(N+n)]=
\Big[a(N)+na'(N)+\cdots +\frac{n^k}{[\sqrt[k]{d(N)}]^k}\Big]+\tilde{e}_N(n)
$$
where the error terms $\tilde{e}_N(n)$ take values in the set $\{-2,-1,0,1\}$.
Hence, in order to establish \eqref{E:uih} it suffices to show that
\begin{equation}\label{E:dxc}
\lim_{N\to \infty}
\sup_{p\in \R_{k-1}[t]}
\norm{\E_{1\leq n\leq  l(N)}
V\Big(\Big[\frac{n^k}{[\sqrt[k]{d(N)}]^k}+p(n)\Big]+\tilde{e}_N(n)\Big)}=0.
\end{equation}

 Next notice that
 \eqref{E:dxc}  follows if we show that
  for every bounded sequence $(c_{N,n})_{N,n\in\N}$ we  have
\begin{equation}\label{E:nbm}
\lim_{N\to \infty}
\sup_{p\in \R_{k-1}[t]}
\norm{\E_{1\leq n\leq  l(N)}
c_{N,n}\ \! V\Big(\Big[\frac{n^k}{[\sqrt[k]{d(N)}]^k}+p(n)\Big]\Big)}=0.
\end{equation}
Indeed, it suffices to use \eqref{E:nbm} when  $c_{N,n}={\bf 1}_{\{k\colon \tilde{e}_N(k)=i\}}(n)$ for $i=-2,-1,0,1$, and then add the corresponding identities.

We perform one last maneuver by rewriting
 \eqref{E:nbm} in a more convenient form.
 Notice that  every integer between $1$ and $l(N)$ can be represented as
 $[\sqrt[k]{d(N)}]n+r$ with $0\leq n\leq \tilde{l}(N)=l(N)/[\sqrt[k]{d(N)}]$  and
  $1\leq r\leq  [\sqrt[k]{d(N)}]$.
Furthermore,  if we choose  $l(t)$ so that
\begin{equation}\label{E:C3}
 (l(t))^k a^{(k)}(t)\succ 1,
\end{equation}
then we have  $\tilde{l}(N)\to \infty$.
Since for every bounded sequence of vectors $V(n)$ the average $\E_{1\leq n\leq  b(N)}V(n)$ is
 equal to
 $\E_{1\leq r\leq  r(N)} \E_{1\leq n\leq  \tilde{l}(N)}V([\sqrt[k]{d(N)}]n+r)$
(up to negligible terms),  an easy computation (similar to the one used in the model example) shows that in order to
establish \eqref{E:nbm} it suffices to show that
  \begin{equation}\label{E:nbm'}
\lim_{N\to \infty}
\sup_{p\in \R_{k-1}[t],}
\norm{ \E_{1\leq  n\leq  N} c_{N,n}\ \! V([n^k+p(n)])}_{L^2(\mu)}=0.
\end{equation}

Summarizing, we have reduced matters to establishing \eqref{E:nbm'}, which holds by our assumption, provided that there exists a function $l(t)$ that
 satisfies  all the conditions imposed previously, namely,  $l(t)\prec t$ and  the conditions stated in  equations \eqref{E:C1}, \eqref{E:C2}, and  \eqref{E:C3}. Equivalently, the function $l(t)$ must satisfy
 $$
 \frac{1}{(a^{(k)}(t))^{\frac{1}{k}}}\prec l(t), \quad l(t)\prec t, \quad l(t) \prec   \frac{1}{(a^{(k+1)}(t))^{\frac{1}{k+1}}}, \quad \text{and}\quad   l(t)  \prec \frac{1}{(a^{(k)}(t))^{\frac{1}{k}+\frac{1}{k^2}}}.
 $$
That such a function  $l(t)$ exists follows from  the second and third conditions in \eqref{E:cvb1}, thus completing the proof.
\end{proof}
With the help of Proposition~\ref{P:uniform} and Lemma~\ref{L:AwayPoly} it is now easy to  prove  Theorem~\ref{T:CharSingle}.
\begin{proof}[Proof of Theorem~\ref{T:CharSingle}]
Let $(X,\X,\mu,T)$ be a system and suppose that at least one of the functions $f_1,\ldots,f_\ell\in L^\infty(\mu)$ is orthogonal to the nilfactor $\cZ$. Let $a\in \H$ have polynomial
growth and satisfy $a(t)\succ \log{t}$. We have to show that
\begin{equation}\label{E:zlo}
\lim_{N\to\infty}\E_{1\leq n\leq N} T^{[a(n)]}f_1\cdot\ldots\cdot T^{\ell[a(n)]}f_\ell=0
\end{equation}
where the convergence takes place in $L^2(\mu)$.

 Combining Proposition~\ref{P:uniform} and Lemma~\ref{L:AwayPoly}
we immediately get that \eqref{E:zlo} holds if $a\in \H$ has polynomial growth and satisfies
$|a(t)-p(t)|\succ \log{t}$ for every $p\in\R[t]$.
Therefore, it remains to deal with the case where
$a(t)=p(t)+e(t)$, where $p\in \R[t]$ is non-constant and $e(t)\ll \log{t}$.

Suppose first  that  $e(t)$ is bounded. Then $e(n)\to c$ for some $c\in \R$, and
as a result for every large $n\in\N$ we have
 $[a(n)]=[p(n)+c]+\tilde{e}(n)$ for some sequence $(\tilde{e}(n))_{n\in\N}$ with $\tilde{e}(n)\in \{0,\pm 1\}$.
Using this, we deduce that \eqref{E:zlo} follows
from Proposition~\ref{P:uniform}.

The last case to consider is when   $1\prec e(t)\prec \log{t}$.
 Let $I_m=\{n\in\N\colon [e(n)]=m\}$. Since $e(n+1)-e(n)\to 0$ (this follows from $e'(t)\to 0$ and the mean value theorem), and $e(n)\to \infty$,  it follows that for every large $m$ the set  $I_m$ is an integer interval with length that increases to infinity. Notice also that for  $n\in I_m$ we have   $[a(n)]=[p(n)]+m+\tilde{e}(n)$ for some sequence $(\tilde{e}(n))_{n\in\N}$ with $\tilde{e}(n)\in \{0,\pm 1\}$. Using this, we deduce that \eqref{E:zlo} follows
from Proposition~\ref{P:uniform}. This completes the proof.
\end{proof}

\subsection{Proof of the polynomial ergodic theorem}\label{SS:PET1}
Let $\mathcal{P}=\{p_1,\ldots,p_\ell\}$  be a family of polynomials with real coefficients.
We say that the family $\mathcal{P}$ consists of \emph{non-constant  and essentially
  distinct} polynomials, if  all the polynomials and their  pairwise differences have positive degree.
The maximum degree of the polynomials  is called the \emph{degree} of the polynomial family, and is denoted by  $\text{deg}(\mathcal{P})$.
 Given a
polynomial family $\mathcal{P}$, let $\mathcal{P}_i$ be the subfamily
of polynomials of degree $i$ in $\mathcal{P}$. We let $w_i$ denote the
number of distinct leading coefficients that appear in the family
$\mathcal{P}_i$. The vector $(d,w_d,\ldots,w_1)$, where $d=\text{deg}(\mathcal{P})$, is called the
\emph{type} of the polynomial family $\mathcal{P}$.

We will use an induction scheme, often called PET induction
(Polynomial Exhaustion Technique), on types of polynomial families
that was introduced by Bergelson in \cite{Be87a}.  We order
the set of all possible types lexicographically, meaning,
$(d,w_d,\ldots,w_1)>(d',w_d',\ldots,w_1')$ if and only if in the first
instance where the two vectors disagree the coordinate of the first
vector is greater than the coordinate of the second vector.

Given a  family of non-constant essentially distinct polynomials $\mathcal{P}=\{p_1,\ldots p_\ell\}$, a positive integer $h$,
and $p\in \mathcal{P}$, we form a new family $\mathcal{P}(p,h)$ as follows:
We start with the family of polynomials
$$
\{ p_1(t+h)-p(t),\ldots,p_\ell(t+h)-p(t), p_1(t)-p(t),\ldots, p_\ell(t)-p(t)\},
$$
and successively remove the smallest number of polynomials so that the resulting family consists
of  non-constant, essentially distinct polynomials. Then for every large $h$ the function
$p_i(t+h)-p(t)$ will be  removed  if and only if    $p_i$ is linear (then $(p_i(t+h)-p(t))-(p_i(t)-p(t))=p_i(h)$),  and the function $p_i(t)-p(t)$ will be removed if and only if $p=p_i$.
\begin{example}
If  $\mathcal{P}=\{t,2t, t^{2}\}$ and $p(t)=t$, then
we start with the family of polynomials
$$\{h , t+2h, (t+h)^{2}- t, 0, t , t^2 -t\}
$$
and remove the first, second, and  fourth polynomials to get
$$
\mathcal{P}(t,h)=\{ (t+h)^{2}- t,t, t^2 -t\}.
$$
Notice that the family $\mathcal{P}$ has type $(2,1,2)$, and the family
$\mathcal{P}(t,h)$ has smaller type, namely, $(2,1,1)$.
\end{example}

The main step in the proof of
Proposition~\ref{P:uniform}  is carried out in  Lemma~\ref{L:uniform}. This lemma
is proved using  induction on the type of the family of functions
involved. In order to carry out the inductive step we will use the following:
\begin{lemma}\label{L:indstep2}
Let $\mathcal{P}=\{p_1,\ldots p_\ell\}$ be  family of non-constant essentially distinct polynomials,
 and suppose that  $\text{deg}(p_1)=\text{deg}(\mathcal{P})\geq 2$.

Then there exists
$p\in \mathcal{P}$ such that for every large $h$ the family $\mathcal{P}(p,h)$
has type smaller than that of $\mathcal{P}$, and
$\text{deg}(p_1(t+h)-p(t))=\text{deg}(\mathcal{P}(p,h))$.
\end{lemma}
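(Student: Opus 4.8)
The plan is to take $p$ to be a polynomial of \emph{minimal degree} in $\mathcal{P}$, and then to bookkeep the degrees and leading coefficients of the generators of $\mathcal{P}(p,h)$ against those of $\mathcal{P}$. Write $m=\deg(p)$ for this minimal degree and, for a polynomial $q$, let $\mathrm{lc}(q)$ denote its leading coefficient; let $L_j=\{\mathrm{lc}(p_i):\deg(p_i)=j\}$, so that $w_j=|L_j|$. The only refinement I will need is in the degenerate case where \emph{every} polynomial of $\mathcal{P}$ has degree $d$: there I additionally insist that $\mathrm{lc}(p)\neq\mathrm{lc}(p_1)$ whenever this is possible (i.e. when $w_d\geq 2$). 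Throughout, the hypothesis ``for every large $h$'' will be used to guarantee that whenever a leading term cancels, the next coefficient—which has the shape $ch+c'$ with $c\neq 0$—is nonzero, so that degrees drop by \emph{exactly} one; large $h$ also validates the removal rules recalled before the lemma (for linear $p_i$ the generators $p_i(t+h)-p(t)$ and $p_i(t)-p(t)$ differ by the nonzero constant $p_i(h)$).

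The type computation splits according to the degree $j$. For each $j>m$ I claim the leading coefficients occurring at degree $j$ in $\mathcal{P}(p,h)$ are precisely the values in $L_j$, so that $w_j$ is \emph{unchanged}. Indeed, for $p_i$ with $\deg(p_i)=j$, both $p_i(t+h)-p(t)$ and $p_i(t)-p(t)$ have degree $j$ and leading coefficient $\mathrm{lc}(p_i)$ (subtracting the lower–degree $p$ leaves the top term intact), they are essentially distinct because they differ by $p_i(t+h)-p_i(t)$ of positive degree $j-1$, and no generator arising from a polynomial of degree $\neq j$ can reach degree $j$. The decisive computation is at degree $m$: the surviving degree-$m$ generators all have leading coefficient $\mathrm{lc}(p_i)-\mathrm{lc}(p)$ with $\deg(p_i)=m$ and $\mathrm{lc}(p_i)\neq\mathrm{lc}(p)$ (those with $\mathrm{lc}(p_i)=\mathrm{lc}(p)$ drop below degree $m$ for large $h$, and $p(t)-p(t)=0$ is removed), while each such value is carried by the surviving \emph{unshifted} generator $p_i(t)-p(t)$. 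Hence the degree-$m$ leading coefficients form $(L_m-\mathrm{lc}(p))\setminus\{0\}$, which has exactly $w_m-1$ elements since $\mathrm{lc}(p)\in L_m$. Thus the type of $\mathcal{P}(p,h)$ agrees with that of $\mathcal{P}$ in the coordinates $d,w_d,\dots,w_{m+1}$ and is strictly smaller at the coordinate indexed by $m$ (when $m=d$ and $w_d=1$ this just says the degree falls below $d$), which is the first assertion.

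For the degree assertion I compute $\deg(p_1(t+h)-p(t))$ directly in three cases. If some polynomial of $\mathcal{P}$ has degree $<d$, then $m<d$, so $p_1(t+h)-p(t)$ has degree $d$ (the subtracted $p$ has strictly smaller degree), and since $w_d$ is preserved we also have $\deg(\mathcal{P}(p,h))=d$. If every polynomial has degree $d$ and $w_d\geq 2$, my refined choice $\mathrm{lc}(p)\neq\mathrm{lc}(p_1)$ gives $\deg(p_1(t+h)-p(t))=d=\deg(\mathcal{P}(p,h))$. If every polynomial has degree $d$ and $w_d=1$, all the top terms cancel, and for large $h$ the polynomial $p_1(t+h)-p(t)$ has degree $d-1$, its coefficient of $t^{d-1}$ equalling $\mathrm{lc}(p_1)\,d\,h$ up to an additive constant and hence nonzero, which matches the new family degree $d-1$. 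In every case the two degrees coincide.

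The step I expect to be most delicate is the bookkeeping at degree $m$: I must ensure that the removals forced by essential distinctness never destroy a leading–coefficient value I am counting (this is exactly why it matters that the unshifted generator $p_i(t)-p(t)$ always survives and carries $\mathrm{lc}(p_i)-\mathrm{lc}(p)$), and that no top–degree witness of $\mathcal{P}(p,h)$ is inadvertently removed, so that $\deg(\mathcal{P}(p,h))$ genuinely equals the degree computed for $p_1(t+h)-p(t)$. Both points are controlled by the large-$h$ normalization together with the fact that $p_1$ is non-linear (as $\deg(p_1)=d\geq 2$), which guarantees $p_1(t+h)-p(t)$ is never discarded by the linear removal rule.
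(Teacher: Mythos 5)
Your proof is correct and takes essentially the same approach as the paper: both choose $p$ of minimal degree when the degrees are not all equal, and a $p$ whose leading coefficient differs from that of $p_1$ when all degrees coincide and $w_d\geq 2$; your explicit bookkeeping of the type coordinates (unchanged above $\deg(p)$, dropping by one at $\deg(p)$) is exactly what the paper leaves implicit behind the phrase ``has the advertised property.'' The only divergence is the degenerate subcase where all polynomials share both the degree and the leading coefficient: there the paper picks $p$ so that $p-p_1$ has maximal degree, whereas you take an arbitrary $p$ --- and your observation that any choice works is correct, since for large $h$ every shifted generator $p_i(t+h)-p(t)$ then has degree exactly $d-1$ while every surviving unshifted generator has degree at most $d-1$.
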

\begin{remark} Since $\text{deg}(p_1)\geq 2$,  no-matter what the choice of $p$ will be, the polynomial
$p_1(t+h)-p(t)$ is going to be an element of the family $\mathcal{P}(p,h)$ for every large $h$.
\end{remark}
\begin{proof}
Suppose first that $\text{deg}(p_i)<\text{deg}(p_1)$ for some $i\in\{2,\ldots,\ell\}$. Let $i_0$ be such that the polynomial $p_{i_0}$ has  minimal degree. Then $p=p_{i_0}$ has the advertised property.

Otherwise,  all the polynomials have the same degree, in which case for $i=2,\ldots,\ell$ we have  $p_i(t)=\alpha_ip_1(t)+q_i(t)$ for some
  non-zero real numbers $\alpha_2,\ldots, \alpha_\ell$ and polynomials $q_i$ with
  $\text{deg}(q_i)<\text{deg}(p_1)$.
If $\alpha_{i_0}\neq 1$ for some $i_0\in \{2,\ldots,\ell\}$, then  $p=p_{i_0}$ has the advertised property. If $\alpha_i= 1$ for $i=2,\ldots,\ell$, let
 $i_0$ be such that the function $q_{i_0}$ has maximal degree.  Then  $p=p_{i_0}$ has the advertised property. This completes the proof.
\end{proof}
We are also going to use   a variation of the classical elementary lemma of van der Corput.
Its proof is a straightforward modification of  the one given in \cite{Be87a}, therefore we omit it.
\begin{lemma}\label{L:N-VDC}
Let  $\{v_{N,n}\}_{N,n\in\N}$ be a bounded  sequence of vectors in a
Hilbert space, and $(\Phi_N)_{N\in\N}$ be a F{\o}lner sequence of subsets of $\N$.
 For every $h\in \N$ we set
$$
b_h=\overline{\lim}_{N\to\infty}\Big|
\E_{n\in\Phi_N}<v_{N,n+h},v_{N,n}>\Big|.
$$

Then
$$
\overline{\lim}_{N\to\infty}
\norm{\E_{n\in\Phi_N}v_{N,n}}^2\leq 4 \ \!
\overline{\lim}_{H\to\infty}\E_{1\leq h\leq H} b_h.
$$
\end{lemma}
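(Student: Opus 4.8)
The plan is to reproduce the classical van der Corput argument, paying attention only to the two features that distinguish this statement from the textbook case: the vectors depend on the extra parameter $N$, and the averages are taken over a general Følner sequence $(\Phi_N)$ rather than over the intervals $[1,N]$. Fix $H\in\N$. The first step is a preliminary windowed averaging. Since the $v_{N,n}$ are bounded, say $\norm{v_{N,n}}^2\le C$, and since for each fixed $h$ the Følner property gives $|(\Phi_N+h)\triangle\Phi_N|/|\Phi_N|\to 0$, a reindexing $n\mapsto n+h$ shows $\norm{\E_{n\in\Phi_N}v_{N,n}-\E_{n\in\Phi_N}v_{N,n+h}}\to 0$ as $N\to\infty$. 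Averaging this over $0\le h<H$ yields
$$
\overline{\lim}_{N\to\infty}\norm{\E_{n\in\Phi_N}v_{N,n}}=\overline{\lim}_{N\to\infty}\norm{\E_{n\in\Phi_N}\tfrac1H\sum_{h=0}^{H-1}v_{N,n+h}}.
$$

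Next I would apply Cauchy--Schwarz (equivalently, convexity of $\norm{\cdot}^2$) to pull the norm inside the average over $n$, and then expand the squared norm of the inner window sum:
$$
\norm{\E_{n\in\Phi_N}\tfrac1H\sum_{h=0}^{H-1}v_{N,n+h}}^2\le \E_{n\in\Phi_N}\frac{1}{H^2}\sum_{h,h'=0}^{H-1}\langle v_{N,n+h},v_{N,n+h'}\rangle.
$$
Taking $\overline{\lim}_{N}$ and moving it across the \emph{finite} double sum (and across absolute values), I would separate the diagonal from the off-diagonal contributions. The $H$ diagonal terms $h=h'$ are each at most $C$, contributing at most $C/H$. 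For $h\ne h'$, a second use of the Følner property (reindexing $n\mapsto n+\min(h,h')$) identifies $\overline{\lim}_N|\E_{n\in\Phi_N}\langle v_{N,n+h},v_{N,n+h'}\rangle|$ with $b_{|h-h'|}$; counting the $2(H-k)$ pairs with $|h-h'|=k$ bounds the off-diagonal part by $\tfrac{2}{H}\sum_{k=1}^{H-1}b_k$. Altogether this gives, for every $H$,
$$
\overline{\lim}_{N\to\infty}\norm{\E_{n\in\Phi_N}v_{N,n}}^2\le \frac{C}{H}+2\,\E_{1\le h\le H}b_h,
$$
and letting $H\to\infty$ kills the first term and leaves $2\,\overline{\lim}_{H\to\infty}\E_{1\le h\le H}b_h$. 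This is in fact stronger than the claimed bound; the stated constant $4$ comfortably absorbs the slack, so no sharpening is needed.

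The two appeals to the Følner property and the elementary pair count are routine. The step that requires genuine care is the legitimacy of the limit interchanges in this two-parameter setting: $\overline{\lim}_N$ must only ever be moved across \emph{finitely many} summands (with $H$ held fixed) and across absolute values, so that the cancellation hidden in $\E_{n\in\Phi_N}\langle v_{N,n+h},v_{N,n+h'}\rangle$ is absorbed by passing to $b_{|h-h'|}$ rather than by illegitimately commuting $\limsup$ with an infinite averaging process. This is exactly the point at which the argument differs cosmetically from, but not substantively from, the single-parameter version in \cite{Be87a}, which is why the proof may be described as a straightforward modification of the one given there.
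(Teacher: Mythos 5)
Your proof is correct, and it is exactly the argument the paper has in mind: the paper omits the proof precisely because it is the classical van der Corput/Bergelson argument from \cite{Be87a}, adapted in the two routine ways you identify (the extra parameter $N$ handled by keeping $H$ fixed while taking $\limsup_N$ across finitely many terms, and the F{\o}lner property replacing the interval shifts). Your sharper constant $2$ is also fine — the stated constant $4$ is simply not optimized, so nothing is amiss there.
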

 To state our next result it will be convenient to introduce some notation. For $N\in\N$ let $\mathcal{P}_N=\{p_{1,N},\ldots,p_{\ell,N}\}$ be a family of polynomials with real coefficients.
We say that the collection  $(\mathcal{P}_N)_{N\in\N}$  is \emph{``nice''} if  for every $N\in \N$  the polynomials $p_{i,N}$ and $p_{i,N}-p_{j,N}$ (for $i\neq j$) are non-constant and their leading coefficients are independent of $N$.
\begin{lemma}\label{L:uniform}
Let $(\{p_{1,N},\ldots,p_{\ell,N}\})_{N\in\N}$  be
a ``nice'' collection of polynomial families.
Let $(X,\X,\mu,T)$ be a system, and
  suppose that one of the functions   $f_1,\ldots,f_\ell\in L^\infty(\mu)$ is orthogonal to the nilfactor $\cZ$.

Then for every F{\o}lner sequence $(\Phi_N)_{N\in \N}$ in $\Z$ and bounded sequence $(c_{N,n})_{N,n\in\N}$ we have
\begin{equation}\label{E:mnm}
\lim_{N\to \infty}\E_{n\in \Phi_N} c_{N,n} \ \!T^{[p_{1,N}(n)]}f_1\cdot \ldots\cdot T^{[p_{\ell,N}(n)]}f_\ell=0
\end{equation}
where the convergence  takes place in $L^2(\mu)$.
\end{lemma}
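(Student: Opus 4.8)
The plan is to establish the identity by \emph{PET induction} on the type of the family, with Lemma~\ref{L:N-VDC} (van der Corput) as the driving mechanism. Set
$$
v_{N,n}=c_{N,n}\,T^{[p_{1,N}(n)]}f_1\cdot\ldots\cdot T^{[p_{\ell,N}(n)]}f_\ell ,
$$
so the goal is $\overline{\lim}_N\norm{\E_{n\in\Phi_N}v_{N,n}}_{L^2(\mu)}=0$; because the collection is ``nice'', the leading coefficients of the $p_{i,N}$ and of their pairwise differences are independent of $N$, so the type $(d,w_d,\ldots,w_1)$ is the same for all $N$ and the induction is well posed. Rather than the bare vanishing, I would prove the quantitative inductive statement that $\overline{\lim}_N\norm{\E_{n\in\Phi_N}v_{N,n}}$ is dominated by a fixed power of a Host--Kra seminorm $\nnorm{f_{i^*}}_s$ of a function $f_{i^*}$ attached to a maximal degree polynomial, where $s$ grows with the type. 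The Lemma then follows at once: since $f_{i^*}\bot\cZ=\bigvee_j\cZ_j$ and $\nnorm{f}_s=0$ is equivalent to $\E(f|\cZ_{s-1})=0$, every seminorm of $f_{i^*}$ vanishes. The base case is a single essentially linear family (type $(1,1)$), where one more application of van der Corput together with the spectral theorem bounds the average by $\nnorm{g}_2$ of the surviving function $g$.

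For the inductive step I would feed $v_{N,n}$ into Lemma~\ref{L:N-VDC}, reducing matters to controlling $\overline{\lim}_H\E_{1\le h\le H}b_h$ with $b_h=\overline{\lim}_N|\E_{n\in\Phi_N}\langle v_{N,n+h},v_{N,n}\rangle|$. For each large $h$ I would apply the measure preserving map $T^{-[p_{j,N}(n)]}$ inside the integral defining the correlation, choosing the pivot $p_{j,N}$ as in Lemma~\ref{L:indstep2} so that a maximal degree polynomial is tracked and the type drops strictly. After the shift the exponents are $[p_{i,N}(n+h)]-[p_{j,N}(n)]$ and $[p_{i,N}(n)]-[p_{j,N}(n)]$, which differ from $[p_{i,N}(n+h)-p_{j,N}(n)]$ and $[p_{i,N}(n)-p_{j,N}(n)]$ by integers in $\{-1,0,1\}$. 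The crucial device is to partition the $n$-average into the finitely many sets on which this tuple of errors is constant: on each such set the bounded integer shifts are absorbed into the functions (a translate of a function orthogonal to $\cZ$ is again orthogonal to $\cZ$), while the indicator of the set is folded into a new bounded coefficient. This turns $\E_{n}\langle v_{N,n+h},v_{N,n}\rangle$ into a bounded combination of averages attached to the ``nice'' collection $\mathcal{P}_N(p_{j,N},h)$ of strictly smaller type, and it is precisely this manoeuvre that forces the statement to be proved for an arbitrary bounded coefficient sequence. I would not expect $b_h$ to vanish for individual $h$; instead the inductive hypothesis bounds the reduced correlation by a seminorm of the reduced distinguished function, and averaging this bound over $h$ reassembles, via the recursion \eqref{eq:recur}, the next level of the seminorm $\nnorm{f_{i^*}}_s$.

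The last point needing care is that Lemma~\ref{L:indstep2} naturally tracks a \emph{maximal} degree polynomial, so the induction directly controls the average by a seminorm of a maximal degree function; to deduce the stated conclusion, in which the orthogonal function is an arbitrary one of the $f_i$, I would process the functions from the highest degree downward, using multilinearity to replace each higher degree function by its conditional expectation onto $\cZ$ once its slot is known to be characteristic, thereby reducing an arbitrary orthogonality hypothesis to the maximal degree case. The main obstacle I anticipate is exactly this intertwined bookkeeping: simultaneously forcing the type to decrease, keeping the integer-part discrepancies harmless through the bounded-coefficient splitting, and guaranteeing that the seminorm assembled over the successive $h$-averages is genuinely a seminorm of the function assumed orthogonal to $\cZ$ and not of some product in which that orthogonality has been lost.
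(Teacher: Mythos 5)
Your inductive engine is essentially the paper's: PET induction on the type, van der Corput via Lemma~\ref{L:N-VDC}, the pivot chosen by Lemma~\ref{L:indstep2} so that the type drops while a maximal-degree polynomial is tracked, and the integer-part discrepancies absorbed by partitioning into finitely many sets of constant error and folding the indicators into the bounded coefficients (which is indeed why the statement must allow arbitrary bounded $c_{N,n}$). Packaging the induction as a quantitative bound by a Host--Kra seminorm, with the recursion \eqref{eq:recur} reassembling the next seminorm level after the $h$-average, is a legitimate variant of the paper's qualitative formulation. The genuine gap is your last paragraph: the passage from ``the orthogonal function sits at a maximal-degree polynomial'' to the stated hypothesis, where the orthogonal function is an \emph{arbitrary} $f_i$. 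Your proposed reduction --- use multilinearity to replace each maximal-degree $f_j$ by $\E(f_j|\cZ)$ --- does not close. Writing $f_j=\E(f_j|\cZ)+g_j$, the terms containing some $g_j$ in a maximal-degree slot do vanish, but you are then left with an average in which every maximal-degree slot carries a $\cZ$-measurable function while the orthogonal function still sits at a lower-degree polynomial. Your inductive statement bounds the average \emph{only} by a seminorm of the maximal-degree function, which is now typically nonzero, so it gives no information about exactly the configuration you must handle. Concretely, for $\{n,n^2\}$ with $f_1\perp\cZ$ your reduction leaves you needing $\E_{n}\,T^{n}f_1\cdot T^{n^2}\E(f_2|\cZ)\to 0$, which is not easier than the original problem.

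The paper resolves this with a duality device that your formulation cannot reproduce: it proves the statement in the dual form \eqref{E:mnm'}, with an absolute value inside the $n$-average and a supremum over all functions other than the distinguished one (unit ball of $L^\infty$). With that uniformity one can factor out the maximal-degree iterate $T^{[p_{\ell,N}(n)]}$: the family becomes $\{-p_{\ell,N},\,p_{1,N}-p_{\ell,N},\ldots,p_{\ell-1,N}-p_{\ell,N}\}$, which is again ``nice'', the orthogonal function's polynomial $p_{1,N}-p_{\ell,N}$ now \emph{has} maximal degree, and the displaced function $f_{\ell}$ lands in the test-function slot covered by the supremum. This re-indexing is what reduces the arbitrary-index hypothesis to the maximal-degree case, and it genuinely requires the supremum (equivalently, $N$-dependent functions of bounded norm in the non-distinguished slots); your inductive statement, phrased for a fixed tuple $(f_1,\ldots,f_\ell)$ with only the scalar coefficients allowed to vary, does not permit it, since the dualizing function is the average itself and changes with $N$. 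The repair is to build the supremum over the non-distinguished functions into your inductive claim (your van der Corput step survives this: as in the paper, one chooses near-optimal $N$-dependent functions before applying Lemma~\ref{L:N-VDC}) and to replace the multilinearity reduction by the factoring-out re-indexing just described; with those two changes your argument becomes, in substance, the paper's proof.
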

\begin{remark}
In the special case where $p_{i,N}=p_i$ for $i=1,\ldots,\ell$ we get a different proof\footnote{In contrast with the proof given  in \cite{Lei05c}, we do  not have to  work with polynomials of several variables (which was a key trick in \cite{Lei05c}) in order to prove the single variable result.} of the known result
that the nilfactor $\cZ$ is characteristic for any family of non-constant,  essentially distinct polynomials of a single variable.
\end{remark}
\begin{proof}
Without loss of generality  we can assume that the function $f_1$ is orthogonal to the nilfactor $\cZ$. Furthermore, we can assume that $\norm{f_i}_\infty \leq 1$ for $i=1,\ldots,\ell$, and $|c_{N,n}|\leq 1$ for every $N,n\in\N$.
It will be crucial for our argument to assume that the polynomial $p_{1,N}$ has maximal degree within the family $\mathcal{P}_N=\{p_{1,N},\ldots,p_{\ell,N}\}$. To get this extra assumption it is convenient to somewhat modify our goal; instead of \eqref{E:mnm} we shall prove that
\begin{equation}\label{E:mnm'}
\lim_{N\to\infty}\sup_{\norm{f_0}_\infty, \norm{f_2}_{\infty}
,\ldots,\norm{f_\ell}_\infty\leq 1} \E_{n\in \Phi_N} \Big|\int
f_0\cdot  T^{[p_{1,N}(n)]}f_1\cdot T^{[p_{2,N}(n)]}f_2\cdot\ldots \cdot
T^{[p_{\ell,N}(n)]}f_\ell \ d\mu\Big| =0.
\end{equation}

Notice first that \eqref{E:mnm'}  implies \eqref{E:mnm}. Indeed, \eqref{E:mnm'}
gives that
\begin{equation}\label{E:mnm''}
\lim_{N\to\infty}
\E_{n\in \Phi_N} c_{N,n} \ \! \int
f_{0,N}\cdot  T^{[p_{1,N}(n)]}f_1\cdot T^{[p_{2,N}(n)]}f_2\cdot\ldots \cdot
T^{[p_{\ell,N}(n)]}f_\ell \ d\mu =0
\end{equation}
whenever $f_{0,N}\in L^\infty(\mu)$ satisfies $\norm{f_{0,N}}_\infty \leq 1$ for $N\in\N$.
Using \eqref{E:mnm''} with the conjugate of the function  $\E_{n\in \Phi_N} c_{N,n} \ T^{[p_{1,N}(n)]}f_1\cdot T^{[p_{2,N}(n)]}f_2\cdot\ldots \cdot
T^{[p_{\ell,N}(n)]}f_\ell$ in place of the function $f_{0,N}$ (for every $N\in \N$), we get \eqref{E:mnm}.

 Next we claim that when proving \eqref{E:mnm'} we can assume that the polynomial $p_{1,N}$ has maximal degree within the family  $\mathcal{P}_N$. Indeed, if this is not the case, then $\text{deg}(p_{1,N})<\text{deg}(p_{i,N})$ for
some $i=2,\ldots,\ell$, say this happens for $i=\ell$. After factoring out the transformation $T^{[p_{\ell,N}(n)]}$  we see  that \eqref{E:mnm'} can be rewritten as
\begin{multline}\label{E:mnm1}
\lim_{N\to\infty}\sup_{\norm{f_0}_\infty, \norm{f_2}_{\infty}
,\ldots,\norm{f_\ell}_\infty\leq 1} \E_{n\in \Phi_N} \Big|\int
f_\ell\cdot T^{[-p_{\ell,N}(n)]+e_{0,N}(n)}f_0\cdot  T^{[p_{1,N}(n)-p_{\ell,N}(n)]+e_{1,N}(n)}f_1\cdot \\
T^{[p_{2,N}(n)-p_{\ell,N}(n)]+e_{2,N}(n)}f_2\cdot\ldots \cdot
T^{[p_{\ell-1,N}(n)-p_{\ell,N}(n)]+e_{\ell-1,N}(n)}f_{\ell-1} \ d\mu\Big| =0
\end{multline}
for some error terms $e_{i,N}(n)$ with values in the set $\{0,1\}$.
Furthermore,  notice that \eqref{E:mnm1} follows if we show that
\begin{multline}\label{E:mnm2}
\lim_{N\to\infty}\sup_{\norm{f_0}_\infty, \norm{f_2}_{\infty}
,\ldots,\norm{f_\ell}_\infty\leq 1} \E_{n\in \Phi_N} \Big|\int
f_\ell\cdot T^{[-p_{\ell,N}(n)]}f_0\cdot  T^{[p_{1,N}(n)-p_{\ell,N}(n)]}f_1\cdot \\
T^{[p_{2,N}(n)-p_{\ell,N}(n)]}f_2\cdot\ldots \cdot
T^{[p_{\ell-1,N}(n)-p_{\ell,N}(n)]}f_{\ell-1} \ d\mu\Big| =0.
\end{multline}
Since the collection of polynomial  families $(\mathcal{P}'_N)_{N\in\N}$, where
$$
\mathcal{P}'_N=\{-p_{\ell,N}, p_{1,N}-p_{\ell,N},\ldots,p_{\ell-1,N}-p_{\ell,N}\},
$$
 is also ``nice'', and the polynomial
$p_{1,N}-p_{\ell,N}$ has maximal degree within the family $\mathcal{P}_N'$, the claim follows.

Summarizing,
we have reduced matters to establishing \eqref{E:mnm'} for every system, assuming that the function $f_1\in L^\infty(\mu)$
is orthogonal to the nilfactor $\cZ$ and the polynomial $p_{1,N}$ has maximal degree
within the family $\mathcal{P}_N$.
We shall do  this by using induction on the type of the family  of  polynomials $\mathcal{P}_N$ (the type of this family is independent of $N$).

The case where all the polynomials have degree $1$ can be  treated as in the proof of  \eqref{E:ofi1} in Proposition~\ref{P:CharSublin} below. (For linear functions, the same argument works for any F{\o}lner sequence
$\Phi_N$ in place of the intervals $[1,N]$; we leave the  routine details to the reader.)

 Now let $d\geq 2$ and suppose that the statement  holds for every ``nice''collection of polynomial families with
  type smaller than $(d,w_d,\ldots,w_1)$. Let $(\mathcal{P}_N)_{N\in\N}$, where  $\mathcal{P}_N=\{p_{1,N},\ldots,p_{\ell,N}\}$, be a ``nice'' collection of polynomial families
   with type $(d,w_d,\ldots,w_1)$.

  Using  the Cauchy-Schwarz inequality we see that
\eqref{E:mnm'} follows if   we show that
$$
\lim_{N\to\infty}\sup_{\norm{f_0}_\infty, \norm{f_2}_{\infty}
,\ldots,\norm{f_\ell}_\infty\leq 1}
\E_{n\in \Phi_N}\Big| \int f_{0}\cdot  T^{[p_{1,N}(n)]}f_1\cdot T^{[p_{2,N}(n)]}f_{2}\cdot\ldots \cdot T^{[p_{\ell,N}(n)]}f_{\ell} \ d\mu\Big|^2=0.
$$
This last identity can be rewritten as
 $$
\lim_{N\to\infty}\sup_{\norm{f_0}_\infty, \norm{f_2}_{\infty}
,\ldots,\norm{f_\ell}_\infty\leq 1}
\E_{n\in \Phi_N} \int F_{0}\cdot  S^{[p_{1,N}(n)]}F_1\cdot S^{[p_{2,N}(n)]}F_{2}\cdot \ldots \cdot S^{[p_{\ell,N}(n)]}F_{\ell} \ d(\mu\times \mu)=0
$$
where $S=T\times T$ and  $F_{i}=f_{i} \otimes \overline{f}_{i}$ for  $i=0,1,\ldots,\ell.$
Using the
 Cauchy-Schwarz inequality we see that it suffices to show that
 \begin{equation}\label{E:oro}
\lim_{N\to\infty}\sup_{\norm{F_2}_{\infty}
,\ldots,\norm{F_\ell}_\infty\leq 1}
\norm{\E_{n\in \Phi_N} \cdot  S^{[p_{1,N}(n)]}F_1\cdot S^{[p_{2,N}(n)]}F_{2}\cdot \ldots \cdot S^{[p_{\ell,N}(n)]}F_{\ell}}_{L^2(\mu\times \mu)}=0
\end{equation}
where $F_1=f_1 \otimes \overline{f}_1$.
We choose  functions $F_{i,N}$, $i=2,\ldots,\ell$,  with  sup norm at most $1$, so that the value of the norms in \eqref{E:oro} is $1/N$ close to the supremum and use
  Lemma~\ref{L:N-VDC}. We get that \eqref{E:oro} follows if we show that
for every large  $h$ we have
\begin{multline*}
\lim_{N\to\infty}\sup_{ \norm{F_2}_{\infty}
,\ldots,\norm{F_\ell}_\infty\leq 1}
\Big| \E_{ n\in \Phi_N} \int   S^{[p_{1,N}(n+h)]}F_1\cdot S^{[p_{2,N}(n+h)]}F_{2}\cdot \ldots \cdot S^{[p_{\ell,N}(n+h)]}F_{\ell} \cdot\\
S^{[p_{1,N}(n)]}\overline{F}_1\cdot S^{[p_{2,N}(n)]}\overline{F}_{2}\cdot \ldots \cdot S^{[p_{\ell,N}(n)]}\overline{F}_{\ell}
\ d(\mu\times \mu)\Big|=0.
\end{multline*}
Factoring out the transformation $S^{[p_N(n)]}$,
 where $p_N=p_{i,N}$ for some $i\in \{1,\ldots,\ell\}$ is chosen as  in Lemma~\ref{L:indstep2} (the choice of $i$ is independent of $N$), we can rewrite the last identity as
  \begin{multline*}
\lim_{N\to\infty}\sup_{\norm{F_2}_{\infty}
,\ldots,\norm{F_\ell}_\infty\leq 1}
\E_{n\in \Phi_N} \Big|\int   S^{[p_{1,N}(n+h)-p_N(n)]+e_{1,N}(h,n)}F_1\cdot \ldots \cdot S^{[p_{\ell,N}(n+h)-p_N(n)]+e_{\ell,N}(h,n)}F_{\ell} \cdot\\
S^{[p_{1,N}(n)-p_N(n)]+e_{\ell+1,N}(h,n)}\overline{F}_1\cdot \ldots \cdot S^{[p_{\ell,N}(n)-p_N(n)]+e_{2\ell,N}(h,n)}\overline{F}_{\ell}
\ d(\mu\times \mu)\Big|=0
\end{multline*}
for some error terms  $e_{i,N}(h,n) $ with values in $\{0,1\}$.
For every fixed $h,N\in \N$ we can partition the integers into a finite number of sets,
  that depend only on $\ell$,  where all sequences $e_{i,N}(h,n)$ are constant.
Therefore, the last identity
  follows if we show that for every large  $h$ we have
  \begin{multline}\label{E:aris1}
\lim_{N\to\infty}\sup_{\norm{F_2}_{\infty}
,\ldots,\norm{F_{\ell}}_\infty\leq 1}
\E_{n\in \Phi_N} \Big|\int   S^{[p_{1,N}(n+h)-p_N(n)]}F_1\cdot \ldots \cdot S^{[p_{\ell,N}(n+h)-p_N(n)]}F_\ell \cdot\\
S^{[p_{1,N}(n)-p_N(n)]}F_1\cdot \ldots \cdot S^{[p_{\ell,N}(n)-p_N(n)]}\overline{F}_\ell
\ d(\mu\times \mu)\Big|=0.
\end{multline}
Next notice that if  the polynomial $p_{i,N}$ have degree $1$  (this can only happen for $i\neq 1$), then $p_{i,N}(n+h)=p_{i,N}(n)+c_{N}(h)$ for some $c_{N}(h)\in \R$. Hence, for those values of $i$
we can write
$$
S^{[p_{i,N}(n+h)-p_N(n)]}F_i\cdot S^{[p_{i,N}(n)-p_N(n)]}\overline{F}_i=S^{[p_{i,N}(n)-p_N(n)]}(S^{[c_{N}(h)]+e_{N}(h,n)}F_i\cdot \overline{F}_i),
$$
for some error terms $e_N(h,n)\in \{0,1\}$. As explained before, because the error terms $e_{N}(h,n)$ take values in a finite set, when proving \eqref{E:aris1} we  can assume that $e_{N}(h,n)=0$.
Therefore,  we have reduced matters to showing  that for every large $h$ we have
  \begin{multline}\label{E:aris1'}
\lim_{N\to\infty}\sup_{\norm{F_0},\norm{F_2}_{\infty}
,\ldots,\norm{F_{r}}_\infty\leq 1}\Big|\E_{n\in \Phi_N}\int F_0\cdot
    S^{[p_{1,N}(n+h)-p_N(n)]}F_1\cdot
 S^{[q_{2,N}(n)]}F_2 \cdot
 \ldots \cdot\\ S^{[q_{r,N}(n)]}F_r \ d(\mu\times\mu)\Big|=0.
\end{multline}
 for some $r\in \N$, where  all the polynomials  involved are elements of the collection of
 polynomial families $(\mathcal{P}_N(p_N,h))_{N\in\N}$ (defined on the beginning of this subsection).
 For every large $h$
   this new collection of polynomial families is ``nice'' and
  by Lemma~\ref{L:indstep2}  it has type smaller than that of
 $(\mathcal{P}_N)_{N\in\N}$. Furthermore,
  the degree of the polynomial $p_{1,N}(n+h)-p_N(n)$
  is maximal within the family $\mathcal{P}_N(p_N,h)$. Since $f_1$ is orthogonal to the factor $\cZ(T)$, we have that $F_1=f_1\otimes\overline{f}_1$ is orthogonal to the factor $\cZ(S)$.
 Therefore, the induction hypothesis applies, and verifies \eqref{E:aris1'}  for every large $h$. This completes the induction and the proof.
 \end{proof}
 We are now ready to give the proof of Proposition~\ref{P:uniform}.
\begin{proof}[Proof of Proposition~\ref{P:uniform}]
Let $(X,\X,\mu,T)$ be a system, and suppose that at least one
 of the functions $f_1,\ldots,f_\ell\in L^\infty(\mu)$ is orthogonal to the nilfactor $\cZ$.
Our goal is to show that for every bounded two variable sequence $(c_{N,n})_{n\in\N}$ we have
\begin{equation}\label{E:aba}
\lim_{N\to\infty}\sup_{p\in \R_{k-1}[t]}\norm{\E_{n\in \Phi_N}c_{N,n} \ \!
 T^{[n^k\alpha +p(n)]}f_1\cdot  T^{2[n^k\alpha+p(n)]}f_2\cdot\ldots \cdot
T^{\ell[n^k\alpha+p(n)]}f_\ell}_{L^2(\mu)} =0.
\end{equation}
We choose  polynomials $p_{N}\in \R_{k-1}[t]$, so that the norm in \eqref{E:aba} is $1/N$ close to the supremum. Then \eqref{E:aba} takes the form
\begin{equation}\label{E:aba'}
\lim_{N\to\infty}\norm{\E_{n\in \Phi_N}c_{N,n} \ \!
 T^{[n^k\alpha +p_N(n)]}f_1\cdot  T^{2[n^k\alpha+p_N(n)]}f_2\cdot\ldots \cdot
T^{\ell[n^k\alpha+p_N(n)]}f_\ell}_{L^2(\mu)} =0.
\end{equation}
Equation \eqref{E:aba'} can be rewritten as
\begin{equation}\label{E:aba''}
\lim_{N\to\infty}\E_{n\in \Phi_N}c_{N,n} \ \!
 T^{[n^k\alpha +p_N(n)]}f_1\cdot  T^{[2(n^k\alpha+p_N(n))]+e_{2,N}(n)}f_2\cdot\ldots \cdot
T^{[\ell(n^k\alpha+p_N(n))]+e_{\ell,N}(n)}f_\ell =0
\end{equation}
where convergence takes place in $L^2(\mu)$ and  the error terms $e_{2,N}(n),\ldots, e_{\ell,N}(n)$ take
 values in the set $\{0,-1,\ldots,-\ell\}$.   Using  Lemma~\ref{L:uniform} we deduce that
 \eqref{E:aba''} holds, completing the proof.
\end{proof}

\section{Characteristic factors for several sequences}\label{S:CharFactorSeveral}
A crucial step in the proof of Theorem~\ref{T:ConvSeveral} is to show that the nilfactor $\mathcal{Z}$ is characteristic for the related multiple ergodic averages. This is the context of Theorem~\ref{T:CharSeveral}
 which we are going to prove in this section.
Our proof  extends  an argument used in \cite{BH09} where  a similar result was verified for weakly mixing systems.
 Since there are a few non-trivial extra complications in our case,
we shall give our proof in more or less full detail, referring the reader to \cite{BH09}
only when an argument we  need is a straightforward modification of one used there.
\subsection{The sub-linear case.}
First, we are going to  prove Theorem~\ref{T:CharSeveral} in the case where all the  functions have at most  linear growth.

   We first give two  lemmas that will be used in our proof.  A variation of the first one appears already  in \cite{BH09}:
 \begin{lemma}\label{L:ChangeVar}
   Let $(V_M(n))_{M,n\in\N}$ be a bounded two parameter sequence of vectors in a normed space and suppose that $a\in \H$
   satisfies $t^{\varepsilon}\prec a(t)\prec t$ for some $\varepsilon>0$.
   Then
   there exists a positive constant $C_a$ such that
  $$ \limsup_{N\to \infty} \sup_{M\in \N}\norm{\E_{1\leq n\leq N} V_M([a(n)])}\leq C_a \limsup_{N\to \infty}
   \sup_{M\in\N}\norm{\E_{1\leq n\leq N} V_M(n)}.
   $$
   \end{lemma}
   \begin{proof}
Without loss of generality we assume that $a(t)$ is positive. Let
$w(n)=|m\in \N\colon [a(m)]=n|$ and $W(n)=\sum_{m=1}^n w(m)$. Using
our hypothesis on $a(t)$ it is not hard  to verify (for the details
see Lemma 2.5 in \cite{BH09}) that
\begin{equation}\label{E:basic10}
w(n) \text{ is eventually increasing},  \quad \text{and} \quad
\frac{nw(n)}{W(n)} \text{ is bounded}.
\end{equation}
Using the definition of $w(n)$ and $W(n)$, that $w(n)/W(n)\to 0$,
and that $(V_M(n))_{M,n\in\N}$ is bounded, we conclude that
   $$
   \lim_{N\to\infty}\sup_{M\in \N}\norm{\E_{1\leq n\leq N} V_M([a(n)])-
    \frac{1}{W([a(N)])}\sum_{n=1}^{[a(N)]} w(n) V_M(n)}=0.
   $$
   So in order to get the asserted estimate it suffices to estimate
   the expression
   \begin{equation}\label{E:ToEstimate}
\lim_{N\to\infty}\sup_{M\in \N}\norm{ \frac{1}{W(N)}\sum_{n=1}^{N}
w(n) V_M(n)}.
   \end{equation}
Let
$$
A_M(n)=\E_{1\leq k\leq n} V_M(k).
$$
 Using summation by parts we have
   $$
   \frac{1}{W(N)}\sum_{n=1}^N w(n) V_M(n)=
   \frac{1}{W(N)}\Big( Nw(N)A_M(N)+\sum_{n=2}^{N}n (w(n-1)-w(n))A_M(n) \Big).
   $$
We take norms, then the sup over $M$, and then the limsup as
$N\to\infty$. Letting
$$
L:=\limsup_{n\to\infty} \sup_{M\in \N} \norm{A_M(n)},
$$
we get  that the quantity in \eqref{E:ToEstimate} is bounded by
  $$
 L\cdot \Big( \limsup_{N\to\infty} \frac{Nw(N)}{W(N)} +
    \limsup_{N\to\infty} \frac{1}{W(N)}\sum_{n=2}^{N}n (w(n)-w(n-1))\Big)
$$
where we used that $W(n)\to +\infty$  and that  $w(n)$ is eventually
increasing (see \eqref{E:basic10}).
  The asserted estimate follows upon noticing that   $Nw(N)/W(N)$ is
  bounded (see \eqref{E:basic10}) and also
 $$
  \frac{N \sum_{n=2}^N (w(n)-w(n-1))}{W(N)}=
 \frac{N (w(N)-w(1))}{W(N)}
   $$
is bounded. Note also that all implicit constants depend only on the
function $a(t)$. The proof is complete.
   \end{proof}
The second lemma is a well known estimate, we prove it for
completeness.
\begin{lemma}\label{L:ccx}
Let $(X,\X,\mu, T)$ be a system  and $f_1\in L^\infty(\mu)$.

Then  we have
\begin{equation}\label{E:rtq}
\limsup_{N\to\infty}\sup_{\norm{f_0}_\infty\leq 1}
\E_{1\leq n\leq N} \Big|\int f_0\cdot  T^{n}f_1 \ d\mu\Big|\leq  \nnorm{f_1}_2.
\end{equation}
\end{lemma}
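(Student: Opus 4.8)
The plan is to reduce the estimate to the mean ergodic theorem on the product system $T\times T$ by a standard tensorization trick, after which the right-hand side $\nnorm{f_1}_2$ enters through the identity $\nnorm{f_1}_2^2=\nnorm{f_1\otimes\overline{f_1}}_{\mu\times\mu,1}$ recorded in \eqref{E:seminonergodic}. The whole argument is soft; the only points requiring care are the order of the quantifiers (the supremum over $f_0$ sits inside the $\limsup$) and the bookkeeping of complex conjugates.

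First I would remove the absolute values and the average over $n$ by the Cauchy--Schwarz inequality \emph{in the variable} $n$: for every $f_0$ with $\norm{f_0}_\infty\leq 1$ and every $N$,
\[
\left(\E_{1\leq n\leq N}\left|\int f_0\cdot T^nf_1\ d\mu\right|\right)^2\leq \E_{1\leq n\leq N}\left|\int f_0\cdot T^nf_1\ d\mu\right|^2.
\]
This trades the awkward $|\cdot|$ inside the average for a square, at which point the doubling trick applies.

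The key step is to write $F_0=f_0\otimes\overline{f_0}$, $F_1=f_1\otimes\overline{f_1}$ and $S=T\times T$ on $(X\times X,\mu\times\mu)$. Since $T$ preserves $\mu$ we have $\overline{T^nf_1}=T^n\overline{f_1}$, so Fubini's theorem gives
\[
\left|\int f_0\cdot T^nf_1\ d\mu\right|^2=\int_{X\times X}F_0\cdot S^nF_1\ d(\mu\times\mu).
\]
Averaging over $n$ and using $\norm{F_0}_\infty=\norm{f_0}_\infty^2\leq 1$, I would bound the resulting quantity, \emph{uniformly in $f_0$}, by
\[
\left|\int_{X\times X}F_0\cdot\Big(\E_{1\leq n\leq N}S^nF_1\Big)\ d(\mu\times\mu)\right|\leq \norm{\E_{1\leq n\leq N}S^nF_1}_{L^1(\mu\times\mu)}\leq \norm{\E_{1\leq n\leq N}S^nF_1}_{L^2(\mu\times\mu)}.
\]
The crucial feature is that this last bound no longer involves $f_0$, so I may take the supremum over $\norm{f_0}_\infty\leq 1$ on the left while keeping the same right-hand side.

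Finally I would invoke the mean ergodic theorem for $S$: $\E_{1\leq n\leq N}S^nF_1\to \E_{\mu\times\mu}(F_1\mid\mathcal{I}(S))$ in $L^2(\mu\times\mu)$, so the $L^2$ norm above converges to $\norm{\E(F_1\mid\mathcal{I}(S))}_{L^2}=\nnorm{F_1}_{\mu\times\mu,1}$, which by \eqref{E:seminonergodic} equals $\nnorm{f_1}_2^2$. Taking $\limsup_N$ through the chain of inequalities (all terms being nonnegative, so $\limsup_N b_N^2=(\limsup_N b_N)^2$) and extracting a square root yields exactly $\limsup_N\sup_{\norm{f_0}_\infty\leq 1}\E_{1\leq n\leq N}\left|\int f_0\cdot T^nf_1\ d\mu\right|\leq\nnorm{f_1}_2$. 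I do not expect a genuine obstacle: the argument is entirely standard, and the only thing to guard against is losing the $f_0$-uniformity of the penultimate estimate, which is precisely what allows the supremum to pass through the limit.
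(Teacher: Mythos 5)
Your proof is correct and follows essentially the same route as the paper's: Cauchy--Schwarz in the averaging variable $n$, tensorization to the product system $S=T\times T$ with $F_i=f_i\otimes\overline{f_i}$, a bound by $\norm{\E_{1\leq n\leq N}S^nF_1}_{L^2(\mu\times\mu)}$ uniform in $f_0$, and then the mean ergodic theorem together with the identity \eqref{E:seminonergodic}. The only cosmetic difference is that you pass through $L^1$ via $L^\infty$--$L^1$ duality where the paper applies Cauchy--Schwarz directly, which is an equivalent step.
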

\begin{proof}
Using the Cauchy-Schwarz inequality we see that
$$
\Big(\E_{1\leq n\leq N} \Big|\int f_0\cdot  T^{n}f_1 \ d\mu\Big|\Big)^2\leq
\E_{1\leq n\leq N} \Big|\int f_0\cdot  T^{n}f_1 \ d\mu\Big|^2.
$$
The last average can be rewritten as
$$
\E_{1\leq n\leq N} \int F_0\cdot  S^{n}F_1 \ d(\mu\times\mu)
$$
where $S=T\times T$, $F_0=f_0\otimes \overline{f}_0$, and $F_1=f_1\otimes \overline{f}_1$.
Assuming that $\norm{f_0}_{\infty}\leq 1$, and using the Cauchy-Schwarz inequality again,
we find that the last average is bounded by
$$
\norm{\E_{1\leq n\leq N} S^{n}F_1}_{L^2(\mu\times\mu)}.
$$
Taking limits, and using the ergodic theorem,  we conclude that the square of the limit in \eqref{E:rtq}
is bounded by
$$
 \norm{\E_{\mu\times\mu}(f_1\otimes\overline{f}_1|\mathcal{I}(S))}_{L^2(\mu\times\mu)}\leq\nnorm{f_1}^2_2
$$
where the last estimate follows   from  \eqref{E:seminonergodic} and
the definition of $\nnorm{f}_1$. This establishes the advertised
estimate.
\end{proof}
In the proof of Proposition~\ref{P:CharSublin} we are going to use the symbol $\ll_{w_1,\ldots, w_k}$  when
 some expression is  majorized by some  other expression and  the implied constant
 depends on the parameters $w_1,\ldots, w_k$.
\begin{proposition}\label{P:CharSublin}
Suppose that $a_1,\ldots,a_\ell\in \mathcal{LE}$ satisfy $t^{\varepsilon}\prec a_i(t)\ll t$
for some $\varepsilon>0$, and
 the same is true for the functions $a_i(t)-a_j(t)$ for $i\neq j$.

 Then the factor $\cZ$ is characteristic for the scheme
 $\{[a_1(n)], \ldots, [a_\ell(n)]\}$.
\end{proposition}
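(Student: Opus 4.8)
The plan is to reduce the statement to a single quantitative bound and then prove that bound by induction on $\ell$, using Lemma~\ref{L:N-VDC} (van der Corput) together with the change of variable Lemma~\ref{L:ChangeVan} and the base estimate Lemma~\ref{L:ccx}. After relabeling the indices (the hypotheses are symmetric in the $a_i$) we may assume $f_1\perp\cZ$ and $\norm{f_i}_\infty\le 1$. I claim it suffices to establish
\[
\limsup_{N\to\infty}\norm{\E_{1\le n\le N}T^{[a_1(n)]}f_1\cdot\ldots\cdot T^{[a_\ell(n)]}f_\ell}_{L^2(\mu)}\ll_{\ell}\nnorm{f_1}_{s}
\]
for some finite $s=s(\ell)$. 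Indeed, since $\cZ=\bigvee_{j}\cZ_j$, orthogonality of $f_1$ to $\cZ$ means $\E(f_1\mid\cZ_j)=0$ for every $j$, hence $\nnorm{f_1}_{s}=0$ for every finite $s$; the displayed bound then forces the averages to $0$, which is exactly what it means for $\cZ$ to be characteristic.

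For the base case $\ell=1$ I would apply Lemma~\ref{L:N-VDC} to $v_n=T^{[a_1(n)]}f_1$. Translating each inner product by $T^{-[a_1(n)]}$ rewrites the quantities $b_h$ in terms of $\int\overline{f_1}\cdot T^{[a_1(n+h)]-[a_1(n)]}f_1\,d\mu$. When $a_1$ is strictly sublinear, $a_1(n+h)-a_1(n)\to 0$, so the difference of integer parts lies in $\{0,\pm1\}$ for large $n$; partitioning $n$ into finitely many classes on which this error is constant and straightening the surviving argument sequences via Lemma~\ref{L:ChangeVan}, one reduces $\E_{h\le H}b_h$ to an average of $\big|\int\overline{f_1}\cdot T^{m}f_1\,d\mu\big|$ over $m$. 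The borderline case $a_1(t)\sim ct$ must be handled separately: there $[a_1(n+h)]-[a_1(n)]$ equals $[ch]$ or $[ch]+1$ with frequencies governed by the equidistribution of $(\{cn\})$, leading to the same kind of average. In either case Lemma~\ref{L:ccx} bounds this by $\nnorm{f_1}_2$, so that
\begin{equation}\label{E:ofi1}
\limsup_{N\to\infty}\norm{\E_{1\le n\le N}T^{[a_1(n)]}f_1}_{L^2(\mu)}\le\nnorm{f_1}_2 .
\end{equation}
This is the estimate whose proof method also covers the all-linear case used for the degree-one polynomials in Lemma~\ref{L:uniform}.

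For the inductive step I would apply Lemma~\ref{L:N-VDC} to $v_n=\prod_{i=1}^\ell T^{[a_i(n)]}f_i$ and translate by $T^{-[a_j(n)]}$ for a conveniently chosen reference index $j$. Passing to the product system with $S=T\times T$ and $F_i=f_i\otimes\overline{f_i}$, each $b_h$ becomes a limit of averages over $X\times X$ of products of the $F_i$ with arguments of the shapes $[a_i(n+h)]-[a_j(n)]$ and $[a_i(n)]-[a_j(n)]$. The indices $i$ with strictly sublinear $a_i$ contribute differences tending to $0$, whose bounded integer-part errors are absorbed by partitioning $n$ into finitely many classes, which removes those indices; the surviving difference-functions and their pairwise differences still satisfy $t^\varepsilon\prec\cdot\ll t$, so the induction hypothesis applies on $(X\times X,S)$. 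Crucially, the distinguished role of $f_1$ is preserved: since $f_1\perp\cZ(T)$, the identities in \eqref{E:seminonergodic} give $F_1=f_1\otimes\overline{f_1}\perp\cZ(S)$. Applying the inductive bound (in its dual supremum form, obtained after a Cauchy–Schwarz step as in the reduction to \eqref{E:mnm'}) dominates $b_h$ uniformly in $h$ by $\nnorm{F_1}_{s'}$; averaging over $h$ and using $\nnorm{f_1\otimes\overline{f_1}}_{s'}=\nnorm{f_1}_{s'+1}^{2}$ from \eqref{E:seminonergodic} yields the required bound with $s=s'+1$.

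The \textbf{main obstacle} I expect is the bookkeeping rather than any single hard estimate. Each translation and van der Corput step introduces integer-part errors $[x+y]-[x]-[y]\in\{0,1\}$ that must be neutralized by splitting the summation index into a bounded number of classes on which every error sequence is constant, exactly as in the proof of Lemma~\ref{L:uniform}; and at every stage one must verify that the new family $\{a_i(\cdot+h)-a_j(\cdot),\,a_i-a_j\}$ of difference-functions retains the admissibility hypothesis $t^\varepsilon\prec\cdot\ll t$ for its growing members, so that the induction hypothesis is genuinely applicable. The change of variable Lemma~\ref{L:ChangeVan}, which is valid only for strictly sublinear arguments, is what permits straightening a difference-sequence $[g(n)]$ back to $n$ before invoking Lemma~\ref{L:ccx}; the linear borderline is the one place where the argument must branch into a separate equidistribution computation, and keeping these two regimes consistent through the induction is the delicate point.
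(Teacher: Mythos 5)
Your inductive engine reproduces the paper's argument only in the case where the family contains a function of genuinely linear growth (this is the paper's Case 1: van der Corput in the product system, absorption of the bounded integer-part errors by partitioning $n$ into finitely many classes, the identity $\nnorm{f\otimes\overline{f}}_{k}=\nnorm{f}_{k+1}^2$ from \eqref{E:seminonergodic}, and the $h$-averaging via \eqref{eq:recur}). But there is a genuine gap: your scheme fails whenever all of $a_1,\ldots,a_\ell$ are strictly sublinear, a case the hypotheses explicitly allow (e.g.\ $a_i(t)=t^{c_i}$ with $0<c_1<\cdots<c_\ell<1$). The gain in van der Corput comes from the shifts $[a_i(n+h)]-[a_i(n)]$; for a sublinear $a_i$ these tend to $0$ for fixed $h$, so the two copies of the distinguished function $F_1$ merge, modulo shifts bounded independently of $h$, into $T^{e}F_1\cdot\overline{F}_1$ with $e$ in a fixed finite set. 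The $h$-average then runs over a bounded set of shifts, and the mechanism you invoke to upgrade $\E_{1\leq h\leq H}\nnorm{T^{[\alpha_1 h]}f_1\cdot\overline{f}_1}_{s'}$ into $\nnorm{f_1}_{s'+1}$ is unavailable: you are left with quantities like $\nnorm{\,|f_1|^2}_{s'}$, which are not controlled by any seminorm of $f_1$ and need not vanish when $f_1\perp\cZ$ (take $|f_1|\equiv 1$). The same defect already sinks your base case: for sublinear $a_1$ and fixed $h$ one has $[a_1(n+h)]=[a_1(n)]$ for a density-one set of $n$, hence $b_h$ is essentially $\norm{f_1}_{L^2(\mu)}^2$ for every $h$, and Lemma~\ref{L:N-VDC} returns a trivial bound; no average of $\big|\int \overline{f_1}\cdot T^m f_1\,d\mu\big|$ over a growing range of $m$ ever appears. (The paper's base case applies Lemma~\ref{L:ChangeVan} \emph{first}, replacing the average along $[a_1(n)]$ by the average along $n$, and only then invokes Lemma~\ref{L:ccx}.)

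The missing idea is the paper's Case 2: when the maximal-growth function $a_1$ is sublinear, one changes variables by composition, writing $a_i(t)=\tilde{a}_i(a_1(t))$ with $\tilde{a}_i=a_i\circ a_1^{-1}$, verifying (via Lemmas~2.7 and 2.11 of \cite{BH09}) that $[a_i(n)]=[\tilde{a}_i([a_1(n)])]$ up to errors in a finite set and that the new family $\{t,\tilde{a}_2,\ldots,\tilde{a}_\ell\}$ again satisfies the hypotheses, and then using Lemma~\ref{L:ChangeVan} to trade the averaging variable $[a_1(n)]$ for $n$. This reduces matters to a family containing the linear function $t$, where your van der Corput step is sound. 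Note that Lemma~\ref{L:ChangeVan} by itself cannot accomplish this, since it only changes the averaging index; one must first express every iterate as a function of the new variable, which is exactly what the composition provides. Carrying this out also forces one to leave $\LE$ and work in a Hardy field closed under composition and compositional inversion (the paper uses the field of Pfaffian germs), since $\LE$ lacks closure under compositional inversion; this ingredient is likewise absent from your proposal.
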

\begin{proof}
Let us first remark that in order to carry out our argument it will be convenient to work with a Hardy field $H$
that contains $\LE$ and such that the set $H^+=\{a\in H\colon a(t)\to \infty\}$  is closed under composition and compositional inversion
($\LE$ does not have this last property).
Such a Hardy field exists, in fact as mentioned in \cite{Bos94}, a constructive example is the field of germs at $\infty$ of Pfaffian functions, which we denote by $\mathcal{P}$.\footnote{This is defined inductively as follows:
We let $\mathcal{P}_1$  be the set of all    $f\in C^\infty(\R_+)$  that satisfy $f'=p(t,f)$  for some $p\in \Z[t_0,t_1]$, and for $k\geq 2$ we let  $\mathcal{P}_{k}$  be the set of all  $f\in C^\infty(\R_+)$  that satisfy $f'=p(t,f_1,\ldots,f_{k-1},f)$  for some $p\in \Z[t_0,t_1,\ldots,t_{k}]$ and $f_i\in \mathcal{P}_i$.
Then $\mathcal{P}$ is the set of germs of functions in $\bigcup_{k\in\N}\mathcal{P}_k$. }
(We are not going
to make use of the exact structure of $\mathcal{P}$.)
Our goal is to show that if the functions $a_1,\ldots,a_\ell\in \mathcal{P}$ satisfy
 the stated assumptions, and  one of the functions $f_1,\ldots,f_\ell$ is orthogonal to the nilfactor $\cZ$, then
\begin{equation}\label{E:paok1}
\lim_{N\to\infty}\norm{\E_{1\leq n\leq N}  T^{[a_1(n)]}f_1\cdot
T^{[a_2(n)]}f_2\cdot\ldots \cdot T^{[a_\ell(n)]}f_{\ell}}_{L^2(\mu)}
=0.
\end{equation}
Without loss of generality we can assume that the function $f_1$ is orthogonal to  $\cZ$.

We are going to use induction on $\ell$ to  show the following:
If $(X,\X,\mu , T)$ is a system,  $f_1$
satisfies  $\norm{f_1}_\infty\leq 1$, and the functions $a_1,\ldots,a_\ell\in \mathcal{P}$ satisfy
 the stated assumptions, then
\begin{equation}\label{E:ofi1}
\limsup_{N\to\infty}\sup_{\norm{f_0}_\infty, \norm{f_2}_{\infty} ,\ldots,\norm{f_{\ell}}_\infty\leq 1}
\E_{1\leq n\leq N} \Big|\int f_0\cdot  T^{[a_1(n)]}f_1\cdot T^{[a_2(n)]}f_2\cdot\ldots \cdot T^{[a_\ell(n)]}f_{\ell} \ d\mu\Big| \ll_{\ell,a_1,\ldots,a_\ell} \nnorm{f_1}_{2\ell}.
\end{equation}
We also claim an analogous estimate with $f_i$ in place of $f_1$ for $i=0,2,\ldots,\ell$.
We leave it to the reader to verify  that such estimates follow  from \eqref{E:ofi1}
(for $i=2,\ldots,\ell$ by symmetry, for $i=0$ we factor out the transformation $T^{[a_1(n)]}$
and work with the resulting averages, the precise argument is very similar to the one given in
the  beginning of the proof of Lemma~\ref{L:uniform}).

First we verify that  \eqref{E:ofi1}  implies \eqref{E:paok1}. We can assume that $\norm{f_i}_\infty\leq 1$ for $i=1,\ldots,\ell$. Since
$f_1$ is orthogonal to $\cZ$, we have  $\nnorm{f_1}_{2\ell}=0$, and as a
consequence the limsup in \eqref{E:ofi1} is $0$. Using \eqref{E:ofi1} with
the conjugate of the function $\E_{1\leq n\leq
N}T^{[a_1(n)]}f_1\cdot T^{[a_2(n)]}f_2\cdot\ldots \cdot
T^{[a_\ell(n)]}f_{\ell}$ in place of the function $f_0$ (for every $N\in \N$),
and removing the norms we get \eqref{E:paok1}.

We proceed now to prove \eqref{E:ofi1} by induction. Suppose first
that  $\ell=1$. If $a_1(t)\prec t$, then we  deduce from
Lemma~\ref{L:ChangeVar}  that
\begin{equation}\label{E:ofi1a}
\limsup_{N\to\infty} \sup_{\norm{f_0}_\infty\leq 1} \E_{1\leq n\leq
N} \Big|\int f_0\cdot  T^{[a_1(n)]}f_1 \ d\mu\Big|\ll_{a_1}
\limsup_{N\to\infty}\sup_{\norm{f_0}_\infty\leq 1} \E_{1\leq n\leq
N} \Big|\int f_0\cdot T^{n}f_1 \ d\mu\Big| .
\end{equation}
Equation \eqref{E:ofi1} now follows by combining \eqref{E:ofi1a} and the  estimate in Lemma~\ref{L:ccx}.
If $a_1(t)\sim t$, then $a_1(t)=\alpha t+e(t)$ for some nonzero $\alpha\in \R$ and $e(t)\prec t$.
Assuming that  $\alpha>0$ (the other case can be treated similarly),
one  sees that
limit in \eqref{E:ofi1} is bounded by a constant (one can use
$([\alpha]+1)$ if $\alpha>1$ and $2$ if $\alpha\leq 1$) times the
quantity
$$
\limsup_{N\to\infty}\sup_{\norm{f_0}_\infty\leq 1}
\E_{1\leq n\leq N} \Big|\int f_0\cdot  T^nf_1 \ d\mu\Big|.
$$
Combining this with Lemma~\ref{L:ccx} gives the advertised estimate.

Suppose now that $\ell\geq 2$ and the statement holds for $\ell-1$.
 We first claim that when proving \eqref{E:ofi1} we can  assume that the function $a_1(t)$
 has maximal growth.  Indeed, if  $a_1(t)\prec a_i(t)$ for some $i\in 2,\ldots,\ell$,
then we can factor out the transformation $T^{[a_i(n)]}$ and work with the resulting average.
We omit the details since the argument is very similar to the one given in the beginning of the proof of
Lemma~\ref{L:uniform}.

 We consider two cases:

{\bf Case 1.} Suppose  that $a_1(t)\sim t$.
We can assume that  for some $r\in \{1,\ldots,\ell\}$ we have
$a_1(t)=\alpha_1t +b_1(t),\ldots,  a_r(t)=\alpha_rt+ b_r(t)$ where $\alpha_i$ are non-zero real numbers,
$b_i(t)\prec t$,
and $a_{r+1}(t),\ldots, a_\ell(t)\prec t$.

We choose functions $f_{i,N}$ with $\norm{f_{i,N}}_\infty\leq 1$  for $i=0,2,\ldots,\ell$, such that the value of the averages in \eqref{E:ofi1}  is $1/N$ close to the supremum. Using the Cauchy-Schwarz inequality we see
that \eqref{E:ofi1} follows if we show that
\begin{equation}\label{E:ofi2}
\limsup_{N\to\infty}
\E_{1\leq n\leq N}\Big| \int f_{0,N}\cdot  T^{[a_1(n)]}f_1\cdot T^{[a_2(n)]}f_{2,N}\ldots \cdot T^{[a_\ell(n)]}f_{\ell,N} \ d\mu\Big|^2 \ll_{\ell,a_1,\ldots,a_\ell} \nnorm{f_1}^2_{2\ell}.
\end{equation}
 Notice that the limit in
 \eqref{E:ofi2} is equal to
 \begin{equation}\label{E:ofi3}
A=\limsup_{N\to\infty}
\E_{1\leq n\leq N} \int F_{0,N}\cdot  S^{[a_1(n)]}F_1\cdot S^{[a_2(n)]}F_{2,N}\cdot \ldots \cdot S^{[a_\ell(n)]}F_{\ell,N} \ d(\mu\times \mu)
\end{equation}
where
$$
S=T\times T,\quad   F_1=f_1 \otimes \overline{f}_1, \ \text{ and } \ \  F_{i,N}=f_{i,N} \otimes \overline{f}_{i,N}
\ \text{ for } \ i=0,2,\ldots,\ell.
$$
Using first the Cauchy Schwarz inequality, and then Lemma~\ref{L:N-VDC}, we see  that
\begin{equation}\label{E:ofi3.5}
|A|^2\leq 4\ \! \limsup_{H\to\infty}
\E_{1\leq h\leq H}A_h
\end{equation}
where
\begin{multline*}
A_h=\limsup_{N\to\infty}
\E_{1\leq n\leq N} \Big|\int   S^{[a_1(n+h)]}F_1\cdot S^{[a_2(n+h)]}F_{2,N}\cdot \ldots \cdot S^{[a_\ell(n+h)]}F_{\ell,N} \cdot\\
S^{[a_1(n)]}\overline{F}_1\cdot S^{[a_2(n)]}F_{2,N}\cdot \ldots \cdot S^{[a_\ell(n)]}\overline{F}_{\ell,N}
\ d(\mu\times \mu)\Big|.
\end{multline*}
 We factor out the transformation  $S^{[a_1(n)]}$. For $h\in \N$ fixed and  large $n\in \N$,  using that
 $b_i(n+h)-b_i(n)\to 0$ for $i=1,\ldots,r$ (since $b_i(t)\prec t$),
and  $a_i(n+h)-a_i(n)\to 0$ for $i=r+1,\ldots,\ell$ (since $a_i(t)\prec t$),
we get the identities
\begin{gather*}
  [a_i(n+h)]-[a_1(n)]=
 [a_i(n)-a_1(n)]+[\alpha_i h]+e_i(h,n) \ \text{ for } i=1,\ldots,r,\\
 [a_i(n+h)]-[a_1(n)]=[a_i(n)-a_1(n)]+e_i(h,n) \ \text{ for } i=r+1,\ldots, \ell, \\
[a_i(n)]-[a_1(n)]=[a_i(n)-a_1(n)]+\tilde{e}_i(h,n) \ \text{ for } i=2,\ldots,\ell,
\end{gather*}
where the error terms $e_i(h,n)$ and  $\tilde{e}_i(h,n)$
take values in the set $\{0,\pm 1, \pm 2\}$.

We find that
\begin{multline*}
A_h=\limsup_{N\to\infty}
\E_{1\leq n\leq N} \Big|\int   \tilde{F}_{1,h,n}\cdot
S^{[a_2(n)-a_1(n)]}
\tilde{F}_{2,h,n,N}\cdot \ldots
\cdot S^{[a_\ell(n)-a_1(n)]} \tilde{F}_{\ell,h,n,N}
\ d(\mu\times \mu)\Big|
\end{multline*}
where $\tilde{F}_{1,h,n}= T^{[\alpha_1 h]+e_1(h,n)}F_1\cdot  \overline{F_1}$, $\tilde{F}_{i,h,n,N}= T^{[\alpha_i h]+e_i(h,n)}F_{i,N}\cdot  T^{\tilde{e}_i(h,n)}\overline{F_{i,N}}$
for $i=2, \ldots, r$, and
$\tilde{F}_{i,h,n,N}= T^{e_i(h,n)}F_{i,N}\cdot  T^{\tilde{e}_i(h,n)}\overline{F_{i,N}}$
for $i=r+1, \ldots, \ell$.
  Next notice that for every fixed $h\in \N$ we can partition the integers into a finite number of sets,
  that depend only on $\ell$,  where all sequences $e_i(h,n), \tilde{e}_i(h,n)$ are constant.
  It follows that for every $h\in\N$ there exists $i\in\{0,\pm 1, \pm 2\}$ such that
\begin{multline*}
A_h\ll_\ell
\limsup_{N\to \infty}
\sup_{\norm{F_2}_{\infty} ,\ldots,\norm{F_{\ell}}_\infty\leq 1}\E_{1\leq n\leq N} \Big|\int   \tilde{F}_{1,h,i}\cdot
S^{[a_2(n)-a_1(n)]} F_2\cdot \ldots
\cdot S^{[a_\ell(n)-a_1(n)]} F_\ell
\ d(\mu\times \mu)\Big|
\end{multline*}
where $\tilde{F}_{1,h,i}=T^{[\alpha_1 h]+i}F_1\cdot  \overline{F_1}$.
Since the functions $a_2(t)-a_1(t),\ldots, a_\ell(t)-a_1(t)$ satisfy the assumptions of the induction hypothesis, it follows  that
\begin{equation}\label{E:ofi4}
A_h\ll_{\ell,a_2-a_1,\ldots,a_\ell-a_1}  \max_{i=0,\pm 1,\pm 2}\ \nnorm{\tilde{F}_{1,h,i}}_{2(\ell-1)}.
\end{equation}
($\nnorm{\tilde{F}_{1,h,i}}_{k}$ is constructed using the system $(X\times X,\X\times \X, S, \mu\times\mu)$.)
Since
$$
\tilde{F}_{1,h,i}=T^{[\alpha_1 h]+i}F_1\cdot  \overline{F_1}
=T^{[\alpha_1 h]+i}(f_1\otimes \overline{f_1})\cdot
(\overline{f_1}\otimes f_1)=
 (T^{[\alpha_1 h]+i}f_1\cdot \overline{f_1}) \otimes
 (\overline{T^{[\alpha_1 h]+i}f_1\cdot \overline{f_1}}),$$
 and \eqref{E:seminonergodic} gives
$$
\nnorm{f\otimes \overline{f}}_{k}\leq \nnorm{f}^2_{k+1}
$$
for every $f\in L^\infty(\mu)$ and $k\in\N$,  we conclude that
\begin{equation}\label{E:ofi5}
\nnorm{\tilde{F}_{1,h,i}}_{2(\ell-1)}\leq \nnorm{T^{[\alpha_1
h]+i}f_1\cdot \overline{f_1}}^2_{2\ell-1}.
\end{equation}
Combining \eqref{E:ofi4} and  \eqref{E:ofi5} we get
\begin{equation}\label{E:ofi6}
\limsup_{H\to\infty}
\E_{1\leq h\leq H} A_h\ll_{\ell,a_2-a_1,\ldots,a_\ell-a_1} \max_{i=1,..,5} \limsup_{H\to\infty}
\E_{1\leq h\leq H} \nnorm{T^{[\alpha_1 h]+i}f_1\cdot \overline{f_1}}^2_{2\ell-1}.
\end{equation}
Finally, one  sees that
\begin{equation}\label{E:ofi7}
 \limsup_{H\to\infty}
\E_{1\leq h\leq H} \nnorm{T^{[\alpha_1 h]+i}f_1\cdot \overline{f_1}}^2_{2\ell-1}
\leq ([\alpha_1]+1) \limsup_{H\to\infty}
\E_{1\leq h\leq H} \nnorm{T^{h}f_1\cdot \overline{f_1}}^2_{2\ell-1},
\end{equation}
and using H\"{o}lder's inequality we get
\begin{equation}\label{E:ofi8}
\limsup_{H\to\infty}
\E_{1\leq h\leq H} \nnorm{T^{h}f_1\cdot \overline{f_1}}^2_{2\ell-1}\leq \limsup_{H\to\infty}
\Big(\E_{1\leq h\leq H} \nnorm{T^{h}f_1\cdot \overline{f_1}}_{2\ell-1}^{2^{2\ell-1}}\big)^{\frac{1}{2^{2\ell-2}}}=\nnorm{f_1}_{2\ell}^4.
\end{equation}
(The last equality follows from \eqref{eq:recur}.)
Combining \eqref{E:ofi3.5}, \eqref{E:ofi6}, \eqref{E:ofi7}, and \eqref{E:ofi8},  we deduce that
$$
|A|\ll_{\ell,a_1,\ldots,a_{\ell}}\nnorm{f_1}_{2\ell}^2.
$$
This proves \eqref{E:ofi2} and
completes the induction step in the case where $a_1(t)\sim t$.

{\bf Case 2.} It remains to deal with the case  $a_1(t)\prec t$.
For $i=1,\ldots,\ell$, we write $a_i(t)=\tilde{a}_i(a_1(t))$ where $\tilde{a}_i(t)=a_i(a_1^{-1}(t))$. Notice that
 $\tilde{a}_i(t)$ is an element of  $\mathcal{P}$ since $\mathcal{P}$ is closed under composition and compositional inversion. Keeping in mind that $a_1(t)$ has maximal growth and using Lemma~2.7 and Proposition~2.11 in  \cite{BH09}, we get for some $\varepsilon>0$  that $t^\varepsilon\prec \tilde{a}_i(t)\ll t$ for $i=1,\ldots,\ell$
 and  $t^\varepsilon\prec \tilde{a}_i(t)-\tilde{a}_j(t)\ll t$ for $i\neq j$. Hence, the functions $\tilde{a}_1(t)=t,\tilde{a}_2(t),\ldots,\tilde{a}_\ell(t)$ satisfy the assumptions of the induction hypothesis.

 Using  Lemma~2.7 in \cite{BH09}  we get for $i=2,\ldots,\ell$ that
$$
[a_i(n)]=[\tilde{a}_i([a_1(n)])]+e_i(n)
$$
where the error terms $e_i(n)$ take values on a finite set. Therefore, the  left hand side in \eqref{E:ofi1} is equal to
$$
\limsup_{N\to\infty}\sup_{\norm{f_0}_\infty, \norm{f_2}_{\infty} ,\ldots,\norm{f_{\ell}}_\infty\leq 1}
\E_{1\leq n\leq N} \Big|\int f_0\cdot  T^{[a_1(n)]}f_1\cdot  T^{[\tilde{a}_2([a_1(n)])]+e_2(n)}f_2\cdot \ldots \cdot T^{[\tilde{a}_\ell([a_1(n)])]+e_\ell(n)}f_{\ell} \ d\mu\Big|.
$$
Since the error terms $e_i(n)$ take values on a finite set, say with cardinality  $K$
(with $K$ depending on the $a_i$'s only), one sees  that
 the previous limit is bounded by $K^{\ell-1}$  times
\begin{equation}\label{E:lmz}
\limsup_{N\to\infty}\sup_{\norm{f_0}_\infty, \norm{f_2}_{\infty} ,\ldots,\norm{f_{\ell}}_\infty\leq 1}
\E_{1\leq n\leq N} \Big|\int f_0\cdot  T^{[a_1(n)]}f_1\cdot  T^{[\tilde{a}_2([a_1(n)])]}f_2\cdot \ldots \cdot T^{[\tilde{a}_\ell([a_1(n)])]}f_{\ell} \ d\mu\Big|.
\end{equation}
Since $t^\varepsilon\prec a_1(t)\prec t$, we can apply
Lemma~\ref{L:ChangeVar} for the sequence of real numbers
$(V_N(n))_{N,n\in\N}$ defined by
$$
V_N(n)=\Big|\int f_{0,N}\cdot  T^{n}f_{1}\cdot
T^{[\tilde{a}_2(n)]}f_{2,N}\cdot \ldots \cdot
T^{[\tilde{a}_\ell(n)]}f_{\ell,N} \ d\mu\Big|,
$$
where the functions $f_{i,N}$ are chosen so that the average in \eqref{E:lmz} gets $1/N$ close to the supremum.
 We get that the   limit in \eqref{E:lmz} is bounded by a constant
 (that depends only on $a_1$) times
$$
\limsup_{N\to\infty}\sup_{\norm{f_0}_\infty, \norm{f_2}_{\infty} ,\ldots,\norm{f_{\ell}}_\infty\leq 1}
\E_{1\leq n\leq N}  \Big|\int f_0\cdot  T^nf_1\cdot  T^{[\tilde{a}_2(n)]}f_2\cdot \ldots \cdot T^{[\tilde{a}_\ell(n)]}f_{\ell} \ d\mu\Big|.
$$
Since the functions $t,\tilde{a}_2(t),\ldots,\tilde{a}_\ell(t)$
satisfy the assumptions of the induction hypothesis, we are reduced to
Case $1$. This completes  the induction step and the proof.
\end{proof}

\subsection{The general case}
We are going to use an inductive method analogous to the one used in Section~\ref{SS:PET1} for polynomial families. First, following \cite{BH09}, we introduce a notion of complexity that is suitable for our current setup.

We remind the reader that $\mathcal{G}=\{a\in C(\R_+)\colon t^{k+\varepsilon}\prec a(t)\prec t^{k+1} \text{ for some } k\geq 0 \text{ and } \varepsilon>0\}$.
We say that a family $\mathcal{F}=\{a_1(t),\ldots a_\ell(t)\}$ of functions in
$\mathcal{LE}$ is \emph{``nice''} if  $a_i(t)\in \mathcal{G}$ and $a_i-a_j\in \mathcal{G}$ for $i\neq j$.
  Given any such ``nice'' family
  we associate a vector $(d,n_d,\ldots,n_1,n_0)$, called the \emph{type of $\mathcal{F}$}, with
non-negative integer entries, as follows: For every non-negative integer $i$
 let
 $$
 \mathcal{F}_i=\{a\in \mathcal{F}\colon t^i\prec  a(t)\prec t^{i+1}\}.
 $$
 We say that two functions $a,b\in \mathcal{F}_i$ are equivalent if $a(t)-b(t)\prec t^i$, and
  we define $n_i$ to be the number of non-equivalent elements of $\mathcal{F}_i$. We denote  the maximum integer $i$ for which $n_i\neq 0$ by
 $d$. We order the set of all possible types
lexicographically, meaning,
$(d,w_d,\ldots,w_1)>(d',w_d',\ldots,w_1')$ if and only if in the
first instance where the two vectors disagree the coordinate of the
first vector is greater than the coordinate of the second vector.

\begin{example} If $\mathcal{F}=\{t^{1/3}, t^{5/2}, t^{5/2}+t^{1/2}, t^{5/2}+t^{7/3}\}$,
 then the  second and third functions are equivalent and all the other functions are non-equivalent.
 Hence, the type of $\mathcal{F}$ is $(2,2,0,1)$.
\end{example}
Given a ``nice'' family of functions  $\mathcal{F}=\{a_1(t),\ldots a_\ell(t)\}$, a positive integer
$h\in \N$, and $a\in \mathcal{F}$, we form a new family $\mathcal{F}(a(t),h)$ as follows:
We
start with the family of polynomials
$$
\{ a_1(t+h)-a(t),\ldots,a_\ell(t+h)-a(t), a_1(t)-a(t),\ldots, a_\ell(t)-a(t)\},
$$
and successively remove the smallest number of functions so that the remaining set consists of unbounded
functions whose pairwise differences are also unbounded. Then for every large $h$
the family $\mathcal{F}(a,h)$ is also ``nice'' (if non-empty). The
 function
$a_i(t+h)-a(t)$ will be removed if and only if   $a_i(t)\prec t$, and the function $a_i(t)-a(t)$ will be removed if and only if $a(t)=a_i(t)$.
\begin{example}
If  $\mathcal{F}=\{t^{1/3},t^{1/2},  t^{3/2}\}$ and $a(t)=t^{1/3}$, then
we
start with the family of polynomials $$\{(t+h)^{1/3} -t^{1/3}, (t+h)^{1/2}- t^{1/3}, (t+h)^{3/2}- t^{1/3}, t^{1/3}- t^{1/3}, t^{1/2} -t^{1/3}, t^{3/2} -t^{1/3}\}
$$
and remove the first, second, and fourth functions to get
$$
\mathcal{F}(t^{1/3},h)=\{(t+h)^{3/2}- t^{1/3}, t^{1/2} -t^{1/3}, t^{3/2} -t^{1/3}\}.
$$
Notice that the family $\mathcal{F}$ has type $(1,1,2)$, and the family
$\mathcal{F}(t^{1/3},h)$ has smaller type, namely, $(1,1,1)$.
\end{example}

To prove Theorem~\ref{T:CharSeveral} we are going to use induction on the type of
the family of functions involved.
In order to carry out the inductive step we will use the following:
\begin{lemma}\label{L:indstep}
Let $\mathcal{F}=\{a_1(t),\ldots a_\ell(t)\}$ be a ``nice'' family of functions.
 Suppose that  $a_1(t)\succ t$, and $a_1(t)$ has maximal growth rate within $\mathcal{F}$, meaning
 $a_i(t)\ll a_1(t)$ for $i=2,\ldots, \ell$.

Then it is possible to choose
$a\in \mathcal{F}$ such that for every large $h$ the family $\mathcal{F}(a(t),h)$
is ``nice'',  has type smaller than that of $\mathcal{F}$, and the function $a_1(t+h)-a(t)$ has maximal growth rate within the family $\mathcal{F}(a(t),h)$.
\end{lemma}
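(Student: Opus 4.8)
The plan is to mirror the proof of Lemma~\ref{L:indstep2}, with the \emph{growth class} of a function (the integer $k$ for which $a\in\mathcal{F}_k$) playing the role of the degree, and the \emph{equivalence class} of a function inside $\mathcal{F}_d$ playing the role of the leading coefficient. The one analytic input I would isolate at the outset is the following consequence of Lemma~\ref{L:properties}(i): if $a_i\in\mathcal{F}_k$ with $k\geq 1$, then for each fixed $h$ the shift-difference $a_i(t+h)-a_i(t)\sim h\,a_i'(t)\sim h\,a_i(t)/t$ lies in $\mathcal{F}_{k-1}$, i.e. differencing by a fixed shift lowers the growth class by exactly one. This is the precise analogue of the fact that $p_i(t+h)-p_i(t)$ has degree one less than $p_i$, and it is what makes the induction run. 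For the bookkeeping I would record that, because of this, whenever $a_i$ is superlinear and $a_i\neq a$ the two members $a_i(t+h)-a$ and $a_i-a$ of $\mathcal{F}(a,h)$ are equivalent (they differ by an element of $\mathcal{F}_{k_i-1}\prec t^{k_i}$) and hence merge; thus the equivalence classes of $\mathcal{F}(a,h)$ are exactly those of the difference family $\{a_j-a:\ a_j\neq a\}$, together with the single ``extra'' increment $a(t+h)-a(t)$, which lies in $\mathcal{F}_{e-1}$ when $a\in\mathcal{F}_e$ (and is simply absent when $a\prec t$).

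With these preliminaries, set $d=\deg(\mathcal{F})\geq 1$, so $a_1\in\mathcal{F}_d$, and split into three cases. In \emph{Case 1}, where some function of $\mathcal{F}$ has growth class strictly below $d$, I would choose $a$ of \emph{minimal} growth class $e<d$: subtracting $a$ deletes the term $a-a=0$ and drops every function equivalent to $a$ below class $e$, so $n_e$ decreases by one while counts in classes above $e$ are unchanged (subtracting a lower-order function preserves top parts), the lone increment sits in class $e-1<e$, and the designated element $a_1(t+h)-a\sim a_1$ remains in the maximal class $d$. In \emph{Case 2a}, where all functions lie in $\mathcal{F}_d$ but $n_d\geq 2$, I would pick $a=a_{i_0}$ from an equivalence class different from that of $a_1$; then $a_1(t+h)-a_{i_0}\sim a_1-a_{i_0}\in\mathcal{F}_d$ is maximal, the class of $a_{i_0}$ falls below $d$ so $n_d$ decreases by one with $d$ unchanged, and the extra increment again lands in $\mathcal{F}_{d-1}$ and so does not affect the top coordinate. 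In \emph{Case 2b}, where all functions lie in $\mathcal{F}_d$ and are mutually equivalent ($n_d=1$), I would take $a=a_1$: every member of $\mathcal{F}(a_1,h)$ is $a_j(t+h)-a_1$ or $a_j-a_1$, each of class at most $d-1$ by equivalence and the shift-difference fact, so the whole family drops to degree $d-1$, and the designated element $a_1(t+h)-a_1\in\mathcal{F}_{d-1}$ realizes this maximal class. In each case the type decreases lexicographically and $a_1(t+h)-a(t)$ has maximal growth class; that $\mathcal{F}(a,h)$ is again ``nice'' is the general fact recorded just before the lemma.

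The step needing the most care—and the reason I would treat Case 2b by taking $a=a_1$ rather than any other top-class function—is controlling the growth class of the designated element once the leading growth of $a_1$ has been cancelled. Subtracting a generic equivalent $a_{i_0}$ would give $a_1(t+h)-a_{i_0}=(a_1(t+h)-a_1)-(a_{i_0}-a_1)$, pitting the fixed remainder $a_{i_0}-a_1$ against the $h$-dependent increment $h\,a_1(t)/t$; an unlucky value of $h$ could create extra cancellation, dropping the class below $d-1$ and destroying maximality for that $h$ (recall the conclusion is required for \emph{every} large $h$). Choosing $a=a_1$ removes the remainder entirely and leaves the clean increment $a_1(t+h)-a_1\sim h\,a_1(t)/t$, which is genuinely of class $d-1$ for every large $h$. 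The remaining verifications—the class and equivalence-class bookkeeping in Cases 1 and 2a, namely that counts above the active coordinate are unchanged and that the removed class truly disappears—go through exactly as in the worked example preceding the lemma.
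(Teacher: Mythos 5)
Your overall plan (PET-style induction with growth classes playing the role of degrees) is the right instinct, and your Case 1 goes through, but Cases 2a and 2b contain a genuine error: at the decisive point you conclude ``maximal growth \emph{rate}'' from ``lies in the maximal growth \emph{class}'', and these are not the same thing. The lemma — and the induction in the proof of Theorem~\ref{T:CharSeveral}, where the function attached to the anti-$\cZ$ function $f_1$ must dominate all others — requires $b(t)\ll a_1(t+h)-a(t)$ for \emph{every} member $b$ of $\mathcal{F}(a(t),h)$; two functions can satisfy $t^j\prec\cdot\prec t^{j+1}$ for the same $j$ while one strictly dominates the other (e.g.\ $t^{3/2}$ and $t^{19/10}$). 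Your Case 2b fails already for $\mathcal{F}=\{a_1,a_2\}$ with $a_1(t)=t^{5/2}$, $a_2(t)=t^{5/2}+t^{19/10}$, a ``nice'' family with $n_2=1$: taking $a=a_1$, the family $\mathcal{F}(a_1,h)$ contains $a_2(t+h)-a_1(t)\sim t^{19/10}$, which strictly dominates the designated element $a_1(t+h)-a_1(t)\sim \frac{5}{2}ht^{3/2}$ — for every $h$, not just unlucky ones. Your Case 2a has the same defect: for $\mathcal{F}=\{t^{5/2},\,t^{5/2}+t^{23/10},\,t^{5/2}+t^{12/5}\}$ (so $n_2=3$), choosing $a=t^{5/2}+t^{23/10}$ is admissible under your prescription (it is not equivalent to $a_1$), yet the designated element is $\sim -t^{23/10}$ and is dominated by the member $a_3(t+h)-a(t)\sim t^{12/5}$. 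Relatedly, your preliminary ``merging'' claim — that $a_i(t+h)-a$ and $a_i-a$ are always equivalent for superlinear $a_i\neq a$ — is false exactly when $a_i$ is equivalent to $a$: the increment $a_i(t+h)-a_i(t)$, of class $k_i-1$, can then dominate $a_i-a$, so the two members need not be equivalent (they need not even lie in the same class).

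The paper avoids all of this by running the case analysis on growth \emph{rates}, mirroring Lemma~\ref{L:indstep2} verbatim: if some $a_i\prec a_1$, take $a$ of minimal growth rate; otherwise all $a_i\sim a_1$, so write $a_i=\alpha_i a_1+b_i$ with $b_i\prec a_1$, take $a=a_{i_0}$ with $\alpha_{i_0}\neq 1$ if such exists, and otherwise take $a=a_{i_0}$ with $b_{i_0}$ of \emph{maximal} growth. In this last case every member of $\mathcal{F}(a_{i_0},h)$ is $\ll\max\bigl(a_1(t)/t,\,b_{i_0}(t)\bigr)$, while the designated element equals $(a_1(t+h)-a_1(t))-b_{i_0}(t)$; subtracting the \emph{largest} deviation is precisely what restores its maximality, and this is the step your choice $a=a_1$ omits (in my first example the paper takes $a=a_2$, giving designated element $\sim -t^{19/10}$, which dominates everything). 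Finally, the cancellation you feared, and which motivated your deviation, is exactly what the ``for every large $h$'' clause disposes of: by Lemma~\ref{L:properties} one has $a_1(t+h)-a_1(t)\sim h\kappa\,a_1(t)/t$ with $\kappa\neq 0$, so this increment can be asymptotically cancelled by the fixed function $b_{i_0}$ for at most one value of $h$, and for all larger $h$ the paper's designated element dominates. So the worry was a non-issue, and the fix you based on it breaks the lemma.
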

\begin{remark} Since $a_1(t)\succ t$,  no-matter what the choice of the function $a(t)$ will be, the function
$a_1(t+h)-a(t)$ is going to be an element of the family $\mathcal{F}(a(t),h)$ for every large $h$.
\end{remark}
\begin{proof}
Suppose first that $a_i(t)\prec a_1(t)$ for some $i\in\{2,\ldots,\ell\}$. Let $i_0$ be such that the function $a_{i_0}(t)$ has the minimal growth (meaning $a_{i_0}(t)\ll a_i(t)$ for $i=1,\ldots,\ell$). Then $a(t)=a_{i_0}(t)$ has the advertised property.

Otherwise,  $a_i(t)\sim a_1(t)$ for $i=1,\ldots,\ell$, in which case, for $i=2,\ldots,\ell$, we have  $a_i(t)=\alpha_ia_1(t)+b_i(t)$, for some
  non-zero real numbers $\alpha_2,\ldots, \alpha_\ell$, and functions $b_i(t)$ with $b_i(t)\prec a_1(t)$.
If $\alpha_{i_0}\neq 1$ for some $i_0\in \{2,\ldots,\ell\}$, then  $a(t)=a_{i_0}(t)$ has the advertised property. If $\alpha_i= 1$ for $i=2,\ldots,\ell$, let
 $i_0$ be such that the function $b_{i_0}(t)$ has maximal growth.  Then $a(t)=a_{i_0}(t)$ has the advertised property. This completes the proof.
\end{proof}

We are now ready to give the proof of Theorem~\ref{T:CharSeveral}.
We recall its statement for convenience.
\begin{theorem}
Suppose that $\{a_1(t),\ldots,a_\ell(t)\}$  is  a  ``nice'' family of functions in $\LE$.

 Then the factor $\cZ$ is characteristic for the family
 $\{[a_1(n)], \ldots, [a_\ell(n)]\}$.
\end{theorem}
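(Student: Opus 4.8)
The plan is to prove the statement by induction on the type of the family $\mathcal{F}=\{a_1,\ldots,a_\ell\}$, adapting the scheme of the sublinear case together with the reduction step of Lemma~\ref{L:indstep}; this is the exact several-sequence analogue of the PET induction carried out for a single sequence in Section~\ref{SS:PET1}. Concretely, I would show that whenever one of $f_1,\ldots,f_\ell$ is orthogonal to $\cZ$ the averages $\E_{1\le n\le N}T^{[a_1(n)]}f_1\cdot\ldots\cdot T^{[a_\ell(n)]}f_\ell$ converge to $0$ in $L^2(\mu)$; without loss of generality $f_1\bot\cZ$ and $\norm{f_i}_\infty\le 1$. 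For the base case, when the type has $d=0$, every $a_i$ lies in $\mathcal{F}_0$, so that $t^\varepsilon\prec a_i(t)\prec t$ and likewise for the differences $a_i-a_j$; hence the conclusion is precisely Proposition~\ref{P:CharSublin}.

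For the inductive step I would argue as follows. First, exactly as at the beginning of the proof of Lemma~\ref{L:uniform}, reduce to the situation where the distinguished function $f_1$ is attached to a function $a_1$ of maximal growth within $\mathcal{F}$: if $a_1(t)\prec a_i(t)$ for some $i$, factor out $T^{[a_i(n)]}$ and pass to the family of differences $\{-a_i,\,a_1-a_i,\ldots,a_{\ell}-a_i\}$, which is again nice and leaves $f_1$ attached to a function of maximal growth, now satisfying $a_1(t+h)$–type growth $\succ t$ because $d\ge 1$. Next, apply the Cauchy--Schwarz inequality and pass to the product system $S=T\times T$ with $F_i=f_i\otimes\overline{f}_i$, and then invoke the van der Corput estimate of Lemma~\ref{L:N-VDC} in the shift parameter $h$, bounding the relevant $\limsup$ by $\limsup_{H\to\infty}\E_{1\le h\le H}A_h$, where $A_h$ is assembled from the $S^{[a_i(n+h)]}$– and $S^{[a_i(n)]}$–iterates. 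For each fixed large $h$ I factor out $S^{[a(n)]}$, with $a\in\mathcal{F}$ the function supplied by Lemma~\ref{L:indstep}; this converts the iterates into those of the family $\mathcal{F}(a,h)$, the sublinear terms collapsing because their shifts differ by a bounded amount. The integer-part discrepancies take only finitely many values, so partitioning $\N$ into the corresponding finitely many sets reduces matters to clean averages over $\mathcal{F}(a,h)$, just as in Proposition~\ref{P:CharSublin} and Lemma~\ref{L:uniform}.

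By Lemma~\ref{L:indstep}, for every large $h$ the family $\mathcal{F}(a,h)$ is nice, has strictly smaller type than $\mathcal{F}$, and its distinguished function $a_1(t+h)-a(t)$ has maximal growth. Since $f_1\bot\cZ(T)$, the identities \eqref{E:seminonergodic} (and the remark following them) give $F_1=f_1\otimes\overline{f}_1\bot\cZ(S)$, so the induction hypothesis applies to $\mathcal{F}(a,h)$ and forces $A_h=0$ for every large $h$; averaging over $h$ then yields the claim.

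The hard part will be the bookkeeping in the inductive step: verifying that the van der Corput shift followed by factoring out $S^{[a(n)]}$ produces \emph{exactly} the family $\mathcal{F}(a,h)$ of strictly smaller type while preserving niceness, and that the various integer-part errors genuinely range over a finite set so that they can be absorbed by a finite partition. This is precisely where Lemma~\ref{L:indstep} (the growth-rate analogue of the polynomial reduction Lemma~\ref{L:indstep2}) does the decisive work; granting it, together with Proposition~\ref{P:CharSublin} as the base case and the transfer of orthogonality to the product system via \eqref{E:seminonergodic}, the remaining steps are routine adaptations of the sublinear argument.
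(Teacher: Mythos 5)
Your proposal is correct and follows essentially the same route as the paper's own proof: induction on the type of the family with Proposition~\ref{P:CharSublin} as the $d=0$ base case, the reduction to a maximal-growth $a_1$ via factoring out an iterate (in the sup-over-auxiliary-functions formulation of Lemma~\ref{L:uniform}), Cauchy--Schwarz to pass to $S=T\times T$, van der Corput, factoring out $S^{[a(n)]}$ with $a$ chosen by Lemma~\ref{L:indstep}, collapsing the sub-linear terms and absorbing the finitely many integer-part errors, and transferring orthogonality to $\cZ(S)$ via \eqref{E:seminonergodic} so the induction hypothesis applies to $\mathcal{F}(a,h)$. The bookkeeping you flag as the remaining work is exactly what the paper carries out, and it goes through as you anticipate.
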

\begin{proof}
Arguing as in the beginning of the proof of Proposition~\ref{L:uniform}
we see that it suffices to show the following:  If
$(X,\X,\mu , T)$ is a measure preserving system,  $f_1\in
L^\infty(\mu)$ is orthogonal to the nilfactor $\mathcal{Z}$, the
family of functions $\mathcal{F}=\{a_1(t),\ldots,a_\ell(t)\}$ is nice, and the function
$a_1(t)$ has maximal growth within $\mathcal{F}$,  then
\begin{equation}\label{E:pao0}
\lim_{N\to\infty}\sup_{\norm{f_0}_\infty, \norm{f_2}_{\infty}
,\ldots,\norm{f_\ell}_\infty\leq 1} \E_{1\leq n\leq N} \Big|\int
f_0\cdot  T^{[a_1(n)]}f_1\cdot T^{[a_2(n)]}f_2\cdot\ldots \cdot
T^{[a_\ell(n)]}f_\ell \ d\mu\Big| =0.
\end{equation}
We are going to use induction on  the type of the family
$\{a_1(t),\ldots,a_\ell(t)\}$ to verify  this statement.

 Proposition~\ref{P:CharSublin} shows that the
result holds for $d=0$.
 Suppose now that $d\geq 1$, and our statement  holds for all families with
  type smaller than $(d,n_d,\ldots,n_0)$. Let $\mathcal{F}=\{a_1(t),\ldots,a_\ell(t)\}$ be
  a family with type $(d,n_d,\ldots,n_0)$ (then  $a_1(t)\succ t$ since
  $a_1(t)$ has maximal growth rate in $\mathcal{F}$).

Choosing functions $f_{i,N}$ with $\norm{f_{i,N}}_\infty \leq 1$, so that the average in \eqref{E:pao0} is $1/N$ close to the supremum,
and using the Cauchy-Schwarz  inequality, we see that  it suffices to show that
$$
\lim_{N\to\infty}
\E_{1\leq n\leq N}\Big| \int f_{0,N}\cdot  T^{[a_1(n)]}f_1\cdot T^{[a_2(n)]}f_{2,N}\ldots \cdot T^{[a_\ell(n)]}f_{\ell,N} \ d\mu\Big|^2=0.
$$
 Equivalently, it suffices to show that
 $$
\lim_{N\to\infty}
\E_{1\leq n\leq N} \int F_{0,N}\cdot  S^{[a_1(n)]}F_1\cdot S^{[a_2(n)]}F_{2,N}\cdot \ldots \cdot S^{[a_\ell(n)]}F_{\ell,N} \ d(\mu\times \mu)=0
$$
where $S=T\times T$, $F_1=f_1 \otimes \overline{f}_1$, and $F_{i,N}=f_{i,N} \otimes \overline{f}_{i,N}$, for $i=0,2,\ldots,\ell$.
Using the Cauchy-Schwarz inequality, and then Lemma~\ref{L:N-VDC}, we reduce matters to showing
for every large  $h$ that
\begin{multline*}
\lim_{N\to\infty}
\Big| \E_{1\leq n\leq N} \int   S^{[a_1(n+h)]}F_1\cdot S^{[a_2(n+h)]}F_{2,N}\cdot \ldots \cdot S^{[a_\ell(n+h)]}F_{\ell,N} \cdot\\
S^{[a_1(n)]}\overline{F}_1\cdot S^{[a_2(n)]}\overline{F}_{2,N}\cdot \ldots \cdot S^{[a_\ell(n)]}\overline{F}_{\ell,N}
\ d(\mu\times \mu)\Big|=0.
\end{multline*}
Factoring out the transformation $S^{[a(n)]}$
 where $a(t)\in \{a_1(t),\ldots,a_\ell(t)\}$ is as in Lemma~\ref{L:indstep}, we see that it suffices to show that   for
 every large $h$  we have
  \begin{multline*}
\lim_{N\to\infty}
\E_{1\leq n\leq N} \Big|\int   S^{[a_1(n+h)-a(n)]+e_1(n)}F_1\cdot S^{[a_2(n+h)-a(n)]+e_2(n)}F_{2,N}\cdot \ldots \cdot S^{[a_\ell(n+h)-a(n)]+e_\ell(n)}F_{\ell,N} \cdot\\
S^{[a_1(n)-a(n)]+e_{\ell+1}(n)}\overline{F}_1\cdot S^{[a_2(n)-a(n)]+e_{\ell+2}(n)}\overline{F}_{2,N}\cdot \ldots \cdot S^{[a_\ell(n)-a(n)]+e_{2\ell}(n)}\overline{F}_{\ell,N}
\ d(\mu\times \mu)\Big|=0
\end{multline*}
for some error terms $e_i(n)$ that take values  in $\{0,1\}$.
Therefore, it suffices to show that for every large $h$ we have
  \begin{multline}\label{E:pao2}
\lim_{N\to\infty}\sup_{\norm{F_2}_{\infty}
,\ldots,\norm{F_{2\ell}}_\infty\leq 1}
\E_{1\leq n\leq N} \Big|\int   S^{[a_1(n+h)-a(n)]}F_1\cdot S^{[a_2(n+h)-a(n)]}F_2\cdot \ldots \cdot S^{[a_\ell(n+h)-a(n)]}F_\ell \cdot\\
S^{[a_1(n)-a(n)]}F_{l+1}\cdot S^{[a_2(n)-a(n)]}F_{l+2}\cdot \ldots \cdot S^{[a_\ell(n)-a(n)]}F_{2\ell}
\ d(\mu\times \mu)\Big|=0.
\end{multline}
Next notice that if $a_i(t)\prec t$ for some $i\in \{2,\ldots,\ell\}$, then $a_i(n+h)-a_i(n)\to 0$. Therefore,  for our purposes we can practically assume that $[a_i(n+h)-a(n)]=[a_i(n)-a(n)]$ for all $n$, and  for those values of $i$
we can write
$$
S^{[a_i(n+h)-a(n)]}F_i\cdot S^{[a_i(n)-a(n)]}\overline{F}_i=S^{[a_i(n)-a(n)]}|F_i|^2.
$$
After doing this reduction, for every large $h$
 the  iterates that appear in the integral in \eqref{E:pao2}  involve functions that belong to the family
 $\mathcal{F}(a(t),h)$ (defined at the beginning of the current subsection). This family  is ``nice'' and by Lemma~\ref{L:indstep}
  has type smaller than that of $\mathcal{F}$. Furthermore,
 one of these iterates appearing in this reduced form is  $S^{[a_1(n+h)-a(n)]}F_1$  and the function $a_1(n+h)-a(n)$ has maximal growth in $\mathcal{F}(a(t),h)$ . Since $f_1$ is orthogonal to the factor $\cZ(T)$, we have that  $F_1=f_1\otimes \overline{f}_1$ is orthogonal to the factor $\cZ(S)$. Therefore, the induction hypothesis applies, and gives that the limit
 in \eqref{E:pao2} is $0$ for every large $h$. This completes the induction and the proof.
\end{proof}

\section{Proof of the convergence and  the recurrence results}\label{S:ProofsMain}
 In this section we combine Theorems~\ref{T:CharSingle} and \ref{T:CharSeveral}  and the equidistribution results from \cite{Fr09} to prove the convergence and recurrence results stated in Sections~\ref{SS:ArithmeticProgressions} and \ref{SS:SeveralSequences}.

\subsection{Convergence  results}
We prove the convergence results stated in Sections~\ref{SS:ArithmeticProgressions} and \ref{SS:SeveralSequences}.
To prove  Theorem~\ref{T:ConvSingle} we will need the following result:
\begin{theorem}[{\bf F.~\cite{Fr09}}]\label{T:A}
Suppose that the function $a\in \H$ has polynomial growth and satisfies one of the three conditions
stated in  Theorem~\ref{T:ConvSingle}.

 Then for every  nilmanifold $X=G/\Gamma$,
$F\in C(X)$, $b\in G$, and $x\in X$, the   following limit exists
$
\lim_{N\to\infty}\frac{1}{N}\sum_{n=1}^N F(b^{[a(n)]}x)
$.
\end{theorem}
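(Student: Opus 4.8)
The plan is to remove the integer part by lifting to a one-parameter nilflow on a suspension, and then to analyse the resulting real-variable average by the short-block Taylor-expansion technique of Section~\ref{S:CharFactorsSingle} together with the equidistribution theory on nilmanifolds. First I would carry out the standard reductions: replacing $X$ by the orbit closure $\overline{\{b^nx:n\in\N\}}$ (a sub-nilmanifold on which $b$ acts ergodically) and passing to a power $c:=b^r$ whose orbit closure is connected, I may assume $X=G/\Gamma$ is connected with $G_0$ simply connected and $c\in G_0$. Since $G_0$ is simply connected nilpotent, $c$ lies on a unique one-parameter subgroup, so the real powers $c^s=\exp(s\log c)$ are defined. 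Writing $[a(n)]=rq_n+s_n$ with $0\le s_n<r$, one has $b^{[a(n)]}=b^{s_n}c^{q_n}$ and, since $\lfloor\lfloor a(n)\rfloor/r\rfloor=\lfloor a(n)/r\rfloor$, $q_n=[\tilde a(n)]$ where $\tilde a:=a/r\in\H$ still satisfies whichever of the three conditions $a$ does (each is invariant under $a\mapsto a/r$).

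Next I would pass to the suspension. On the connected nilmanifold $\tilde X=X\times\T$, realised as a homogeneous space of $G_0\times\R$, consider the one-parameter subgroup generated by $\tilde c=(c,1)$, so that $\tilde c^{\,s}(x,0)=(c^sx,\{s\})$. Setting $\tilde F(y,u)=F(b^{\lfloor ru\rfloor}c^{-u}y)$ for $u\in[0,1)$, a direct check using $\lfloor r\{\tilde a(n)\}\rfloor=s_n$ and $c^{-\{\tilde a(n)\}}c^{\tilde a(n)}=c^{[\tilde a(n)]}=c^{q_n}$ gives $F(b^{[a(n)]}x)=\tilde F(\tilde c^{\,\tilde a(n)}\tilde x)$ with $\tilde x=(x,0)$. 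The function $\tilde F$ is bounded and Riemann integrable on $\tilde X$, continuous off the finitely many tori $\{u=j/r\}$. Approximating it from above and below by continuous functions agreeing with it away from these tori, I reduce the problem --- in the generic first case, where the $\T$-coordinate $\{\tilde a(n)\}$ equidistributes and so avoids the jump set in density --- to proving convergence of $\frac1N\sum_{n\le N}G(\tilde c^{\,\tilde a(n)}\tilde x)$ for every continuous $G\colon\tilde X\to\C$, an average with no integer part.

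For this real-variable average I would again use short blocks. By Lemma~\ref{L:averaging} it suffices to treat blocks $N<n\le N+\ell(N)$ for a suitable $\ell(t)\prec t$, and on such a block the Taylor expansion of $\tilde a$ about $N$ --- controlled exactly as in the proof of Lemma~\ref{L:AwayPoly}, via the derivative estimates of Lemmas~\ref{L:cvb} and~\ref{L:properties} --- yields $\tilde a(N+m)=P_N(m)+o(1)$ uniformly for $m\le\ell(N)$, with $P_N\in\R[t]$ of degree at most the $k$ of Lemma~\ref{L:cvb}. Continuity of $s\mapsto\tilde c^{\,s}$ then replaces the block average by that of the polynomial nilflow $m\mapsto\exp(P_N(m)\log\tilde c)\tilde x$, whose averages converge for each fixed $P_N$ by Leibman's equidistribution theorem \cite{Lei05a}. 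To pass to the limit over the varying $P_N$ and obtain a single limiting value I would invoke the quantitative equidistribution result Theorem~\ref{T:GT2}: under the first condition the growth estimates force $\norm{\psi(\exp(P_N(\cdot)\log\tilde c))}_{C^\infty[\ell(N)]}\to\infty$ for every nontrivial quasi-character $\psi$, so each block sequence is eventually $\delta$-equidistributed for all $\delta>0$ and the block average tends to $\int_{\tilde X}G\,dm_{\tilde X}$. The remaining two conditions I would handle directly on the integer sequence $[a(n)]$: under the second, $[a(n)]=[cp(n)+d]+O(1)$ and, after splitting $n$ into residues modulo the denominator of $c$ when $c$ is rational, $b^{[a(n)]}x$ is a genuine integer-polynomial orbit to which Leibman's theorem applies; under the third, $a$ is within $\ll\log t$ of a rational-slope line and an elementary blockwise argument suffices.

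The main obstacle is the uniformity across blocks. Because the approximating polynomials $P_N$ change with $N$, one must show that their leading arithmetic data behave uniformly: either uniformly non-resonant, so that Theorem~\ref{T:GT2} forces genuine equidistribution of every block (the first condition, where the sharp estimates of Lemma~\ref{L:cvb} are precisely what guarantees the frequencies escape to infinity), or uniformly resonant in a way that reduces to a fixed polynomial or linear orbit (the remaining conditions). A secondary delicate point, hidden in the suspension step, is that in the two resonant cases the limiting orbit may charge the discontinuity set of $\tilde F$, which is why there I would revert to a direct treatment of the integer sequence rather than the flow.
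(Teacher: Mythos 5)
First, a point of reference: the paper does not actually prove this statement — it is imported wholesale from the companion paper \cite{Fr09} — so your attempt has to be judged against the toolkit that paper and this one use. Your overall strategy is indeed the intended one: suspension to eliminate the integer part (exactly the device of Lemma~\ref{L:key}), short-block Taylor approximation (as in Lemma~\ref{L:AwayPoly}), the quantitative equidistribution Theorem~\ref{T:GT2} to get uniformity over the block polynomials, and Leibman's theorem in the polynomial (resonant) cases. The identity $F(b^{[a(n)]}x)=\tilde F(\tilde c^{\,\tilde a(n)}\tilde x)$ checks out, and your observation that the discontinuity set of $\tilde F$ must be avoided in density (hence the separate treatment of the resonant cases) is the right sensitivity.

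There is, however, a genuine gap in your case analysis. You treat all of the first condition of Theorem~\ref{T:ConvSingle} by the block-Taylor method, ``controlled exactly as in the proof of Lemma~\ref{L:AwayPoly}, via the derivative estimates of Lemma~\ref{L:cvb}.'' But Lemma~\ref{L:cvb} requires $|a(t)-p(t)|\succ\log t$ for every \emph{real} polynomial $p\in\R[t]$, which is strictly stronger than the first condition, namely $|a(t)-cp(t)|\succ\log t$ for every $c\in\R$ and $p\in\Z[t]$. A function such as $a(t)=\sqrt2\,t^2+\sqrt3\,t$, or $a(t)=t^2+\sqrt2\,t+\log t$, satisfies the first condition (it is not within $\log t$ of any constant multiple of an integer polynomial), yet it is a real polynomial (respectively, within $\log t$ of one), so no $k$ satisfies $1/t^k\prec a^{(k)}(t)\prec 1$ and your block analysis never gets started. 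These functions are also not covered by your treatment of the second condition, which is set up only for constant multiples of integer polynomials plus a convergent error. So the case ``$a(t)=q(t)+e(t)$ with $q\in\R[t]$ non-constant, not of the form $cp(t)+d$, and $e(t)\ll\log t$'' falls through the cracks; it needs a separate argument (real-polynomial orbits on the suspension handled by Leibman's theorem, together with a block argument for an unbounded error $1\prec e(t)\prec \log t$, in the spirit of the last paragraph of the proof of Theorem~\ref{T:CharSingle}). This is precisely the dichotomy the paper itself draws in that proof (``it remains to deal with the case where $a(t)=p(t)+e(t)$, where $p\in\R[t]$ is non-constant and $e(t)\ll\log t$''), and any proof of the present theorem must contain it. Two lesser remarks: your claim that some power $c=b^r$ lies in $G_0$ is not justified in general — the standard route, used in the proof of Proposition~\ref{P:closetopoly}, is to lift to a connected, simply connected group where real powers exist; and the assertion that under non-resonance the norms $\norm{\psi(\exp(P_N(\cdot)\log\tilde c))}_{C^\infty[\ell(N)]}$ blow up for every bounded-frequency quasi-character is the technical heart of \cite{Fr09} and is asserted rather than argued, though that is forgivable in a sketch.
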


\begin{proof}[Proof of Theorem~\ref{T:ConvSingle}]
The necessity of the conditions was proved in \cite{BKQW05}
by using examples of rational rotations on the circle.

To show that the  three stated conditions are sufficient for convergence we start by using  Theorem~\ref{T:CharSingle}. We get that the nilfactor $\cZ$
 is characteristic for the corresponding multiple ergodic averages. Using an ergodic decomposition argument  and  Theorem~\ref{T:HoKra},
 we deduce that it suffices to prove convergence when our system is an inverse limit of nilsystems. A standard approximation argument allows us to finally reduce matters to nilsystems.

 Let $(X=G/\Gamma,\mathcal{G}/\Gamma,m_X,T_b)$ be a nilsystem and $F_1,\ldots, F_\ell\in L^\infty(m_X)$.
Our goal is to show that if the function $a\in \H$ satisfies one of the three stated conditions, then the limit
 \begin{equation}\label{E:reducednil1}
\lim_{N\to\infty} \E_{1\leq n\leq N} F_1(b^{[a(n)]}x)\cdot F_2(b^{2[a(n)]}x)\cdot \ldots \cdot F_\ell(b^{\ell[a(n)]}x)
 \end{equation}
 exists in $L^2(m_X)$. By density, we can assume that the functions $F_1,\ldots, F_\ell$ are continuous. In this case we claim  that the limit in \eqref{E:reducednil1} exists for every $x\in X$. Indeed, applying Theorem~\ref{T:A}
 to the nilmanifold $X^k$,  the nilrotation $\tilde{b}=(b,b^2,\ldots,b^\ell)$, the point $\tilde{x}=(x,x,\ldots,x)$, and  the function $\tilde{F}(x_1,\ldots,x_\ell)=F_1(x_1)\cdot F_2(x_2)\cdots F_\ell(x_l)$, we get that
 the limit
 $$
 \lim_{N\to\infty} \E_{1\leq n\leq N}\tilde{F}(\tilde{b}^{[a(n)]}\tilde{x})
 $$
 exists. This implies that the limit in  \eqref{E:reducednil1} exists for every $x\in X$ and completes the proof.
\end{proof}
To prove  Theorem~\ref{T:ConvSingleFormula} we will need the following result:
\begin{theorem}[{\bf F.~\cite{Fr09}}]\label{T:B}
Let $a\in\H$ have  at most polynomial growth and satisfy   $|a(t)-cp(t)|\succ \log t$ for every $c\in \R$ and  $p\in \Z[t]$.

Then for every nilmanifold   $X=G/\Gamma$,   $b\in G$, and   $x\in X$, the
  sequence $(b^{[a(n)]}x)_{n\in\N}$ is equidistributed in the nilmanifold $ \overline{(b^nx)}_{n\in \N}$.
 \end{theorem}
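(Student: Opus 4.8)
The plan is to deduce equidistribution from the structure theory of nilmanifolds together with a Taylor-approximation scheme on short blocks, which brings the quantitative equidistribution result of Theorem~\ref{T:GT2} into play. By construction $b$ has a dense orbit in $X_b=\overline{(b^nx)}_{n\in\N}=H/\Delta$, so $b$ acts ergodically on $X_b$. I would first carry out the usual reductions to the connected ergodic case: replace $b$ by a suitable power $b^r$ making $X_{b^r}$ connected, decompose $X_b$ into the finitely many connected components that are cyclically permuted by $b$, and split the index $n$ according to the residue of $[a(n)]$ modulo the number of components. The growth hypothesis on $a$ guarantees the equidistribution of $[a(n)]$ among these residue classes needed to make the splitting legitimate. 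After these reductions it suffices to treat a connected nilmanifold $X=G/\Gamma$ (with $G_0$ simply connected and $G=G_0\Gamma$) on which $b$ acts ergodically, and to show that $\E_{1\leq n\leq N}F(b^{[a(n)]}x)\to\int_X F\,dm_X$ for every continuous $F$.

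The main step is to handle this convergence despite the fact that $b^{[a(n)]}$ is not a polynomial sequence in $G$, so that Theorem~\ref{T:GT2} is not directly applicable. The idea is to localize. Following the block construction in the proof of Lemma~\ref{L:AwayPoly}, and using the derivative estimates of Lemma~\ref{L:properties}, I would partition $[1,N]$ into consecutive blocks of length $\ell(N_j)$ so short that on each block the Taylor expansion of $a$ about the left endpoint agrees, up to an additive error that changes $[a(n)]$ by only a bounded amount, with a real polynomial $p_j$ of a fixed degree $k$. Absorbing the bounded error by further partitioning each block into finitely many pieces, the restriction of the sequence to a block becomes one of the form $(b^{[p_j(n)]}x)$. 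The integer-part bracket is then removed by the standard device of passing to a torus extension, or equivalently by using that on the affine torus $b$ acts as a unipotent affine map whose integer iterates are polynomial in the exponent; this exhibits the blockwise sequence as governed by a genuine polynomial orbit, to which Theorem~\ref{T:GT2} applies.

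The heart of the matter, and the step I expect to be the main obstacle, is to show that these blockwise polynomial orbits equidistribute uniformly enough that the block averages reassemble into the global average. By Theorem~\ref{T:GT2}, failure of $\delta$-equidistribution on a block produces a nontrivial horizontal character $\psi$ with $\norm{\psi}\leq M$ whose associated real polynomial $\psi(g_j(n))$ has every coefficient $\alpha_i$ satisfying $\norm{\alpha_i}\leq M/\ell(N_j)^i$. Unwinding the Taylor data, the top coefficient is, up to the bounded integer frequency of $\psi$, a fixed rational multiple of $a^{(k)}(N_j)/k!$, and the lower coefficients encode the remaining derivatives of $a$. The crux is to argue that if such near-integrality persisted on a positive proportion of blocks as $N\to\infty$, then the growth and regularity of $a$ guaranteed by Lemma~\ref{L:properties} would force $a(t)$ to lie within $O(\log t)$ of some $cp(t)$ with $c\in\R$ and $p\in\Z[t]$, contradicting the hypothesis $|a(t)-cp(t)|\succ\log t$. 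Hence only finitely many bad blocks can occur, every block equidistributes, and summing over blocks gives the theorem. Turning the qualitative non-resonance hypothesis on $a$ into the quantitative exclusion of small-norm characters simultaneously across all blocks is the delicate point where most of the effort is concentrated.
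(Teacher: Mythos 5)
First, a point of orientation: the paper you are being compared against does not prove this theorem at all. The attribution ``F.~\cite{Fr09}'' means it is quoted verbatim from the companion paper \cite{Fr09}, where the nilmanifold case of the whole program is carried out; in the present paper Theorem~\ref{T:B} is used as a black box in the proof of Theorem~\ref{T:ConvSingleFormula}. So the only meaningful comparison is with \cite{Fr09}, and there your overall architecture is indeed the right one: reduce to a connected nilmanifold with $G_0$ simply connected on which a power of $b$ acts ergodically, approximate $a$ blockwise by its Taylor polynomials of a fixed degree (exactly the maneuver of Lemma~\ref{L:AwayPoly} in this paper), remove integer parts by passing to a torus extension (as in Lemma~\ref{L:key}), and apply the quantitative Green--Tao result (Theorem~\ref{T:GT2}) block by block, with the hypothesis $|a(t)-cp(t)|\succ\log t$ entering only to kill the character obstructions (compare the way Theorem~\ref{T:GT2} is used in Lemma~\ref{L:unif}).

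The genuine gap is that your final paragraph asserts, but does not prove, the only hard step. Saying that persistent near-integrality of the Taylor data ``would force $a(t)$ to lie within $O(\log t)$ of some $cp(t)$'' is a restatement of the contrapositive of the theorem, not an argument, and it is precisely where the content of \cite{Fr09} lives. To close it one must: (i) pigeonhole to a single quasi-character $\psi$ across the bad blocks (harmless, since only finitely many have frequency magnitude at most $M$); (ii) unwind $\norm{\psi(g_j(n))}_{C^\infty[N_j]}\leq M$ through the affine torus into simultaneous near-integrality conditions, at the left endpoint $N_j$ of each bad block, on quantities built from $a^{(i)}(N_j)/i!$ and the Kronecker data of $b$, with errors of size $M/\ell(N_j)^i$ (cf.\ the computation of the leading coefficient $\gamma_m$ in the proof of Lemma~\ref{L:unif}); and (iii) show that if these local conditions hold on a set of blocks of positive density, the derivative-comparison properties of Hardy functions (Lemma~\ref{L:properties}) force the integer parts arising at different blocks to cohere into the coefficients of a single $p\in\Z[t]$ and a single $c\in\R$ with $|a(t)-cp(t)|\ll \log t$. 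Step (iii) is where all the delicacy sits --- it is what actually produces the $\log t$ threshold rather than some other error scale --- and nothing in your sketch addresses it. A smaller slip: negating ``positive proportion of bad blocks'' yields a density-zero set of bad blocks, not ``only finitely many''; density zero is in fact all the reassembly of block averages needs, but it is not what your dichotomy, as written, delivers. As it stands, your proposal is an accurate road map to the proof in \cite{Fr09}, not a proof.
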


\begin{proof}[Proof of Theorem~\ref{T:ConvSingleFormula}]
Arguing  as in the proof of Theorem~\ref{T:ConvSingle} we reduce matters to showing the following:
Let $a\in \H$ satisfy the assumptions of our theorem,  $X=G/\Gamma$ be a nilsystem, $b\in G$ be a nilrotation,  and $F_1,\ldots, F_\ell\in C(X)$.  Then for every $x\in X$ the limit in \eqref{E:reducednil1}
exists and is equal to the limit
$$
\lim_{N\to\infty} \E_{1\leq n\leq N} F_1(b^nx)\cdot F_2(b^{2n}x)\cdot \ldots \cdot F_\ell(b^{\ell n}x).
$$
Keeping the same notation as in the proof of Theorem~\ref{T:ConvSingleFormula}, we  rewrite the desired identity as
$$
\lim_{N\to\infty} \E_{1\leq n\leq N}\tilde{F}(\tilde{b}^{[a(n)]}\tilde{x})=
\lim_{N\to\infty} \E_{1\leq n\leq N}\tilde{F}(\tilde{b}^{n}\tilde{x}).
$$
Using Theorem~\ref{T:B}, and the fact that
 the sequence $(\tilde{b}^n \tilde{x})_{n\in\N}$ is equidistributed in the set
$\overline{\{\tilde{b}^n \tilde{x}, n\in\N\}}$, we deduce that the last identity holds for every $\tilde{x}\in X^\ell$, completing the proof.
\end{proof}
Next we prove Theorem~\ref{T:ConvSeveral} using the following result that will be established in Section~\ref{SS:proof}.
\begin{proposition}\label{P:L^2product}
Suppose that the functions $a_1,\ldots,a_\ell \in\mathcal{LE}\cap \G$ have different growth rates.

Then for every nilmanifold $X=G/\Gamma$, $b\in G$ ergodic, and $F_1,\ldots,F_\ell \in C(X)$ we have
$$
\lim_{N\to\infty}\E_{1\leq n\leq N} F_1(b^{[a_1(n)]}x)\cdot \ldots \cdot F_\ell(b^{[a_\ell(n)]}x)
=\int F_1 \ d{m_X}\cdot\ldots\cdot \int F_\ell \ d{m_X}
$$
where the convergence takes place in $L^2(m_X)$.
\end{proposition}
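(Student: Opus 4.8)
The plan is to upgrade the asserted $L^2$ convergence to the stronger statement that the orbit $(b^{[a_1(n)]}x,\ldots,b^{[a_\ell(n)]}x)_{n\in\N}$ is equidistributed in $X^\ell=G^\ell/\Gamma^\ell$ with respect to its Haar measure $m_{X^\ell}=m_X\times\cdots\times m_X$. Applying this to $\tilde F(x_1,\ldots,x_\ell)=F_1(x_1)\cdots F_\ell(x_\ell)$ gives, for every $x$, the pointwise limit $\prod_i\int F_i\,dm_X$, and boundedness then yields the claimed convergence in $L^2(m_X)$. By multilinearity (write $F_i=(F_i-\int F_i\,dm_X)+\int F_i\,dm_X$ and expand) it is equivalent to prove that $\E_{1\le n\le N}\prod_i F_i(b^{[a_i(n)]}x)\to 0$ in $L^2(m_X)$ whenever at least one $F_i$ has $\int F_i\,dm_X=0$.

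First I would reduce to a connected nilmanifold: replacing $b$ by a suitable power $b^r$ (again ergodic) and splitting the average into residue classes modulo $r$, we pass to $X_{b^r}$, so we may assume $X$ connected with $G_0$ simply connected and $G=G_0\Gamma$; then $X^\ell$ is connected with simply connected identity component as well. Next, following the local linearization of Section~\ref{S:CharFactorsSingle}, I partition $[1,N]$ into intervals of length $\prec N$ on which the Taylor expansion of each $a_i$ about the left endpoint represents $[a_i(n)]$, up to bounded error terms (absorbed exactly as in Lemma~\ref{L:AwayPoly}), by a real polynomial of bounded degree. On each such interval the tuple becomes a genuine polynomial sequence $n\mapsto g(n)\Gamma^\ell$ on $X^\ell$ of bounded degree, so the quantitative equidistribution theorem of Green and Tao (Theorem~\ref{T:GT2}) applies.

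The heart of the matter is to show these polynomial sequences are $\delta$-equidistributed for each fixed $\delta$ once the intervals are long. If not, Theorem~\ref{T:GT2} produces a nontrivial quasi-character $\psi$ of $X^\ell$ of bounded frequency magnitude with $\norm{\psi(g(n))}_{C^\infty[N]}\ll 1$. Every quasi-character of $X^\ell$ factors through the affine torus $A^\ell\simeq\T^{l_1}\times\cdots\times\T^{l_\ell}$, where $b$ acts by a fixed unipotent affine transformation; hence $\psi(g(n))=e(p(n))$ with $p(n)=\sum_i\kappa_i\cdot P_i([a_i(n)])$ an integer combination $(\kappa_1,\ldots,\kappa_\ell)\neq 0$ of the unipotent orbit-polynomials $P_i$. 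Because the $a_i$ have \emph{different growth rates} and lie in $\mathcal{G}$, the dominant term of any such nontrivial combination is a single fractional-power function $\sim c\,t^{\gamma}$ that stays logarithmically away from every integer polynomial; the single-sequence equidistribution underlying Theorem~\ref{T:B} then forces $\norm{\psi(g(n))}_{C^\infty[N]}\to\infty$, a contradiction. Equidistribution of the tuple in $X^\ell$ follows.

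The main obstacle is exactly this non-resonance step: one must verify that no nontrivial integer combination of the orbit-polynomials $P_i([a_i(n)])$ collapses to a slowly varying or polynomial phase, and make this quantitative, uniformly over the frequencies $\kappa_i$ and over the partition intervals, so as to feed Theorem~\ref{T:GT2}. The different-growth-rate hypothesis is precisely what rules the collapse out, and it is essential: a naive van der Corput / PET induction in the spirit of Section~\ref{S:CharFactorSeveral} does \emph{not} suffice here, because the difference families it generates contain resonant pairs such as $\{a(t),2a(t)\}$, for which the product formula fails (the correct limit being the Furstenberg average rather than the product of integrals). This delicate equidistribution analysis is carried out using the results of the companion paper \cite{Fr09}.
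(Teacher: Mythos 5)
There is a genuine gap, and it sits exactly at the step you call the heart of the matter. After Taylor expansion and lifting (which, note, already needs the $\T\times X$ device of Lemma~\ref{L:key} to turn $b^{[P(h)]}$ into a genuine polynomial orbit --- integer parts of polynomials are not polynomial sequences in $G$), the phase of a quasi-character of the product nilmanifold along the tuple orbit is \emph{not} of the form $\sum_i \kappa_i\cdot P_i$; it is a polynomial of degree up to the step of $X$ in the quantities $a_i(N+h)$, i.e.\ a combination $\sum_{i,j}\gamma_{ij}\,a_i(N+h)^j$ whose coefficients are built from the frequencies and from the coordinates of $b$ on the affine torus. Your structural claim --- that any nontrivial such combination is dominated by a single fractional-power term staying logarithmically away from integer polynomials --- is false, because powers of functions in $\G$ can be integer polynomials and can coincide with one another. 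For the admissible pair $a_1(t)=t^{1/2}$, $a_2(t)=t^{3/2}$ one has $a_1^2=t$, $a_1^3=a_2$, $a_2^2=t^3$; so on a nilmanifold of step at least two the term $\gamma_{12}\,a_1(N+h)^2=\gamma_{12}(N+h)$ is an integer polynomial times $\gamma_{12}$, and on step at least three the terms $\gamma_{13}\,a_1(N+h)^3$ and $\gamma_{21}\,a_2(N+h)$ can cancel identically, leaving a residual phase whose dominant term is again an integer polynomial whose coefficient is a fixed rational combination of coordinates of $b$. Whether the $C^\infty[N]$ norm of that residue blows up is then a diophantine question about $b$, settled only by invoking ergodicity (rational independence of the Kronecker coordinates); it does not follow from the single-function Theorem~\ref{T:B}. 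This elimination over all the powers $a_i^j$ is precisely the hard analysis of the companion paper, and \cite{Fr09} carries it out only for functions of \emph{super-linear} growth, so your concluding citation does not repair the argument when some $a_i$ is sub-linear. (Your reduction to connected $X$ is also broken as stated: splitting $n$ into residue classes modulo $r$ does not fix $[a_i(n)]\bmod r$.)

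This is also why the paper's own proof looks nothing like yours: it splits into cases and works at two scales. The all-sub-linear case is quoted from Theorem 7.3 of \cite{BH09}; the all-super-linear case is exactly the pointwise tuple equidistribution of Theorem 1.3 of \cite{Fr09}; and in the mixed case one writes $n=Rn'+r$, uses that for sub-linear $a_i$ one has $[a_i(Rn'+r)]=[a_i(Rn')]$ for a density-one set of $n'$ (so the sub-linear coordinates decouple from the inner average over $r$), applies Proposition~\ref{P:Rn+r} --- a double limit $\lim_{R\to\infty}\lim_{N\to\infty}$ imported from Proposition 5.3 of \cite{Fr09} --- to the super-linear block averages, and finally Proposition 6.4 of \cite{BH09} to the sub-linear part sampled along $Rn'$. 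That last ingredient, and hence the whole sub-linear side, is only an $L^2(m_X)$ statement; your proposal asserts equidistribution of the tuple for \emph{every} $x$, which is stronger than anything the cited tools provide once a sub-linear function is present. The double averaging over blocks is the paper's substitute for your single-interval equidistribution claim, and it is there precisely because coordinates with different growth rates equidistribute at genuinely different scales.
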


\begin{proof}[Proof of Theorem~\ref{T:ConvSeveral}]
Using Theorem~\ref{T:CharSeveral} and arguing  as in the proof of Theorem~\ref{T:ConvSingle} the result follows from
 Proposition~\ref{P:L^2product}.
\end{proof}

Lastly, we prove Theorem~\ref{T:ConvNonCommuting}. Its proof has a rather soft touch of  ergodic theory; in fact
it is an easy consequence of the following general statement:
\begin{proposition}\label{P:ConvSequence}
Let $a_1,\ldots,a_\ell\in \LE\cap \mathcal{G}$  have different growth rates and satisfy $a_i(t)\prec t$ for $i=1,\ldots,\ell$.
Let $(X,\mathcal{X},\mu)$  be a probability space, and for $i=1,\ldots,\ell$ let  $(F_i(n))_{n\in\N}$ be  sequences of
functions in $L^\infty(\mu)$ with uniformly bounded norm
such that the limits
 $\tilde{F}_i=\lim_{N-M\to\infty}\E_{M\leq n\leq N}F_i(n)$ exist in $L^2(\mu)$.

 Then
 $$
\lim_{N\to\infty} \E_{1\leq n\leq N} F_1([a_1(n)])\cdot\ldots \cdot F_\ell([a_\ell(n)])=
\tilde{F}_1\cdot \ldots \cdot \tilde{F}_\ell
 $$
where  the limit is taken in $L^2(\mu)$.
 \end{proposition}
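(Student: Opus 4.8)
The plan is to argue by induction on $\ell$, at each stage \emph{peeling off} the fastest growing function. Since the $a_i$ have different growth rates we may, after relabelling, assume $a_1(t)\succ a_2(t)\succ\cdots\succ a_\ell(t)$. Because $a_i\in\G$ and $a_i(t)\prec t$, we in fact have $t^{\varepsilon}\prec a_i(t)\prec t$ for some $\varepsilon>0$, so Lemma~\ref{L:properties} gives $a_i'(t)\sim a_i(t)/t$; in particular each $a_i'(t)\to 0$ and $a_1'(t)\succ a_2'(t)$. For $\ell=1$ the conclusion is immediate: Lemma~\ref{L:ChangeVan} applied to the constant-in-$N$ sequence $V_N(n)=F_1(n)$ yields $\E_{1\le n\le N}F_1([a_1(n)])-\E_{1\le n\le N}F_1(n)\to 0$, and $\E_{1\le n\le N}F_1(n)\to\tilde F_1$ by taking $M=1$ in the hypothesis on $\tilde F_1$.

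For the inductive step I would partition $\{1,\dots,N\}$ into consecutive blocks of slowly varying length $L=L(t)$, chosen so that on a typical block the slow iterates $[a_i(n)]$ ($i\ge2$) stay constant while the fast one $[a_1(n)]$ sweeps a long integer interval. Using $a_i'\sim a_i/t$ one checks that there is such an $L$ with \[ \frac{t}{a_1(t)}\prec L(t)\prec\min\Big(\frac{t}{a_2(t)},\ \frac{t}{\sqrt{a_1(t)}}\Big), \] equivalently $L\,a_1'\to\infty$, $L\,a_2'\to0$, and $L^2a_1''\to0$ (for $\ell=1$ only the outer two constraints matter); the existence of $L$ follows from $a_1\succ a_2$ and $a_1\to\infty$, exactly the kind of juggling performed in the proof of Lemma~\ref{L:AwayPoly}. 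Call a block $[M,M+L]$ \emph{good} if no $a_i$ with $i\ge2$ crosses an integer across it; since $a_i\ll a_2$ the total length of the bad blocks is $\ll L\,a_2'(N)\cdot N=o(N)$, so bad blocks contribute $o(1)$ to the averages by boundedness. On a good block the factors $F_i([a_i(n)])$, $i\ge2$, equal the constants $F_i([a_i(M)])$, while the Taylor expansion $a_1(M+s)=a_1(M)+s\,a_1'(M)+O(L^2a_1''(M))$ shows that $[a_1(M+s)]$ runs, as $s$ ranges over the block, through an integer interval of length $\sim L\,a_1'(M)\to\infty$ beginning at the large value $[a_1(M)]$, meeting each integer a nearly constant number of times. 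Hence \[ \E_{M\le n\le M+L}F_1([a_1(n)])\approx\E_{[a_1(M)]\le m\le[a_1(M)]+L\,a_1'(M)}F_1(m), \] and since the right-hand side is an average over an integer window whose length tends to infinity, the existence of the \emph{two-sided} limit $\tilde F_1=\lim_{N-M\to\infty}\E_{M\le n\le N}F_1(n)$ forces it to converge to $\tilde F_1$ in $L^2(\mu)$.

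Summing over the good blocks (and freezing the slow factors once more to reconstruct the $(\ell-1)$-fold average) then gives \[ \E_{1\le n\le N}\prod_{i=1}^\ell F_i([a_i(n)])=\tilde F_1\cdot\E_{1\le n\le N}\prod_{i=2}^\ell F_i([a_i(n)])+o(1). \] The functions $a_2,\dots,a_\ell$ again have different growth rates, lie in $\LE\cap\G$, and satisfy $a_i\prec t$, so the induction hypothesis shows the remaining average tends to $\prod_{i=2}^\ell\tilde F_i$, and the whole expression tends to $\prod_{i=1}^\ell\tilde F_i$, completing the induction. I expect the main obstacle to be the decoupling step: making the window estimate uniform and controlling the exceptional blocks where a slow iterate crosses an integer (where the arbitrary sequence $F_i(\cdot)$ can jump). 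The structural heart of the matter is that the windowed averages of $F_1$ live on integer intervals drifting off to infinity, so they can only be pinned down using the two-sided Cesàro limit $\tilde F_1$ — this is precisely why the hypothesis is stated with $\lim_{N-M\to\infty}$ rather than $\lim_{N\to\infty}$.
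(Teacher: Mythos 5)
Your proof is correct, and although it shares the paper's inductive skeleton (induct on $\ell$, peel off the function of maximal growth, freeze the slow factors on blocks, and pin down the fast factor using the two-sided Ces\`aro hypothesis), the mechanism you use for the fast factor is genuinely different. The paper passes to the Hardy field of Pfaffian functions (closed under composition and compositional inversion), sets $\tilde a_i=a_i\circ a_\ell^{-1}$, invokes Lemma~2.12 of \cite{BH09} to get $[a_i(n)]=\tilde a_i([a_\ell(n)])$ on a set of density one, and then uses Lemma~\ref{L:ChangeVan} to replace the average along $[a_\ell(n)]$ by the average along $n$; after this substitution the fast factor is literally $F_\ell(n)$, the slow iterates $[\tilde a_i(n)]$ are frozen on maximal constancy intervals $I_m$ with $|I_m|\to\infty$, and the hypothesis (after normalizing to $\tilde F_\ell=0$) finishes the induction. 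You instead stay in the original time variable and use a local Taylor expansion of the fast function, comparing block averages of $F_1([a_1(n)])$ with unweighted averages of $F_1$ over integer windows of length $L\,a_1'(M)\to\infty$; this replaces the composition/inversion machinery by the same kind of Taylor-plus-counting bookkeeping the paper performs elsewhere (in Lemma~\ref{L:AwayPoly} and inside the proof of Lemma~\ref{L:ChangeVan}). What your route buys: it is more self-contained --- no Pfaffian field, no Lemmas~2.7 and 2.12 of \cite{BH09} --- and it needs no reduction to $\tilde F_1=0$, since the window averages converge to $\tilde F_1$ directly. What the paper's route buys: after the change of variables only first-difference information ($\tilde a_i(t+1)-\tilde a_i(t)\to 0$) is required, so there are no second-derivative constraints on block lengths, no weighted-window comparison, and no exceptional blocks to control. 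The two obstacles you flag are genuine but routine: the window-approximation error for the block starting at $M$ is $o(1)$ as $M\to\infty$ with constants depending only on the $a_i$ (each integer in the window is hit $1/a_1'(M)+O(1)$ times and $a_1'$ varies negligibly across a block since $L\prec t$), so the finitely many early blocks contribute $o(1)$ to the Ces\`aro average exactly as in Lemma~\ref{L:averaging}; and the bad blocks number $\ll a_2(N)$, hence have total length $\ll a_2(N)\,L(N)\prec N$ by your constraint $L\prec t/a_2$, so they contribute $o(1)$ by boundedness.
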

\begin{proof}
As in the proof of Proposition~\ref{P:CharSublin} it will be convenient  to work with the
larger Hardy field $\mathcal{P}$ of Pfaffian functions which  is closed under composition and compositional
inversion.

We use induction on $\ell$. For $\ell=1$ the result follows from
Lemma~\ref{L:ChangeVar}.

Suppose that the result holds for $\ell-1$. Without loss of generality we can assume that
$a_i(t)\prec a_\ell(t)$ for $i=1,\ldots,\ell-1$. Assuming that $\tilde{F}_\ell=0$ it suffices to prove
that
\begin{equation}\label{E:zcx}
\lim_{N\to\infty}   \E_{1\leq n\leq N} F_1([a_1(n)])\cdot  \ldots
  \cdot F_\ell( [a_\ell(n)])=0
\end{equation}
where the convergence takes place in $L^2(\mu)$. For
$i=1,\ldots\ell-1$ we let $\tilde{a}_i(t)=a_i(a_\ell^{-1}(t))$ which
is again an element of $\mathcal{P}$. Since the functions
$a_1(t),\ldots, a_\ell(t)$ have different growth rates, belong to
$\mathcal{G}$, and $a_\ell(t)$ has maximal growth, the same is the
case for the functions $
\tilde{a}_1(t),\ldots,\tilde{a}_{\ell-1}(t),t$, and
$\tilde{a}_i(t)\prec t$ for $i=1,\ldots,\ell-1$. Keeping  this in
mind and using  Lemma~2.7 in \cite{BH09} we get that
$t^\varepsilon\prec\tilde{a}_i(t)\prec t$  for some $\varepsilon>0$.
Furthermore, by Lemma 2.12 in \cite{BH09} we have
$[a_i(n)]=\tilde{a}_i([a_\ell(n)])$ for a set of $n\in \N$ of
density $1$. It follows from  Lemma~\ref{L:ChangeVar} that in order
to verify \eqref{E:zcx} it suffices to show that
\begin{equation}\label{E:zcx'}
\lim_{N\to\infty}   \E_{1\leq n\leq N} F_1([\tilde{a}_1(n)])\cdot  \ldots\cdot
  F_{\ell-1}([\tilde{a}_{\ell-1}(n)]) \cdot F_\ell(n)=0.
\end{equation}
Since for $i=1,\ldots,\ell-1$ the functions $\tilde{a}_i(t)$ belong to some Hardy field  and  satisfy $\tilde{a}_i(t)\prec t$ we have  that $\tilde{a}_i(t+1)-\tilde{a}_i(t)\to 0$. Using this, we see that
 there exists a sequence $(I_m)_{m\in\N}$ of   non-overlapping intervals, with $|I_m|\to \infty$, $\bigcup_{m=1}^\infty I_m=\N$, and
such  that the sequences $[\tilde{a}_1(n)],\ldots,[\tilde{a}_{\ell-1}(n)]$ are constant on every interval $I_m$ (for technical reasons we can also assume that $|I_m|\prec m$).
Then  \eqref{E:zcx'} follows if we show that
\begin{equation}\label{E:zcx''}
\lim_{M\to\infty}  \E_{1\leq m \leq M}\big( G_m\cdot  \E_{n\in I_m}  F_\ell(n)\big)=0,
\end{equation}
where
$$
G_m
= F_1([\tilde{a}_1(n_m)])\cdot  \ldots \cdot
  F_{\ell-1}([\tilde{a}_{\ell-1}(n_m)]),
$$
and $n_m$ is any element of the interval $I_m$.
Furthermore, since  the functions $G_m$ have uniformly bounded $L^\infty$ norms,
 it suffices to show that
\begin{equation}\label{E:zcx'''}
\lim_{M\to\infty}  \E_{1\leq m \leq M}\norm{\E_{n\in I_m}  F_\ell(n)}_{L^2(\mu)}=0.
\end{equation}
Our assumption gives that
$$
\lim_{m\to\infty} \norm{\E_{n\in I_m}  F_\ell(n)}_{L^2(\mu)}=0,
$$
which  immediately implies \eqref{E:zcx'''}. This  completes the proof.
\end{proof}
We deduce Theorem~\ref{T:ConvNonCommuting} from Proposition~\ref{P:ConvSequence}.
 \begin{proof}[Proof of Theorem~\ref{T:ConvNonCommuting}]
  By the (uniform) mean ergodic theorem we have
$$
\lim_{N-M\to\infty} \norm{\E_{M\leq n\leq N}  T_i^nf_i}_{L^2(\mu)}=E(f_i|\mathcal{I}(T_i))
$$
where the convergence takes place in $L^2(\mu)$.
We can therefore apply Proposition~\ref{P:ConvSequence} for the sequences $(F_i(n))_{n\in\N}$, $i=1,\ldots,\ell$,  defined by $F_i(n)=T^n_if_i$ to conclude the proof.
 \end{proof}

\subsection{Recurrence results}
We prove Theorem~\ref{T:RecSingle} using the following  multiple recurrence result that will be handled later.
\begin{proposition}\label{P:closetopoly}
Let $a\in \H$ be of the form $a(t)=p(t)\alpha+e(t)$ for some $p\in\Z[t]$, $\alpha\in \R$, and $1\prec e(t)\prec t$.
Let $(X,\X,\mu,T)$ be a system and $f\in L^\infty(\mu)$ be non-negative and not almost everywhere zero.

Then for every $\ell \in \N$ we have
\begin{equation}\label{E:1821}
\limsup_{N-M\to \infty} \E_{M\leq  n \leq  N} \int f\cdot T^{[a(n)]}f\cdot \ldots\cdot T^{\ell[a(n)]}f \ d\mu>0.
\end{equation}
\end{proposition}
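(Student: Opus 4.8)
The plan is to follow the two-stage strategy announced in the introduction: first reduce the problem to a nilsystem, and then establish the recurrence lower bound on nilsystems by a direct, if somewhat delicate, equidistribution analysis. Since $f\geq 0$ the integrand $\int f\cdot T^{[a(n)]}f\cdot\ldots\cdot T^{\ell[a(n)]}f\,d\mu$ is non-negative, so it suffices to produce a \emph{positive} lower bound for its Cesàro average over the Følner sequence $\{M\leq n\leq N\}$. If $p$ is non-constant, or if $p$ is constant but $e(t)\succ\log t$, then $a(t)\succ\log t$ and Theorem~\ref{T:CharSingle} applies, so the nilfactor $\cZ$ is characteristic; replacing $f$ by $\E(f\mid\cZ)$ (again non-negative, with the same positive integral) changes the averages by a quantity tending to $0$ in $L^2(\mu)$, and the averaging arguments behind Theorem~\ref{T:CharSingle} apply verbatim to the intervals $[M,N]$. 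Using the ergodic decomposition together with Theorem~\ref{T:HoKra} and a routine approximation, matters then reduce to the case where $(X,\X,\mu,T)$ is a nilsystem $X=G/\Gamma$ with $b\in G$ ergodic and $f=F$ continuous, non-negative, with $\int F\,dm_X>0$.

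The remaining possibility, $p$ constant and $1\prec e(t)\ll\log t$, lies outside the scope of Theorem~\ref{T:CharSingle} and is handled separately. Here $[a(n)]$ is eventually monotone, tends to $\infty$, and — since $e(t)\prec t$ forces $e'(t)\to 0$ and hence $e(n+1)-e(n)\to 0$ — assumes each large integer value on a run of length tending to $\infty$. Furstenberg's multiple recurrence theorem (\cite{Fu77}) gives $\liminf_{K\to\infty}\E_{1\leq k\leq K}\int f\cdot T^kf\cdot\ldots\cdot T^{\ell k}f\,d\mu>0$, and a summation-by-parts argument in the spirit of Lemma~\ref{L:ChangeVan} transfers this positive Cesàro lower bound through the slowly varying change of variables $n\mapsto[a(n)]$ (the weights, being the run-lengths, vary regularly), disposing of the sub-logarithmic case.

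On the nilsystem, set $G(s)=\int_X F(x)F(b^sx)\cdot\ldots\cdot F(b^{\ell s}x)\,dm_X\geq 0$, so the goal is a lower bound for $\E_{M\leq n\leq N}G([a(n)])$. Because $e'(t)\to 0$, I can partition $[M,N]$ into long intervals $I_m$ on which $e$ varies by $o(1)$; writing $c_m$ for the (nearly constant) value of $e$ on $I_m$, one has $[a(n)]=[\alpha p(n)+c_m]+O(1)$ for $n\in I_m$, with $c_m\to\infty$. Absorbing the bounded errors exactly as in Section~\ref{SS:PET1} (they take finitely many values), the problem becomes to bound from below, over $m$ and over $n\in I_m$, the values of $G$ along the \emph{shifted polynomial sequences} $[\alpha p(n)+c_m]$. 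For a fixed shift $c$ the inner average is governed by the equidistribution of the orbit $\tilde b^{[\alpha p(n)+c]}\Delta x$ on the product nilmanifold $X^{\ell+1}$, where $\tilde b=(1,b,\ldots,b^\ell)$, $\Delta$ is the diagonal embedding, and $\Phi=F\otimes\cdots\otimes F$; here I would invoke the equidistribution machinery of the companion paper \cite{Fr09} (resting on Theorem~\ref{T:GT2}) to convert $b^{[\,\cdot\,]}$ into a genuine polynomial sequence on an enlarged nilmanifold. The place where recurrence can genuinely fail for an \emph{individual} shift is the affine torus: by Theorem~\ref{T:GT2}, any failure of equidistribution is witnessed by a non-trivial quasi-character $\psi$ of bounded frequency, which factors through the affine torus and reduces the obstruction to the near-rationality of the coefficients of the polynomial $\psi(\tilde b^{[\alpha p(n)+c]})=e(q_c(n))$. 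This is exactly the mechanism behind bad examples such as $[\sqrt5\,n+2]$, where a fixed constant shift pins the orbit away from the diagonal.

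The decisive point — and the reason the analysis is ``somewhat complicated'' — is that the constant term of $q_c$ depends on the shift through a phase of the form $e(\kappa\cdot c)$, and as $m$ grows $c_m$ sweeps monotonically through all of $\R$ (since $e(t)\to\infty$ with $e'(t)\to 0$); hence $\{\kappa\cdot c_m\}$ equidistributes in $\T$, or exhausts all residues modulo the relevant denominator in the rational case. Averaging the shift-dependent limiting measures over $m$ therefore integrates the obstruction over a full period and produces a measure whose $\Phi$-integral is bounded below by a strictly positive Furstenberg-type constant. I expect the main obstacle to be precisely this last step: one must control the quasi-character and its frequency \emph{uniformly} in $c$ (so that they range over the finite list permitted by the bound $M_{X,d,\delta}$ of Theorem~\ref{T:GT2}), and then upgrade the resulting averaged lower bound from merely non-negative to strictly positive. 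It is here that the non-negativity of $F$ and the growth hypothesis $1\prec e(t)\prec t$ — guaranteeing both the slow sweep and the full equidistribution of the shifts — are used in an essential way.
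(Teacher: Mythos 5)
Your overall architecture matches the paper's: the slowly-varying case via long constancy runs and Furstenberg's theorem, the main case via Theorem~\ref{T:CharSingle}, ergodic decomposition, Theorem~\ref{T:HoKra}, and reduction to nilsystems (though preserving \emph{positivity} under inverse limits is not a ``routine approximation''; the paper invokes the argument of Lemma~3.2 of \cite{FuK79} for this), followed on the nilsystem by exploitation of the slow sweep of $e(t)$ through a family of shifts. But at the decisive nilsystem step your argument has two genuine gaps. First, the claim that $\{\kappa\cdot c_m\}$ \emph{equidistributes} in $\T$ because $c_m\to\infty$ monotonically with increments $o(1)$ is false in general: such sequences have dense fractional parts but need not equidistribute (consider $\{\log m\}$), so ``integrating the obstruction over a full period'' by averaging over $m$ is not available as stated. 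Second, and more fundamentally, even granting full control of the shift dependence, you never prove that the averaged limiting object has positive $\Phi$-integral; the phrase ``bounded below by a strictly positive Furstenberg-type constant'' is exactly the point at issue, since the iterates involved are of the form $[\,\cdot\,\alpha]$ and the classical Furstenberg theorem does not apply to them directly. You flag this yourself as ``the main obstacle,'' and nothing in the proposal fills it.

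The paper closes precisely this gap by a more structured choice of shifts. Instead of using the arbitrary values $c_m$ of $e$, it picks, for a suitable $r\in\N$, intervals $J_{r,m}$ on which $e(n)-mr\alpha\in[0,o(1))$, so that (off a small $\delta$-fringe) $[a(n)]=[p(n)\alpha+mr\alpha]$: the shift is an integer multiple of $r\alpha$, hence $[p(n)\alpha+mr\alpha]=[(p(n)+mr)\alpha]$, and the whole shifted sequence lives on the orbit of the single nilrotation $\tilde b=(\alpha,b^{\alpha})$ on $\T\times X$ with varying base point $\tilde b^{mr}\tilde\Gamma$. Lemma~\ref{L:key} --- resting on the uniform-in-the-base-point equidistribution Lemma~\ref{L:unif}, which is where Theorem~\ref{T:GT2} enters, together with Lemma~\ref{L:Ziegler} and passage to a power $b^{r_0}$ to secure the connectedness hypotheses your sketch ignores --- then identifies the limit of \emph{each} shifted average with the average along $[nr\alpha]$; no averaging over shifts and no cancellation of obstructions is needed. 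Finally, positivity of the $[nr\alpha]$-averages is supplied by a separate nontrivial input, Lemma~\ref{L:[na]}, proved via the uniform multiple recurrence theorem of \cite{BHRF00} and the elementary identity $[m\beta]=m[\beta]$ valid when $\{\beta\}<1/m$. That lemma is the true source of positivity in the proof, and it has no counterpart in your proposal. (Incidentally, when $p$ is constant the run-length argument works for all $1\prec e(t)\prec t$, which is how the paper treats that entire case; your nilsystem route for $p$ constant and $e(t)\succ\log t$ degenerates, since there is then no polynomial part to analyze.)
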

\begin{proof}[Proof of Theorem~\ref{T:RecSingle}]
If $|a(t)-cp(t)|\succ \log{t}$ for every $c\in \R$ and $p\in\Z[t]$, then the result follows immediately
by combining Theorem~\ref{T:ConvSingleFormula} and Furstenberg's multiple recurrence theorem (\cite{Fu77}).
Furthermore, the case where $a(t)=cp(t)+e(t)$ for some $c\in \R$, $p\in\Z[t]$, and $e(t)\ll \log{t}$,
is taken care by  Proposition~\ref{P:closetopoly}. This completes the proof.
\end{proof}
Lastly, we deduce  Theorem~\ref{T:RecMult} from Theorem~\ref{T:ConvSeveral}.
\begin{proof}[Proof of Theorem~\ref{T:RecMult}]
Let $\mu=\int \mu_t d\lambda(t)$ be the ergodic decomposition of  the measure $\mu$. Using
 Theorem~\ref{T:ConvSeveral} for the ergodic systems $(X,\X,\mu_t,T)$ we get that
$$
\lim_{N\to\infty} \E_{1\leq n\leq N}\mu(A\cap T^{-[a_1(n)]}A\cap T^{-[a_2(n)]}A\cap\cdots \cap
T^{-[a_\ell(n)]}A)=\int (\mu_t(A))^{\ell +1} \ d\lambda(t).
$$
Using Holder's inequality we see that the last integral is at least
$$
\Big(\int \mu_t(A)\ d\lambda(t)\Big)^{\ell+1}=(\mu(A))^{\ell +1}.
$$
This proves the advertised estimate and completes the proof.
\end{proof}

\subsection{Proof of Proposition~\ref{P:L^2product}}\label{SS:proof} In the case  where all  the functions
 $a_1(t),\ldots,a_\ell(t)$ have super-linear growth  Proposition~\ref{P:L^2product} is a direct consequence
 of the corresponding pointwise result in \cite{Fr09} (Theorem~1.3).
The general case can be covered using a modification of an argument
used in \cite{Fr09}. To avoid unnecessary repetition we only sketch
the proof.

\begin{proposition}\label{P:Rn+r}
Suppose that the functions  $a_1,\ldots,a_\ell \in\mathcal{LE}$ have
different growth rates and satisfy $t^{k_i}\log{t} \prec a_i(t)\prec
t^{k_i+1}$ for some $k_i\in \N$.

Then for every nilmanifold $X=G/\Gamma$,   $b\in G$,   $x\in X$, and
$F\in C(X^\ell)$, we have
$$
\lim_{R\to\infty}\limsup_{N\to\infty}\E_{1\leq n\leq N}\Big|
\E_{1\leq r\leq
R}F(b^{[a_1(Rn+r)]}x,\ldots,b^{[a_\ell(Rn+r)]}x)-\int F \
dm_{X^\ell_b}\Big|=0
$$
where   $X_b=\overline{\{b^nx\colon n\in\N\}}$.
\end{proposition}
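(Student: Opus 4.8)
The plan is to reduce the joint equidistribution statement on $X^\ell$ to a single-sequence equidistribution result already available from the companion paper, namely Theorem~\ref{T:B}, applied on a suitable product nilmanifold. The object of interest is the polynomial-like sequence
$$
n \longmapsto \big(b^{[a_1(Rn+r)]}x,\ldots,b^{[a_\ell(Rn+r)]}x\big)
$$
on the nilmanifold $X^\ell = (G/\Gamma)^\ell$, driven by the diagonal-type nilrotation and the distinct-growth functions $a_i$. Since the $a_i$ have different growth rates (each pairwise quotient tends to $0$ or $\pm\infty$), one expects the iterates $[a_1(Rn+r)],\ldots,[a_\ell(Rn+r)]$ to behave ``independently'' as $r$ ranges over a long block, so that the inner average over $r$ should converge to the Haar integral over the connected component of the orbit closure $X_b=\overline{\{b^n x\colon n\in\N\}}$, i.e.\ over $X^\ell_{b}$. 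The outer $\limsup$ in $n$ together with $R\to\infty$ is the device (a van~der~Corput--type averaging over blocks, as in Lemma~\ref{L:averaging}) that lets us sum the block contributions.

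First I would fix the nilmanifold $X_b=\overline{\{b^n x\}}=H/\Delta$ guaranteed by the structure theory in Section~\ref{SS:BackgroundNil}, passing to a power $b^{r_0}$ if necessary so that the relevant orbit closure is connected; this is harmless since replacing $b$ by a power only reindexes the sequence and does not affect the averaged statement. Working on the product $X^\ell_b=(H/\Delta)^\ell$, I would apply Theorem~\ref{T:B} to the element $\tilde b=(b,\ldots,b)$ acting diagonally, but with the crucial observation that the \emph{several} functions $a_i$ of different growth force us to treat the genuinely $\ell$-parameter character behaviour. The key step is to expand an arbitrary $F\in C(X^\ell)$ in terms of functions that transform by quasi-characters of $X^\ell_b$ (the vertical Fourier decomposition used throughout \cite{Fr09}): a nontrivial character contributes a factor $e(\kappa_1\cdot[a_1(Rn+r)]+\cdots+\kappa_\ell\cdot[a_\ell(Rn+r)])$ after projection to the affine torus, and one must show the corresponding exponential sum averages to zero. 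Because the $a_i$ have different growth rates, a nontrivial combination $\sum_i \kappa_i a_i$ again stays logarithmically away from constant multiples of integer polynomials, so the hypothesis of Theorem~\ref{T:B} is met and the relevant sub-nilsystem is equidistributed, killing the off-constant Fourier modes.

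The main obstacle I anticipate is precisely the passage from the individual equidistribution of each $[a_i(n)]$-orbit to the \emph{joint} equidistribution on the product, handled uniformly in the block parameter $r$ and survived through the outer averaging in $n$. Concretely, one must verify that no nontrivial linear combination of the frequencies $\kappa_i$ produces resonance among the different coordinates; this is where the distinct-growth assumption $a_i/a_j\to 0$ or $\pm\infty$ is indispensable, and where one invokes the quantitative equidistribution input (Theorem~\ref{T:GT2}) to get bounds uniform in $r$ and $n$. The change of variable $m=Rn+r$, together with Lemma~\ref{L:averaging} applied to the block decomposition, then converts the uniform inner estimate into the claimed double limit; the interchange of the $R$- and $N$-limits is legitimated by the uniformity of the equidistribution bound. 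I would finally note that for super-linear $a_i$ the statement is immediate from Theorem~1.3 of \cite{Fr09}, so the real content is the uniform treatment allowing some $a_i$ to have sub-linear or intermediate growth, which is exactly what the block-averaging plus Theorem~\ref{T:B} machinery is designed to supply.
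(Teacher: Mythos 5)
Your plan founders on a direction-of-implication problem. The engine you propose, Theorem~\ref{T:B} (and likewise Theorem~1.3 of \cite{Fr09}), is a Ces\`aro statement: the sequence $(b^{[a(n)]}x)_{n\in\N}$ equidistributes when averaged over $1\le n\le N$. Proposition~\ref{P:Rn+r} is a \emph{block} statement: for large $R$, \emph{most} blocks $\{Rn+1,\dots,Rn+R\}$ must individually produce an average close to $\int F\,dm_{X_b^\ell}$ (the absolute value sits inside the average over $n$). Ces\`aro equidistribution does not imply this, and one cannot hope to upgrade it to ``all blocks'' either: as remarked after Theorem~\ref{T:ConvSingle}, $[a(n)]$ takes even values on arbitrarily long intervals, so well-distribution genuinely fails for these sequences. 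What is needed is quantitative control of how many blocks are bad, and that is obtained by Taylor-expanding each $a_i$ on the block $[Rn,Rn+R]$, replacing the Hardy iterates by polynomial orbits whose coefficients depend on $n$, and applying Theorem~\ref{T:GT2} \emph{uniformly} over those coefficients; this is exactly the content of Proposition~5.3 of \cite{Fr09}, which the paper's proof simply invokes after reducing (via the argument of Section~5.2 of \cite{Fr09}) to the case $G$ connected and simply connected and $b$ ergodic. Note also that Lemma~\ref{L:averaging} transfers block convergence \emph{to} Ces\`aro convergence, not the other way round, so it cannot perform the conversion your last paragraph asks of it. A second gap: on a (non-abelian) product nilmanifold, vanishing of the abelian exponential sums $e(\kappa_1[a_1(\cdot)]+\cdots+\kappa_\ell[a_\ell(\cdot)])$ does not by itself yield equidistribution; the reduction of equidistribution to horizontal characters is available only for polynomial sequences (this is Theorem~\ref{T:GT2}/Leibman), which again forces the block-polynomial approximation first, and in any case $\sum_i\kappa_i[a_i(n)]$ differs from $[\sum_i\kappa_i a_i(n)]$ by fractional-part terms that cannot be ignored and must be tracked by lifting to $\T^\ell\times X^\ell$ (compare the proof of Lemma~\ref{L:key}).

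Your closing remark also inverts the actual situation. The statement is \emph{not} immediate from Theorem~1.3 of \cite{Fr09} for super-linear $a_i$ (that theorem is again a Ces\`aro result), while for sub-linear $a_i$ the conclusion as written outright fails: if $a_1(t)=t^{1/2}$ and $\ell=1$, then for fixed $R$ one has $a_1(Rn+r)-a_1(Rn)\to0$, so for a density-one set of $n$ the entire block is constant, the inner average equals $F(b^{[a_1(Rn)]}x)$, and by Theorem~\ref{T:B} the quantity $\E_{1\le n\le N}\big|F(b^{[a_1(Rn)]}x)-\int F\,dm_{X_b}\big|$ tends to $\int\big|F-\int F\,dm_{X_b}\big|\,dm_{X_b}>0$ whenever $F$ is non-constant on $X_b$. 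Consistently with this, the paper applies Proposition~\ref{P:Rn+r} only to the super-linear functions among the $a_i$ (see the proof of Proposition~\ref{P:L^2product}); the sub-linear ones are pulled out of the block average there precisely because their blocks are constant. So the real content is the super-linear case with uniformity in the block position, and that content is quantitative; it cannot be extracted from the qualitative Theorem~\ref{T:B}.
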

\begin{proof}[Sketch of Proof]
Using a straightforward modification of the reduction argument  of
Section 5.2 in \cite{Fr09}, we can reduce matters to showing that
for every nilmanifold $X=G/\Gamma$,  with $G$ connected and simply
connected,  $b\in G$ ergodic,   and $F\in C(X^\ell)$,
 we have
$$
\lim_{R\to\infty}\limsup_{N\to\infty}\E_{1\leq n\leq N}\Big|
\E_{1\leq r\leq
R}F(b^{[a_1(Rn+r)]}x,\ldots,b^{[a_\ell(Rn+r)]}x)-\int F \
dm_{X^\ell}\Big|=0,
$$
This was verified while proving Proposition~5.3  in \cite{Fr09},
completing the proof.
\end{proof}

\begin{proof}[Proof of Proposition~\ref{P:L^2product}]

Our goal is to show that for every nilmanifold $X=G/\Gamma$,  $b\in
G$ ergodic, and $F_1,\ldots,F_\ell \in C(X)$, we have
$$
\lim_{N\to\infty}\E_{1\leq n\leq N} F_1(b^{[a_1(n)]}x)\cdot \ldots
\cdot F_\ell(b^{[a_\ell(n)]}x) =\int F_1 \ d{m_X}\cdot\ldots\cdot
\int F_\ell \ d{m_X}
$$
where the convergence  takes place in $L^2(m_X)$.

If all the functions $a_1(t),\ldots,a_\ell(t)$ are sub-linear, then   the result follows from  Theorem~7.3 in \cite{BH09}, and if all the functions are super-linear, then  the result follows from Theorem~1.3 in \cite{Fr09}.
If none of these is the case,  we can assume that  $a_1(t),\ldots,a_m(t)\succ t$ and $a_{m+1}(t),\ldots, a_\ell(t)\prec t$ for some $m\in \{1,\ldots,\ell-1\}$.

Let $F\in C(X^m)$ and $G\in C(X^{\ell-m})$.  It suffices to show
that if $\int F\ \!dm_{X^m}=0$, 
then
\begin{equation}\label{EE:main}
\lim_{N\to\infty}\E_{1\leq n\leq N} \big(F
({b}^{[a_1(n)]}x,\ldots,{b}^{[a_{m}(n)]}x)\cdot G
({b}^{[a_{m+1}(n)]}x,\ldots,{b}^{[a_{\ell}(n)]}x)\big)=0
\end{equation}
where convergence takes place in $L^2(m_X)$. For every $R\in\N$ the
 averages ijn \eqref{EE:main} are asymptotically equal to the averages
\begin{equation}\label{EE:main1}
\E_{1\leq n\leq N} \Big(\E_{1\leq r\leq R} \big(
F(b^{[a_1(nR+r)]}x,\ldots,b^{[a_m(nR+r)]}x)\cdot
G(b^{[a_{m+1}(nR+r)]}x,\ldots,b^{[a_{\ell}(nR+r)]}x)\big)\Big).
\end{equation}
Since  the functions $a_{m+1},\ldots, a_\ell\in \LE$
are all sub-linear,
we can  show (see Lemma 2.12 in \cite{BH09})  the following:  for every  $R\in \N$,  for a set of $n\in \N$ of density $1$,
 we have $[a_i(nR+r)]=[a_i(nR)]$ for $r=1,\ldots,R$  and $i=m+1,\ldots,\ell$.  We deduce that  the limsup as $N\to\infty$  of the
 $L^2(m_X)$-norm of the averages  in \eqref{EE:main1}
 is bounded by a constant times
\begin{equation}\label{EE:main2}
\limsup_{N\to\infty} \E_{1\leq n\leq N} \norm{
 \E_{1\leq r\leq R}
F(b^{[a_1(nR+r)]}x,\ldots,b^{[a_m(nR+r)]}x)}_{L^2(m_X)}.
\end{equation}
Using Proposition~\ref{P:Rn+r} we see that the limit of the
expression \eqref{EE:main2}  as  $R\to\infty$ is $0$, completing the
proof.
\end{proof}
\subsection{Proof of Proposition~\ref{P:closetopoly}}
First, we  informally discuss the proof strategy of Proposition~\ref{P:closetopoly}.
When the function $a(t)$ is logarithmically close to a constant multiple of an integer polynomial  the relevant multiple ergodic averages cannot be directly compared with  Furstenberg's
 averages (because of the luck of equidistribution).  To bypass this difficulty we  work with an appropriate subsequence of the sequence $[a(n)]$. This subsequence is chosen so that
 it becomes possible to compare the corresponding multiple ergodic averages with
 those along the sequence $[n \alpha ]$. For the latter averages
positiveness follows easily from Furstenberg's multiple recurrence theorem, thus achieving our goal.

To carry out this plan we need  a few  preliminary results that enable us to carry out the aforementioned ``comparison step''. 

We start with an equidistribution result on nilmanifolds. To prove the claimed uniformity we make use some quantitative equidistribution results  (this is the only place in the present article where we make explicit use of such results).
\begin{lemma}\label{L:unif}
Let $X=G/\Gamma$ be a connected nilmanifold, $b\in G$ be an ergodic nilrotation, and $p\in \Z[t]$ be  non-constant.

Then for every  $F\in C(X)$   we have
 \begin{equation}\label{E:unif}
\lim_{N-M\to\infty}\max_{x\in X}\Big| \E_{M\leq n\leq N } F(b^{p(n)}x)-\int F \ dm_X\Big|=0.
\end{equation}
\end{lemma}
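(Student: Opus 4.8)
The plan is to argue by contradiction, converting the failure of uniform equidistribution into a bounded-complexity obstruction through the quantitative equidistribution theorem of Green and Tao (Theorem~\ref{T:GT2}), which applies here since $X$ is connected and, by our standing conventions, $G_0$ is simply connected. By density of Lipschitz functions in $C(X)$ in the sup-norm, and after subtracting $\int F\,dm_X$, I may assume $F$ is Lipschitz with $\int F\,dm_X=0$, so the goal becomes $\lim_{N-M\to\infty}\max_{x\in X}\big|\E_{M\le n\le N}F(b^{p(n)}x)\big|=0$. Suppose this fails: then there are $\delta>0$, integers $M_j<N_j$ with $L_j:=N_j-M_j\to\infty$, and points $x_j\in X$ with $\big|\E_{M_j\le n\le N_j}F(b^{p(n)}x_j)\big|\ge\delta$.

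First I would recenter the window. Setting $y_j:=b^{p(M_j)}x_j=h_j\Gamma$ and $q_j(n):=p(M_j+n)-p(M_j)$, we have $b^{p(M_j+n)}x_j=b^{q_j(n)}y_j$, so $\big|\E_{0\le n\le L_j}F(b^{q_j(n)}y_j)\big|\ge\delta$. The crucial gain is that $q_j$ has degree $d=\deg p$, zero constant term, and leading coefficient equal to that of $p$, hence \emph{independent of $j$}; only its intermediate coefficients depend on $M_j$. Thus $g_j(n)=b^{q_j(n)}h_j$ is a polynomial sequence in $G$ of degree at most $d$, and the finite orbit $(g_j(n)\Gamma)_{0\le n\le L_j}$ is not $\delta'$-equidistributed for a suitable $\delta'$ depending on $\delta$ and $\norm{F}_{\mathrm{Lip}(X)}$. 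Theorem~\ref{T:GT2} then yields a non-trivial quasi-character $\psi_j$ with $\norm{\psi_j}\le M$ and $\norm{\psi_j(g_j(n))}_{C^\infty[L_j]}\le M$, where $M=M_{X,d,\delta'}$ is fixed. Since only finitely many quasi-characters have frequency magnitude at most $M$, I may pass to a subsequence along which $\psi_j=\psi$ is one fixed non-trivial quasi-character.

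Next I would compute the phase. As $\psi$ factors through the affine torus $A\simeq\T^l$, on which $b$ acts as a unipotent affine map $x\mapsto\beta+Sx$, one gets $\psi(b^m h)=e(\phi_h(m))$ for a real polynomial $\phi_h$ whose \emph{top-degree} coefficient is determined by $S$, $\beta$, and the frequency of $\psi$ alone, the base point $h$ affecting only lower-order coefficients. Substituting $m=q_j(n)$ and using that the leading coefficient of $q_j$ is the $j$-independent leading coefficient of $p$, the phase $\psi(g_j(n))=e(P_j(n))$ has a top-degree coefficient in $n$ equal to a fixed real number $\theta$, independent of both the shift $M_j$ and the base point $h_j$. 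The estimate $\norm{P_j}_{C^\infty[L_j]}\le M$ then forces $L_j^{\deg P_j}\norm{\theta}\ll M$ (in the binomial basis of \eqref{E:norms}); letting $L_j\to\infty$ with $\theta$ fixed gives $\theta\in\Z$, i.e. the top layer of the obstruction is trivial.

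Finally I would close the argument using ergodicity, which for the connected nilmanifold $X$ is equivalent to unique ergodicity of $(X,T_b)$ and forces these leading data to be irrational; so the conclusion $\theta\in\Z$ can only occur when $\psi$ is trivial on the top vertical torus. In that degenerate case one descends: $\psi$ factors through a proper factor nilmanifold of smaller dimension on which $b$ still acts ergodically, and the whole argument repeats. Running this as an induction on $\dim X$, with base case an ergodic rotation on a torus where Weyl's criterion gives the uniform window estimate directly, exhausts all non-trivial quasi-characters and produces the contradiction, proving \eqref{E:unif}. The hard part will be exactly this last step: extracting from the bare ergodicity of $b$ a conclusion \emph{uniform over the compact family of base points} $h_j$ and over the window shifts $M_j$. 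This requires controlling the base-point dependence of the lower-order coefficients of $\phi_h$ and organizing the descent so that ergodicity is preserved at each stage; the shift- and base-point-independence of the top coefficient established above is what makes the top layer uniform and launches the induction.
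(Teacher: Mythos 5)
Your proposal is, up to its final paragraph, essentially the paper's own proof: argue by contradiction, recenter the window so that the polynomial phase has a shift-independent leading coefficient, invoke Theorem~\ref{T:GT2} to produce a non-trivial quasi-character $\psi$ of bounded frequency with $\norm{\psi(g_j(n))}_{C^\infty[L_j]}\le M$, pass to the affine torus where $b$ acts as a unipotent affine map, and observe that the top-degree coefficient $\theta$ of the phase polynomial is independent of both the window shift and the base point, so that \eqref{E:norms} together with $L_j\to\infty$ forces $\norm{\theta}\to 0$. (Fixing a single $\psi$ by pigeonhole is a cosmetic variant of the paper's device of noting there are only finitely many possible values of the leading coefficient.)

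The genuine gap is the ending, which you explicitly leave open. Having forced $\theta$ to be (essentially) integral, you assert this ``can only occur when $\psi$ is trivial on the top vertical torus'' and propose an induction on $\dim X$ with a descent through factor nilmanifolds, which you yourself call the hard part. That degenerate case is vacuous, so no descent or induction is needed: the contradiction is immediate, and this is exactly how the paper closes. Concretely, write the affine action as $t\mapsto\beta+St$ on $\T^l$ with $N=S-I$ nilpotent and integral, and let $\kappa\neq 0$ be the frequency of $\psi$. If $j_0$ is maximal with $\kappa^TN^{j_0}\neq 0$, then the top coefficient of $m\mapsto\kappa\cdot T_b^m h$ sits in degree $j_0+1$ and equals $\lambda\cdot\beta$, where $\lambda^T=\kappa^TN^{j_0}$ is a nonzero integer vector satisfying $\lambda^TS=\lambda^T$. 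Hence $t\mapsto e(\lambda\cdot t)$ is an eigenfunction of $T_b$ with eigenvalue $e(\lambda\cdot\beta)$, and if $\lambda\cdot\beta$ were rational then a suitable power would be a non-constant invariant function, contradicting the ergodicity of $b$ (this is the paper's statement that the Kronecker coordinates $\{1,\beta_1,\ldots,\beta_s\}$ are rationally independent). So for \emph{every} non-trivial quasi-character the relevant top coefficient is irrational; since $|\kappa|\le M$, it moreover ranges over a finite set of irrationals, so $\norm{\theta}$ is uniformly bounded below, contradicting $\norm{\theta}\le M/L_j^{\deg P_j}\to 0$ outright. Note finally that the uniformity over base points $h_j$ that you flag as the outstanding difficulty requires no control of the lower-order coefficients of $\phi_h$ at all: the norm in \eqref{E:norms} is a maximum over coefficients, so the single base-point-independent top coefficient already supplies the required lower bound, which is precisely why the paper's argument is uniform in $x$.
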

\begin{proof}
We argue by contradiction. Suppose that  \eqref{E:unif} fails for some connected nilmanifold $X$, ergodic $b\in G$, $p\in \Z[t]$ with  $k=\deg(p)\geq 1$, and $F\in C(X)$. Then there exist
  $\delta>0$, $x_m\in X$, and sequences of positive integers $(n_m)_{m\in\N}$, $(N_m)_{m\in\N}$,
 with $N_m\to \infty$, such that the sequence
$(b^{p(n_m+n)}x_m)_{1\leq n\leq N_m}$ is not $\delta$-equidistributed in $X$ for every $m\in\N$.

Then by Theorem~\ref{T:GT2} (suppose that $\delta$ is small enough so that the theorem applies) there exists  a constant $M=M(X,\delta,k)$, and a sequence of  quasi-characters $\psi_m$,
with $\norm{\psi_m}\leq M$, and  such that
\begin{equation}\label{E:M}
\norm{\psi_m(b^{p(n_m+n)}x_m)}_{C^\infty[N_m]}\leq M
\end{equation}
for every $m\in\N$.
As explained in Section~\ref{SS:BackgroundNil}, the affine torus $A$ of $X$ can
  be identified with a finite dimensional torus $\T^l$. After making
  this identification, we have $\psi_m(t)=\kappa_m\cdot t$ for some
  non-zero $\kappa_m\in \Z^l$, and the nilrotation $b$ induces a $d$-step
  unipotent affine transformation $T_b\colon \T^l\to \T^l$.
  Let $\pi(b)=(\beta_1\Z,\ldots,\beta_s\Z)$, where $\beta_i\in \R$,  be the projection of $b$ to the Kronecker factor of
  $T_b$ (notice that
$s$ is bounded by the dimension of $X$).
Since $b$ acts ergodically on $X$ the set  $B=\{1, \beta_1,\ldots,\beta_s\}$ is rationally independent.
For every $x\in\T^l$ the coordinates of $T_b^n x$
are polynomials of $n$, and so $\kappa_m\cdot T_b^nx$ is a
polynomial of $n$. Moreover, the leading
term of the polynomial $\kappa_m\cdot T_b^nx$ has the form $ \gamma_m n^{\tilde{d}}$,
where $1\leq \tilde{d}\leq d$, and
\begin{equation}\label{E:beta}
  \gamma_m=\frac{1}{k!}\sum_{i=1}^s r_{m,i}\beta_i, \ \text{ where } r_i\in \Z \text{ are not all  zero and } |r_{m,i}|\leq c_1\cdot M
\end{equation}
for some constant $c_1$ that depends only on $b$.\footnote{For example, if $T\colon \T^2\to \T^2$ is defined by
$T(x,y)=(x+\alpha,y+2x+\alpha)$ and $k=(k_1,k_2)$, then $k\cdot T^n(x,y)=k_2\alpha n^2+(k_1\alpha+2k_2x)n$. Therefore,
the corresponding leading term is either $k_2\alpha n^2$ (if $k_2\neq 0$), or $k_1\alpha n$ (if $k_2=0$).} Using this and the
definition of $\norm{\cdot}_{C^\infty[N]}$ (see \eqref{E:norms}), it
follows that
$$
\norm{\psi(T_b^{p(n_m+n)}x_m)}_{C^\infty[N_m]}\geq
N_m^{k\tilde{d}} \norm{\gamma_m}.
$$
Combining this with \eqref{E:M} we get that
\begin{equation}\label{E:m}
  \norm{\gamma_m}\leq \frac{M}{N_m^{k\tilde{d}}}.
\end{equation}
Since  by \eqref{E:beta} we have  a finite number of
options for the irrational numbers $\gamma_m$, and $N_m\to \infty$,
letting $m\to \infty$ we  get a contradiction. This completes the proof.
\end{proof}

Next we use the  Lemma~\ref{L:unif} to establish a key identity. We remark that if
 $G$ is a connected and simply connected nilpotent Lie group, then
  there exists  a unique continuous homomorphism $b\colon \R\to G$ with $b(1)=b$. For $b\in G$ and  $t\in \R$,
 by $b^t$ we mean the element $b(t)\in G$.
\begin{lemma}\label{L:key}
Let $X=G/\Gamma$ be a nilmanifold with $G$ connected and simply connected,  $\alpha\in \R$ be  non-zero,
and  $b\in G$ be such that
the sub-nilmanifold $\overline{\{(n\alpha\Z ,b^{n\alpha}\Gamma),n\in\N\}}$
   of $\T\times X$ is connected.

Then for every $F\in C(X)$, F{\o}lner sequence $(\Phi_m)_{m\in\N}$ in $\Z$, and  non-constant $p\in\Z[t]$  we have
\begin{equation}\label{E:key}
\lim_{m\to\infty}\E_{n\in\Phi_m} F\big(b^{[p(n) \alpha+m\alpha]}\Gamma \big)=
\lim_{N\to \infty}\E_{1\leq n\leq N} F(b^{[n \alpha ]}\Gamma).
\end{equation}
\end{lemma}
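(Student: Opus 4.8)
The plan is to lift the problem to a skew product over the circle and then read off both sides as the integral of a single function against Haar measure. I introduce the transformation $S\colon\T\times X\to\T\times X$ given by $S(s\Z,y\Gamma)=\bigl((s+\alpha)\Z,\ b^{\alpha}y\Gamma\bigr)$ and the base point $z_0=(0\Z,\Gamma)$, so that $S^n z_0=(n\alpha\Z,b^{n\alpha}\Gamma)$. The orbit closure of $z_0$ is then exactly $Z=\overline{\{(n\alpha\Z,b^{n\alpha}\Gamma)\colon n\in\N\}}$, which by hypothesis is a \emph{connected} sub-nilmanifold of $\T\times X$ on which $S$ acts ergodically (being the orbit closure of a point); writing $m_Z$ for its Haar measure, the orbit $(S^n z_0)_{n\in\N}$ is equidistributed in $Z$. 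Since $G$ is connected and simply connected I may form $b^t$ for real $t$, and I define $\tilde{F}\colon\T\times X\to\C$ by $\tilde{F}(s\Z,y\Gamma)=F(b^{-\{s\}}y\Gamma)$; this is well defined and bounded, and because $b^{-\{s\}}b^{s}=b^{[s]}$ one checks the two identities $\tilde{F}(S^n z_0)=F(b^{[n\alpha]}\Gamma)$ and $\tilde{F}(S^{p(n)+m}z_0)=F(b^{[p(n)\alpha+m\alpha]}\Gamma)$.

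Restricted to $Z$, the function $\tilde{F}$ is continuous off the set $Z\cap(\{0\Z\}\times X)$. When $\alpha$ is irrational the projection $Z\to\T$ is onto and pushes $m_Z$ to Lebesgue measure, so this exceptional fiber is $m_Z$-null; when $\alpha\in\Z$ the function $\tilde{F}$ is just $F$ pulled back and is continuous (the remaining case, $\alpha$ rational non-integer, cannot occur, as it would force $Z$ to be disconnected). Hence in all cases $\tilde{F}|_Z$ is bounded and $m_Z$-almost everywhere continuous, so the equidistribution of $(S^n z_0)$ extends to it by the usual sandwiching, and the right-hand side of \eqref{E:key} equals
\begin{equation*}
\lim_{N\to\infty}\E_{1\leq n\leq N}\tilde{F}(S^n z_0)=\int_Z \tilde{F}\ dm_Z.
\end{equation*}
It therefore remains to show that the left-hand side is also $\int_Z\tilde{F}\,dm_Z$.

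Using the first identity and the relation $S^{p(n)+m}z_0=S^{p(n)}(S^m z_0)$, the left-hand side becomes $\lim_{m\to\infty}\E_{n\in\Phi_m}\tilde{F}\bigl(S^{p(n)}(S^m z_0)\bigr)$. For a \emph{continuous} $\Psi$ on $Z$, Lemma~\ref{L:unif} applied to the connected nilmanifold $Z$, the ergodic nilrotation $S$, and the nonconstant $p$ gives that $\E_{M\leq n\leq N}\Psi(S^{p(n)}x)\to\int_Z\Psi\,dm_Z$ as $N-M\to\infty$, uniformly in $x\in Z$. The next step is to transfer this interval-uniform statement to the arbitrary F{\o}lner sequence $(\Phi_m)$: given $\varepsilon$, I choose $L$ so that every interval of length $L$ gives an average within $\varepsilon$ of $\int_Z\Psi$ (uniformly in $x$), average the windowed means $\tfrac1L\sum_{j=0}^{L-1}\Psi(S^{p(n+j)}x)$ over $n\in\Phi_m$, and use the F{\o}lner property to replace each shifted average $\E_{n\in\Phi_m}\Psi(S^{p(n+j)}x)$ by $\E_{n\in\Phi_m}\Psi(S^{p(n)}x)$ up to an error $o(1)$ that is uniform in $x$; this yields $\limsup_m\sup_{x\in Z}\bigl|\E_{n\in\Phi_m}\Psi(S^{p(n)}x)-\int_Z\Psi\,dm_Z\bigr|\leq\varepsilon$, hence equality to $0$. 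Specializing $x=S^m z_0$ gives the claim for continuous $\Psi$, and finally I sandwich the merely a.e.-continuous $\tilde{F}|_Z$ between continuous $\Psi^-\leq\tilde{F}\leq\Psi^+$ with $\int_Z(\Psi^+-\Psi^-)\,dm_Z<\varepsilon$ (possible by Riemann integrability) and let $m\to\infty$, concluding that the left-hand side equals $\int_Z\tilde{F}\,dm_Z$.

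The main obstacle is that the function $\tilde{F}$ manufactured by the integer-part/fractional-part bookkeeping is \emph{not continuous}, so I must verify that its discontinuities form an $m_Z$-null set and approximate it by continuous functions; this is precisely where the connectedness hypothesis enters, through surjectivity of the projection $Z\to\T$ that makes the fiber $\{0\Z\}\times X$ negligible. A secondary technical point is that the averaging runs over a general F{\o}lner sequence rather than over intervals, so the interval-uniform equidistribution of Lemma~\ref{L:unif} must be passed to $\Phi_m$-averages by the windowing argument above. Beyond these two reductions the only genuine equidistribution input is Lemma~\ref{L:unif} itself, so no further analysis on nilmanifolds is required.
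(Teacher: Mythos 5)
Your proposal is correct and follows essentially the same route as the paper's proof: the same lift to the skew product on $\T\times X$ with base point $(0\Z,\Gamma)$, the same discontinuous function $\tilde{F}(s\Z,y\Gamma)=F(b^{-\{s\}}y\Gamma)$, and Lemma~\ref{L:unif} applied on the connected orbit closure as the sole equidistribution input. The only real divergence is how the discontinuity of $\tilde{F}$ along the fiber $\{0\Z\}\times X$ is handled: the paper constructs explicit continuous approximants $\tilde{F}_\delta$ agreeing with $\tilde{F}$ off a $\delta$-neighborhood of that fiber and then invokes well-distribution of $(x+p(n)\alpha)_{n\in\N}$ in $\T$, uniformly in $x$, to bound the resulting error, whereas you sandwich $\tilde{F}$ between continuous functions $\Psi^-\leq\tilde{F}\leq\Psi^+$ with $\int_Z(\Psi^+-\Psi^-)\,dm_Z<\varepsilon$ (legitimate, since connectedness forces the bad fiber to be $m_Z$-null in the irrational case and forces $\alpha\in\Z$ in the rational case), so that the error control comes for free from monotonicity --- a clean simplification that removes the separate well-distribution step; your explicit windowing argument passing from interval-uniform averages to general F{\o}lner averages also fills in a transition the paper leaves implicit.
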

\begin{proof}
We first do some maneuvers that enable us to remove the integer parts and bring us to a point where Lemma~\ref{L:unif} is applicable. We let $\tilde{X}$ be the nilmanifold  $\T\times X$ which we identify with $(\R\times G)/(\Z\times \Gamma)$,
set $\tilde{b}=(\alpha,b^\alpha)$, and define the  function $\tilde{F}$ on $\tilde{X}$  by
$$
\tilde{F}(t\Z,g\Gamma)=F(b^{-\{t\}}g\Gamma).
$$
 Notice that for every $t\in \R$ we have
\begin{equation}\label{E:bvc}
\tilde{F}(\tilde{b}^{t}\tilde{\Gamma})=F(b^{-\{\alpha t\}}b^{\alpha t}\Gamma)=F(b^{[\alpha t]}\Gamma)
\end{equation}
where $\tilde{\Gamma}=\Z\times \Gamma$.
(We caution the reader that the function $\tilde{F}$ is not continuous on $\tilde{X}$.)

By assumption the nilmanifold $\tilde{X}=\overline{\{\tilde{b}^{n}\Gamma, n\in\N\}}$
is connected.
 Since the nilrotation $\tilde{b}$ acts ergodically on the connected  nilmanifold $\tilde{X}$, part of the hypothesis of Lemma~\ref{L:unif} is satisfied.

 Next we claim that Lemma~\ref{L:unif} can be applied for  the restriction of the function $\tilde{F}$ to the nilmanifold $\tilde{X}$, namely we claim that
 \begin{equation}\label{E:ada}
\lim_{N-M\to\infty}\max_{\tilde{x}\in \tilde{X}}\Big| \E_{M\leq  n\leq N } \tilde{F}(\tilde{b}^{p(n)}\tilde{x})-\int \tilde{F} \ dm_{\tilde{X}}\Big|=0.
\end{equation}
Suppose first that   $\alpha$ is rational.
Since $\tilde{X}$ is a connected nilmanifold and $\tilde{\Gamma}\in \tilde{X}$, we have    that  $\tilde{X}\subset \{\Z\}\times X$.
  Hence, the restriction of  $\tilde{F}$ onto $\tilde{X}$ is given by $\tilde{F}(\Z,g\Gamma)=F(g\Gamma)$ (for those
  $(\Z,g\Gamma)$  that belong to $\tilde{X}$), and as a result is  continuous. In this case,  \eqref{E:ada} is a direct consequence of Lemma~\ref{L:unif}.
   Therefore,  it remains to verify \eqref{E:ada}  when $\alpha$ is irrational.
   Notice first that the set of discontinuities of $\tilde{F}$ on $\tilde{X}$ is a subset of the nilmanifold
   $\{\Z\}\times X$. Near
a point $(\Z,g\Gamma)$ of  $\{\Z \}\times X$ the function  $\tilde{F}$ comes close to  the value
$F(g\Gamma)$ or the value $F(b^{-1}g\Gamma)$.
For $\delta>0$ (and smaller than $1/2$) let $\tilde{X}_\delta=I_\delta \times X$ where
$I_\delta=\big\{t\Z\colon \norm{t}\geq \delta \big\}$.
There exist functions  $\tilde{F}_\delta\in C(\tilde{X})$ that agree with $\tilde{F}$ on $\tilde{X}_\delta$ and have sup-norm bounded by $2\norm{F}_\infty$. Since $\alpha$ is irrational,   the sequence $(p(n)\alpha\Z)_{n\in\N}$
(which happens to be the first coordinate of $(\tilde{b}^{p(n)})_{n\in\N}$) is well distributed  in $\T$, and as a result
 $$
 \lim_{N-M\to\infty}\max_{x\in[0,1]}|\E_{M\leq  n\leq N} {\bf 1}_{[\delta,1-\delta]}(\{x+p(n)\alpha\}) -(1-2\delta)|=0.
 $$
It follows that
\begin{equation}\label{E:approx}
\limsup_{N-M\to\infty} \max_{\tilde{x}\in \tilde{X}}\E_{M\leq  n\leq N}|\tilde{F}(\tilde{b}^{p(n)}\tilde{x})- \tilde{F}_\delta(\tilde{b}^{p(n)}\tilde{x})|\leq 3\norm{F}_\infty \delta.
\end{equation}
  By Lemma~\ref{L:unif}, equation \eqref{E:ada}
holds if one uses the functions $\tilde{F}_\delta$ in place of the function $\tilde{F}$. Using this and
\eqref{E:approx}, we deduce \eqref{E:ada}.
 This proves our claim.

Next notice that \eqref{E:ada} gives that
\begin{equation}\label{E:form1}
\lim_{m\to\infty}\E_{n\in\Phi_m} \tilde{F}(\tilde{b}^{p(n)+m}\tilde{\Gamma})=\int
\tilde{F} \ dm_{\tilde{X}}.
\end{equation}
Furthermore, since the nilrotation $\tilde{b}$ acts ergodically in $\tilde{X}$,
the last integral is  equal to
\begin{equation}\label{E:form2}
\lim_{N\to\infty}\E_{1\leq  n\leq N}\tilde{F}(\tilde{b}^n\tilde{\Gamma}).
\end{equation}
(If $\tilde{F}$ is not continuous we argue as before to get this.)
Using  \eqref{E:bvc}
   we see that
\begin{equation}\label{E:form3}
\tilde{F}(\tilde{b}^{p(n)+m}\tilde{\Gamma})=F\big(b^{[p(n)\alpha+m\alpha]}\Gamma \big), \quad \text{ and } \quad \tilde{F}(\tilde{b}^n\tilde{\Gamma})=F(b^{[n\alpha]}\Gamma).
\end{equation}
Combining \eqref{E:form1}, \eqref{E:form2}, and \eqref{E:form3}, we get \eqref{E:key}.
This completes the proof.
\end{proof}

The next lemma will be used later   to verify that certain connectedness assumptions (needed to apply Lemma~\ref{L:key})
 are satisfied.
\begin{lemma}\label{L:Ziegler}
Let $X=G/\Gamma$ be a connected nilmanifold and $b\in G$ be an ergodic nilrotation.


Then there exists a connected sub-nilmanifold $Z$ of $X^\ell$ such that for a.e. $g\in G$ the element $\tilde{b}_g=(g^{-1}bg,g^{-1}b^2g,\ldots,g^{-1}b^\ell g)$ acts ergodically on $Z$.
\end{lemma}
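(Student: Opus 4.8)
The plan is to reduce the statement to the claim that the orbit closure
$Z_g:=\overline{\{\tilde b_g^{\,n}\tilde\Gamma:n\in\N\}}\subseteq X^\ell$, where $\tilde\Gamma=(\Gamma,\ldots,\Gamma)$, equals one fixed connected sub-nilmanifold $Z$ for all $g$ outside a Haar-null subset of $G$. First I would record the algebraic form of $\tilde b_g$: writing $\hat g=(g,\ldots,g)\in G^\ell$, $\hat b=(b,b^2,\ldots,b^\ell)$, and $c=g^{-1}bg$, one has $g^{-1}b^ig=c^i$, so $\tilde b_g=(c,c^2,\ldots,c^\ell)=\hat g^{-1}\hat b\,\hat g$. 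Hence $Z_g=\hat g^{-1}\,\overline{\{\hat b^{\,n}(\hat g\tilde\Gamma)\}}$ is the $\hat g^{-1}$-translate of the orbit closure of the diagonal point $\hat g\tilde\Gamma$ under $\hat b$. Since $Z_g$ is by definition the orbit closure of the nilrotation $\tilde b_g$ through the base point $\tilde\Gamma$, the background facts on nilsystems (every $X_b$ is a sub-nilmanifold carrying a dense $b$-orbit, and a dense orbit forces ergodicity) show that $\tilde b_g$ automatically acts ergodically on $Z_g$. Thus the entire content of the lemma is that $Z_g$ is independent of $g$ off a null set.

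The crucial observation is that the projection of $\tilde b_g$ to the maximal torus (Kronecker) factor of $X^\ell$ does not depend on $g$. Indeed $c=g^{-1}bg$ and $b$ differ by a commutator, and commutators act trivially on the Kronecker factor, so $c$ and $b$ induce the same rotation on the maximal torus $\T^m$ of $X$. Consequently $\tilde b_g$ induces on $(\T^m)^\ell$ the fixed rotation $\tau=(\beta,2\beta,\ldots,\ell\beta)$, where $\beta$ is the (totally irrational, since $b$ is ergodic on the connected $X$) rotation of $b$. The orbit closure of $\tau$ is therefore the fixed connected subtorus $S=\{(t,2t,\ldots,\ell t):t\in\T^m\}$, on which $\tau$ acts minimally, and every $Z_g$ projects onto this same $S$. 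By the standard criterion relating connectedness of an orbit closure to connectedness of its maximal-torus projection, each $Z_g$ is connected. This isolates the heart of the matter: all the $Z_g$ are connected sub-nilmanifolds through $\tilde\Gamma$ with one and the same image $S$ in the maximal torus, and I must prove they coincide for a.e.\ $g$.

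To finish I would use countability together with a measure/algebraicity argument. Connected sub-nilmanifolds of $X^\ell$ correspond to rational closed connected subgroups of $G^\ell$, of which there are only countably many; so $g\mapsto Z_g$ is measurable with at most countably many values, and the sets $A_{Z'}=\{g:Z_g=Z'\}$ partition $G$. Pick a value $Z=H/(H\cap\Gamma^\ell)$ of maximal dimension $d^{*}$ with $A_Z$ of positive Haar measure. For any connected sub-nilmanifold $Z'$ with group $H'$, the condition $Z_g\subseteq Z'$ says that the cyclic group generated by $\tilde b_g=\hat g^{-1}\hat b\,\hat g$ maps into $H'\Gamma^\ell$, which in Mal'cev coordinates is a countable intersection of polynomial conditions on $g$ and hence defines a real-algebraic subset of $G$; a proper real-algebraic set is Haar-null. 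Applying this with $Z'=Z$, the set $\{g:Z_g\subseteq Z\}$ is algebraic and contains the positive-measure set $A_Z$, so it is all of $G$: thus $Z_g\subseteq Z$ for every $g$. Applying it to each proper $Z'\subsetneq Z$, the set $\{g:Z_g\subseteq Z'\}$ is algebraic and, were it of positive measure, would force $Z\subseteq Z'$ (by evaluating at a point of $A_Z$), a contradiction; hence it is null. Removing the countable union of these null sets, one has $Z_g\subseteq Z$ but $Z_g\not\subseteq Z'$ for every proper $Z'\subsetneq Z$, forcing $Z_g=Z$. Taking this $Z$, and recalling that $\tilde b_g$ is ergodic on its orbit closure $Z_g=Z$, completes the argument.

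The step I expect to be the main obstacle is this last one — upgrading \emph{``$A_Z$ has positive measure''} to \emph{``$A_Z$ is conull.''} The clean route is the algebraicity argument just sketched, but making it rigorous requires care: one must verify that $Z_g\subseteq Z'$ is genuinely governed by membership of the powers of $\tilde b_g$ in the rational set $H'\Gamma^\ell$ (using that $Z_g$ is the \emph{full} orbit closure and that $\tilde b_g$ acts ergodically on it), and that the resulting exceptional loci really are proper real-algebraic subsets rather than merely null for softer reasons. Should the coordinate bookkeeping prove awkward, an alternative is to replace it by a lower-semicontinuity/maximality argument for $\dim Z_g$, or to feed in the quantitative equidistribution input (Theorem~\ref{T:GT2}): the frequencies that would push $Z_g$ into a proper sub-nilmanifold correspond to $\tilde b_g$ satisfying nontrivial character relations, which occur only for $g$ in a null set.
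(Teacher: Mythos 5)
Your reduction (ergodicity on the orbit closure $Z_g=\overline{\{\tilde b_g^n\tilde\Gamma\}}$ is automatic, so everything rests on $Z_g$ being one fixed connected sub-nilmanifold a.e.) is a reasonable start, but the proof founders at the connectedness step, and this is exactly where the real content of Lemma~\ref{L:Ziegler} lies. The ``standard criterion'' you invoke --- that an orbit closure is connected whenever its projection to the maximal torus of the \emph{ambient} nilmanifold is connected --- is not a theorem and is false: Leibman's criterion relates equidistribution in the full connected $X$ to equidistribution of the projection in the full maximal torus, and it does not relativize to proper orbit closures, whose component structure is governed by their \emph{own} Kronecker factor, in general strictly larger than their image in the ambient torus. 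Worse, the conclusion you draw --- that $Z_g$ is connected for \emph{every} $g$ --- is itself false, which is precisely why the lemma carries the a.e.\ qualifier. Concretely, let $G$ be the Heisenberg group with product $(x,y,z)(x',y',z')=(x+x',y+y',z+z'+xy')$, $\Gamma=\Z^3$, $X=G/\Gamma$, $b=(\alpha,\beta,0)$ with $1,\alpha,\beta$ rationally independent, and $\ell=2$. For $g=(u,v,w)$ one computes $c:=g^{-1}bg=(\alpha,\beta,s)$ with $s=\alpha v-\beta u$ (so $s$ sweeps all of $\R$), and $\tilde b_g^n=(c^n,c^{2n})$ with $c^n=(n\alpha,n\beta,ns+\binom{n}{2}\alpha\beta)$. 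All orbit points lie in the connected rational sub-nilmanifold $W=H\tilde\Gamma/\tilde\Gamma$, where $H=\{(h_1,h_2)\in G^2\colon x_2=2x_1,\ y_2=2y_1\}$; on $H$ the map $Q(h_1,h_2)=z_2-4z_1$ is a continuous homomorphism into $\R$ carrying $H\cap\Gamma^2$ into $\Z$, so it descends to a continuous $\bar Q\colon W\to\T$, and along the orbit $\bar Q(\tilde b_g^n\tilde\Gamma)=n(\alpha\beta-2s)\bmod 1$. Choosing $g$ with $\alpha\beta-2s=1/2$ (a nonempty null set of $g$), the orbit closure $Z_g$ is contained in the two disjoint closed fibers $\bar Q^{-1}(0)\cup\bar Q^{-1}(1/2)$ and meets both, hence is \emph{disconnected} --- even though its projection to the maximal torus of $X^2$ is the connected torus $\{(t_1,t_2,2t_1,2t_2)\}$, exactly as you observed it must be.

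Once some $Z_g$ are disconnected, your second half collapses as well: disconnected $Z_g$ do not range over the countable family of rational sub-nilmanifolds through $\tilde\Gamma$ (their non-identity components are translates $\tilde b_g^{\,j}Z'$ that move continuously with $g$), so the countability/maximal-dimension selection is unavailable. The algebraicity mechanism itself is sound --- for each fixed connected rational $Z'=H'\tilde\Gamma/\tilde\Gamma$, the set $\{g\colon \tilde b_g^n\in H'\Gamma^\ell \text{ for all } n\}$ is null or everything, arguing on each of the countably many components of $G$ where these are real-algebraic conditions in Mal'cev coordinates --- but it cannot by itself identify a single connected $Z$ filled by a.e.\ $Z_g$: one must first rule out, off a null set of $g$, all ``extra'' rational relations of the kind exhibited above, and doing so is essentially the content of Ziegler's limit formula. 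That is why the paper's proof is a two-line citation: Theorem~2.2 of \cite{Zi05}, with the deduction written out in Corollary~2.10 of \cite{Fr08}. Your closing suggestion (feeding in quantitative equidistribution, Theorem~\ref{T:GT2}) is the right direction for a self-contained proof, but it has to be run against all quasi-characters of the relevant sub-nilmanifolds at every step of the lower central series --- not just characters of the ambient maximal torus --- and organized to yield an a.e.\ statement; at that point one has essentially reproved the cited results.
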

\begin{remark}
The independence of $Z$ of the generic $g\in G$ will not be used, only that $Z$ is connected will be used.
\end{remark}
 \begin{proof}
This is an immediate consequence of a limit formula that appears  in Theorem~2.2 of \cite{Zi05}
(the details of the deduction appear in Corollary~2.10 of \cite{Fr08}).
\end{proof}

As mentioned before, our plan is to  establish  a positiveness property by comparing certain multiple ergodic averages to  some simpler ones involving the sequence $[n\alpha]$. The next lemma establishes the positiveness property needed for the latter averages.
\begin{lemma}\label{L:[na]}
Let $(X,\X, \mu ,T)$ be a system, $\ell\in \N$,  and $\alpha$ be a non-zero real number.

Then for every $f\in L^\infty(\mu)$ positive and not almost everywhere zero we have
 \begin{equation}\label{E:skl}
\liminf_{N\to\infty} \E_{1\leq n\leq N } \int f \cdot T^{[n \alpha ]}f\cdot \ldots\cdot T^{\ell [n \alpha ]}f \ d\mu>0.
\end{equation}
\end{lemma}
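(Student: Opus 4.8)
The plan is to first strip the statement down to an indicator function and an ergodic system. Since $f$ is nonnegative and not a.e.\ zero, there is an $\varepsilon>0$ with $\mu(A)>0$ for $A=\{f\geq\varepsilon\}$, and because $f\geq\varepsilon{\bf 1}_A$ and all the iterates $T^{j[n\alpha]}f$ are nonnegative, it suffices to treat $f={\bf 1}_A$. The function $a(t)=\alpha t$ satisfies the second condition of Theorem~\ref{T:ConvSingle} (take $c=\alpha$, $p(t)=t$, $d=0$), so the sequence $([n\alpha])_{n\in\N}$ is good for multiple convergence and the average in question actually converges; in particular the $\liminf$ is a genuine limit. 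Writing the ergodic decomposition $\mu=\int\mu_s\,d\lambda(s)$ and applying Theorem~\ref{T:ConvSingle} to each component, bounded convergence lets me pass the limit inside the integral, so it is enough to prove positivity for ergodic $T$ and a set $A$ with $\mu(A)>0$.

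\textbf{The Furstenberg input via a suspension.} The ``easy from Furstenberg'' content is captured by the skew product $(\,\T\times X,\ m_\T\times\mu,\ S\,)$ with $S(t,x)=(\{t+\alpha\},\,T^{\lfloor t+\alpha\rfloor}x)$, which is measure preserving and satisfies $S^n(t,x)=(\{t+n\alpha\},\,T^{\lfloor t+n\alpha\rfloor}x)$. Applying Furstenberg's multiple recurrence theorem (\cite{Fu77}) to $S$ and $B=\T\times A$ gives
\begin{equation*}
\liminf_{N\to\infty}\E_{1\leq n\leq N}\int_0^1\mu\Big(\bigcap_{j=0}^\ell T^{-\lfloor t+jn\alpha\rfloor}A\Big)\,dt>0 .
\end{equation*}
Using $\lfloor t+jn\alpha\rfloor=j[n\alpha]+\lfloor t+j\{n\alpha\}\rfloor$, the inner exponents equal $j[n\alpha]+\delta_j$ with a \emph{bounded} discrepancy $\delta_j=\lfloor t+j\{n\alpha\}\rfloor\in\{0,1,\dots,\ell\}$. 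Thus the suspension delivers positivity of a convex combination of shifted correlations $\mu\big(\bigcap_j T^{-(j[n\alpha]+\delta_j)}A\big)$, rather than of the unshifted quantity $\mu\big(\bigcap_j T^{-j[n\alpha]}A\big)$ that the lemma demands.

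\textbf{Removing the discrepancy (the main obstacle).} The crux is to recover the exact arithmetic progression $j[n\alpha]$. I would do this by reducing to nilsystems: since $\alpha t\succ\log t$, Theorem~\ref{T:CharSingle} shows the nilfactor $\cZ$ is characteristic for $\{[a(n)],\dots,\ell[a(n)]\}$, so I may replace ${\bf 1}_A$ by $\tilde f=\E({\bf 1}_A\,|\,\cZ)\geq0$ with $\int\tilde f\,d\mu=\mu(A)>0$ without changing the limit; then Theorem~\ref{T:HoKra} and a standard $L^2$--approximation reduce matters to a connected nilsystem $(G/\Gamma,m_X,b)$ with $b$ ergodic and $F\in C(X)$, $F\geq0$, $\int F\,dm_X>0$, for which I must show
\begin{equation*}
\lim_{N\to\infty}\E_{1\leq n\leq N}\int_X\prod_{j=0}^\ell F\big(b^{j[n\alpha]}x\big)\,dm_X(x)>0 .
\end{equation*}
Setting $\widehat F=F^{\otimes(\ell+1)}$ and conjugating the diagonal, the integrand becomes $\widehat F\big(\Delta(g)\,\tilde b_g^{[n\alpha]}\Delta\Gamma\big)$ with $\tilde b_g=(g^{-1}bg,\dots,g^{-1}b^\ell g)$; here Lemma~\ref{L:Ziegler} provides a \emph{connected} sub-nilmanifold $Z$ on which $\tilde b_g$ acts ergodically and which contains the diagonal point $\Delta\Gamma$. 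The plan is then to argue that the Ces\`aro averages of $\widehat F(\cdots)$ converge to the integral of the nonnegative function $\widehat F$ against the equidistribution measure of the subsequence orbit $\big(\tilde b_g^{[n\alpha]}\Delta\Gamma\big)_{n}$, and that this measure charges a neighborhood of the diagonal, where $\widehat F\big(\Delta(g\Gamma)\big)=F(g\Gamma)^{\ell+1}>0$ on the positive-measure set $\{F>0\}$.

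\textbf{Where the difficulty concentrates.} The delicate point — and the reason the preceding equidistribution lemmas are needed — is precisely this last step. Because $a(t)=\alpha t$ fails the spacing hypothesis $|a(t)-cp(t)|\succ\log t$ of Theorem~\ref{T:B}, the orbit $\big(b^{[n\alpha]}x\big)$ need \emph{not} equidistribute in the full orbit closure, and the limiting measure may live on a proper sub-nilmanifold of $Z$; one must show this sub-nilmanifold nonetheless meets the open region $\{\widehat F>0\}$ surrounding the diagonal with positive mass. I expect to establish this using the connectedness furnished by Lemma~\ref{L:Ziegler} together with the return properties of the $[n\alpha]$--orbit (via the equidistribution mechanism of Lemma~\ref{L:key} and the quantitative result Theorem~\ref{T:GT2}): connectedness forces the limiting measure to be the Haar measure of a positive-dimensional homogeneous space through $\Delta\Gamma$, so that its support cannot avoid the diagonal neighborhood on which the nonnegative integrand is strictly positive. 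This yields a strictly positive contribution on a positive-measure set of fibers $g$, and integrating completes the proof.
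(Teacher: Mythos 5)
Your reductions and the suspension step are fine as far as they go, but the proof has a genuine gap exactly where you say the difficulty concentrates, and the heuristic you offer to close it cannot be made to work. You claim that connectedness (via Lemma~\ref{L:Ziegler}) should force the limiting distribution of the points $\tilde b_g^{[n\alpha]}\Delta\Gamma$ to be Haar measure on a positive-dimensional homogeneous space through $\Delta\Gamma$, whose support therefore meets the region $\{\widehat F>0\}$ around the diagonal. That general principle is false, because it is completely insensitive to replacing $[n\alpha]$ by $[n\alpha+c]$: the paper itself notes (footnote in Section~\ref{SSS:ApRecurrence}) that $[\sqrt{5}\,n+2]$ is bad for multiple recurrence, since $\norm{[\sqrt{5}\,n+2]/\sqrt{5}}\geq 1/10$ for every $n$, so for the circle rotation by $1/\sqrt{5}$ the orbit along this sequence stays uniformly away from the starting point and the limit measure's support does avoid a neighborhood of the diagonal. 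Any correct argument must therefore exploit the zero shift in $[n\alpha]$, and nothing in your sketch does so. A further flaw in the same step: the support of a Ces\`aro limit of point masses along an orbit is contained in the orbit closure but need not contain the base point, so you are not entitled to a homogeneous space ``through $\Delta\Gamma$'' at all.

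The paper's proof is entirely different and elementary; it bypasses nilsystems, suspensions and equidistribution altogether. Following \cite{BH96}, for any nonnegative sequence $a(n)$ and any $N_0\in\N$ one has
$$
\liminf_{N\to\infty}\E_{1\leq n\leq N}\,a(n)\ \geq\ \frac{1}{N_0}\,\liminf_{N\to\infty}\E_{1\leq n\leq N}\big(\E_{1\leq m\leq N_0}\,a(mn)\big),
$$
so it suffices to produce $N_0$, a constant $c>0$, and a set $S$ of positive lower density with $\E_{1\leq m\leq N_0}a(mn)\geq c$ for all $n\in S$. Take $a(n)=\int f\cdot T^{[n\alpha]}f\cdot\ldots\cdot T^{\ell[n\alpha]}f\,d\mu$. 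The uniform multiple recurrence theorem of \cite{BHRF00} supplies $N_0$ and $c>0$ (depending only on $\int f\,d\mu$ and $\ell$) such that $\E_{1\leq m\leq N_0}\int f\cdot T^{rm}f\cdot\ldots\cdot T^{\ell rm}f\,d\mu\geq c$ for every $r\in\N$. Now the key arithmetic observation: $[m\beta]=m[\beta]$ whenever $\{\beta\}<1/m$. Hence on the positive-density set $S=\{n\colon \{n\alpha\}<1/N_0\}$ one has $[mn\alpha]=m[n\alpha]$ for all $m\leq N_0$, so $\E_{1\leq m\leq N_0}a(mn)=\E_{1\leq m\leq N_0}\int f\cdot T^{m[n\alpha]}f\cdot\ldots\cdot T^{\ell m[n\alpha]}f\,d\mu\geq c$, by the uniform recurrence applied with $r=[n\alpha]$. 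The restriction to $S$ is precisely where the zero shift is used --- this diophantine input is exactly what your approach is missing, and it is what distinguishes $[n\alpha]$ from the bad sequence $[n\alpha+2]$.
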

\begin{proof}
We follow closely an argument used in \cite{BH96} (Theorem 2.3).

For a general sequence  $(a(n))_{n\in\N}$ of non-negative numbers, and $m\in\N$,  one has
$$
\liminf_{N\to\infty} \E_{1\leq n\leq N} a(n)
\geq \frac{1}{m} \liminf_{N\to\infty}\E_{1\leq n\leq N} a(mn).
$$
Using this for $m=1,\ldots,N_0$, where $N_0$ is an integer that will be chosen later,  and averaging over $m$ we get
$$
\liminf_{N\to\infty} \E_{1\leq n\leq N} a(n)
\geq  \E_{1\leq m\leq N_0}\Big(\frac{1}{m} \liminf_{N\to\infty}\E_{1\leq n\leq N} a(mn)\Big).
$$
 The last expression is greater or equal than
$$
\frac{1}{N_0}\liminf_{N\to\infty}\E_{1\leq n\leq N} ( \E_{1\leq m\leq N_0}  a(mn)).
$$
As a result, if there exists $N_0\in \N$ such that
$$
\E_{1\leq m\leq N_0}  a(mn)>c>0
$$
for a set $S$ of  $n\in\N$ of positive lower density, then
$$
\liminf_{N\to\infty} \E_{1\leq n\leq N} a(n)>0.
$$
We shall use this for
 $$
a(n)= \int f \cdot T^{[n \alpha ]}f\cdot \ldots\cdot T^{\ell [n\alpha ]}f \ d\mu.
$$
to prove \eqref{E:skl}.

We choose $N_0$ as follows: By the uniform multiple recurrence property (\cite{BHRF00}),  there exists  $N_0\in \N$, and positive constant
$c$ (depending only on $\int f d\mu$ and $\ell$),  such that
for every $r\in \N$ one has
\begin{equation}\label{E:uni}
\E_{1\leq m\leq N_0 } \int f \cdot T^{rm}f\cdot \ldots\cdot T^{\ell r m}f \ d\mu\geq c.
\end{equation}

We choose $S=\big\{n\colon \{n\alpha\}< 1/N_0\big\}$, which as is well known has positive density.
Since
$[m\beta]=m[\beta]$ whenever $\{\beta\}<1/m$, we have
$$
a(mn)=\int f \cdot T^{m[n \alpha]}f\cdot \ldots\cdot T^{\ell m[n \alpha]}f \ d\mu
$$
for every $n\in S$ and $m=1,\ldots,N_0$. As a result, for every $n\in S$ we have
$$
\E_{1\leq m\leq N_0}  a(mn)=\E_{1\leq m\leq N_0}
\int f \cdot T^{m[n \alpha]}f\cdot \ldots\cdot T^{\ell m[n \alpha ]}f \ d\mu\geq  c
$$
where the last estimate follows form  \eqref{E:uni} applied  for  $r=[n \alpha ]$. As explained
 before this implies \eqref{E:skl} and  completes the proof.
\end{proof}
We now have all the ingredients needed to prove Proposition~\ref{P:closetopoly}.
\begin{proof}[Proof of Proposition~\ref{P:closetopoly}]
If $\alpha=0$, or $p$ is constant, we have $1\prec a(t)\prec t$. Since $a\in \H$, it  follows  that
for every large $m$ the sets $\{n\in\N\colon [a(n)]=m\}$ are intervals with length
increasing to infinity as $m\to\infty$. Using this,  the result follows  from Furstenberg's multiple recurrence theorem (\cite{Fu77}).

Suppose now that $\alpha\neq 0$ and $p$ is non-constant.
We start with some reductions. We can assume that $p(0)=0$. Furthermore, using an ergodic decomposition argument we can assume that the system is ergodic. By Theorem~\ref{T:CharSingle} we can reduce matters to showing \eqref{E:1821} in the case where the system is an inverse limit of nilsystems. Lastly, an argument completely analogous to the one used
in the proof of  Lemma~3.2 in
\cite{FuK79} shows that the positiveness property \eqref{E:1821} is preserved by inverse limits. Hence, we can  assume that the system is a nilsystem.

  Suppose now that $X=G/\Gamma$ is a nilmanifold,  $b\in G$, and $F\in L^\infty(m_X)$  is non-negative and not almost everywhere zero.
   Our goal is to show that there exists a  sequence of intervals $(I_m)_{m\in\N}$ with length converging to infinity such that
  \begin{equation}\label{E:wanted}
\lim_{m\to\infty}  \E_{n\in I_m} V([a(n)])> 0
\end{equation}
where
\begin{equation}\label{E:V(n)}
 V(n)=\int F(x) \cdot F(b^nx)\cdot \ldots \cdot F(b^{\ell n}x)    \ dm_X.
  \end{equation}

The choice of the intervals  $(I_m)_{m\in\N}$
 will be made so that the values of the sequence  $[a(n)]$ for  $n\in I_m$ take a convenient form.
More precisely,
since $1\prec e(t)\prec t$ and $e\in \H$,  for every $r\in \N$ (to be chosen later), there exists a sequence  of intervals $(J_{r,m})_{m\in\N}$ with $|J_{r,m}|\to\infty$,  such that
$$
mr\alpha\leq e(n) \text{  for } n\in J_{r,m} \ \text{ and } \ \sup_{n\in J_{r,m}}( e(n) -mr\alpha)\to 0.
$$
For $\delta>0$ (to be chosen later)  define
$$
S_{r,\delta}=\{n\in \N\colon \{p(n)r\alpha\}\leq  1-\delta\}.
$$
Then for  every large $m$, if   $n\in J_{r,m}\cap S_{r,\delta}$ we have
$$
[a(n)] =[p(n)\alpha+mr\alpha].
$$
 It follows that  in order to verify \eqref{E:wanted} it suffices to show that for some $r\in \N$ and $\delta>0$ we have
\begin{equation}\label{E:577}
\limsup_{m\to\infty}\E_{n\in J_{r,m}} (V([p(n)\alpha+mr\alpha])\cdot {\bf 1}_{S_{r,\delta}}(n))>0.
\end{equation}
In fact it suffices to show that there exists $r\in \N$ such that
\begin{equation}\label{E:577'}
\limsup_{m\to\infty}\E_{n\in J_{r,m}} V([p(n)\alpha+mr\alpha])>0,
\end{equation}
 the reason being that for every $r\in \N$, $\alpha\in \R$, and $\delta$ small, we have
  $$
\limsup_{m\to \infty} \E_{n\in J_{r,m}} (1- {\bf 1}_{S_{r,\delta}}(n))\leq \delta,\footnote{If $\alpha$ is irrational
the estimate holds with equality (this follows from equidistribution). If $\alpha$ is rational the estimate holds trivially for $\delta$ small.}
$$
and as a result  \eqref{E:577'} readily implies \eqref{E:577}, again if  $\delta$ is small.
Therefore, we can concentrate our attention in proving \eqref{E:577'}.

We shall  show that there exists an $r\in \N$
such that
\begin{equation}\label{E:key'}
\lim_{m\to\infty}\E_{ n\in \Phi_m} V\big([p(rn)\alpha+mr\alpha] \big)=
\lim_{N\to \infty}\E_{1\leq n\leq N} V([nr\alpha])
\end{equation}
for every F{\o}lner sequence $(\Phi_m)_{m\in\N}$ in $\Z$.
Having shown this, we can finish the proof  by using  the definition of $V(n)$ and Lemma~\ref{L:[na]}.
We deduce  that the limits in \eqref{E:key'}  are positive, and as a result \eqref{E:577'} holds.

We proceed now to find this $r\in \N$.     Using a standard lifting argument we can also assume that the group $G$ is connected and simply connected.\footnote{We can assume that $G/G_0$ is finitely generated. In this case, one can show (\cite{Lei05a})  that there exists
 a nilmanifold $\hat{X}=\hat{G}/\hat{\Gamma}$, where
 $\hat{G}$ is a connected and simply-connected Lie group, such that
 for every  $F\in C(X)$, $b\in G$, and $x\in X$,   there exists
 $\hat{F}\in C(\hat{X})$, $\hat{b}\in \hat{G}$, and $\hat{x}\in \hat{X}$, such that
 $F(b^nx)=\hat{F}(\hat{b}^n\hat{x})$ for every $n\in\N$.}
A standard approximation argument shows that  it suffices to  verify \eqref{E:key'} whenever the function  $F$ in  \eqref{E:V(n)} is continuous.
  Our plan is to use Lemma~\ref{L:key} to establish a stronger pointwise result, namely, there exists an $r\in \N$ such that for a.e. $x\in X$ we have for every F{\o}lner sequence $(\Phi_m)_{m\in\N}$ in $\Z$ that
  \begin{equation} \label{E:pointwise}
\lim_{m\to\infty}\E_{n\in \Phi_m} F(b^{[p(rn)\alpha+mr\alpha]}x)
    \cdot\ldots\cdot F(b^{\ell [p(rn)\alpha+mr\alpha]}x)=
\lim_{m\to\infty}\E_{1\leq n\leq N} F(b^{[nr\alpha]}x)
    \cdot\ldots\cdot F(b^{\ell [nr\alpha]}x).
    \end{equation}
  (Our argument will show that both limits exist.) Trivially, this pointwise
  result  implies \eqref{E:key'}. Therefore,  we are left with establishing \eqref{E:pointwise}.

We do some preparation in order to use Lemma~\ref{L:key}.
For every  $n\in\N$ and element $x=g\Gamma$ of $ X$ we have
\begin{equation} \label{E:r_0A}
F(b^{n}x)
    \cdot\ldots\cdot F(b^{\ell n}x)=
\tilde{F}(\tilde{b}^{n}_g\tilde{\Gamma}),
    \end{equation}
  where $\tilde{F}(x_1,\ldots,x_l)=F(gx_1)\cdot \ldots\cdot F(gx_\ell)$ ($\in C(X^\ell)$),
   $\tilde{\Gamma}=\Gamma^\ell$, and
   $$
  \tilde{b}_g=(g^{-1}bg,g^{-1}b^{2 }g,\ldots, g^{-1}b^{\ell }g) \in G^\ell.
   $$
Next we show that there exists an $r\in\N$ such that for a.e. $g\in G$ the sub-nilmanifold
  \begin{equation}\label{E:connected}
  \overline{\{(nr\alpha,\tilde{b}^{nr\alpha}_g\tilde{\Gamma}), n\in\N\}}
  \end{equation} of $\T\times X^\ell$ is connected.
Indeed,  as mentioned in Section~\ref{SS:BackgroundNil},  there exists  $r_0$ such that the nilmanifold
  $\overline{\{(nr_0\alpha\Z,b^{nr_0\alpha}\Gamma),n\in\N\}}$ is connected.
It follows form  Lemma~\ref{L:Ziegler} (applied to the nilmanifold $\T\times X$ and
 the element $(r_0\alpha,b^{r_0\alpha})\in \T\times G$) that   for almost every $g\in G$ the   sub-nilmanifold
$$
  \overline{\{(nr_0\alpha \Z,\ldots,\ell nr_0\alpha \Z,\tilde{b}^{nr_0\alpha}_g\tilde{\Gamma}), n\in\N\}}
$$
of $\T^\ell \times X^\ell$   is connected.  Projecting in the appropriate coordinates we get  that the nilmanifold in \eqref{E:connected} is also connected.

We are now in a position  where we can apply Lemma~\ref{L:key} for the nilmanifold $X^\ell$, the function $\tilde{F}\in C(X^\ell)$,  the integer $r_0$ we just found, the element $r_0\alpha\in \R$ in place of $\alpha$, the polynomial $q\in\Z[t]$ defined by $q(t)=p(r_0t)/r_0$ (remember $p(0)=0$),  and  the elements
$\tilde{b}_g\in G^\ell$ (for those $g\in G$ for which the set \eqref{E:connected} is connected for $r=r_0$). We deduce  that for almost every $g\in G$, and  for every F{\o}lner sequence $(\Phi_m)_{m\in\N}$ in $\Z$, we have that
$$
 \lim_{m\to\infty}\E_{n\in \Phi_m} \tilde{F}({\tilde{b}_g}^{[p(r_0n)\alpha+mr_0\alpha]}\tilde{\Gamma})
=\lim_{N\to\infty}\E_{1\leq n\leq N} \tilde{F}({\tilde{b}_g}^{[nr_0\alpha ]}\tilde{\Gamma}).
$$
Using \eqref{E:r_0A}, this gives \eqref{E:pointwise}, which in turn gives \eqref{E:key'}.
 This completes the proof.
\end{proof}

\end{document}